\numberwithin{equation}{section} 
\numberwithin{figure}{section} 
\theoremstyle{plain}
\newtheorem{thm}{Theorem}[section]
  \theoremstyle{plain}
  \newtheorem{lem}[thm]{Lemma}
  \theoremstyle{plain}
  \newtheorem{prop}[thm]{Proposition}
\def\ft#1{{\mathsf #1}}
\def\chow{{\mathscr X}}
\def\hcoY{{\mathscr Y}}
\def\Hes{{\mathscr H}}
\def\Zpq{{\mathscr Z}}
\def\Cs{{\mathscr C}}
\def\hchow{{\mathaccent20\chow}}
\def\Lrho{\text{\large{\mbox{$\rho\hskip-2pt$}}}}
\font\eu=eusm10 at 10pt
\def\eF{\text{\eu F}}
\def\eG{\text{\eu G}}
\def\eQ{\text{\eu W}}
\def\eS{\text{\eu U}}
\def\eW{\text{\eu W}}
\def\simarrow{\smash{\mathop{\rightarrow}\limits^{\sim}}} 
\newcommand{\vcorr}[3][1]{%
  \begingroup
    \tabcolsep=.5\tabcolsep
    \sbox0{%
      \begin{tabular}[b]{@{}l}%
        #3%
         \tabularnewline
      \end{tabular}%
    }%
    \settoheight{\dimen0 }{%
      \rotatebox{#2}{%
        \copy0 %
        \kern-\tabcolsep
      }%
    }%
    \rule{0pt}{#1\dimen0}%
    \setlength{\wd0 }{1em}%
    \setlength{\ht0 }{1em}%
    \rotatebox{#2}{\usebox{0}}%
  \endgroup
}
\newenvironment{fcaption}{\begin{list}{}{
\setlength{\leftmargin}{35pt}
\setlength{\rightmargin}{35pt}
\setlength{\labelsep}{5pt}
}}{\end{list}}
\newenvironment{myitem}{\begin{list}{}{
\setlength{\leftmargin}{3pt}
\setlength{\labelsep}{5pt}
\setlength{\itemindent}{0pt}
}}{\end{list}}
\newenvironment{myitem2}{\begin{list}{}{
\setlength{\leftmargin}{0.5cm}
\setlength{\itemindent}{-0.3cm}
\setlength{\itemsep}{0cm}
}}{\end{list}}
\newenvironment{myitem3}{\begin{list}{}{
\setlength{\leftmargin}{0.7cm}
\setlength{\itemindent}{-0.6cm}
\setlength{\itemsep}{0cm}
}}{\end{list}}
\begin{document}
\global\long\def\sA{\mathcal{A}}
 \global\long\def\sB{\mathcal{B}}
 \global\long\def\sC{\mathcal{C}}
 \global\long\def\sD{\mathcal{D}}
 \global\long\def\sE{\mathcal{E}}
 \global\long\def\sF{\mathcal{F}}
 \global\long\def\sG{\mathcal{G}}
 \global\long\def\sH{\mathcal{H}}
 \global\long\def\sI{\mathcal{I}}
 \global\long\def\sJ{\mathcal{J}}
 \global\long\def\sK{\mathcal{K}}
 \global\long\def\sL{\mathcal{L}}
 \global\long\def\sN{\mathcal{N}}
 \global\long\def\sM{\mathcal{M}}
 \global\long\def\sO{\mathcal{O}}
 \global\long\def\sP{\mathcal{P}}
 \global\long\def\sS{\mathcal{S}}
 \global\long\def\sR{\mathcal{R}}
 \global\long\def\sQ{\mathcal{Q}}
 \global\long\def\sT{\mathcal{T}}
 \global\long\def\sU{\mathcal{U}}
 \global\long\def\sV{\mathcal{V}}
 \global\long\def\sW{\mathcal{W}}
 \global\long\def\sX{\mathcal{X}}
 \global\long\def\sY{\mathcal{Y}}
 \global\long\def\sZ{\mathcal{Z}}
 \global\long\def\tA{{\widetilde{A}}}
 \global\long\def\mA{\mathbb{A}}
 \global\long\def\mC{\mathbb{C}}
 \global\long\def\mF{\mathbb{F}}
 \global\long\def\mG{\mathbb{G}}
 \global\long\def\G{{\bf G}}
 \global\long\def\mN{\mathbb{N}}
 \global\long\def\mP{\mathbb{P}}
 \global\long\def\mQ{\mathbb{Q}}
 \global\long\def\mZ{\mathbb{Z}}
 \global\long\def\mW{\mathbb{W}}
 \global\long\def\Ima{\mathrm{Im}\,}
 \global\long\def\Ker{\mathrm{Ker}\,}
 \global\long\def\Alb{\mathrm{Alb}\,}
 \global\long\def\ap{\mathrm{ap}}
 \global\long\def\Bs{\mathrm{Bs}\,}
 \global\long\def\Chow{\mathrm{Chow}}
 \global\long\def\CP{\mathrm{CP}}
 \global\long\def\Div{\mathrm{Div}\,}
 \global\long\def\divi{\mathrm{div}\,}
 \global\long\def\expdim{\mathrm{expdim}\,}
 \global\long\def\ord{\mathrm{ord}\,}
 \global\long\def\Aut{\mathrm{Aut}\,}
 \global\long\def\Hilb{\mathrm{Hilb}}
 \global\long\def\Hom{\mathrm{Hom}}
 \global\long\def\id{\mathrm{id}}
 \global\long\def\Ext{\mathrm{Ext}}
 \global\long\def\sHom{\mathcal{H}{\!}om\,}
 \global\long\def\Lie{\mathrm{Lie}\,}
 \global\long\def\mult{\mathrm{mult}}
 \global\long\def\opp{\mathrm{opp}}
 \global\long\def\Pic{\mathrm{Pic}\,}
 \global\long\def\Pf{{\bf Pf}}
 \global\long\def\Sec{\mathrm{Sec}}
 \global\long\def\Spec{\mathrm{Spec}\,}
 \global\long\def\Sym{\mathrm{Sym}}
 \global\long\def\sQpec{\mathcal{S}{\!}pec\,}
 \global\long\def\Proj{\mathrm{Proj}\,}
 \global\long\def\Rhom{{\mathbb{R}\mathcal{H}{\!}om}\,}
 \global\long\def\aw{\mathrm{aw}}
 \global\long\def\exc{\mathrm{exc}\,}
 \global\long\def\emb{\mathrm{emb\text{-}dim}}
 \global\long\def\codim{\mathrm{codim}\,}
 \global\long\def\OG{\mathrm{OG}}
 \global\long\def\pr{\mathrm{pr}}
 \global\long\def\Sing{\mathrm{Sing}\,}
 \global\long\def\Supp{\mathrm{Supp}\,}
 \global\long\def\SL{\mathrm{SL}\,}
 \global\long\def\Reg{\mathrm{Reg}\,}
 \global\long\def\rank{\mathrm{rank}\,}
 \global\long\def\VSP{\mathrm{VSP}\,}
 \global\long\def\B{B}
 \global\long\def\Q{Q}
 \global\long\def\rG{\mathrm{G}}
 \global\long\def\rk{\mathrm{rk}\,}
\global\long\def\tS{\ft S}

\hfill 

\begin{center}
\textbf{\Large Mirror Symmetry and Projective Geometry of }
\par\end{center}{\Large \par}

\begin{center}
\textbf{\Large Fourier-Mukai Partners}
\par\end{center}{\Large \par}

$\;$

$\;$

$\;$

\begin{center}
Shinobu Hosono and Hiromichi Takagi 
\par\end{center}

\markboth{}{Mirror Symmetry and Projective Geometry of FM partners}

$\;$

\begin{fcaption} {\small  \item 

Abstract. This is a survey article on mirror symmetry and Fourier-Mukai
partners of Calabi-Yau threefolds with Picard number one based on
recent works \cite[\hskip-3pt 2,3,4]{HoTa1}. For completeness, mirror
symmetry and Fourier-Mukai partners of K3 surfaces are also discussed. 

}\end{fcaption}

\vspace{0.5cm}
 { \tableofcontents{} }

\newpage

\section{\textbf{\textup{Introduction}}}

Derived categories of coherent sheaves on projective varieties are
attracting attentions from many aspects of mathematics for the last
decades. Among them, the derived categories of coherent sheaves on
Calabi-Yau manifolds have been attracting special attentions since
they are conjecturally related to symplectic geometry by the homological
mirror symmetry due to Kontsevich \cite{Ko} and also to the geometric
mirror symmetry due to Strominger-Yau-Zaslow \cite{SYZ}. In this
article, we will survey on the derived categories of Calabi-Yau manifolds
of dimension two and three focusing on the so-called Fourier-Mukai
partners and their mirror symmetry. 

As defined in the text, smooth projective projective varieties $X$
and $Y$ are called Fourier-Mukai partners to each other if their
derived categories of bounded complexes of coherent sheaves are equivalent,
$D^{b}(X)\simeq D^{b}(Y)$. When $X$ and $Y$ are K3 surfaces, the
study of the derived equivalence goes back to the works by Mukai in
'80s \cite{Mukai1} and Orlov in '90s \cite{Orlov}. For completeness,
we start our survey with a brief summary of their results, and also
the mirror symmetry interpretations made in \cite{HLOYauteq}. About
the Fourier-Mukai partners of Calabi-Yau threefolds, little is known
except a general result that two Calabi-Yau threefolds are derived
equivalent if they are birational \cite{Br2}. In \cite{BC}\cite{Ku2},
it has been shown that an interesting example of a pair of Calabi-Yau
threefolds $X$, $Y$ of Picard number one (Grassmannian-Pfaffian
Calabi-Yau threefolds) due to R{\o}dland \cite{Ro} is the case of
non-trivial Fourier-Mukai partners which are not birational. In particular,
it has been recognized in \cite{Ku2,Ku1} that the classical projective
duality between the Grassmannian $\rG(2,7)$ and the Pfaffian variety
$\mathrm{Pf}(4,7)$ in the construction of $X$ and $Y$ plays a prominent
role, and a notion called homological projective duality has been
introduced in \cite{Ku1}. \textcolor{black}{Recently, it has been
found by the present authors \cite[\hskip-3pt2,3,4]{HoTa1} that }the
projective duality of $\rG(2,7)$ and $\mathrm{Pf(4,7)}$\textcolor{red}{{}
}\textcolor{black}{has a natural counterpart in the projective duality
between the secant varieties }of symmetric forms \textcolor{black}{and
these of the} dual forms. In this setting, we naturally came to two
Calabi-Yau threefolds $X$ and $Y$ of Picard numbers one which are
derived equivalent but not birational to each other. Calabi-Yau manifold
$X$ is the so-called three dimensional Reye congruence (whose two
dimensional counterpart has been studied in \cite{Cossec}), and $Y$
is given by a linear section of double quintic symmetroids (see Section
\ref{sec:Birational-Geometry-of-hcoY}). 

In the construction of $Y$ and also in the proof of the derived equivalence
to $X$, birational geometry of the double quintic symmetroids has
been worked out in detail in \cite{HoTa3}. It has been found that
the birational geometry of symmetroids itself contains interesting
projective geometry of quadrics \cite{Tyurin}. 

This article is aimed to be a survey of the works {[}HoTa1,2,3,4{]}
on mirror symmetry and Fourier-Mukai partners of the new Calabi-Yau
manifolds of Picard number one, and also interesting birational geometry
of the double quintic symmetroids which arises in the constructions.
In order to clarify the entire picture of the subjects, we have included
previous works on K3 surfaces and also the R{\o}dland's example.
Since the expository nature of this article, most of the proofs for
the statements are omitted referring to the original papers. \vskip0.3cm

\noindent \textbf{Acknowledgements:}\textcolor{red}{{} }\textcolor{black}{The
first named author would like to thank K. Oguiso, B.H. Lian and S.-T.
Yau for valuable collaborations on Fourier-Mukai partners of K3 surfaces.
}This article is supported in part by Grant-in Aid Scientific Research
(C 18540014, S.H.) and Grant-in Aid for Young Scientists (B 20740005,
H.T.).

\section{\textbf{\textup{\label{sec:Fourier-Mukai-partners-K3}Fourier-Mukai
partners of K3 surfaces}}}

\subsection{Counting formula of Fourier-Mukai partners }

Let $X$ be a K3 surface, i.e., a smooth projective surface with $K_{X}\simeq\sO_{X}$
and $H^{1}(X,\sO_{X})=0$. We have a symmetric bilinear form $(*,**)$
on $H^{2}(X,\mZ)$ by the cup product. Then $(H^{2}(X,\mZ),(*,**))$
is an even unimodular lattice of signature $(3,19)$, which is isomorphic
to $L_{K3}:=E_{8}(-1)^{\oplus2}\oplus U^{\oplus3}$ where $U$ is
the hyperbolic lattice $(\mZ\oplus\mZ,\left(\begin{smallmatrix}0 & 1\\
1 & 0\end{smallmatrix}\right)$). Denote by $NS_{X}=Pic(X)$ the Picard (N\'eron-Severi) lattice
and set $\rho(X)=\rk NS_{X}$. $NS_{X}$ is the primitive sub-lattice
in $H^{2}(X,\mZ)$ and has signature $(1,\rho(X)-1)$. The orthogonal
complement $T_{X}=(NS_{X})^{\perp}$ in $H^{2}(X,\mZ)$ is called
transcendental lattice. $T_{X}$ has signature $(2,20-\rho(X))$.
The extension $\tilde{H}(X,\mZ)=H^{0}(X,\mZ)\oplus H^{2}(X,\mZ)\oplus H^{4}(X,\mZ)\simeq E_{8}(-1)^{\oplus2}\oplus U^{\oplus4}$
is called Mukai lattice.

Let us denote by $\omega_{X}$ the nowhere vanishing holomorphic two
form of $X$ which is unique up to constant. Then the Global Torelli
theorem says that K3 surfaces $X$ and $X'$ are isomorphic iff there
exists a Hodge isometry, i.e., a lattice isomorphism $\varphi:H^{2}(X,\mZ)\to H^{2}(X',\mZ$)
which satisfies $\varphi(\mC\omega_{X})=\mC\omega_{X'}$. Extending
earlier works by Mukai \cite{Mukai1} in 80', Orlov \cite{Orlov}
has formulated a similar Global Torelli theorem for the derived categories
of coherent sheaves on K3 surfaces: 
\begin{thm}[\cite{Mukai1}\cite{Orlov}]
 K3 surfaces $X$ and $X'$ are derived equivalent, $D^{b}(X)\simeq D^{b}(X'),$
if and only if there exists a Hodge isometry of transcendental lattices
$(T_{X},\mC\omega_{X})\simeq(T_{X'},\mC\omega_{X'})$. 
\end{thm}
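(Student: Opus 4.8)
The plan is to prove both implications by passing from $D^{b}$ to the weight-two Hodge structure on the Mukai lattice $\tilde{H}(X,\mZ)$ (normalized so that $\tilde{H}^{2,0}(X)=H^{2,0}(X)$), using on one side the cohomological Fourier--Mukai transform and on the other side Mukai's theory of moduli of stable sheaves on K3 surfaces.

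For the ``only if'' direction, suppose $D^{b}(X)\simeq D^{b}(X')$. By Orlov's representability theorem the equivalence is of Fourier--Mukai type, $\Phi=\Phi_{\sE}$ for some kernel $\sE\in D^{b}(X\times X')$. Form $v(\sE)=\mathrm{ch}(\sE)\sqrt{\mathrm{td}(X\times X')}$ and let $\Phi^{H}_{\sE}\colon\tilde{H}(X,\mC)\to\tilde{H}(X',\mC)$ be the induced correspondence. The three claims to establish are: (i) $\Phi^{H}_{\sE}$ carries the integral Mukai lattice $\tilde{H}(X,\mZ)$ isomorphically onto $\tilde{H}(X',\mZ)$ --- this follows from Grothendieck--Riemann--Roch, since $\Phi^{H}_{\sE}(v(F))=v(\Phi_{\sE}(F))$ is an integral Mukai vector for every object $F$ and such Mukai vectors span the integral lattice; (ii) $\Phi^{H}_{\sE}$ is an isometry for the Mukai pairing --- one passes to the left/right adjoint of $\Phi_{\sE}$, which is again of Fourier--Mukai type, and uses Serre duality on $X$ and $X'$ to see that $\Phi^{H}_{\sE}$ composed with the adjoint transform is the identity, forcing it to preserve the pairing; (iii) $\Phi^{H}_{\sE}$ is a morphism of Hodge structures, because $\sE$ is an honest object of $D^{b}(X\times X')$, so its Künneth components respect the bigrading. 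Granting (i)--(iii), $\Phi^{H}_{\sE}$ is a Hodge isometry of Mukai lattices, hence it sends $\tilde{H}^{1,1}(X,\mZ)=H^{0}(X,\mZ)\oplus NS_{X}\oplus H^{4}(X,\mZ)$ onto $\tilde{H}^{1,1}(X',\mZ)$, and therefore sends its orthogonal complement $T_{X}$ isometrically onto $T_{X'}$ while preserving the period line --- which is exactly the asserted Hodge isometry $(T_{X},\mC\omega_{X})\simeq(T_{X'},\mC\omega_{X'})$.

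For the ``if'' direction, start from a Hodge isometry $\psi\colon(T_{X},\mC\omega_{X})\simeq(T_{X'},\mC\omega_{X'})$. First extend it to a Hodge isometry $\tilde\psi\colon\tilde{H}(X,\mZ)\simeq\tilde{H}(X',\mZ)$: the orthogonal complement of $T_{X}$ in the Mukai lattice is an even lattice containing a hyperbolic summand $U$, and the same holds for $T_{X'}$, so Nikulin's theory of primitive embeddings of even lattices into even unimodular lattices, combined with the extra copy of $U$ present in $\tilde{H}$ compared with $H^{2}$, lets one extend $\psi$ to an isometry of $\tilde{H}$ and then correct it on the (purely algebraic, hence automatically of Hodge type) orthogonal complement so that the extension is a Hodge isometry. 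Now set $v:=\tilde\psi^{-1}(0,0,1)\in\tilde{H}(X,\mZ)$; it is primitive, isotropic, and algebraic (being orthogonal to $\omega_{X}$). Composing $\tilde\psi$ with suitable auto-equivalences of $D^{b}(X)$ (shifts, twists by line bundles, spherical twists along $\sO_{X}$) we may assume $v=(r,\ell,s)$ with $r>0$; choosing a $v$-generic ample class $h$ on $X$, Mukai's theory shows that the moduli space $M:=M_{h}(v)$ of $h$-stable sheaves with Mukai vector $v$ is a K3 surface, that it is \emph{fine} (because $v$ is primitive and isotropic) with a universal sheaf $\sU$ on $X\times M$, and that there is a canonical Hodge isometry $H^{2}(M,\mZ)\simeq v^{\perp}/\mZ v$. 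Comparing this with the Hodge isometry $v^{\perp}/\mZ v\simeq H^{2}(X',\mZ)$ induced by $\tilde\psi$ and invoking the Global Torelli theorem gives $M\simeq X'$. Finally, $\Phi_{\sU}\colon D^{b}(X)\to D^{b}(M)\simeq D^{b}(X')$ is an equivalence, by Mukai's theorem that the integral transform along the universal family of a fine moduli space attached to a primitive isotropic Mukai vector is fully faithful and essentially surjective (alternatively, verify the Bondal--Orlov/Bridgeland criterion for $\sU$ directly).

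The main obstacle is the ``if'' direction, and within it the genuinely deep input is Mukai's analysis of moduli of stable sheaves on K3 surfaces --- that $M_{h}(v)$ is again a smooth projective K3 with the expected Hodge-theoretic period, that it is fine for primitive isotropic $v$, and that the associated integral transform is an equivalence. The lattice extension step is comparatively soft but requires care to preserve the Hodge structure. On the ``only if'' side, the one point needing an actual computation is the invariance of the Mukai pairing, claim (ii), which rests on Serre duality together with Grothendieck--Riemann--Roch for the correspondence.
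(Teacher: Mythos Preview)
The paper is a survey and does not supply its own proof of this theorem; it simply attributes the result to Mukai and Orlov and then remarks (via Nikulin's uniqueness of primitive embeddings, Theorem~\ref{thm:Nikulin1}) that the Hodge isometry of transcendental lattices always extends to one of Mukai lattices. Your sketch is the standard Mukai--Orlov argument and is essentially correct; there is nothing to compare against beyond noting that your extension step in the ``if'' direction is precisely what the paper singles out in its remark.

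One imprecision worth flagging: you justify the fineness of $M_{h}(v)$ by saying ``because $v$ is primitive and isotropic''. Primitivity and isotropy of $v$ in $\tilde H(X,\mZ)$ are not by themselves enough; fineness requires an \emph{algebraic} class $w$ with $(v,w)=\pm 1$. In your setup this is automatic, but for a different reason: since $v=\tilde\psi^{-1}(0,0,1)$ and $\tilde\psi$ is a Hodge isometry, the class $\tilde\psi^{-1}(1,0,0)$ is algebraic and pairs to $-1$ with $v$. It would strengthen the write-up to state this explicitly rather than attributing fineness to primitivity and isotropy alone.
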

Due to the uniqueness theorem of primitive embeddings into indefinite
lattices (see Theorem \ref{thm:Nikulin1} in Appendix), we note that
the Hodge isometry $(T_{X},\mC\omega_{X})\simeq(T_{X'},\mC\omega_{X'})$
above always extends to that of the Mukai lattice $(\tilde{H}(X,\mZ),\mC\omega_{X})\simeq(\tilde{H}(X',\mZ),\mC\omega_{X'})$,
and hence we can rephrase the above theorem in terms of the Hodge
isometry of Mukai lattices. 

Consider smooth projective varieties $X$ and $Y$. $Y$ is called
Fourier-Mukai partner of $X$ if $D^{b}(Y)\simeq D^{b}(X)$. We denote
the set of Fourier-Mukai partners (up to isomorphisms) of $X$ by
\[
FM(X)=\left\{ Y\mid D^{b}(Y)\simeq D^{b}(X)\right\} /\text{isom.}\]
For a K3 surface $X$, the set $FM(X)$ consists of K3 surfaces (see
\cite[Cor.10.2]{Huy} for example) and its cardinality is known to
be finite, i.e. $|FM(X)|<\infty$ in \cite{BM}. Studying all possible
obstructions for extending a Hodge isometry $(T_{X},\mC\omega_{X})\simeq(T_{X'},\mC\omega_{X'})$
between the transcendental lattices to the corresponding Hodge isometry
$(H^{2}(X,\mZ),\mC\omega_{X})\simeq(H^{2}(Y,\mZ),\mC\omega_{Y})$,
the following counting formula has been obtained:
\begin{thm}[\cite{HLOYcounting}]
\label{thm:HOLY-counting-F}For a K3 surface $X$, we have \[
|FM(X)|=\sum_{\mathcal{G}(NS_{X})=\left\{ S_{1},..,S_{N}\right\} }\big\vert O(S_{i})\diagdown O(A_{S_{i}})\diagup O_{Hodge}(T_{X},\mC\omega_{X})\big\vert,\]
where $\mathcal{G}(NS_{X})$ is the isogeny classes of the lattice
$NS_{X}$, $A_{S_{i}}=(S^{*}/S,q:S^{*}/S\to\mQ/\mZ)$ is the discriminant
of the lattice $S_{i}$, and $O(S_{i})$ and $O(A_{S_{i}})$ are isometries
of $S_{i}$ and $A_{S_{i}}$. $O_{Hodge}(T_{X},\mC\omega_{X})$ is
the Hodge isometries of $(T_{X},\mC\omega_{X})$. 
\end{thm}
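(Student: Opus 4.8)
The plan is to combine Orlov's derived Torelli theorem (the first theorem above) with Nikulin's lattice theory and the Global Torelli theorem for K3 surfaces, reducing $|FM(X)|$ to an orbit count on a finite discriminant group. First I would restate membership in $FM(X)$: by Orlov's theorem, $FM(X)$ is the set of isomorphism classes of K3 surfaces $Y$ admitting a Hodge isometry $(T_Y,\mC\omega_Y)\simeq(T_X,\mC\omega_X)$. For such $Y$, since $T_Y=NS_Y^{\perp}$ inside the unimodular lattice $H^2(Y,\mZ)$ the discriminant forms satisfy $q_{NS_Y}\simeq-q_{T_Y}$; combined with $T_Y\simeq T_X$ this gives $q_{NS_Y}\simeq-q_{T_X}\simeq q_{NS_X}$, and together with the signature $(1,\rho(X)-1)$ it follows that $NS_Y$ lies in the genus of $NS_X$. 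This produces the decomposition $FM(X)=\coprod_{i=1}^{N}FM(X)_{S_i}$ with $FM(X)_{S_i}=\{Y\in FM(X):NS_Y\simeq S_i\}$, and it remains to identify $|FM(X)_{S_i}|$ with the $i$-th summand.

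Next, fixing $S=S_i$, I would use the uniqueness of primitive embeddings into the indefinite unimodular lattice $L_{K3}$ (Theorem~\ref{thm:Nikulin1}) to choose once and for all a primitive embedding $S\hookrightarrow L_{K3}$ with orthogonal complement $T:=S^{\perp}$; then $T$ has signature $(2,20-\rho(X))$ and $q_{T}\simeq-q_{S}\simeq q_{T_X}$, so $T$ lies in the genus of $T_X$, hence $T\simeq T_X$ (for the ranks occurring here $T_X$ is unique in its genus). Transporting $\mC\omega_X$ through a fixed isometry $T\simeq T_X$ yields a K3-type period $\mC\omega_0\subset T\otimes\mC\subset L_{K3}\otimes\mC$ whose associated N\'eron--Severi lattice is exactly $S$ — not merely a lattice containing $S$ — because $T_X$ is by definition the smallest primitive sublattice of $H^2(X,\mZ)$ whose complexification contains $\omega_X$. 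By surjectivity of the period map and the Global Torelli theorem, isomorphism classes of K3 surfaces are in bijection with $O(L_{K3})$-orbits of K3-type periods, and under this dictionary $FM(X)_{S}$ becomes the set of such orbits of periods with N\'eron--Severi lattice $S$ whose transcendental Hodge structure is Hodge-isometric to $(T_X,\mC\omega_X)$. Appealing once more to uniqueness of the embedding, every such orbit has a representative lying in $T\otimes\mC$, and two such representatives give isomorphic surfaces precisely when some $g\in O(L_{K3})$ carries one to the other — which, since such $g$ maps $NS$ to $NS$, forces $g(S)=S$. Hence $FM(X)_{S}$ is the quotient of the $O(T)$-orbit of $\mC\omega_0$, namely of $O(T)/O_{Hodge}(T,\mC\omega_0)$ with $O_{Hodge}(T,\mC\omega_0)\simeq O_{Hodge}(T_X,\mC\omega_X)$, by the image in $O(T)$ of the stabilizer $\mathrm{Stab}_{O(L_{K3})}(S)$.

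It remains to descend both of these groups to $O(A_S)$. By Nikulin's gluing description, $\mathrm{Stab}_{O(L_{K3})}(S)$ consists of the pairs in $O(S)\times O(T)$ that induce matching automorphisms of the discriminant forms under the glue isomorphism $A_S\simeq A_T$; in particular its image in $O(T)$ contains the kernel $O^{(0)}(T)$ of the reduction $r\colon O(T)\to O(A_T)$, so the double coset is unaffected by first factoring out $O^{(0)}(T)$. Since $r$ is surjective for the ranks in play — again by Nikulin's criterion comparing $\rk T_X$ with the length of $A_{T_X}$ — one gets $O(T)/O^{(0)}(T)\simeq O(A_T)\simeq O(A_S)$, the image of $\mathrm{Stab}_{O(L_{K3})}(S)$ becomes the image of $O(S)$ in $O(A_S)$, and the image of $O_{Hodge}(T_X,\mC\omega_X)$ sits inside $O(A_S)$ via the identification $q_{T_X}\simeq-q_{S}$. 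This gives $FM(X)_{S_i}\simeq O(S_i)\backslash O(A_{S_i})/O_{Hodge}(T_X,\mC\omega_X)$, and summing over $i$ yields the formula.

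The genuine obstacle is the discriminant-form bookkeeping of the last step: realizing the three groups $O(S_i)$, $O(A_{S_i})$, and $O_{Hodge}(T_X,\mC\omega_X)$ as all acting on the single finite group $A_{S_i}$ requires the surjectivity of $O(T_X)\to O(A_{T_X})$ together with the gluing exact sequence for $O(L_{K3})$, both resting on Nikulin's rank-versus-length inequalities, which need care (or mild additional hypotheses) when $\rho(X)$ is large. By comparison, the first two reductions are essentially formal consequences of Orlov's theorem and the Torelli theorem.
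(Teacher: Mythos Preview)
Your proposal is correct and follows exactly the strategy the paper indicates. Note that this survey does not actually prove the theorem: it only records the one-line hint ``studying all possible obstructions for extending a Hodge isometry $(T_X,\mC\omega_X)\simeq(T_{X'},\mC\omega_{X'})$ between the transcendental lattices to the corresponding Hodge isometry of $H^2$'' and then refers to \cite{HLOYcounting} for details. Your argument is precisely an elaboration of that hint --- Orlov's criterion, the genus decomposition of $NS_Y$, Nikulin's gluing of $O(S)\times O(T)$ over the discriminant, and the descent to $O(A_{S_i})$ --- and is the same route taken in the cited reference. Your closing caveat about surjectivity of $O(T_X)\to O(A_{T_X})$ and related rank--length inequalities when $\rho(X)$ is large is exactly the point that requires extra care in \cite{HLOYcounting}; it is handled there by working inside the Mukai lattice $\tilde H(X,\mZ)=L_{K3}\oplus U$, where the orthogonal complement $NS_X\oplus U$ always satisfies Nikulin's hypothesis, rather than inside $L_{K3}$ alone.
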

We refer to \cite{HLOYcounting} for the details (see also \cite{HP}).
Since the isogeny classes of a lattice are finite, the counting formula
contains the earlier result $|FM(X)|<\infty$. When $X$ is a K3 surface
with $\rho(X)=1$ and deg$(X)=2n$, the counting formula coincides
with the result in \cite{Og} (obtained by counting the so-called
over-lattices); \begin{equation}
|FM(X)|=2^{p(n)-1}\,(=\frac{1}{2}|O(A_{NS_{X}})|),\label{eq:FMone}\end{equation}
where $p(n)$ is the number of prime factors of $n$ (we set $p(1)=1$).
In fact, much is known by \cite{Mukai3} in this case that we have\[
FM(X)=\left\{ \mathcal{M}_{X}(r,h,s)\mid n=rs,(r,s)=1\right\} ,\]
in terms of the moduli space of stable vector bundles $\sE$ on $X$
with Mukai vector $(r,h,s)=$$ch(\sE)\sqrt{Td_{X}}$ in $H^{0}(X,\mZ)\oplus H^{2}(X,\mZ)\oplus H^{4}(X,\mZ$)
(see also \cite{HLOYpicOne}). We will study in detail the first non-trivial
example of $|FM(X)|\not=1$ ($n=6$) in Subsection \ref{sub:An-example-due-Mukai}. 

\vskip0.5cm

\subsection{Marked $M$-polarized K3 surfaces\label{sub:M-polarized-K3-surfaces}}

A K3 surface $X$ with a choice of isomorphism $\phi:H^{2}(X,\mZ)\simarrow L_{K3}$
is called a marked K3 surface $(X,\phi)$. Marked K3 surfaces $(X,\phi)$
and $(X',\phi')$ are isomorphic if there exists an isomorphism $f:X\to X'$
satisfying $\phi'=\phi\circ f^{*}$. By the Global Torelli theorem,
$(X,\phi)$ and $(X',\phi')$ are isomorphic iff there exists a Hodge
isometry $\varphi:(H^{2}(X',\mZ),\mC\omega_{X'})\simarrow(H^{2}(X,\mZ),\mC\omega_{X})$
such that $\phi'=\phi\circ\varphi$ (see \cite{BHPv} for more details
of K3 surfaces). 

Consider a lattice $M$ of signature $(1,t)$ and fix a primitive
embedding $i:M\hookrightarrow L_{K3}$. A marked K3 surface $(X,\phi)$
is called marked $M$-polarized K3 surface if $\phi^{-1}(M)\subset NS_{X}$
(where we write $\phi^{-1}(M)=(\phi^{-1}\circ i)(M)$ for short).
Marked $M$-polarized K3 surfaces $(X,\phi)$ and $(X',\phi')$ are
isomorphic if there exists a lattice isomorphism $\varphi:L_{K3}\simarrow L_{K3}$
such that \begin{equation}
\begin{matrix}\xymatrix{H^{2}(X,\mZ)\ar_{\sim}^{\phi}[r] & L_{K3}\ar_{\varphi}^{\wr}[d] & M\ar@{_{(}->}_{i}[l]\ar@{=}[d]\\
H^{2}(X',\mZ)\ar_{\sim}^{\phi'}[r] & L_{K3} & M\ar@{_{(}->}_{i}[l]}
\end{matrix}\label{eq:iso-M-pol}\end{equation}
and the composition $(\phi')^{-1}\circ\varphi\circ\phi:(H^{2}(X,\mZ),\mC\omega_{X})\to(H^{2}(X',\mZ),\mC\omega_{X'})$
is a Hodge isometry. The lattice isomorphism $\varphi$ in (\ref{eq:iso-M-pol})
is an element of the group \[
\Gamma(M)=\left\{ g\in O(L_{K3})\mid g(m)=m\;(\forall m\in M)\right\} .\]
Consider the orthogonal lattice $M^{\perp}=(i(M))^{\perp}$. Then
there is a natural injective homomorphism $\Gamma(M)\to O(M^{\perp}).$
The image is known to be described by the kernel $O(M^{\perp})^{*}:=\Ker\left\{ O(M^{\perp})\to O(A_{M^{\perp}})\right\} $
of the natural homomorphism to the isometries of the discriminant
$A_{M^{\perp}}$ (see \cite[Prop.3.3]{Dolgavhev}). 

A marked K3 surfaces $(X,\phi)$ determines the period points $\phi(\mC\omega_{X})$
in the period domain $\sD$=$\left\{ [\omega]\in\mP(L_{K3}\otimes\mC)\mid(\omega,\omega)=0,(\omega,\bar{\omega})>0\right\} $.
By the surjectivity of the period map, $\sD$ gives a classifying
space of the (not necessarily projective) marked K3 surfaces. Then,
by the Global Torelli theorem, the quotient $\sD/O(L_{K3})$ classifies
the isomorphism classes of (not necessarily projective) marked K3
surfaces. 

From the definition, it is easy to deduce that marked $M$-polarized
K3 surfaces are classified by the period points in the following domain\[
\sD(M^{\perp}):=\left\{ [\omega]\in\mP(M^{\perp}\otimes\mC)\mid(\omega,\omega)=0,(\omega,\bar{\omega})>0\right\} ,\]
which has two connected components $\sD(M^{\perp})=\sD(M^{\perp})^{+}\sqcup\sD(M^{\perp})^{-}$.
Let us define $O^{+}(M^{\perp})\subset O(M^{\perp})$ to be the isometries
of $M^{\perp}$ which preserve the orientations of all positive two
spaces in $M^{\perp}\otimes\mathbb{R}$. Then the isomorphisms classes
of marked $M$-polarized K3 surfaces are classified by the following
quotient,\begin{equation}
\sD(M^{\perp})/O(M^{\perp})^{*}\simeq\sD(M^{\perp})^{+}/O^{+}(M^{\perp})^{*}(\simeq\sD(M^{\perp})^{-}/O^{+}(M^{\perp})^{*}),\label{eq:Domain-M}\end{equation}
where $O^{+}(M^{\perp})^{*}:=O^{+}(M^{\perp})\cap O(M^{\perp})^{*}$
is the \textit{monodromy} group which acts on the period points $\phi(\mC\omega_{X})\in\sD(M^{\perp})^{\pm}$
of marked $M$-polarized K3 surfaces $(X,\phi$). 

\vskip0.5cm

\subsection{$M$-polarizable K3 surfaces }

Let us fix a primitive lattice embedding $i:M\hookrightarrow L_{K3}$
as in the preceding subsection. Following \cite{HLOYauteq}, we call
a K3 surface $X$ $M$-\textit{polarizable} if there is a marking
$\phi:H^{2}(X,\mZ)\overset{\sim}{\to}L_{K3}$ such that $(\phi^{-1}\circ i)(M)\subset NS_{X}$.
Two $M$-polarizable K3 surfaces $X$ and $X'$ are defined to be
isomorphic if there exists lattice isomorphisms $\varphi:L_{K3}\simarrow L_{K3}$
and $g:M\simarrow M$ which make the following diagram commutative:
\begin{equation}
\begin{matrix}\xymatrix{H^{2}(X,\mZ)\ar_{\sim}^{\phi}[r] & L_{K3}\ar_{\varphi}^{\wr}[d] & M\ar@{_{(}->}_{i}[l]\ar_{g}^{\wr}[d]\\
H^{2}(X',\mZ)\ar_{\sim}^{\phi'}[r] & L_{K3} & M\ar@{_{(}->}_{i}[l]}
\end{matrix}\label{eq:iso-M-polarizable}\end{equation}
and the composition $(\phi')^{-1}\circ\varphi\circ\phi:(H^{2}(X,\mZ),\mC\omega_{X})\to(H^{2}(X',\mZ),\mC\omega_{X'})$
is a Hodge isometry. Note that, as we see in the diagram, the definition
of the isomorphism is slightly generalized for the $M$-polarizable
K3 surfaces. Hence, although $M$-polarizable K3 surfaces $X$ are
obtained by forgetting the marking $\phi$ from the marked $M$-polarized
K3 surfaces $(X,\phi)$, their isomorphism classes are possibly different.
We saw in the last subsection that the isomorphism classes of marked
$M$-polarized K3 surfaces are classified by the quotient $\sD(M^{\perp})/O(M^{\perp})^{*}$.
On the other hand, the classifying space of the isomorphism classes
of $M$-polarizable K3 surfaces is given by a similar quotient of
$\sD(M^{\perp})$ but with a group which resides between $O(M^{\perp})^{*}$
and $O(M^{\perp})$. 

\vskip0.5cm

\subsection{Mirror symmetry of K3 surfaces\label{sub:Mirror-symmetry-of-K3}}

In \cite{Dolgavhev}, Dolgachev defined mirror symmetry of marked
$M$-polarized K3 surfaces. To summarize his construction/definition,
let us fix a primitive embedding $i:M\hookrightarrow L_{K3}$ of a
lattice $M$ of signature $(1,t)$ and \textit{assume} that the orthogonal
lattice $M^{\perp}$ has a decomposition $M^{\perp}=\check{M}\oplus U$,
i.e. \[
M\oplus M^{\perp}=M\oplus U\oplus\check{M}\subset L_{K3},\]
where $U$ is the hyperbolic lattice. Since the signature of $\check{M}$
is $(1,\check{t})=(1,19-t)$, the primitive embedding $i:\check{M}\hookrightarrow L_{K3}$
naturally introduces marked $\check{M}$-polarized K3 surfaces. Marked
$\check{M}$-polarized K3 surfaces are classified by $\sD(\check{M}^{\perp})$,
while marked $M$-polarized K3 surfaces are classified by $\sD(M^{\perp})$. 

For a general marked $M$-polarized K3 surface $(X,\phi)$ and a general
marked $\check{M}$-polarized K3 surface $(\check{X},\check{\phi})$,
we have the following isomorphisms:\begin{equation}
NS_{X}\simeq M,\; T_{X}\simeq U\oplus\check{M};\;\; NS_{\check{X}}\simeq\check{M},\; T_{\check{X}}\simeq U\oplus M,\;\label{eq:lattice-mirror-symmetry}\end{equation}
and observe the exchange of the algebraic and transcendental cycles
(up to the factor $U$). This exchange is the hallmark of the mirror
symmetry of K3 surfaces. Also we see the so-called {}``mirror map''
\cite{L-Y} for K3 surfaces in the following isomorphisms (see e.g.
\cite[Prop.1]{GrossWilson}):\begin{equation}
V(M)\simeq\sD(\check{M}^{\perp}),\; V(\check{M})\simeq\sD(M^{\perp}),\label{eq:mirror-map-V-D}\end{equation}
where $V(M)$ is the tube domain defined by $V(M)=\left\{ B+i\kappa\in M\otimes\mC\mid(\kappa,\kappa)>0\right\} $
and similar definition for $V(\check{M})$. $V(M)$ and $V(\check{M})$
are regarded as the tube domains for the complexified K\"ahler moduli
spaces of $(X,\phi)$ and $(\check{X},\check{\phi})$, respectively,
and hence (\ref{eq:mirror-map-V-D}) describes the mirror isomorphisms
between the complex structure and (complexified) K\"ahler moduli
spaces. There are several different ways to define mirror symmetry
of K3 surfaces \cite{Ba1,SYZ}. See references \cite{GrossWilson,Bel},
for example, for the relations among them. 

\vskip0.5cm

\subsection{Homological mirror symmetry }

There is a slight asymmetry in the exchange of the Picard lattices
and the transcendental lattices in (\ref{eq:lattice-mirror-symmetry}).
This can be remedied by considering the (numerical) Grothendieck group
together with a (non-degenerate) pairing $([\sE],[\mathcal{F}])=-\chi(\sE,\mathcal{F})$
where $\chi(\sE,\mathcal{F})=\sum(-1)^{i}\dim\Ext_{\sO_{X}}^{i}(\sE,\mathcal{F})$.
Namely, we understand the isomorphisms (\ref{eq:lattice-mirror-symmetry})
as \begin{equation}
T_{\check{X}}\simeq U\oplus M\simeq(K(X),(*,**)),\;\; T_{X}\simeq U\oplus\check{M}\simeq(K(\check{X}),(*,**)).\label{eq:Hom-Mirror-K}\end{equation}
Note that the form $(*,**)$ is symmetric due to the Serre duality
for K3 surfaces. Also we note that $K(X)$ contains $[\sO_{x}]$ and
$-[\sI_{x}]$, in addition to $[\sO_{D}]=[\sO_{X}]-[\sO_{X}(-D)]$
for $D\in Pic(X)$ (likewise for $K(\check{X})$). By Riemann-Roch
theorem, it is easy to see that $[\sO_{x}]$ and $-[\sI_{x}]$ explain
the additional factor $U$ in $U\oplus M$. The above isomorphisms
are consequences of the homological mirror symmetry due to Kontsevich
\cite{Ko}, but we refrain from going into the details about this
in this article. 

\vskip0.5cm

\subsection{\label{sub:FM(X)-and-mirror}FM(X) and mirror symmetry }

Let us consider the case $M_{n}=\langle2n\rangle$, i.e., $(\mZ h,h^{2}=2n)$
in detail. We first note that we can embed the lattice $M_{n}$ into
the hyperpolic lattice $U$ by making a primitive embedding $\langle2n\rangle\oplus\langle-2n\rangle\subset U$.
Then, since primitive embedding $i:M_{n}\hookrightarrow L_{K3}$ is
unique up to isomorphism due to Theorem \ref{thm:Nikulin2}, we may
assume that the embedding $i:M_{n}\hookrightarrow L_{K3}$ is given
by \[
M_{n}\oplus M_{n}^{\perp}=\langle2n\rangle\oplus(U\oplus\check{M}_{n})\subset L_{K3}\]
where $M_{n}^{\perp}:=(i(M_{n}))^{\perp}=\langle-2n\rangle\oplus U^{\oplus2}\oplus E_{8}(-1)^{\oplus2}$
is the orthogonal lattice and $\check{M}_{n}:=\langle-2n\rangle\oplus U\oplus E_{8}(-1)^{\oplus2}$. 

Let $(X,\phi)$ be a marked $M_{n}$-polarized K3 surface, and $h$
be its polarization $(h^{2}=2n)$. Then we have $|FM(X)|=2^{p(n)-1}$
from the counting formula. On the other hand, for a general marked
$\check{M}_{n}$-polarized K3 surface $(\check{X},\check{\phi})$,
we have $|FM(\check{X})|=1$ since $\rho(\check{X})=19$ and $A_{\check{M}_{n}}\simeq\mZ/2n\mZ$
(see \cite[Cor.2.6]{HLOYcounting} and also \cite[Proposition 6.2]{Mukai1}). 

It has been argued in \cite{HLOYauteq} that the number $|FM(X)|=2^{p(n)-1}$
has a nice interpretation from the monodromy group which acts on the
period domain $\sD(\check{M}_{n}^{\perp})^{+}$ for the mirror marked
polarized $\check{M}_{n}$-polarized K3 surfaces. Roughly speaking,
the number $\vert FM(X)\vert$ appears as the covering degree of the
map from $\sD(\check{M}_{n}^{\perp})^{+}/O^{+}(\check{M}_{n}^{\perp})^{*}$
to the corresponding quotient for the isomorphism classes of $\check{M}_{n}$-polarizable
K3 surfaces. 

We have determined, in Subsection \ref{sub:M-polarized-K3-surfaces},
the monodromy group of the marked $\check{M}_{n}$-polarized K3 surfaces
by $O^{+}(\check{M}_{n}^{\perp})^{*}=O^{+}(\check{M}_{n}^{\perp})\cap O(\check{M}_{n}^{\perp})^{*}$.
As for the $\check{M}_{n}$-polarizable K3 surfaces, the corresponding
group becomes larger. 
\begin{lem}[{\cite[Lem.1.14, Def.1.15]{HLOYauteq}}]
 \label{lem:-The-monodromy}The monodromy group of the $\check{M}_{n}$-polarizable
K3 surfaces is given by $O^{+}(\check{M}_{n}^{\perp})$/$\left\{ \pm id\right\} $. 
\end{lem}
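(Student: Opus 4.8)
The plan is to compare the two ``monodromy groups'' in play. For the marked $\check M_n$-polarized K3 surfaces we already have, by Subsection \ref{sub:M-polarized-K3-surfaces}, that the period domain $\sD(\check M_n^\perp)^+$ is acted on by $O^+(\check M_n^\perp)^*$. For the $\check M_n$-polarizable K3 surfaces the isomorphism relation \eqref{eq:iso-M-polarizable} permits an extra twist by an isometry $g:\check M_n\simarrow\check M_n$, so the classifying group should be enlarged by the image of all such admissible twists. The first step is therefore to analyze $O(\check M_n)$: since $\check M_n=\langle-2n\rangle\oplus U\oplus E_8(-1)^{\oplus2}$ contains a hyperbolic summand $U$, and the discriminant form $A_{\check M_n}\simeq\mZ/2n\mZ$ agrees (up to sign) with $A_{\check M_n^\perp}\simeq A_{M_n}\simeq\mZ/2n\mZ$, every isometry $g$ of $\check M_n$ acts on $A_{\check M_n}$ by some automorphism, and by the gluing description of $O(L_{K3})$ relative to the orthogonal pair $(\check M_n,\check M_n^\perp)$, such a $g$ extends to $\varphi\in O(L_{K3})$ precisely when its action on the discriminant is matched by a corresponding action of an isometry of $\check M_n^\perp$. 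So I would first record that $O(\check M_n)\to O(A_{\check M_n})$ is surjective (using the $U$-summand), and that the same holds for $\check M_n^\perp$.

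Next I would translate the enlarged isomorphism relation into a group acting on $\sD(\check M_n^\perp)^+$. Starting from $\varphi\in\Gamma(M)$-type data but now allowing $g\neq\id$, the induced isometry of $\check M_n^\perp$ ranges over the preimage under $O(\check M_n^\perp)\to O(A_{\check M_n^\perp})$ of those discriminant automorphisms that lift to $\check M_n$; by the surjectivity just noted, that preimage is \emph{all} of $O(\check M_n^\perp)$ (intersected with the orientation condition, giving $O^+(\check M_n^\perp)$). Here is the key numerical input: $A_{\check M_n}\simeq\mZ/2n\mZ$ has automorphism group $(\mZ/2n\mZ)^\times$, and $\mathrm{id}$ on $\check M_n$ versus $-\mathrm{id}$ on $\check M_n$ already realize the two ``obvious'' lifts; the point is that after passing to $\sD(\check M_n^\perp)^+/(\text{group})$ the allowed $g$'s contribute nothing new beyond what $O^+(\check M_n^\perp)$ already contributes, because $\check M_n^\perp$ itself surjects onto $O(A_{\check M_n^\perp})$. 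This shows the monodromy group of $\check M_n$-polarizable K3 surfaces is $O^+(\check M_n^\perp)$ rather than the smaller $O^+(\check M_n^\perp)^*$.

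Finally I would account for the quotient by $\{\pm\id\}$. The isometry $-\id\in O^+(\check M_n^\perp)$ acts trivially on the projective period domain $\sD(\check M_n^\perp)^+\subset\mP(\check M_n^\perp\otimes\mC)$ since it scales $\omega$ by $-1$; hence the effective action is that of $O^+(\check M_n^\perp)/\{\pm\id\}$. Assembling the three steps gives the claimed description. I expect the main obstacle to be the second step: carefully verifying that every discriminant automorphism lifting to $\check M_n$ also lifts to $\check M_n^\perp$ (so that no compatibility condition is lost), which is where Nikulin's gluing/uniqueness results (Theorems \ref{thm:Nikulin1}, \ref{thm:Nikulin2} in the Appendix) and the presence of the $U$ and $E_8(-1)$ summands must be used with care; the orientation bookkeeping (tracking $O^+$ versus $O$) is a secondary but genuine technical nuisance. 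Since this lemma is quoted verbatim from \cite[Lem.1.14, Def.1.15]{HLOYauteq}, the full argument can be found there, and I would cite it for the details.
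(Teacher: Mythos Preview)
Your proposal is correct and reaches the same conclusion as the paper's sketch, but the route differs slightly. You work from the $\check M_n$ side: you invoke surjectivity of $O(\check M_n)\to O(A_{\check M_n})$ (Theorem~\ref{thm:Nikulin1}, using the $U$-summand) together with the discriminant-gluing description of $O(L_{K3})$ to conclude that every $h\in O(\check M_n^\perp)$ can be matched by some $g\in O(\check M_n)$ and hence extends to $\varphi\in O(L_{K3})$. The paper instead works directly from the $\check M_n^\perp$ side: it observes that $\check M_n^\perp=U\oplus M_n$ has a unique primitive embedding into $L_{K3}$ by Theorem~\ref{thm:Nikulin2}, so any $h\in O(\check M_n^\perp)$ extends to $\varphi\in O(L_{K3})$ in one stroke, and the restriction of $\varphi$ to the orthogonal complement furnishes the required $g$. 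The paper then invokes surjectivity of the period map (which you leave implicit) to realize $\varphi$ by actual K3 surfaces. Both arguments then quotient by $\{\pm\id\}$ for the same reason you give. Your approach unpacks the lattice theory that sits inside Theorem~\ref{thm:Nikulin2}; the paper's is a line shorter because it cites that theorem as a black box. Note that in your argument the surjectivity $O(\check M_n^\perp)\to O(A_{\check M_n^\perp})$ is not actually needed---only the surjectivity on the $\check M_n$ side matters for matching a given $h$.
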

\noindent By definition, for $\check{M}_{n}$-polarizable K3 surfaces
$\check{X},\check{X}',$ we have markings $\phi,\phi'$ such that
$(\check{X},\check{\phi})$ and $(\check{X'},\check{\phi'})$ are
marked $\check{M}_{n}$-polarized K3 surfaces. Then, the above lemma
can be deduced from the following diagram which describes the isomorphism
of $\check{M}_{n}$-polarizable K3 surfaces: \begin{equation}
\begin{matrix}\xymatrix{H^{2}(\check{X},\mZ)\ar_{\sim}^{\check{\phi}}[r] & L_{K3}\ar_{\varphi}^{\wr}[d] & \check{M}_{n}\ar@{_{(}->}_{i}[l]\ar_{g}^{\wr}[d]\\
H^{2}(\check{X'},\mZ)\ar_{\sim}^{\check{\phi'}}[r] & L_{K3} & \check{M}_{n}\ar@{_{(}->}_{i}[l]}
\end{matrix}\label{eq:xxxg}\end{equation}
Here we sketch the proof of the lemma: Suppose an element $h\in O(\check{M}_{n}^{\perp})$
is given. Since primitive embedding $\check{M}_{n}^{\perp}=U\oplus M_{n}\hookrightarrow L_{K3}$
is unique by Theorem \ref{thm:Nikulin2}, $h$ extends to an isomorphism
$\varphi:L_{K3}\to L_{K3}$ and also determines an isomorphism $g:\check{M}_{n}\to\check{M}_{n}$
on the orthogonal complement of $\check{M}_{n}^{\perp}$. By the surjectivity
of the period map, we see that $\varphi$ extends to an isomorphism
of $\check{M}_{n}$-polarizable K3 surfaces. From the relation $\sD(\check{M}_{n}^{\perp})/O(\check{M}_{n}^{\perp})\simeq\sD(\check{M}_{n}^{\perp})^{+}/O^{+}(\check{M}_{n}^{\perp})$
and the fact that $\left\{ \pm id\right\} $ has a trivial action
on $\sD(\check{M}_{n}^{\perp})^{+}$, the group $O^{+}(\check{M}_{n}^{\perp})/\left\{ \pm id\right\} $
identifies the $\check{M}_{n}$-polarizable K3 surfaces which are
isomorphic to each other. In this sense, we can call the quotient
group $O^{+}(\check{M}_{n}^{\perp})/\left\{ \pm id\right\} $ the
monodromy group of $\check{M}_{n}$-polarizable K3 surfaces. \hfill $\square$ 

Now we can see the FM number $|FM(X)|=2^{p(n)-1}$ as the covering
degree of the map \[
\sD(\check{M}_{n}^{\perp})^{+}/O^{+}(\check{M}_{n}^{\perp})^{*}\to\sD(\check{M}_{n}^{\perp})^{+}/O^{+}(\check{M}_{n}^{\perp}),\]
which we evaluate for $n\not=1$ (see \cite[Theorem 1.18]{HLOYauteq}
for details) as \[
[O^{+}(\check{M}_{n}^{\perp})/\left\{ \pm id\right\} :O^{+}(\check{M}_{n}^{\perp})^{*}]=2^{p(n)-1},\]
where we recall the fact that $\left\{ \pm id\right\} $ acts trivially
on the domain. The covering degree can be explained by the nontrivial
actions of $g$ in the diagram (\ref{eq:xxxg}), which implies that
$(\check{X},\check{\phi})$ and $(\check{X'},\check{\phi'})$ are
related by Hodge isometries that have non-trivial actions on the Picard
lattice. The monodromy group $O^{+}(\check{M}_{n}^{\perp})^{*}$ comes
from the Dehn twists which preserve (the cohomology classes of) generic
symplectic forms (K\"ahler forms) $\kappa_{\check{X}}$ (\cite[Thm.1.9]{HLOYauteq}).
Then the covering group represents isomorphisms of K3 surfaces which
do not preserve the (cohomology classes of) generic symplectic forms
$\kappa_{\check{X}}$. This is the mirror symmetry interpretation
of $FM(X)$ made in {[}ibid{]}, where the relation of the Dehn twits
to $Auteq\, D^{b}(X)$ has been discussed in more detail. 

\vskip0.5cm

\subsection{An example due to Mukai\label{sub:An-example-due-Mukai}}

Here we consider an explicit construction of the $M_{6}=\langle12\rangle$-polarized
K3 surfaces due to Mukai \cite{Mukai4}. We see general properties
discussed in the last subsections for this specific example, and make
an observation that will be shared with the examples of Calabi-Yau
threefolds in the subsequent sections. Note that $FM(X)=\left\{ X,Y\right\} $
with $Y\simeq\sM_{X}(2,h,3)$ for general $M_{6}$-polarized K3 surfaces
$X$.

\subsubsection{\label{sub:OG(5,10)-def}{\itshape\bfseries  Linear sections of
$\mathbf{OG(5,10)}$}}

Let us consider orthogonal Grassmannian $\mathrm{OG}(5,10)$ which
parametrizes maximal isotropic subspaces of $\mC^{10}$ with a fixed
non-degenerate quadratic form. $\mathrm{OG}(5,10)$ has two connected
components $\mathrm{OG}^{\pm}(5,10)$, which are isomorphic to each
other. $\mathrm{OG}^{+}(5,10)\simeq\mathrm{OG}^{-}(5,10)$ is called
spinor variety $\mathbb{S}_{5}$ (of dimension $10$), and can be
embedded into the projective space $\mP(S_{16})$ of the spin representation
of $SO(10)$. $\mathrm{OG}^{+}(5,10)$ is the Hermitian symmetric
space $\mathrm{SO}(10,\mathbb{R})/\mathrm{U}(5)$, and its Picard
group is generated by the ample class of the above spinor embedding.
The projective dual variety (discriminantal variety) $\mathbb{S}_{5}^{*}$
in the dual projective space $\mP(S_{16}^{*})$ is known to be isomorphic
to $\mathbb{S}_{5}$. Mukai \cite{Mukai4} constructed a smooth K3
surface of degree $12$ (with Picard group $\mZ h$) by considering
a complete linear section $X=\mathbb{S}_{5}\cap H_{1}\cap...\cap H_{8}$
and observed that the moduli space of stable vector bundles $\mathcal{M}_{X}(2,h,3)$
over $X$ is isomorphic to a K3 surface $Y$, which is defined in
the dual variety $\mathbb{S}_{5}^{*}$ in the following way: Let $L_{8}$
be a general 8-dimensional linear subspace in $S_{16}^{*}$ and by
$L_{8}^{\perp}$ its orthogonal space in $S_{16}$. Then the K3 surfaces
$X$ and $Y$ above are given by the {}``orthogonal linear sections
to each other'',\[
\begin{aligned}X=\mathbb{S}_{5}\cap\mP(L_{8}^{\perp})\subset\mP(S_{16}), & \;\;\; Y=\mathbb{S}_{5}^{*}\cap\mP(L_{8})\subset\mP(S_{16}^{*}).\end{aligned}
\]

Due to the isomorphism $Y\simeq\mathcal{M}_{X}(2,h,3)$ (see \cite{IM}
for a proof), we can write the equivalence $\Phi_{\sP}:D^{b}(Y)\simeq D^{b}(X)$
using the universal bundle $\sP$ over $X\times Y$ as the kernel
of the Fourier-Mukai transform $\Phi_{\sP}(-)=R\pi_{X*}(L\pi_{Y}^{*}(-)\otimes\sP$).

\subsubsection{{\itshape\bfseries Mirror family of $M_{6}$-polarized K3 surfaces} }

Let us consider marked $\check{M}_{6}$-polarized K3 surfaces, which
are the mirror K3 surfaces of $X$ as defined in Subsection \ref{sub:Mirror-symmetry-of-K3}.
Their isomorphism classes are classified by the points on the quotient
of the period domain $\sD(\check{M}_{6}^{\perp})$ by the group $O(\check{M}_{6}^{\perp})^{*}$.
Noting that $\sD(\check{M}_{6}^{\perp})\simeq V(M_{6})$ consists
of two copies of the upper half pane $\mathbb{H}_{+}$ and an isomorphism
$O^{+}(\check{M}_{6}^{\perp})^{*}\simeq\Gamma_{0}(6)_{+6}$ (see \cite[Thm.(7.1), Rem.(7.2)]{Dolgavhev}),
we have \[
\sD(\check{M}_{6}^{\perp})^{+}/O^{+}(\check{M}_{6}^{\perp})^{*}\simeq\mathbb{H}_{+}/\Gamma_{0}(6)_{+6},\]
see Fig.1. On the other hand, we have an isomorphism $O^{+}(\check{M}_{6}^{\perp})/\left\{ \pm id\right\} \simeq\Gamma_{0}(6)_{+}$
for the monodromy group of the $\check{M}_{6}^{\perp}$-polarizable
K3 surfaces {[}ibid{]} (see also \cite[Thm.5.5]{HLOYauteq}). For
these two groups, we have the following presentations: \begin{equation}
\begin{matrix}\Gamma_{0}(6)_{+}=\langle\left(\begin{matrix}1 & 1\\
0 & 1\end{matrix}\right),\left(\begin{smallmatrix}0 & -\frac{1}{\sqrt{6}}\\
\sqrt{6} & 0\end{smallmatrix}\right),\left(\begin{smallmatrix}\sqrt{3} & \frac{1}{\sqrt{3}}\\
2\sqrt{3} & \sqrt{3}\end{smallmatrix}\right)\rangle=:\langle T_{0},S_{1},S_{2}S_{1}\rangle, & \quad\\
\Gamma_{0}(6)_{+6}=\langle\left(\begin{matrix}1 & 1\\
0 & 1\end{matrix}\right),\left(\begin{smallmatrix}0 & -\frac{1}{\sqrt{6}}\\
\sqrt{6} & 0\end{smallmatrix}\right),\left(\begin{matrix}5 & 2\\
12 & 5\end{matrix}\right)\rangle=:\langle T_{0},S_{1},(S_{2}S_{1})^{2}\rangle,\end{matrix}\label{eq:Gamma6}\end{equation}
with $S_{2}=\left(\begin{smallmatrix}-\sqrt{2} & \frac{1}{\sqrt{2}}\\
-3\sqrt{2} & \sqrt{2}\end{smallmatrix}\right)$. Explicit relations of $\Gamma_{0}(6)_{+}$ and $\Gamma_{0}(6)_{+6}$
to $O^{+}(\check{M}_{6}^{\perp})/\left\{ \pm\id\right\} $ and $O^{+}(\check{M}_{6}^{\perp})^{*}$,
respectively, are given by fixing an isomorphism $\check{M}_{6}^{\perp}\simeq(\mZ^{\oplus3},\Sigma_{6})$
with $\Sigma_{6}=\left(\begin{smallmatrix}0 & 0 & 1\\
0 & 12 & 0\\
1 & 0 & 0\end{smallmatrix}\right)$ and an anti-homomorphism $R:PSL(2,\mathbb{R})\to SO(2,1,\mathbb{R})$,\[
R:\left(\begin{smallmatrix}a & b\\
c & d\end{smallmatrix}\right)\mapsto\left(\begin{smallmatrix}a^{2} & -2ac & -\frac{c^{2}}{6}\\
-ab & ad+bc & \frac{cd}{6}\\
-6b^{2} & 12bd & d^{2}\end{smallmatrix}\right)\in SO(2,1,\mathbb{R}),\]
where $SO(2,1,\mathbb{R})=\left\{ g\in\mathrm{Mat(}3,\mathbb{R})\mid\,^{t}g\Sigma_{6}g=\Sigma_{6}\right\} $.
Here, we naturally consider $O^{+}(\check{M}_{6}^{\perp}),O^{+}(\check{M}_{6}^{\perp})^{*}$
in $SO(2,1,\mathbb{R})$ (and the image of $O^{+}(\check{M}_{6}^{\perp})\to SO(2,1,\mathbb{R}),$
$g\mapsto(\det g)g$ for $O^{+}(\check{M}_{6}^{\perp})/\left\{ \pm\id\right\} $).
The group index $[\Gamma_{0}(6)_{+}:\Gamma_{0}(6)_{+6}]=2$ is obvious
from (\ref{eq:Gamma6}) and this is the mirror interpretation of $|FM(X)|=2$
in this case.

We can actually construct a family of (marked) $\check{M}_{6}$-polarized
K3 surfaces $\check{\sX}=\left\{ \check{X}_{x}\right\} _{x\in\mP^{1}}$
parametrized by $\mP^{1}$ \cite{L-Y,PetersStienstra}, whose Picard-Fuchs
differential equation for period integrals has the following form
with $\theta_{x}=x\frac{d\;}{dx}$: \begin{equation}
\left\{ \theta_{x}^{3}-x(2\theta_{x}+1)(17\theta_{x}^{2}+17\theta_{x}+5)+x^{2}(\theta_{x}+1)^{3}\right\} \omega(x)=0,\label{eq:PF-K3}\end{equation}
where $\omega(x)=\int_{\gamma}\Omega(\check{X}_{x})$ is the period
integrals of nowhere vanishing holomorphic 2 form $\omega_{\check{X}_{x}}=\Omega(\check{X}_{x})$
with respect to a transcendental cycle $\gamma\in H_{2}(\check{X}_{x_{0}},\mZ$).
In \cite{HLOYauteq}, the corresponding $\mP^{1}$ family of $\check{M}_{6}$-polarizable
K3 surfaces has been studied in detail. \def\figIEPS{\resizebox{8cm}{!}{\includegraphics{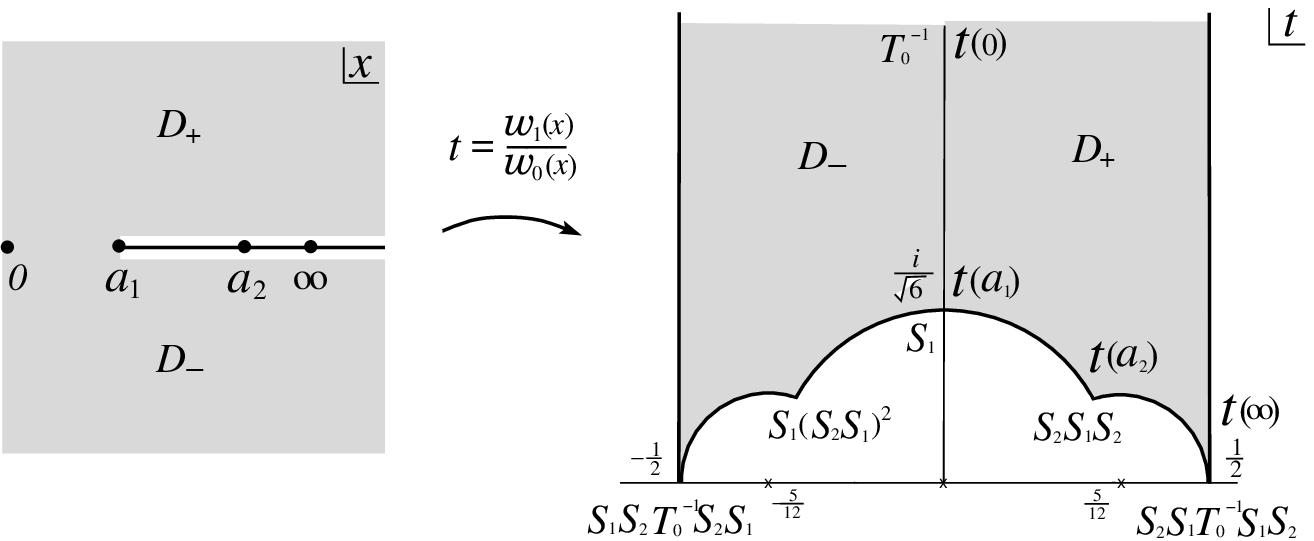}}} 
\def\FigI{
\begin{xy} (0,0)*{\figIEPS},
\end{xy}}

\begin{figure}
\begin{centering}
\includegraphics[scale=0.7]{fig1Domain}
\par\end{centering}

\begin{fcaption}\item{Fig.2.1.}\label{Fig:fig1} \textbf{Fundamental
Domain of $\Gamma_{0}(6)_{+6}$.} The image of the mirror map $t=t(x)$
\cite{L-Y} is depicted as the fundamental domain of $\Gamma_{0}(6)_{+6}$.
The images of $0,a_{1},a_{2},\infty$ have nontrivial isotropy groups,
which explain the monodromy around each point. The generators of the
isotropy groups are shown at each point. \end{fcaption}
\end{figure}

\subsubsection{\label{sub:Monodromy-calc-K3}{\itshape\bfseries Monodromy calculations
}}

As we see in Fig.1, there are two cusps in $\mathbb{H}_{+}/\Gamma_{0}(6)_{+6}$.
By Proposition \ref{pro:Monodromy-K3-deg12} below, we see that these
two are identified by the action of an element $\Gamma_{0}(6)_{+}\setminus\Gamma_{0}(6)_{+6}$.
In fact, these cusps correspond to the maximally unipotent monodromy
(MUM) points at $x=0$ and $x=\infty$ of (\ref{eq:PF-K3}), which
we read in the following Riemann's $\sP$ scheme:\[ \left\{ \small\begin{array}{cccc} 0 & a_{1} & a_{2} & \infty \\ 
\hline  
0 & 0 & 0 & 1\\ 0 & 1 & 1 & 1\\ 0 & \frac{1}{2} & \frac{1}{2} & 1
\end{array}  \right\}  \]with $a_{1}:=17-12\sqrt{2},a_{2}:=17+12\sqrt{2}$ (see \cite{Morrison}
for a general definition of MUM points). The relation of these cusps
becomes explicit by constructing an integral basis of the solutions
of the Picard-Fuchs equation (\ref{eq:PF-K3}) which is compatible
with the mirror isomorphism $T_{\check{X}}\simeq(K(X),-\chi(*,**))$
in (\ref{eq:Hom-Mirror-K}). Since the construction is general for
other K3 surfaces \cite{HoIIa} and also parallel to that for Calabi-Yau
threefolds (see \cite[Secti.2]{HoTa1}), we briefly sketch it here.
Firstly, we set up the local solutions about the MUM point $x=0$
of the form $w_{0}(x)=1+O(x)$ and\[
\begin{matrix}\begin{aligned}w_{1}(x)= & w_{0}(x)\log(x)+w_{1}^{reg}(x),\\
w_{2}(x)= & -w_{0}(x)(\log x)^{2}+2w_{1}(x)\log x+w_{2}^{reg}(x)\end{aligned}
\end{matrix}\]
 requiring the forms $w_{1}^{reg}(x)=c_{1}x+O(x)$ and $w_{2}^{reg}(x)=c_{2}x^{2}+O(x^{2}).$
We make similar solutions $\tilde{w}_{k}(z)$ $(z=\frac{1}{x})$ around
$z=0$ requiring $\tilde{w}_{0}(z)=z(1+O(z)),$$\tilde{w}{}_{1}^{reg}(z)=z(\tilde{c}_{1}z+O(z))$
and $\tilde{w}{}_{2}^{reg}(z)=z(\tilde{c}_{2}z^{2}+O(z^{2}))$. Using
these, we set the following ansatz for the integral basis:\begin{equation}
\Pi(x)=N_{x}\left(\begin{smallmatrix}1 & 0 & 0\\
0 & 1 & 0\\
0 & 0 & -\frac{\mathrm{deg}X}{2}\end{smallmatrix}\right)\left(\begin{smallmatrix}n_{0}w_{0}\\
n_{1}w_{1}\\
n_{2}w_{2}\end{smallmatrix}\right),\;\;\tilde{\Pi}(z)=N_{z}\left(\begin{smallmatrix}1 & 0 & 0\\
0 & 1 & 0\\
0 & 0 & -\frac{\mathrm{deg}X}{2}\end{smallmatrix}\right)\left(\begin{smallmatrix}n_{0}\tilde{w}_{0}\\
n_{1}\tilde{w}_{1}\\
n_{2}\tilde{w}_{2}\end{smallmatrix}\right),\label{eq:PiX-PiZ-K3}\end{equation}
where $N_{x}$ and $N_{z}$ are unknown constants and $n_{k}:=\frac{1}{(2\pi i)^{k}}$.
These forms are expected in general to give an integral basis which
represents the mirror isomorphism $T_{\check{X}}\simeq(K(X),-\chi(*,**))$
with the bilinear form $\Sigma_{n}=\left(\begin{smallmatrix}0 & 0 & 1\\
0 & 2n & 0\\
1 & 0 & 0\end{smallmatrix}\right)$ $(\deg X=2n)$. The constants $N_{x,}N_{z}$ are determined by the
Griffiths tansversalities;\begin{equation}
\begin{matrix}\,^{t}\Pi\Sigma_{n}\Pi=\,^{t}\Pi\Sigma_{n}\frac{d\;}{dx}\Pi=0, & ^{t}\Pi\Sigma_{n}\frac{d^{2}\;}{dx^{2}}\Pi=\frac{-1}{(2\pi i)^{2}}C_{xx},\;\;\;\;\\
\,^{t}\tilde{\Pi}\Sigma_{n}\tilde{\Pi}=\,^{t}\tilde{\Pi}\Sigma_{n}\frac{d\;}{dz}\tilde{\Pi}=0,\, & ^{t}\tilde{\Pi}\Sigma_{n}\frac{d^{2}\;}{dz^{2}}\tilde{\Pi}=\frac{-1}{(2\pi i)^{2}}C_{xx}\left(\frac{dx}{dz}\right)^{2},\end{matrix}\qquad\qquad\label{eq:Griffiths-transversality}\end{equation}
where $C_{xx}=12/((1-34x+x^{2})x^{2})$ is the Griffiths-Yukawa coupling
\cite{CdOGP} normalized by $\deg X=12$. The following results are
parallel to those in \cite[Prop.2.10]{HoTa1}:
\begin{prop}
\label{pro:Monodromy-K3-deg12}{\rm (1)} The ansatz (\ref{eq:PiX-PiZ-K3})
with $N_{x}=N_{z}=1$ satisfies (\ref{eq:Griffiths-transversality}). 

\begin{myitem} \item[{\rm (2)}]  The two local solutions are related
under an analytic continuation along a path through the upper half
plane by $\Pi(x)=U_{xz}\tilde{\Pi}(z)$ with $U_{xz}=\left(\begin{smallmatrix}3 & 12 & -2\\
1 & 5 & -1\\
-2 & -12 & 3\end{smallmatrix}\right)$. 

\item[{\rm (3)}] Monodromy matrices $M_{c}$ of $\Pi(x)$ $(\tilde{M}_{c}$
of $\tilde{\Pi}(z))$ around each singular point $x=c$ of (\ref{eq:PF-K3})
are given by

\def\m{\text{{\bf -}}} 
$\qquad\qquad$
{\tablinesep=0.5pt \tabcolsep=2pt  
\begin{tabular}{|c|cccc|}  
\hline  
&{\small $x=0$} & {\small $a_1$} & {\small $a_2$} & {\small $\infty$} \\  
\hline 
{\small $M_c$} & 
$\left(\begin{smallmatrix}
1 & 0 & 0\\
1 & 1 & 0\\ 
\m6 & \m12 & 1
\end{smallmatrix}\right)$
& 
$\left(\begin{smallmatrix}
0 & 0 & 1\\ 
0 & 1 & 0\\ 
1 & 0 & 0\end{smallmatrix}\right)$ 
& 
$\left(\begin{smallmatrix} 
\m24 & 120 & 25\\ 
\m10 & 49 & 10\\ 
25 & \m120 & \m24
\end{smallmatrix}\right)$
& 
$\left(\begin{smallmatrix} 
49 & \m168 & \m24\\ 
21 & \m71 & -10\\ 
\m54 & 180 & 25
\end{smallmatrix}\right)$
\\
\hline
{\small ${\tilde M}_c$} & 
$\left(\begin{smallmatrix}
25 & 120 & \m24\\ 
\m15 & \m71 & 14\\ 
\m54 & \m252 & 49 
\end{smallmatrix}\right)$
& 
$\left(\begin{smallmatrix}
\m24 & \m120 & 25\\ 
10 & 49 & \m10\\ 
25 & 120 & \m24 
\end{smallmatrix}\right)$
& 
$\left(\begin{smallmatrix}
0 & 0 & 1\\
0 & 1 & 0\\
1 & 0 & 0
\end{smallmatrix}\right)$
& 
$\left(\begin{smallmatrix}
1 & 0 & 0\\ 
1 & 1 & 0\\ 
\m6 & \m12 & 1
\end{smallmatrix}\right)$
\\
\hline
\end{tabular}     }

\noindent and satisfy $M_{0}M_{a_{1}}M_{a_{2}}M_{\infty}=\id$ and
$\tilde{M}_{c}=U_{xz}^{-1}M_{c}U_{xz}$ with $U_{xz}^{-1}=\left(\begin{smallmatrix}3 & -12 & -2\\
-1 & 5 & 1\\
-2 & 12 & 3\end{smallmatrix}\right)$. 

\item[{\rm (4)}] $M_{c}$'s and $U_{xz}$ are given in terms of generators
of $\Gamma_{0}(6)_{+}$ in (\ref{eq:Gamma6}) by \[
M_{0}=R(T_{0}^{-1}),\; M_{a_{1}}=-R(S_{1}),\; M_{a_{2}}=-R(S_{2}S_{1}S_{2}),\; U_{xz}=R(S_{1}S_{2}).\]
In particular $M_{0,}M_{a_{1}},M_{a_{2}}\in O(\check{M}_{6}^{\perp})^{*}$
and $U_{xz}\in O(\check{M}_{6}^{\perp})\setminus O(\check{M}_{6}^{\perp})^{*}$
with the symmetric form $\Sigma_{6}$.\end{myitem}
\end{prop}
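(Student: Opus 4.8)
The plan is to derive all four parts from two inputs: the self-dual (Calabi--Yau type) structure of the third-order operator in~(\ref{eq:PF-K3}), which makes Griffiths transversality automatic up to normalization, and one genuine analytic-continuation computation producing the connection matrix $U_{xz}$; the rest is finite linear algebra organized by $\Gamma_{0}(6)_{+}$. For part~(1), the operator $L_{x}=\theta_{x}^{3}-x(2\theta_{x}+1)(17\theta_{x}^{2}+17\theta_{x}+5)+x^{2}(\theta_{x}+1)^{3}$ is self-dual (of Calabi--Yau type), as one checks from the adjointness relation among its coefficients, so its $3$-dimensional solution space carries a flat symmetric pairing unique up to scale. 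First I would form the local solutions $w_{0},w_{1},w_{2}$ at the MUM point $x=0$ as specified and compute the power series of $\,^{t}\Pi\Sigma_{n}\Pi$, $\,^{t}\Pi\Sigma_{n}\frac{d}{dx}\Pi$ and $\,^{t}\Pi\Sigma_{n}\frac{d^{2}}{dx^{2}}\Pi$ order by order in $x$: the first two must vanish identically (this checks flatness together with the claim that $\Sigma_{n}$ is the correct Gram matrix in the chosen basis), and the third reproduces $\frac{-1}{(2\pi i)^{2}}C_{xx}$ precisely when $N_{x}=1$; the identical computation at $z=\frac{1}{x}$, where the Yukawa coupling picks up the Jacobian $(dx/dz)^{2}$ as in~(\ref{eq:Griffiths-transversality}), forces $N_{z}=1$. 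This is a routine symbolic verification.

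For part~(2), the genuine step: both $\Pi(x)$ (adapted to $x=0$) and $\tilde{\Pi}(z)$ (adapted to $z=0$, i.e.\ $x=\infty$) are bases of the same local system, so analytic continuation along a path in the upper half plane gives $\Pi(x)=U_{xz}\tilde{\Pi}(z)$ with a constant matrix. I would pin $U_{xz}$ down using the modular description of the family: by~\cite{Dolgavhev} the period quotient here is $\mathbb{H}_{+}/\Gamma_{0}(6)_{+6}$, the points $x=0$ and $x=\infty$ are its two cusps, and the chosen upper-half-plane path realizes the element $S_{1}S_{2}\in\Gamma_{0}(6)_{+}$ joining the two cusp neighborhoods; pushing this through the anti-homomorphism $R\colon PSL(2,\mathbb{R})\to SO(2,1,\mathbb{R})$ of the text gives $U_{xz}=R(S_{1}S_{2})=\left(\begin{smallmatrix}3 & 12 & -2\\1 & 5 & -1\\-2 & -12 & 3\end{smallmatrix}\right)$. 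Independently, and to keep the argument self-contained, one can continue the three power-series solutions numerically through the upper half plane, read off the transition matrix, recognize the rational entries, and confirm $\,^{t}U_{xz}\Sigma_{6}U_{xz}=\Sigma_{6}$ together with the conjugacy $\tilde{M}_{c}=U_{xz}^{-1}M_{c}U_{xz}$ as a consistency check; inverting yields $U_{xz}^{-1}=\left(\begin{smallmatrix}3 & -12 & -2\\-1 & 5 & 1\\-2 & 12 & 3\end{smallmatrix}\right)$.

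For parts~(3) and~(4): the monodromy $M_{0}$ comes from substituting $x\mapsto e^{2\pi i}x$ into $w_{1}=w_{0}\log x+w_{1}^{reg}$ and $w_{2}=-w_{0}(\log x)^{2}+2w_{1}\log x+w_{2}^{reg}$, the factors $n_{k}=(2\pi i)^{-k}$ being exactly what render the result integral, so $M_{0}=\left(\begin{smallmatrix}1 & 0 & 0\\1 & 1 & 0\\-6 & -12 & 1\end{smallmatrix}\right)$ with entries $-\frac{\deg X}{2},-\deg X$; the same computation at $z=0$ gives $\tilde{M}_{\infty}$ equal to this matrix, and $M_{\infty}=U_{xz}\tilde{M}_{\infty}U_{xz}^{-1}$ once the loop-orientation conventions are fixed so that $\tilde M_{c}=U_{xz}^{-1}M_{c}U_{xz}$ holds. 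At $x=a_{1},a_{2}$ the Riemann scheme has exponents $0,1,\frac{1}{2}$, so the local monodromies have eigenvalues $1,1,-1$, i.e.\ are $\Sigma_{6}$-reflections squaring to $\id$; I would get $M_{a_{1}}=\left(\begin{smallmatrix}0 & 0 & 1\\0 & 1 & 0\\1 & 0 & 0\end{smallmatrix}\right)$ from the Frobenius solutions at $x=a_{1}$ and their connection matrix to the $x=0$ basis, then fix $M_{a_{2}}$ from the global relation $M_{0}M_{a_{1}}M_{a_{2}}M_{\infty}=\id$ (with loop ordering given by a chosen set of generators of $\pi_{1}(\mP^{1}\setminus\{0,a_{1},a_{2},\infty\})$ along the real axis), and check that it squares to $\id$, has eigenvalues $1,1,-1$, and matches the table, finally verifying $\tilde{M}_{c}=U_{xz}^{-1}M_{c}U_{xz}$ for every $c$. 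Part~(4) is then a direct check from the explicit $3\times3$ formula for $R$: one verifies $R(T_{0}^{-1})=M_{0}$, $-R(S_{1})=M_{a_{1}}$, $-R(S_{2}S_{1}S_{2})=M_{a_{2}}$, $R(S_{1}S_{2})=U_{xz}$, the signs being forced by the eigenvalue constraint (only one of $\pm R(S_{1})$ lies in the image of $O^{+}(\check{M}_{6}^{\perp})$), and the memberships $M_{0},M_{a_{1}},M_{a_{2}}\in O(\check{M}_{6}^{\perp})^{*}$ versus $U_{xz}\in O(\check{M}_{6}^{\perp})\setminus O(\check{M}_{6}^{\perp})^{*}$ follow from $T_{0},S_{1},S_{2}S_{1}S_{2}\in\Gamma_{0}(6)_{+6}$ and $S_{1}S_{2}\in\Gamma_{0}(6)_{+}\setminus\Gamma_{0}(6)_{+6}$ under the identifications in~(\ref{eq:Gamma6}).

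The main obstacle is part~(2): everything else is bookkeeping once $U_{xz}$ is in hand --- part~(1) is a power-series check, the $M_{0}$ and $\tilde{M}_{\infty}$ parts of~(3) are automatic from the logarithmic structure, and the remainder of~(3) and all of~(4) are finite linear algebra --- whereas $U_{xz}$ requires genuinely reconciling the two independently normalized integral bases at the MUM points $x=0$ and $x=\infty$ and extracting the precise rational matrix, with the correct branch for the upper-half-plane path; keeping the four loop orientations compatible so that $M_{0}M_{a_{1}}M_{a_{2}}M_{\infty}=\id$ holds with exactly that ordering also needs care.
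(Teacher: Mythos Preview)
The paper gives no proof here: this is a survey article, and the sentence immediately preceding the proposition says only that ``the following results are parallel to those in \cite[Prop.2.10]{HoTa1}''. Your proposal is exactly the kind of computation that reference carries out --- set up the Frobenius bases at the two MUM points, fix the normalizations $N_{x},N_{z}$ via the Griffiths--Yukawa coupling, analytically continue (numerically) to obtain $U_{xz}$, read off the local monodromies from the logarithmic structure and the global relation, and then recognize everything through the anti-homomorphism $R$ --- so the approaches coincide.

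One small point on presentation: in part~(2) you first invoke the modular picture to \emph{predict} $U_{xz}=R(S_{1}S_{2})$ and only afterwards mention the numerical continuation as an independent check. Strictly speaking, knowing which element of $\Gamma_{0}(6)_{+}\setminus\Gamma_{0}(6)_{+6}$ the chosen path realizes presupposes the connection matrix (or at least the explicit mirror map $t(x)$ and its branch), so the honest logical order is the one the paper and \cite{HoTa1} take: continue numerically first, recognize the rational matrix, and then record the identification $U_{xz}=R(S_{1}S_{2})$ as the content of part~(4). You already have both ingredients, so this is only a matter of ordering, not a gap.
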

In Fig.~2.1, we see that the modular action of the element $S_{1}S_{2}\in\Gamma_{0}(6)_{+}\setminus\Gamma_{0}(6)_{+6}$
on $\mathbb{H}_{+}$ identifies the image of $D_{+}$ with that of
$D_{-}$ by exchanging the two cusp points.

\subsubsection{\label{sub:FM-functor-K3} {\itshape\bfseries FM functor $\Phi_{\sP}$
and Auteq$\, D^{b}(X)$} }

We can read more from the mirror isomorphism $T_{\check{X}}\simeq(K(X),-\chi(*,**))$
which comes from the monodromy calculations. Let us note that the
integral basis $\Pi(x)=\,^{t}(\Pi_{1},\Pi_{2},\Pi_{3})$ in Proposition
\ref{pro:Monodromy-K3-deg12} implicitly determines the corresponding
basis $(\gamma_{1},\gamma_{2},\gamma_{3})$ of the transcendental
lattice $T_{\check{X}}$. As for the basis of the lattice $(K(X),-\chi(*,**))$,
we may take \[
([\sE_{1}],[\sE_{2}],[\sE_{3}])=([\sO_{x}],[\sO_{h}]+6[\sO_{x}],-[\sI_{x}]),\]
with $0\to\sO_{X}(-h)\to\sO_{X}\to\sO_{h}\to0$, and $\sO_{x}$ the
skyscraper sheaf and $\sI_{x}$ the ideal sheaf of a point $x\in X$.
Note that we choose $[\sE_{2}]$ so that $ch([\sO_{h}]+6[\sO_{x}])=h$,
and hence we can verify $(-\chi([\sE_{i}],[\sE_{j}]))=\Sigma_{6}$
by Riemann-Roch theorem. Identifying these two basis, we have an explicit
isomorphism $T_{\check{X}}\simeq(K(X),-\chi(*,**))$ (this can be
done in general \cite[Sect.2.4]{HoIIa}). 

Actually, the identification of the two basis above is somehow canonical
from the viewpoint of homological mirror symmetry, since we can show
that the topology of $\gamma_{1}$ is isomorphic to the real two torus,
i.e.$\gamma_{1}\thickapprox T^{2}$. The identification of such torus
cycle with $\sO_{x}$ is justified from many aspects of the homological
mirror symmetry $D^{b}Fuk(\check{X})\simeq D^{b}(X)$ (see \cite{Ko,SYZ}).
Note also that $\gamma_{1}$ is isotropic in $T_{\check{X}}$ and
choosing such a vector in $T_{\check{X}}$ determines (almost uniquely,
i.e., up to signs) other bases with the specified intersection numbers
in the entries of $\Sigma_{6}$. Similar construction of the basis
of $\tilde{\Pi}(z)$ (or the cycles $\tilde{\gamma}_{1},\tilde{\gamma}_{2},\tilde{\gamma}_{3}$)
and the identification $\tilde{\gamma}_{1}\thickapprox T^{2}$ with
$\sO_{y}$ are valid for $(K(Y),-\chi(*,**))$. We denote by $h'$
the polarization of $Y$. 

Now recall that the Fourier-Mukai functor $\Phi_{\sP}:D^{b}(Y)\simeq D^{b}(X)$
is defined by the kernel $\sP$, the universal bundle over $X\times Y=X\times\mathcal{M}_{X}(2,h,3)$,
and hence we have $\Phi_{\sP}(\sO_{y})=\mathcal{P}_{y}$ with the
Mukai vector $ch(\mathcal{P}_{y})\sqrt{Todd_{X}}=2+h+3v$ $(v:=ch(\sO_{x}))$.
From this, we have\[
\begin{aligned}ch(\Phi_{\sP}(\sO_{y})) & =ch(\mathcal{\sP}_{y})=2+h+v=3v+h+2(1-v)\\
 & =3ch([\sE_{1}])+ch([\sE_{2}])-2ch([\sE_{3}]),\end{aligned}
\]
and identify this in the 1st column of the connection matrix $U_{xz}=R(S_{1}S_{2})$%
\footnote{The correspondence between the Chern characters $ch(\sP_{y})$=$ch(\Phi_{\sP}(\sO_{y}))$
for $\sP=\sP_{Y\to X}$ $(Y\in FM(X))$ and the elements in $\Gamma_{0}(n)_{+}\setminus\Gamma_{0}(n)_{+n}$
in general has been worked in \cite{Kaw}. %
}(note that we identify $\tilde{\gamma}_{1}$ with $\sO_{y}$). This
leads us to a conjecture that the continuation of the cycles $\tilde{\gamma}_{1},\tilde{\gamma}_{2},\tilde{\gamma}_{3}$
to $\gamma_{1},\gamma_{2},\gamma_{3}$ corresponds to the Fourier-Mukai
functor $\Phi_{\sP}:D^{b}(Y)\simeq D^{b}(X)$. Note that the analytic
continuation of $\Pi(x)$ connects cycles in the fibers around $x=0$
and those around $x=\infty$, but actually it comes from a Dehn twist
of $\check{X}$ because the local family around $x=0$ and $x=\infty$
are isomorphic as the family of $\check{M}_{6}$-polarizable K3 surfaces.
Dehn twists around $x=0,a_{1},a_{2},\infty$ are easy to be identified
from the standard forms of the monodromy matrices $M_{0},M_{a_{1}},\tilde{M}_{a_{2}}$
and $\tilde{M}_{\infty}$. They can be identified, respectively, with
the following Fourier-Mukai functors (see e.g. \cite{ST}):\[
(-)\otimes\sO_{X}(h),\quad\Phi_{\sI_{\Delta(X)}},\quad\Phi_{\sP}\circ\Phi_{\sI_{\Delta(Y)}}\circ\Phi_{\sP}^{-1}\;\text{ and \;}\Phi_{\sP}\circ\big((-)\otimes\sO_{Y}(h')\big)\circ\Phi_{\sP}^{-1},\]
where $\sI_{\Delta(X)}$ (resp. $\sI_{\Delta(Y)}$) is the ideal sheaf
of the diagonal $\Delta\subset X\times X$ (resp. $\Delta\subset Y\times Y$)
and $h'$ is the polarization of $Y$. From the above considerations,
and taking the monodromy relation into account, we naturally come
to a conjecture that the group $Auteq\, D^{b}(X)$ is generated by
the shift functor and the following Fourier-Mukai functors: \[
(-)\otimes\sO_{X}(h),\quad\Phi_{\sI_{\Delta(X)}}\text{\; and \;}\Phi_{\sP}\circ\Phi_{\sI_{\Delta(Y)}}\circ\Phi_{\sP}^{-1}.\]

\vskip0.5cm

\subsection{\label{sub:Some-other-aspects}Some other aspects }

From the example in the previous subsection, one may expect some relation
between the Fourier-Mukai numbers $|FM(X)|$ and the numbers of MUM
points in $\sD(\check{M}^{\perp})/O(\check{M}^{\perp})^{*}$. In fact,
S. Ma \cite{Ma} (see also \cite{Hart}) showed that the counting
formula in Theorem \ref{thm:HOLY-counting-F} allows such interpretation
if we identify MUM points with the standard cusps in the Baily-Satake
compactification of $\sD(\check{M}^{\perp})/O(\check{M}^{\perp})^{*}$.
From this viewpoint, we can read the counting formula as the number
of non-isomorphic decompositions of $\check{M}^{\perp}$ into $\check{M}^{\perp}=U\oplus M$
modulo the actions of $O(\check{M}^{\perp})^{*}$. Non-standard cusps
are 0-dimensional boundary points which correspond to the decompositions
$\check{M}^{\perp}=U(m)\oplus M$ $(m>1)$. In ref.~\cite{Ma}, the
counting formula has been generalized to incorporate non-standard
cusps, and it has been shown that the generalized formula counts the
number of twisted Fourier-Mukai partners, i.e., K3 surfaces $Y$ satisfying
$D^{b}(X)\simeq D^{b}(Y,\alpha)$ where $\alpha$ is an element of
the Brauer Group $Br(Y)$. See references \cite{HuySt,Cal} for the
derived categories of twisted sheaves on $Y$. 

\vskip3cm

\section{\textbf{\textup{Fourier-Mukai partners of Calabi-Yau threefolds I}} }

We define Calabi-Yau 3-folds by smooth, projective, three dimensional
varieties $X$ over $\mC$ which satisfy $K_{X}\simeq\sO_{X}$, $H^{1}(X,\sO_{X})=H^{2}(X,\sO_{X})=0$.
It is known, due to Bridgeland \cite{Br2}, that birational Calabi-Yau
3-folds $X,Y$ are derived equivalent, i.e., $D^{b}(X)\simeq D^{b}(Y)$.
Except this general theorem, however, not much is known about the
Fourier-Mukai partners of Calabi-Yau 3-folds. Here and in the next
section, we focus on two examples of pairs of Calabi-Yau 3-folds with
Picard number one which are Fourier-Mukai partners but not birational
to each other. In both cases, some similarity to the example of Mukai
in the last section will be observed in the fact that suitable projective
dualities play important roles in their constructions and also their
derived equivalences.

\vskip0.5cm

\subsection{\label{sub:Grassmannian-and-Pfaffian}Grassmannian and Pfaffian Calabi-Yau
threefolds }

The first example is Calabi-Yau 3-folds due to R{\o}dland. Let $\rG(2,7)$
be the Grassmannian of two dimensional subspaces in $\mC^{7}$. Consider
the Pl\"ucker embedding of $\rG(2,7)$ into $\mP(\wedge^{2}\mC^{7})$.
Then the projective dual of $\rG(2,7)$ is the Pfaffian variety $\mathrm{Pf}(4,7)$
in the dual projective space $\mP(\wedge^{2}(\mC^{*})^{7})$, i.e.,
the locus $\left\{ [c_{ij}]\in\mP(\wedge^{2}(\mC^{*})^{7})\mid\rank(c_{ij})\leq4\right\} .$
Let us consider general 7 dimensional linear subspace $L_{7}\subset\wedge^{2}(\mC^{*})^{7}$
and its orthogonal subspace $L_{7}^{\perp}\subset\wedge^{2}\mC^{7}$.
Then, similarly to the construction in Subsection \ref{sub:OG(5,10)-def},
we define \[
X=\rG(2,7)\cap\mP(L_{7}^{\perp})\subset\mP(\wedge^{2}\mC^{7}),\;\; Y=\mathrm{Pf(4,7)}\cap\mP(L_{7})\subset\mP(\wedge^{2}(\mC^{*})^{7}).\]
$X$ and $Y$, respectively, are called Grassmannian and Pfaffian
Calabi-Yau 3-folds.
\begin{prop}[R{\o}dland \cite{Ro}]
\label{pro:Rodland-X-Y} When $L_{7}$ is general, both $X$ and
$Y$ are smooth Calabi-Yau 3-folds with Picard number one and the
following invariants:\[
\begin{matrix}H_{X}^{3}=42, & c_{2}(X).H_{X}=84 & h^{1,1}(X)=1, & h^{2,1}(X)=50\\
H_{Y}^{3}=14, & c_{2}(Y).H_{Y}=56 & h^{1,1}(Y)=1, & h^{2,1}(Y)=50\end{matrix}\]
where $H_{X}$ and $H_{Y}$ are the ample generators of the Picard
groups, respectively.
\end{prop}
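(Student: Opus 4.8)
The plan is to treat $X$ and $Y$ separately: in both cases Bertini's theorem (for $Y$, together with a dimension count) gives smoothness, and then all cohomological invariants are extracted from a Koszul complex on an ambient homogeneous variety by means of the Borel--Weil--Bott theorem and Schubert calculus. On the $X$-side this is routine: since $\rG(2,7) \subset \mP(\wedge^2\mC^7)$ is smooth of dimension $10$ and $H_X$ is the restriction of the Plücker class, iterating Bertini shows that for general $L_7$ the seven successive hyperplane sections cutting out $X$ are smooth, so $X$ is smooth of dimension $3$; adjunction along this chain together with the fact that $\rG(2,7)$ has Fano index $7$ gives $\omega_X \simeq \sO_X$; and the Lefschetz hyperplane theorem along the same chain gives $H^i(X,\mZ) \simeq H^i(\rG(2,7),\mZ)$ for $i\le 2$, whence $H^1(\sO_X)=H^2(\sO_X)=0$, $b_2(X)=1$ and, by Hodge symmetry, $h^{1,1}(X)=1$ with Picard number one. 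The invariant $H_X^3=\deg\rG(2,7)=42$ is classical, and $c_2(X)\cdot H_X=84$ and $\chi(X)$ (hence $h^{2,1}(X)=1-\tfrac12\chi(X)=50$) are computed from the Koszul resolution $0\to\sO(-7H)\to\cdots\to\sO(-H)^{\oplus7}\to\sO_{\rG(2,7)}\to\sO_X\to 0$ by intersection theory on $\rG(2,7)$.

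For $Y$ the analysis is the same in spirit but uses a resolution of $\mathrm{Pf}(4,7)$. The Pfaffian variety $\mathrm{Pf}(4,7)\subset\mP^{20}$ has codimension $3$ and, as is standard for generic Pfaffians, is singular exactly along the rank-$\le2$ locus, which is the cone over the Plücker image of $\rG(2,(\mC^*)^7)$ and so has dimension $10$ in $\mP^{20}$; hence for general $L_7$ the linear space $\mP(L_7)\cong\mP^6$ misses this singular locus, and Bertini applied to the smooth (rank-exactly-$4$) locus shows $Y$ is smooth of dimension $3$. On the smooth locus $\omega\mapsto\ker(\omega\colon V\to V^*)$ attaches to a rank-$4$ form a $3$-plane, which gives the standard resolution $p\colon\widetilde P:=\mP_{\rG(3,7)}(\wedge^2\sQ^*)\to\mathrm{Pf}(4,7)$ ($V=\mC^7$, $\sS\subset V$ the tautological subbundle, $\sQ=V/\sS$), a $\mP^5$-bundle over $\rG(3,V)$ that restricts to an isomorphism over the smooth locus. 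Thus $\widetilde Y:=p^{-1}(Y)\simeq Y$ is the zero locus on $\widetilde P$ of a section of $\sO(\xi)^{\oplus14}$ coming from $L_7^\perp\subset\wedge^2 V$, where $\xi:=p^*\sO_{\mP^{20}}(1)$. A relative-Euler-sequence computation together with the Fano index $7$ of $\rG(3,7)$ gives $\omega_{\widetilde P}\simeq\sO(-6\xi-4\sigma)$ with $\sigma:=\pi^*\sO_{\rG(3,7)}(1)$, so adjunction yields $\omega_{\widetilde Y}\simeq\sO(8\xi-4\sigma)|_{\widetilde Y}$. The key geometric input is the relation $\sigma|_{\widetilde Y}=2\xi|_{\widetilde Y}$: indeed $\pi|_{\widetilde Y}$ sends $[\omega]$ to $\ker\omega=(\mathrm{im}(\omega\wedge\omega))^\perp$, and the map $[\omega]\mapsto[\omega\wedge\omega]\in\rG(4,V^*)\cong\rG(3,V)$ is cut out by quadrics. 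Therefore $\omega_{\widetilde Y}\simeq\sO_{\widetilde Y}$, so $Y$ is Calabi--Yau, and for $H_Y:=\xi|_Y$ one gets $H_Y^3=\int_{\widetilde P}\xi^{17}=\deg\mathrm{Pf}(4,7)=14$.

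It remains to compute the linear invariants of $Y$. Tensoring the Koszul complex $\wedge^\bullet(\sO(-\xi)^{\oplus14})\to\sO_{\widetilde Y}$ with $\Omega^p_{\widetilde P}$ and breaking up $\Omega^p_{\widetilde P}$ via the relative Euler sequence of $\widetilde P\to\rG(3,7)$ expresses $H^q(Y,\Omega^p_Y)$ in terms of the cohomology of homogeneous bundles on $\rG(3,7)$, which Borel--Weil--Bott evaluates; this gives $H^1(\sO_Y)=H^2(\sO_Y)=0$ and $h^{1,1}(Y)=1$ (hence $h^{2,0}(Y)=0$, so $Y$ has Picard number one), while $c_2(Y)\cdot H_Y=56$ and $\chi(Y)$ (hence $h^{2,1}(Y)=1-\tfrac12\chi(Y)=50$) come out of the corresponding Chern-number computation on $\widetilde P$, using $c(T_{\widetilde Y})=c(T_{\widetilde P}|_{\widetilde Y})/(1+\xi)^{14}$. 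The main obstacle is entirely on the $Y$-side: setting up the resolution $\widetilde P$ with the correct canonical twist $-6\xi-4\sigma$, verifying $\sigma|_{\widetilde Y}=2\xi|_{\widetilde Y}$, and --- most laboriously --- carrying through the Bott-vanishing and Schubert-calculus computations on the $17$-dimensional $\widetilde P$ (equivalently on $\rG(3,7)$ with the bundle $\wedge^2\sQ^*$) needed to pin down $c_2\cdot H$ and $\chi$, the latter being equivalent to $h^{2,1}(Y)=50$.
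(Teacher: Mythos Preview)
The paper does not supply its own proof of this proposition: it is quoted as a result of R\o dland \cite{Ro}, and in keeping with the survey nature of the article no argument is given. Your outline is therefore being compared against the standard computations rather than against anything in the text.

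Your approach is correct and is essentially the one underlying R\o dland's original paper. On the $X$-side everything you say is routine. On the $Y$-side the resolution $\widetilde P=\mP_{\rG(3,7)}(\wedge^2\sQ^*)$ you introduce is precisely the variety $\tilde{\mathcal Y}$ that the paper itself brings in at (\ref{eq:resol-Pf-Y}) for the derived-equivalence discussion, so the paper implicitly endorses this as the right ambient space. The canonical-bundle formula $\omega_{\widetilde P}\simeq\sO(-6\xi-4\sigma)$ is correct (relative Euler sequence plus $\det\wedge^2\sQ^*=\sO_{\rG(3,7)}(-3)$ and Fano index $7$), and the key relation $\sigma|_{\widetilde Y}=2\xi|_{\widetilde Y}$ holds for the reason you give: over the rank-$4$ locus the bundle projection $\pi$ factors through $p$ followed by the $4\times4$-Pfaffian map $[\omega]\mapsto[\omega\wedge\omega]$, which is given by quadrics on $\mP^{20}$, and for general $L_7$ the variety $\widetilde Y$ lies entirely in that locus. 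Adjunction then gives $K_Y=0$ and $H_Y^3=\deg\mathrm{Pf}(4,7)=14$ as stated.

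The one place where your sketch is doing real work rather than quoting a theorem is $h^{1,1}(Y)=1$. A direct Lefschetz argument is not available: $\xi$ is only big and semi-ample on $\widetilde P$ (it contracts the divisor over the rank-$\le2$ locus), and $\widetilde P$ has $b_2=2$. Your proposed route---feed the conormal sequence $0\to\sO(-\xi)^{\oplus14}|_{\widetilde Y}\to\Omega_{\widetilde P}|_{\widetilde Y}\to\Omega_{\widetilde Y}\to0$ through the Koszul resolution, unwind $\Omega_{\widetilde P}$ via the relative Euler sequence, and apply Borel--Weil--Bott on $\rG(3,7)$---does work and is what R\o dland in effect carries out. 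The same package yields $c_2\cdot H_Y=56$ and $\chi(Y)=-98$ by a Chern-class computation on $\widetilde P$. None of this is a gap; you have correctly identified it as the laborious part.
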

As for the smoothness, it is further known that $X$ is smooth if
and only if Y is smooth \cite{BC}. The equal Hodge numbers might
indicate a possibility that $X$ and $Y$ were birational to each
other \cite{Ba2}. However, looking the degrees $H_{X}^{3}=42$ and
$H_{Y}^{3}=14$ together with $\rho(X)=\rho(Y)=1$, we see that this
is not the case. 

In \cite{Ro}, R{\o}dland studied mirror symmetry of Pfaffian Calabi-Yau
threefold $Y$ and constructed a mirror family $\mathcal{Y}=\left\{ \check{Y}_{x}\right\} _{x\in\mP^{1}}$
by the so-called orbifold mirror construction. His construction starts
with a special family of Pfaffian Calabi-Yau 3-folds which admits
a Heisenberg group action \cite{GrPo}. By finding a suitable subgroup
of the Heisenberg group as the orbifold group, and making a crepant
resolutions for the singularities in the orbifold mirror construction,
the desired mirror Calabi-Yau 3-folds $\check{Y}$ with Hodge numbers
$h^{1,1}(\check{Y})=50,h^{2,1}(\check{Y})=1$ was obtained. Independently,
mirror symmetry of Grassmannian Calabi-Yau 3-folds $X$ was studied
in \cite{BCKvS} by the method of toric degeneration of Grassmannians.
It was recognized by these authors that the Picard-Fuchs differential
equations for these two families have exactly the same form but they
are distinguished by two different MUM points of the equation, as
we have witnessed in the equation (\ref{eq:PiX-PiZ-K3}). In particular,
it was observed that Gromov-Witten invariants ($g=0$) calculated
from the two MUM points ($x=0$ and $x=\infty$ in Subsection \ref{sub:Mirror-symmetry-Gr-Pf})
match to those for $X$ and Y, respectively. 

Later, in \cite{HoKo}, the calculation of Gromov-Witten invariants
$(g=0)$ have been extended to higher genus $(g\leq10)$ solving the
so-called BCOV holomorphic anomaly equation discovered in \cite[\hskip-3pt 2]{BCOV1}. 

\vskip0.5cm

\subsection{\label{sub:Derived-equivalence-Gr-Pf}Derived equivalence $D^{b}(X)\simeq D^{b}(Y)$}

As described in the previous subsection, there are similarities in
their constructions between the example of Fourier-Mukai partners
in Subsection \ref{sub:An-example-due-Mukai} and the Grassmannian
and Pfaffian Calabi-Yau 3-folds $X$ and Y. It is natural to expect
that $X$ and $Y$ are derived equivalent. In fact, the derived equivalence
is supported from the analysis of Gauged Linear Sigma Model (GLSM)
in physics \cite{HoriT}. The derived equivalence has been proved
mathematically in \cite{BC} and \cite{Ku2} (see also \cite{BDFIK,ADS}
for recent progresses). 

Let $\mathcal{Y}$ be the Pfaffian variety $\mathrm{Pf(4,7})$. $\mathcal{Y}$
is singular along $\mathcal{Y}_{sing}=\left\{ [c_{ij}]\mid\rk c\leq2\right\} $
and has a natural (Springer-type) resolution\begin{equation}
\tilde{\mathcal{Y}}=\left\{ ([c],[w])\mid w\subset\ker c\right\} \subset\mathcal{Y}\times\rG(3,7).\label{eq:resol-Pf-Y}\end{equation}
Since it is easy to see that all the fibers of the projection $\Lrho:\tilde{\mathcal{Y}}\to\rG(3,7)$
are isomorphic to $\mP^{5}$, $\tilde{\mathcal{Y}}$ is smooth. Let
us denote $\rG(2,7)$ by $\mathcal{X}$. Then we have $X=\mathcal{X}\cap\mP(L_{7}^{\perp})$
and also we can write $Y=\tilde{\mathcal{Y}}\cap\mP(L_{7})$ since
$\mathcal{Y}_{sing}$ is away from $\mP(L_{7})$ for general $L_{7}$.
Let us summarize our settings into the following diagram:\def\diagI{
\begin{xy}
(0,10)*++{\mathcal X}="A",
(30,10)*+{\tilde{\mathcal Y}}="C",
(30,0)*+{\mathcal Y}="D",
(0,3)*+{}="B1",
(18,3)*+{}="B2",
(8,0)*+{\rG(2,7)\;\;\;\;\rG(3,7)},
\ar@{=} "A";"B1",
\ar^{\pi} "C";"D",
\ar_{\rho} "C";"B2",
\end{xy}}\begin{equation}
\begin{matrix}{\diagI}\end{matrix}\label{eq:Gr-Pf-resl-diagram}\end{equation}
The proofs of the derived equivalence in \cite{BC} and \cite{Ku2}
uses a natural incidence correspondence between the two Grassmannians
in the diagram, which is given by \[
\Delta_{0}=\left\{ ([\xi],[w])\mid\dim(\xi\cap w)\geq1\right\} \subset\rG(2,7)\times\rG(3,7).\]
To sketch the proofs, let us consider the ideal sheaf $\sI_{\Delta_{0}}$
of $\Delta_{0}$ and define its pullback $\sI:=(\id\times\rho)^{*}\sI_{\Delta_{0}}$
on $\mathcal{X}\times\tilde{\mathcal{Y}}$. The restriction $I:=\sI\vert_{X\times Y}$
is an ideal sheaf on $X\times Y$. We regard $I$ as an object in
$D^{b}(X\times Y)$ and defines the Fourier-Mukai functor $\Phi_{I}(-):=R\pi_{X*}(L\pi_{Y}^{*}(-)\otimes I)$,
where $\pi_{X}$ and $\pi_{Y}$ are projections to $X$ and $Y$.
Then, Borisov and Caldararu proved the following
\begin{thm}[{\cite[Theorem 6.2]{BC}}]
\label{thm:DX-DY-Gr-Pf} $\Phi_{I}(-):D^{b}(Y)\to D^{b}(X)$ is an
equivalence. 
\end{thm}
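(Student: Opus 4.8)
The plan is to verify that the Fourier-Mukai functor $\Phi_I$ is fully faithful and then invoke the standard criterion that a fully faithful exact functor between derived categories of smooth projective Calabi-Yau varieties of the same dimension whose kernel is nontrivial is automatically an equivalence (by the Bondal-Orlov / Bridgeland spanning-class argument, using $K_X\simeq\sO_X$ and $K_Y\simeq\sO_Y$). So the real content is full faithfulness of $\Phi_I$, and for this I would follow Bridgeland's criterion: it suffices to check that for the skyscraper sheaves $\{\sO_y\}_{y\in Y}$ one has $\Hom_{D^b(X)}(\Phi_I\sO_y,\Phi_I\sO_{y'}[i])=0$ unless $y=y'$ (for all $i$) and $=\mC$ when $y=y'$ and $i=0$, together with $\Hom(\Phi_I\sO_y,\Phi_I\sO_y[i])=0$ for $i<0$ or $i>\dim X$. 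Equivalently, by Mukai's formalism, one computes the kernel composition $I^{\vee}\star I$ on $Y\times Y$ (with appropriate shift/twist) and shows it is $\sO_{\Delta_Y}$.

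The first step is to identify the object $\Phi_I\sO_y\in D^b(X)$ concretely. For a point $y=[c]\in Y\subset\tilde{\mathcal Y}$, the fiber structure of $\Delta_0$ over $\rG(3,7)$ pulled back along $\rho$ means that $I|_{X\times\{y\}}$ is (up to the resolution bookkeeping in \eqref{eq:resol-Pf-Y}) the ideal sheaf on $X$ of the locus of $2$-planes $\xi$ meeting the $3$-plane $w=w(y)=\ker c$; this is a degeneracy/incidence locus cut out on $\rG(2,7)$ by a section of a vector bundle, whose Koszul resolution gives an explicit complex computing the relevant $\mathrm{Ext}$'s. I would therefore globalize: resolve $\sI_{\Delta_0}$ on $\rG(2,7)\times\rG(3,7)$ by a (Koszul-type, or Kapranov-exceptional-collection-type) complex of sheaves pulled back from the two factors, pull this back to $\mathcal X\times\tilde{\mathcal Y}$, and restrict to $X\times Y$. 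The point of passing to the resolution $\tilde{\mathcal Y}$ and the ample-generator Calabi-Yau geometry is that $X$ and $Y$ are complete linear sections cut by $\mP(L_7)$, $\mP(L_7^\perp)$, so the relevant cohomology groups on the ambient $\mathcal X\times\tilde{\mathcal Y}$ are computed from Borel-Weil-Bott on the two Grassmannians and then Koszul-resolved down to $X\times Y$ using that $L_7$ is general (so all the degeneracy loci have expected dimension and the Koszul complexes are exact).

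The main obstacle — and where the real work of \cite{BC} lies — is the vanishing/cohomology computation on the ambient product after restricting to the linear sections: one must show that all the "off-diagonal" $\mathrm{Ext}$-groups vanish and the diagonal ones are $1$-dimensional, and this requires carefully chasing the Koszul complex for the linear sections through a Bott-vanishing analysis of homogeneous bundles on $\rG(2,7)\times\rG(3,7)$ (built from $\wedge^2$ of the tautological bundles and the incidence bundle), keeping track of the $\mP^5$-bundle structure $\pi:\tilde{\mathcal Y}\to\rG(3,7)$ so that pushforwards along $\pi$ are controlled. A second, more subtle point is genericity: one needs that for general $L_7$ the locus $\Delta_0\cap(X\times Y)$ and all its incidence strata behave as expected (flatness over $Y$, expected codimension), so that the restriction of the resolved kernel stays a resolution; this is where the hypothesis "$L_7$ general" is genuinely used, and it is the step I would expect to require the most care to make rigorous rather than just plausible. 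Once full faithfulness is established, the Calabi-Yau condition on $X$ and $Y$ (Proposition \ref{pro:Rodland-X-Y} gives $\rho=1$ and trivial canonical bundle) closes the argument via the spanning-class equivalence criterion. $\square$
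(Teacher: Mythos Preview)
Your proposal is correct and follows essentially the same route as the paper's sketch of the Borisov--Caldararu proof: verify the Bondal--Orlov/Bridgeland criterion (Theorem~\ref{thm:Ix-Iy-Thm}) via flatness of $I$ over $Y$, identify $I_y$ as the ideal sheaf of an incidence curve on $X$, and reduce the off-diagonal $\Ext$-vanishings \eqref{eq:ExtGrPf} to a Bott-type computation using the locally free (Eagon--Northcott) resolution \eqref{eq:GrPf-reol}, with the Calabi--Yau condition giving equivalence from full faithfulness. The one sharpening the paper highlights and you leave implicit is that $\Hom(I_y,I_y)\simeq\mC$ is automatic from the general fact that ideal sheaves of subschemes of dimension~$\leq 1$ in smooth projective $3$-folds are simple, so the resolution machinery is needed only for the $y_1\neq y_2$ vanishings.
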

\noindent The proof of the above theorem is based on the following
theorem for smooth projective varieties $X,Y$ and a Fourier-Mukai
functor $\Phi_{\sP}(-)=R\pi_{X*}(L\pi_{Y}^{*}(-)\otimes\sP)$ with
an object $\sP\in D^{b}(X\times Y)$ (see \cite[Thm.1.1]{BO}, \cite[Thm.1.1]{Br1},
\cite[Cor. 7.5, Prop. 7.6]{Huy}):
\begin{thm}
\label{thm:Ix-Iy-Thm}If $\sP$ a coherent sheaf on $X\times Y$ flat
over $Y$, then $\Phi_{\sP}:D^{b}(Y)\to D^{b}(X)$ is fully faithful
if and only if the following two conditions are satisfied:

\begin{myitem2}

\item[{\rm (i)}] For any point $x\in X$, it holds $\Hom(\sP_{x},\sP_{x})\simeq\mC$,
and

\item[{\rm (ii)}] if $x_{1}\not=x_{2}$, then $\Ext^{i}(\sP_{x_{1}},\sP_{x_{2}})=0$
for any i. 

\end{myitem2} Under these conditions, $\Phi_{\sP}$ is an equivalence
if and only of $\dim X=\dim Y$ and $\sP\otimes\pi_{X}^{*}\omega_{X}\simeq\sP\otimes\pi_{Y}^{*}\omega_{Y}$.
\end{thm}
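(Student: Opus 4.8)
I would structure the argument around two standard facts — Bridgeland's full faithfulness criterion for Fourier--Mukai functors, and the Serre-functor characterization of when a fully faithful functor is an equivalence — the only point requiring checking being that the flatness hypothesis turns conditions (i) and (ii) into precisely the input those facts want. First I would fix notation: for a closed point $y$ of $Y$ set $\sP_{y}:=\sP|_{X\times\{y\}}$; since $\sP$ is flat over $Y$ this is a coherent sheaf on $X$ and $\Phi_{\sP}(\sO_{y})\simeq\sP_{y}$, so (i) and (ii) are statements about $\Ext^{\bullet}_{X}(\sP_{y_{1}},\sP_{y_{2}})$ (the indices ranging over closed points of $Y$). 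Since $X\times Y$ is smooth, $\sP$ is a perfect complex, so $\Phi_{\sP}$ has both a left and a right adjoint, again Fourier--Mukai functors, with kernels $\sP^{\vee}\otimes\pi_{X}^{*}\omega_{X}[\dim X]$ and $\sP^{\vee}\otimes\pi_{Y}^{*}\omega_{Y}[\dim Y]$.

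For full faithfulness the plan is to use the reduction to a spanning class: $\{\sO_{y}\}_{y\in Y}$ spans $D^{b}(Y)$, and a functor possessing adjoints which induces isomorphisms $\Hom(a,b[i])\xrightarrow{\sim}\Hom(\Phi_{\sP}a,\Phi_{\sP}b[i])$ for all $a,b$ in a spanning class and all $i$ is fully faithful \cite{Huy}. One then compares $\Hom_{X}(\sP_{y_{1}},\sP_{y_{2}}[i])$ with $\Hom_{Y}(\sO_{y_{1}},\sO_{y_{2}}[i])$, the latter being $0$ for $y_{1}\neq y_{2}$ and $\wedge^{i}T_{y_{1}}Y$ for $y_{1}=y_{2}$. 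The distinct-points case is precisely (ii); the case $i=0$, $y_{1}=y_{2}$ is precisely (i); and $\Ext^{i}_{X}(\sP_{y},\sP_{y})=0$ for $i<0$ or $i>\dim X$ is automatic, because $\sP_{y}$ is an honest coherent sheaf on the smooth projective variety $X$ (Serre duality on $X$ gives the upper bound). The one genuinely hard point is that $\Ext^{i}_{X}(\sP_{y},\sP_{y})\simeq\wedge^{i}T_{y}Y$ for $0<i\le\dim X$ is then forced even though it is not assumed — this does not follow formally from (i) and (ii). I expect the resolution to be exactly the technical core of Bridgeland's theorem \cite{Br1} (see also \cite{BO}, \cite[Cor.~7.5]{Huy}): one does not compute these intermediate groups, but instead analyzes the object $\Phi_{\sP}^{R}\Phi_{\sP}(\sO_{y})$ together with the unit morphism $\sO_{y}\to\Phi_{\sP}^{R}\Phi_{\sP}(\sO_{y})$, and deduces from the orthogonality (ii), the normalization (i), and the local structure of coherent sheaves supported at a point of the smooth variety $Y$ that $\Phi_{\sP}^{R}\Phi_{\sP}(\sO_{y})\simeq\sO_{y}$ with the unit an isomorphism; being an isomorphism on a spanning class, the unit is then an isomorphism, i.e.\ $\Phi_{\sP}$ is fully faithful. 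I would cite this step rather than reprove it.

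For the final assertion I would use: a fully faithful exact functor $F\colon D^{b}(Y)\to D^{b}(X)$ between derived categories of smooth projective varieties is an equivalence if and only if it intertwines the Serre functors, $S_{X}\circ F\simeq F\circ S_{Y}$ (its image is then a Serre-stable admissible subcategory, hence all of $D^{b}(X)$); this is \cite[Prop.~7.6]{Huy}. Taking $F=\Phi_{\sP}$ and using $S_{X}=(-)\otimes\omega_{X}[\dim X]$, $S_{Y}=(-)\otimes\omega_{Y}[\dim Y]$ together with the projection formula, one gets $S_{X}\circ\Phi_{\sP}\simeq\Phi_{\sP\otimes\pi_{X}^{*}\omega_{X}[\dim X]}$ and $\Phi_{\sP}\circ S_{Y}\simeq\Phi_{\sP\otimes\pi_{Y}^{*}\omega_{Y}[\dim Y]}$. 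By uniqueness of Fourier--Mukai kernels \cite{Orlov} (applicable since $S_{X}\circ\Phi_{\sP}$ is again fully faithful), the Serre-intertwining condition is equivalent to $\sP\otimes\pi_{X}^{*}\omega_{X}[\dim X]\simeq\sP\otimes\pi_{Y}^{*}\omega_{Y}[\dim Y]$ in $D^{b}(X\times Y)$; since $\sP$ is a sheaf, both sides sit in a single cohomological degree, so the isomorphism holds exactly when $\dim X=\dim Y$ and $\sP\otimes\pi_{X}^{*}\omega_{X}\simeq\sP\otimes\pi_{Y}^{*}\omega_{Y}$, which is the claim.

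The hard part is the full faithfulness direction, namely that the purely pointwise conditions (i), (ii), reinforced only by the degree bounds that come for free from $\sP$ being a flat sheaf, already suffice; everything around it — spanning classes, the form of the adjoints, the projection-formula identities, the comparison of complexes at the end — is formal. This is why, in a survey of this kind, it is natural simply to state the result and refer to \cite{BO,Br1,Huy}.
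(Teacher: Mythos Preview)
Your proposal is correct and matches the paper's treatment: the paper does not give its own proof of this theorem but simply states it with the citations \cite[Thm.~1.1]{BO}, \cite[Thm.~1.1]{Br1}, \cite[Cor.~7.5, Prop.~7.6]{Huy}, and your sketch is precisely the argument contained in those references (spanning class plus Bridgeland's analysis of the unit for full faithfulness, Serre-functor commutation for the equivalence). Your closing remark that in a survey one would simply cite these sources is exactly what the paper does.
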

It has been proved that the ideal sheaf $I$ is flat over $Y$, and
in fact, defines a flat family of curves parametrized by $Y$ \cite[Prop. 4.4]{BC}.
The condition $\Hom(I_{y},I_{y})\simeq\mC$ follows from a general
property of ideal sheaves of subschemes of dimension $\leq1$ in smooth
projective 3-folds {[}ibid,Prop.~4.5{]}. Hence, verifying the cohomology
vanishings \begin{equation}
\Ext^{\bullet}(I_{y_{1}},I_{y_{2}})=0\;\;(y_{1}\not=y_{2})\label{eq:ExtGrPf}\end{equation}
is the main part of the proof given in {[}ibid{]}. 

Kuznetsov formulates the derived equivalence as a consequence of the
homological projective duality (HPD) between $\rG(2,7)$ and $\mathrm{Pf}(4,7)$
(precisely, the non-commutative resolution of $\mathrm{Pf}(4,7)$).
In the proof given in \cite{Ku2}, the following locally free resolution
of the ideal sheaf $\sI$ on $\mathcal{X}\times\tilde{\mathcal{Y}}$
plays an important role: \begin{equation}
0\to\tS^{2}\sU\boxtimes\sO_{\tilde{\mathcal{Y}}}\to\sU\boxtimes\tilde{\sQ}\to\sO_{\mathcal{X}}\boxtimes\wedge^{2}\tilde{Q}\to\sI\otimes\sO_{\mathcal{X}\times\tilde{\mathcal{Y}}}(1,(1,0))\to0,\label{eq:GrPf-reol}\end{equation}
where $\sU$ is the universal bundle on $\rG(2,7)$, $\tilde{Q}$
is the universal quotient bundle on $\rG(3,7)$ and $\sO_{\mathcal{X}\times\tilde{\mathcal{Y}}}(1,(1,0)):=(\sO_{\mathcal{X}}(1)\boxtimes\rho^{*}\sO_{\rG(3,7)}(1))$
(see {[}ibid, Lemma 8.2{]}). The restriction of (\ref{eq:GrPf-reol})
to $\mathcal{X}\times\{y\}$ is nothing but the Eagon-Northcot complex
which was used for the proof of the vanishings (\ref{eq:ExtGrPf})
in \cite[Prop. 3.6]{BC}. Although we do not go into the details of
HPD, but for the comparison with the corresponding results in another
example in the next section it is useful to summarize some of the
main results in \cite{Ku2}. For that, let us introduce \textcolor{black}{the
following notation} for the sheaves that appear in (\ref{eq:GrPf-reol}):\begin{align*}
E_{3} & =\tS^{2}\mathcal{U},\; E_{2}=\mathcal{U},\; E_{1}=\sO_{\mathcal{X}};\;\; F_{3}=\sO_{\tilde{\mathcal{Y}}},\; F_{2}=\tilde{Q},\; F_{1}'=\wedge^{2}\tilde{Q},\end{align*}
and define the following full subcategories $\sA_{i}\subset D^{b}(\mathcal{X})$
$(i=0,...,6)$ and $B_{k}\subset D^{b}(\tilde{\mathcal{Y}})$ $(k=0,...,13)$:
\begin{equation}
\begin{aligned}\langle E_{3},E_{2},E_{1}\rangle=\sA_{0}=\sA_{1}=\cdots=\sA_{6}\subset D^{b}(\mathcal{X}),\\
\langle F_{1}^{*},F_{2}^{*},F_{3}^{*}\rangle=\sB_{0}=\sB_{1}=\cdots=\sB_{13}\subset D^{b}(\tilde{\mathcal{Y}}),\end{aligned}
\label{eq:Collections-A-B-Gr-Pf}\end{equation}
where we set $F_{1}:=F_{1}'/\sO_{\tilde{\mathcal{Y}}}(1,-1)$ with
$\sO_{\tilde{\mathcal{Y}}}(a,b)=\rho^{*}\sO_{\rG(3,7)}(a)\otimes\pi^{*}\sO_{\mathcal{Y}}(b)$. 
\begin{thm}[{\cite[Theorem 4.1]{Ku2}}]
\label{thm:HPD-Gr-Pf} Denote by $\sA_{i}(a)$, $\sB_{i}(a)$ the
twists of $\sA_{i}$, $\sB_{i}$ by $\sO_{\mathcal{X}}(a)$ and $\pi^{*}\sO_{\mathcal{Y}}(a)$,
respectively. Then 

{\rm (i)} $\langle\sA_{0},\sA_{1}(1),\cdots,\sA_{6}(6)\rangle$ is
a Lefschetz decomposition of $D^{b}(\mathcal{X})$, and 

{\rm (ii)} $\langle\sB_{13}(-13),\cdots,\sB_{1}(-1),\sB_{0}\rangle$
is a dual Lefschetz decomposition of $\tilde{D}^{b}(\mathcal{Y})$,

\noindent  where $\tilde{D}^{b}(\mathcal{Y})\subset D^{b}(\tilde{\mathcal{Y}}$)
is a full subcategory which is equivalent to $D^{b}(\sY,\sR$), the
bounded derived category of coherent sheaves of right $\sR$-modules
on $\mathcal{Y}$ with $\sR=\pi_{*}\sE nd(\sO_{\tilde{\mathcal{Y}}}\oplus\rho^{*}\tilde{\sU})$
and $\tilde{\mathcal{U}}$ the universal bundle on $\rG(3,7)$. 
\end{thm}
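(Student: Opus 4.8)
The two assertions are established by rather different means: (i) is a finite representation-theoretic computation on $\rG(2,7)$, while (ii) combines the projective-bundle structure of the resolution $\tilde{\sY}$ with the general machinery of categorical (noncommutative) resolutions of singularities.

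For (i), since $\sA_{0}=\sA_{1}=\cdots=\sA_{6}=\langle\tS^{2}\sU,\sU,\sO_{\sX}\rangle$, what is claimed is that the $21$ bundles $\tS^{2}\sU(i)$, $\sU(i)$, $\sO_{\sX}(i)$ $(i=0,\dots,6)$ form a full exceptional collection on $\sX=\rG(2,7)$ of the rectangular shape compatible with twisting by $\sO_{\sX}(1)$. The plan is first to check exceptionality of the single block $\langle\tS^{2}\sU,\sU,\sO_{\sX}\rangle$ together with the semiorthogonalities $\Hom^{\bullet}(\sA_{j}(j),\sA_{i}(i))=0$ for $i<j$; upon dualizing these reduce to the vanishing of $H^{\bullet}\!\bigl(\rG(2,7),\,\Sigma^{\lambda}\sU^{\vee}\otimes\Sigma^{\mu}\sU\otimes\sO(-k)\bigr)$ for $0\le k\le 6$ and partitions $\lambda,\mu$ with at most two parts of size $\le 2$, all of which one reads off from Borel--Weil--Bott. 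For fullness one compares with Kapranov's collection $\{\Sigma^{(a,b)}\sU\}_{5\ge a\ge b\ge 0}=\{\tS^{a-b}\sU(-b)\}$: using the Koszul resolution of the diagonal of $\rG(2,7)$ to obtain generation, and a sequence of mutations to reshuffle, one rewrites Kapranov's collection in the form $\langle\sA_{0},\sA_{1}(1),\dots,\sA_{6}(6)\rangle$; since the number of objects is $21=\rk K_{0}(\rG(2,7))$, fullness then follows once the semiorthogonalities are in place. This mutation bookkeeping is the only laborious step here.

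For (ii), observe first that $\rho\colon\tilde{\sY}\to\rG(3,7)$ exhibits $\tilde{\sY}$ as a $\mP^{5}$-bundle --- the projectivization of a rank-$6$ bundle built from $\wedge^{2}\tilde{Q}$ --- under which the polarization $\pi^{*}\sO_{\sY}(1)$ of $\tilde{\sY}$ is the relative $\sO(1)$; Orlov's projective-bundle theorem then displays $D^{b}(\tilde{\sY})$ as six copies of $D^{b}(\rG(3,7))$ twisted by $\pi^{*}\sO_{\sY}(0),\dots,\pi^{*}\sO_{\sY}(5)$. Next one constructs the subcategory $\tilde{D}^{b}(\sY)$: since $\sY=\mathrm{Pf}(4,7)$ has rational Gorenstein singularities and $\pi$ is a resolution with $R\pi_{*}\sO_{\tilde{\sY}}=\sO_{\sY}$, one checks that $\sG:=\sO_{\tilde{\sY}}\oplus\rho^{*}\tilde{\sU}$ has $R\pi_{*}\mathcal{E}nd(\sG)$ concentrated in degree $0$ --- a sheaf of algebras, equal to $\sR$ --- and is a $\pi$-relative tilting generator, so that $R\pi_{*}\sHom(\sG,-)$ identifies a full admissible subcategory $\tilde{D}^{b}(\sY)\subset D^{b}(\tilde{\sY})$ with $D^{b}(\sY,\sR)$. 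Finally, inside $\tilde{D}^{b}(\sY)$ one checks that $\langle F_{1}^{*},F_{2}^{*},F_{3}^{*}\rangle$ is exceptional (the truncation $F_{1}=F_{1}'/\sO_{\tilde{\sY}}(1,-1)$ being required precisely so that this block lies inside $\tilde{D}^{b}(\sY)$) and that the twists $\sB_{k}(-k)$, $k=0,\dots,13$, are semiorthogonal and generate $\tilde{D}^{b}(\sY)$. Semiorthogonality is again a Borel--Weil--Bott computation, now performed fibrewise along the $\mP^{5}$-bundle over $\rG(3,7)$; the length $14=21-7$ is exactly the one forced by homological projective duality from the length-$7$ decomposition of (i); and generation is obtained by resolving the Fourier--Mukai kernel $\sI\otimes\sO_{\sX\times\tilde{\sY}}(1,(1,0))$ as in (\ref{eq:GrPf-reol}), whose terms $\tS^{2}\sU\boxtimes\sO_{\tilde{\sY}}$, $\sU\boxtimes\tilde{Q}$, $\sO_{\sX}\boxtimes\wedge^{2}\tilde{Q}$ are exactly products of objects of the $\sA$- and $\sB$-blocks, so that only twists in the permitted ranges occur.

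The main obstacle is step (ii). Unlike (i), it is not a bare cohomology computation: one must simultaneously pin down the category $\tilde{D}^{b}(\sY)$ --- i.e. verify that the proposed generators span precisely the categorical resolution $D^{b}(\sY,\sR)$, neither more nor less of $D^{b}(\tilde{\sY})$ --- and prove a generation statement for a length-$14$ collection inside a category cut out by the noncommutative algebra $\sR$. This is exactly the point at which the singular locus $\sY_{sing}=\{\,[c]\mid\rk c\le 2\,\}$, along which $\pi$ fails to be an isomorphism, interacts with the Lefschetz combinatorics, and controlling this interaction --- through the resolution (\ref{eq:GrPf-reol}) and its analogue over $\tilde{\sY}$ --- is the technical heart of \cite{Ku2}.
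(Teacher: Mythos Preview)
The paper does not prove this theorem: it is a survey, and the statement is quoted verbatim as \cite[Theorem 4.1]{Ku2} with no accompanying argument beyond the remark that the vanishings implicit in (ii) imply (\ref{eq:ExtGrPf}). So there is no ``paper's own proof'' to compare your sketch against; the reference is simply to Kuznetsov's original paper.

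That said, your outline is a fair summary of the shape of Kuznetsov's argument. For (i) the comparison with Kapranov's collection on $\rG(2,7)$ via mutations, together with Borel--Weil--Bott for the semiorthogonality, is exactly how the rectangular Lefschetz decomposition is produced in \cite{Ku2}. For (ii) you have the right ingredients --- the $\mP^{5}$-bundle structure of $\tilde{\sY}$ over $\rG(3,7)$, Orlov's projective-bundle decomposition, and the identification of $\tilde{D}^{b}(\sY)$ with modules over the sheaf of algebras $\sR$ --- though one point is slightly misplaced: in \cite{Ku2} the dual Lefschetz decomposition of $\tilde{D}^{b}(\sY)$ and the fullness of the collection are not derived \emph{from} the resolution (\ref{eq:GrPf-reol}) of the kernel $\sI$; rather, the Lefschetz decomposition is established first (by projective-bundle and mutation arguments internal to $\tilde{\sY}$), and the resolution (\ref{eq:GrPf-reol}) is then the tool that verifies the homological projective duality functor is compatible with the two Lefschetz structures. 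Your sketch runs the logic in the opposite direction at that step.
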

A (dual) Lefschetz decomposition is a special form of a semi-orthogonal
decomposition of a triangulated category \cite{BO}. In our case,
the vanishings \[
\Hom_{D^{b}(\tilde{\sY})}^{\bullet}(\sB_{i}(-i),\sB_{j}(-j))=0\;(i<j),\]
which are implied in (ii) of the above theorem, entail the desired
vanishings (\ref{eq:ExtGrPf}). 

\vskip0.8cm

\subsection{\label{sub:BPS-numbers-Gr-Pf}BPS numbers}

As noted in the previous subsection, the ideal sheaf $I_{y}\,(y\in Y$)
defines a family of curves on $X$. It can be shown by explicit calculations
with Macaulay2 that 
\begin{prop}
\label{pro:Curve-BPS-Gr-Pf}For a general point $y\in Y$, the ideal
sheaf $I_{y}$ defines a smooth curve on $X$ of genus $6$ and degree
14. 
\end{prop}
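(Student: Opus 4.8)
The plan is to compute the numerical invariants of the curve $C_y$ cut out by the ideal sheaf $I_y$ directly from the locally free resolution of $\sI$ restricted to $\sX \times \{y\}$, and then verify generic smoothness. First I would recall from Subsection \ref{sub:Derived-equivalence-Gr-Pf} that $I_y$ is the restriction to $X$ of the pullback $\sI = (\id \times \rho)^* \sI_{\Delta_0}$, and that the Eagon--Northcott complex (the restriction of (\ref{eq:GrPf-reol}) to $\sX \times \{y\}$) provides a locally free resolution
\[
0 \to \tS^2\sU \to \sU^{\oplus 4} \to \sO_{\sX}^{\oplus 6} \to I_{C_y}(1) \to 0
\]
of the twisted ideal sheaf on $\sX = \rG(2,7)$ (here the ranks $4$ and $6$ come from $\rk \tilde{Q}=4$, $\rk \wedge^2\tilde{Q}=6$ at the chosen point $y$). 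Restricting further to the Calabi--Yau threefold $X = \sX \cap \mP(L_7^\perp)$, one gets the analogous four-term resolution of $I_{C_y}$ on $X$. From this I would read off the Hilbert polynomial of $\sO_{C_y}$ by Hilbert-polynomial additivity: $\chi(\sO_{C_y}(m)) = \chi(\sO_X(m)) - \chi(I_{C_y}(m))$, and the latter is computed from the resolution using the known Chern classes of $\sU$ on $\rG(2,7)$ together with $H_X^3 = 42$, $c_2(X)\cdot H_X = 84$ from Proposition \ref{pro:Rodland-X-Y}. The degree of $C_y$ is the leading coefficient of this polynomial and the genus is extracted from $\chi(\sO_{C_y}) = 1 - g$.

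In practice this Euler-characteristic bookkeeping over a homogeneous space is exactly the kind of computation that is cleanly automated, so I would carry it out with Macaulay2 (as the statement already signals): present $X$ as a generic complete intersection of the appropriate degree inside an explicit affine chart system for $\rG(2,7)$, write down a random $y \in Y$, form the ideal $I_{C_y}$ from the incidence condition $\dim(\xi \cap w) \ge 1$ together with the Plücker and linear-section equations, and have the program return $\deg C_y = 14$ and $p_a(C_y) = 6$. The degree $14$ is also sanity-checked against $H_Y^3 = 14$ via the structure of the Fourier--Mukai kernel, and the genus $6$ is consistent with $C_y$ being (for generic $y$) a smooth curve of Clifford-general type embedded by a line bundle of degree $14$.

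For smoothness, the plan is twofold: first show $C_y$ is smooth for \emph{some} $y$ by a direct Macaulay2 computation of the Jacobian ideal of a random $C_y$ (checking that the singular locus is empty), and then invoke a semicontinuity/genericity argument — the locus of $y \in Y$ for which $C_y$ is singular is closed, and exhibiting one smooth member over the (irreducible) $Y$ shows the generic member is smooth. Since $I_y$ is flat over $Y$ (Borisov--Caldararu, \cite[Prop.~4.4]{BC}), the family $\{C_y\}_{y \in Y}$ is a flat family of curves, so the Hilbert polynomial, hence $\deg$ and $p_a$, is constant in $y$; combined with generic smoothness this gives that the general $C_y$ is a smooth curve of genus $6$ and degree $14$. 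The main obstacle is not conceptual but computational: organizing the explicit affine charts on the two Grassmannians and the incidence variety so that Macaulay2 can actually evaluate the Jacobian-rank and Hilbert-polynomial computations in reasonable time and memory; once a workable chart/coordinate setup is in place, the genus, degree, and smoothness all fall out of routine (if heavy) symbolic computation, with the flatness result of \cite{BC} supplying the passage from one explicit example to the generic statement.
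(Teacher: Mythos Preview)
Your proposal is correct and takes essentially the same approach as the paper: the paper simply states that the proposition ``can be shown by explicit calculations with Macaulay2'' and gives no further argument. Your write-up is considerably more detailed than the paper's one-line justification, spelling out how the restricted Eagon--Northcott resolution, flatness from \cite{BC}, and a single random-point Jacobian check combine to give the generic statement, but the underlying method is the same.
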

Expecting some relations to the moduli problems of ideal sheaves on
$X$, such as Donaldson-Thomas invariants of $X$ \cite{PT} or BPS
numbers \cite{HST}, it is interesting to seek a possibly related
number in the table of the BPS numbers calculated in \cite{HoKo}.
The relevant part of the table to the curves of Proposition \ref{pro:Curve-BPS-Gr-Pf}
reads as follows (with $d=14)$:\begin{equation}
\begin{tabular}{c|ccccccc}
 \mbox{\ensuremath{g}}  &  0  &  \mbox{\ensuremath{\cdots}}  &  6  &  7  &  8  &  9  &  10\\
\hline \mbox{\ensuremath{n_{g}^{X}(d)}}  &  2.67..\ensuremath{\times}1\ensuremath{0^{19}}  &  \ensuremath{\cdots} &  123676  &  392  &  7  &  0  &  0\end{tabular}\label{eq:Gr-Pf-BPS}\end{equation}
Unfortunately the BPS number $n_{6}^{X}(14)=123676$ is rather large
to find a relation to the curve defined by $I_{y}$. However, as noted
in \cite[(4-1.6)]{HoTa1}, we can observe that $n_{8}(14)=7$ counts
a well-known family of curves studied by Mukai, i.e., curves that
are linear sections of $\rG(2,6)$. Such curves appear in our setting
as \[
\rG(2,6)\cap\mP(L_{7}^{\perp})\subset\rG(2,7)\cap\mP(L_{7}^{\perp})=X,\]
and hence they are naturally parametrized by $\mP^{6}\simeq\left\{ \rG(2,6)\subset\rG(2,7)\right\} $.
General members of this family are smooth and of genus $8$ and degree
$14$. Then, following the counting {}``rule'' of BPS numbers \cite{GV},
we explain the number $n_{8}(14)=7$ as \[
n_{8}(14)=(-1)^{\dim\mP^{6}}e(\mP^{6})=7.\]
The counting {}``rule'' also tells us that such a generically smooth
family of curves of genus $g$ contributes to the numbers $n_{h}(d)\,(h\leq g$)
in a specified way {[}ibid{]}. Thus our observation above indicates
that there are contributions from at least two different families
of (generically) smooth curves in the BPS numbers $\left\{ n_{h}(14)\right\} _{h\leq8}$
in (\ref{eq:Gr-Pf-BPS}).

\vskip0.8cm

\subsection{\label{sub:Mirror-symmetry-Gr-Pf}Mirror symmetry}

Consider the mirror family $\check{\mathcal{Y}}=\left\{ \check{Y}_{x}\right\} _{x\in\mP^{1}}$
obtained from the orbifold mirror construction \cite{Ro}. The Picard-Fuchs
differential equation satisfied by the period integrals $w(x)=\int_{\gamma}\Omega(\check{Y}_{x})$
$(\gamma\in H_{3}(\check{Y}_{0},\mZ))$ has been determined by R{\o}dland
as $\sD_{x}w(x)=0$ with \[
\begin{matrix}\begin{aligned}\sD_{x}= & 9\,\theta_{x}^{4}-3\, x(15+102\,\theta_{x}+272\,\theta_{x}^{2}+340\,\theta_{x}^{3}+173\,\theta_{x}^{4})\\
 & -2\, x^{2}(1083+4773\,\theta_{x}+7597\,\theta_{x}^{2}+5032\,\theta_{x}^{3}+1129\,\theta_{x}^{4})\\
 & +2\, x^{2}(6+675\,\theta+2353\,\theta_{x}^{2}+2628\,\theta_{x}^{3}+843\,\theta_{x}^{4})\\
 & -x^{4}(26+174\,\theta_{x}+478\,\theta_{x}^{2}+608\,\theta_{x}^{3}+295\,\theta_{x}^{4})+x^{5}(\theta_{x}+1)^{4},\end{aligned}
\end{matrix}\]
and $\theta_{x}=x\frac{d\,}{dx}$. As described in Subsection \ref{sub:Grassmannian-and-Pfaffian},
the operator $\sD_{x}$ is the same as that of X in \cite{BCKvS,ES}
and Gromov-Witten invariants of $X$ and $Y$ are calculated, respectively,
from the MUM points at $x=0$ and $z=\frac{1}{x}=0$. Although the
geometry of the family is rather complicated (cf. Subsection \ref{sub:Mirror-symmetry-Reye}),
monodromy calculations proceeds in a similar way to Subsection \ref{sub:An-example-due-Mukai}.
The Riemann's $\sP$-scheme is\[    \left\{ {\tablinesep=-5pt \tabcolsep=2pt   
\begin{tabular}{cccccc}  
\small {\small $\;0\;$} & {\small $\;\alpha_1\;$} & {\small $\;\alpha_2\;$} 
& {\small $\;\alpha_3\;$}  & {\small $\;3\;$} & {\small $\;\infty\;$} \\  
\hline  0 &    0    &   0    &   0    &  0  &  1      \\  
0 &    1    &   1    &   1    &  1  &  1      \\  
0 &    1    &   1    &   1    &  3  &  1      \\  
0 &    2    &   2    &   2    &  4  &  1      \\ \end{tabular} }    \right\},  \]where $\alpha_{k}$ are the (real) roots of the 'discriminant' $1-57x-289x^{2}+x^{3}=0$
and $x=3$ is an apparent singularity with no monodromy (with order
$\alpha_{2}<0<\alpha_{1}<3<\alpha_{3})$. The symplectic and integral
basis of the solution can be obtained by making ansatz similar to
those in Subsection \ref{sub:Monodromy-calc-K3} (see also \cite{DM,ES}).
In fact, its full details are completely parallel to \cite[(2-5.1)-(2-5.7)]{HoTa1}
assuming two local solutions of the forms, \[
\Pi(x)=N_{x}\left(\begin{smallmatrix}1 & 0 & 0 & 0\\
0 & 1 & 0 & 0\\
\beta & a & \kappa/2 & 0\\
\gamma & \beta & 0 & -\kappa/6\end{smallmatrix}\right)\left(\begin{smallmatrix}n_{0}w_{0}(x)\\
n_{1}w_{1}(x)\\
n_{2}w_{2}(x)\\
n_{3}w_{3}(x)\end{smallmatrix}\right),\;\tilde{\Pi}(z)=N_{z}\left(\begin{smallmatrix}1 & 0 & 0 & 0\\
0 & 1 & 0 & 0\\
\tilde{\beta} & \tilde{a} & \tilde{\kappa}/2 & 0\\
\tilde{\gamma} & \tilde{\beta} & 0 & -\tilde{\kappa}/6\end{smallmatrix}\right)\left(\begin{smallmatrix}n_{0}\tilde{w}{}_{0}(z)\\
n_{1}\tilde{w}_{1}(z)\\
n_{2}\tilde{w}_{2}(z)\\
n_{3}\tilde{w}_{3}(z)\end{smallmatrix}\right).\]
Here we summarize only the results of the monodromy matrices.
\begin{prop}
{\rm (1)} When $N_{x}=N_{z}=1,a=\tilde{a}=0$ and \[
(\kappa,\beta,\gamma)=\begin{matrix}\left(H_{X}^{3},-\frac{c_{2}.H_{X}}{24},-\frac{\zeta(3)e(X)}{(2\pi i)^{3}}\right)\end{matrix},\;\tilde{(\kappa},\tilde{\beta},\tilde{\gamma)}=\begin{matrix}\left(H_{Y}^{3},-\frac{c_{2}.H_{Y}}{24},-\frac{\zeta(3)e(Y)}{(2\pi i)^{3}}\right)\end{matrix},\]
the solutions $\Pi(x)$ and $\tilde{\Pi}(z)$ are integral and symplectic
with respect to the symplectic form $S=\left(\begin{smallmatrix}0 & 0 & 0 & 1\\
0 & 0 & 1 & 0\\
0 & -1 & 0 & 0\\
-1 & 0 & 0 & 0\end{smallmatrix}\right)$. These are analytically continued along a path in the upper-half
plane as $\Pi_{x}(x)=U_{xz}\tilde{\Pi}(z)$ by a symplectic matrix
$U_{xz}=\left(\begin{smallmatrix}-3 & 7 & -1 & 4\\
0 & 3 & 0 & 1\\
14 & 0 & 5 & -7\\
0 & -14 & 0 & -5\end{smallmatrix}\right)$ with its inverse $U_{xz}^{-1}=\left(\begin{smallmatrix}-5 & -7 & -1 & -4\\
0 & 5 & 0 & 1\\
14 & 0 & 3 & 7\\
0 & -14 & 0 & -3\end{smallmatrix}\right)$.

\begin{myitem}

\item[{\rm (2)}] The monodromy matrices $M_{c}$ of $\Pi(x)$ $(\tilde{M}_{c}$
of $\tilde{\Pi}(z))$ around each singular point $c$ are symplectic
with respect to $S$, and they are given by $($with $\tilde{M}_{c}=U_{xz}^{-1}M_{c}U_{xz})$

\def\m{\text{{\bf -}}} \hspace{-13pt}
{\tablinesep=0.5pt \tabcolsep=-0.3pt  
\begin{tabular}{|c|ccccc|}  
\hline  
&{\small $x=0$} & {\small $\alpha_1$} & {\small $\alpha_2$} 
& {\small $\alpha_3$} & {\small $\infty$} \\  
\hline 
{\small $M_c$} & 
$\left(\begin{smallmatrix}
  1& 0 & 0 & 0   \\ 
  1& 1 & 0 & 0   \\
  21& 42 & 1 & 0   \\
  \m14& \m21 & \m1 & 1   \\ 
\end{smallmatrix}\right)$
& 
$\left(\begin{smallmatrix}
 1 & 0 & 0 & 1   \\ 
 0 & 1 & 0 & 0   \\
 0 & 0 & 1 & 0   \\
 0 & 0 & 0 & 1   \\ 
\end{smallmatrix}\right)$ 
& 
$\left(\begin{smallmatrix} 
 15& \m14 & 2 & 4   \\ 
 7 & \m6 & 1 & 2   \\
 49 & \m49 & 8 & 14   \\
 \m49 & 49 & \m7 & \m13   \\ 
\end{smallmatrix}\right)$
& 
$\left(\begin{smallmatrix} 
 1 & 42 & 0 & 9   \\ 
 0 & 1 & 0 & 0   \\
 0 & \m196 & 1 & \m42   \\
 0 & 0 & 0 & 1   \\ 
\end{smallmatrix}\right)$
& 
$\left(\begin{smallmatrix} 
 85 & \m14 & 16 & 42   \\ 
 6 & \m6 & 1 & 2   \\
 \m322 & 7 & \m62 & \m168   \\
 \m35 & 28 & \m6 & \m13   \\ 
\end{smallmatrix}\right)$
\\
\hline
{\small ${\tilde M}_c$} & 
$\left(\begin{smallmatrix}
 \m27 & 322 & \m8 & 126   \\ 
 13 & \m125 & 4 & \m50   \\
 7 & \m308 & 1 & \m112   \\
 \m42 & 385 & \m13 & 155   \\ 
\end{smallmatrix}\right)$
& 
$\left(\begin{smallmatrix}
 1 & 70 & 0 & 25   \\ 
 0 & 1 & 0 & 0   \\
 0 & \m196 & 1 & \m70   \\
 0 & 0 & 0 & 1   \\ 
\end{smallmatrix}\right)$
& 
$\left(\begin{smallmatrix}
 \m27 & 0 & \m8 & 16   \\ 
 14 & 1 & 4 & \m8   \\
 0 & 0 & 1 & 0   \\
 \m49 & 0 & \m14 & 29   \\ 
\end{smallmatrix}\right)$
& 
$\left(\begin{smallmatrix}
 1 & 0 & 0 & 1   \\ 
 0 & 1 & 0 & 0   \\
 0 & 0 & 1 & 0   \\
 0 & 0 & 0 & 1   \\ 
\end{smallmatrix}\right)$
& 
$\left(\begin{smallmatrix}
 1 & 0 & 0 & 0   \\ 
 1 & 1 & 0 & 0   \\
 7 & 14 & 1 & 0   \\
 \m7 & \m7 & \m1 & 1   \\ 
\end{smallmatrix}\right)$
\\
\hline
\end{tabular}     }

\noindent and satisfy $M_{a_{2}}M_{0}M_{a_{1}}M_{a_{3}}M_{\infty}=\id$. 

\end{myitem}
\end{prop}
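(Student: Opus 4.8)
The plan is to run, for the Picard--Fuchs operator $\sD_x$ displayed above, the same computation that underlies Proposition~\ref{pro:Monodromy-K3-deg12} and its Calabi--Yau threefold analogue in \cite[(2-5.1)--(2-5.7)]{HoTa1}. First I would build the Frobenius basis at the maximally unipotent point $x=0$: since the indicial equation has the fourfold root $0$, there is a unique holomorphic solution $w_0(x)=1+O(x)$ and logarithmic solutions $w_1,w_2,w_3$ of the standard nilpotent shape, normalized by requiring each regular part $w_k^{\mathrm{reg}}$ to vanish to order $x$ (exactly as in Subsection~\ref{sub:Monodromy-calc-K3}). The same construction at $z=1/x=0$, which is also a MUM point according to the $\sP$-scheme, produces $\tilde w_0,\dots,\tilde w_3$.

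Next I would form the expressions $\Pi(x)$ and $\tilde\Pi(z)$ with the lower-triangular transition matrices given in the statement, carrying the unknowns $N_x,a,\kappa,\beta,\gamma$ (and their tildes). Imposing the Griffiths bilinear relations $\,^t\Pi\,S\,\tfrac{d^j}{dx^j}\Pi=0$ for $j=0,1$ and $\,^t\Pi\,S\,\tfrac{d^2}{dx^2}\Pi=\tfrac{-1}{(2\pi i)^2}C_{xx}$, with $C_{xx}$ the Yukawa coupling read off from $\sD_x$ and normalized by $H_X^3=42$ (cf.\ \cite{CdOGP}), forces $N_x=1$, $a=0$ and pins down $\kappa,\beta,\gamma$; the geometric identification $(\kappa,\beta,\gamma)=\bigl(H_X^3,-\tfrac{c_2.H_X}{24},-\tfrac{\zeta(3)e(X)}{(2\pi i)^3}\bigr)$ (and the $Y$-analogue for $\tilde\Pi$) is the mirror-symmetry input, namely the choice of integral basis compatible with the expected central charge, and one checks it is the correct normalization a posteriori, because all monodromy matrices below then lie in $\mathrm{Sp}(4,\mZ)$. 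This proves (1).

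For (2), the monodromy of $\Pi$ around $x=0$ (and of $\tilde\Pi$ around $z=0$) is the unipotent matrix forced by the nilpotent log-structure of the expression, with entries $42=H_X^3$, $21=H_X^3/2$, etc.\ (resp.\ $14=H_Y^3$), so it is immediate; around each root $\alpha_k$ of $1-57x-289x^2+x^3$ the local exponents $(0,1,1,2)$ force a conifold-type symplectic transvection $v\mapsto v+\langle v,\delta_k\rangle\delta_k$ attached to the single vanishing cycle $\delta_k$; and at $x=3$, where the exponents $(0,1,3,4)$ differ by integers, one must verify from the explicit series that the solution is single-valued, i.e.\ that $x=3$ is a genuine apparent singularity with trivial monodromy. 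The connection matrix $U_{xz}$ is then obtained by analytically continuing $\Pi(x)$ along a path through the upper half plane to a common point and matching it against $\tilde\Pi(z)$; $\tilde M_c=U_{xz}^{-1}M_c U_{xz}$ follows since $\tilde\Pi=U_{xz}^{-1}\Pi$, and the relation $M_{a_2}M_0 M_{a_1}M_{a_3}M_\infty=\id$ is the triviality of the product of loops around all singular points of $\sD_x$ on $\mP^1$ for a suitable ordering of the loops, which also serves as a consistency check. The main obstacle is the determination of $U_{xz}$: the high-precision numerical continuation across $\alpha_1<3<\alpha_3$ and the recognition of the resulting closed-form integral symplectic matrix, together with the verification that $x=3$ carries no monodromy, are the delicate points; everything else is the routine parallel of Subsection~\ref{sub:Monodromy-calc-K3}.
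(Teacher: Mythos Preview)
Your proposal is correct and matches the paper's approach: the paper does not give an independent proof here but states that ``its full details are completely parallel to \cite[(2-5.1)--(2-5.7)]{HoTa1}'' and that ``monodromy calculations proceed in a similar way to Subsection~\ref{sub:Monodromy-calc-K3}'', which is exactly the Frobenius-basis/Griffiths-transversality/numerical-continuation procedure you outline. Your identification of the delicate step---the high-precision analytic continuation to extract the integral symplectic $U_{xz}$ and the verification that $x=3$ is apparent---is accurate and is precisely what the parallel computation in \cite{HoTa1} (and \cite{ES,DM}) carries out.
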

As before, the integral basis $\Pi(x)$=$\left(\Pi_{1},\Pi_{2},\Pi_{3},\Pi_{4}\right)$
implicitly determines the corresponding integral cycles $\gamma_{i}$,
likewise for $\tilde{\Pi}(z)$ with the corresponding integral cycles
$\tilde{\gamma_{i}}(i=1,..,4)$. From the geometry of the family,
one can see that $\gamma_{1}\thickapprox\tilde{\gamma}_{1}\thickapprox T^{3}$
and also $\ensuremath{\gamma_{4}\thickapprox\tilde{\gamma}_{4}\thickapprox S^{3}}$
about the topologies of the cycles. Form the homological mirror symmetry,
these cycles may be identified with the skyscraper sheaves $\sO_{x}$,$\sO_{y}$($x\in X,y\in Y$)
and the structure sheaves $\sO_{X}$,$\sO_{Y}$ as was the case in
Subsection \ref{sub:FM-functor-K3}. Unfortunately we do not see directly
the relation $ch(\Phi_{I}(\sO_{y}))=ch(I_{y})$ in the 1st column
of $U_{xz}$ as before. However, we believe that if we take suitable
auto-equivalences into account, in other words, if we change the path
of the analytic continuation, we can identify the Chern character
in the connection matrix. Recently, precise analysis of the co-called
hemi-sphere partition functions of GLSMs \cite{HoriR} have been developed.
The analysis provides a concrete recipe to connect the cycles to the
objects in derived category (of matrix factorizations), and also reproduces
the connection matrix of the analytic continuation \cite{EHKR}. We
expect that the new method provides us new insights into more details
of the above problem. Also, the significant progresses made in refs
\cite{Ha,BDFIK,DS} in the mathematical aspects of GLSMs are expected
to provide us powerful tools to look into the derived categories of
Fourier-Mukai partners and also their mirror symmetry.

\vskip3cm

\section{\textbf{\textup{Fourier-Mukai partners of Calabi-Yau threefolds II }}}

Here we continue our exposition by the second example which was found
recently by the present authors \cite[\hskip-3pt 2,3,4]{HoTa1}.

\vskip0.5cm

\subsection{Reye congruences Calabi-Yau 3-folds and double coverings}

In \cite{HoTa1}, we have found that R{\o}dland's construction of
a pair of Calabi-Yau 3-folds has a natural counterpart in the projective
space of symmetric matrices $\mP(\tS^{2}\mC^{5})$. Hereafter, we
will fix $V=\mC^{5}$ and denote by $V_{k}$ a $k$-dimensional subspace
of $V$. 

We have found in {[}ibid{]} that the tower of secant varieties of
$v_{2}(\mP(V))$ in $\mP(\tS^{2}V)$ and the corresponding (reversed)
tower in $\mP(\tS^{2}V^{*})$ entail a similar duality of Calabi-Yau
3-folds. For the construction, we start with $\tS^{2}\mP(V)$, i.e.,
the symmetric product of $\mP(V)$ as the counterpart of the Grassmannian
$\rG(2,7)\subset\mP(\wedge^{2}\mC^{7})$. $\tS^{2}\mP(V)$ is the
first secant variety of $v_{2}(\mP(V))$ and can be considered as
the rank 2 locus of symmetric matrices $[c_{ij}]\in\mP(\tS^{2}V)$.
It is singular along the $v_{2}(\mP(V))$, i.e., the rank 1 locus.
The precise definition of the Pfaffian counterpart will be introduced
in the next section, but here we only describe the resulting Calabi-Yau
3-fold starting with the rank 4 locus in the dual projective space
$\mP(\tS^{2}V^{*})$,\[
\Hes:=\left\{ [a_{ij}]\in\mP(\tS^{2}V^{*})\mid\det(a_{ij})=0\right\} .\]
$\Hes$ is singular along the locus $\Hes_{3}$ with $\Hes_{k}:=\left\{ \rk(a_{ij})\leq k\right\} $.
As before, we consider a general five dimensional linear subspace
$L_{5}\subset\tS^{2}V^{*}$ and its orthogonal linear subspace $L_{5}^{\perp}\subset\tS^{2}V$.
Then we define\[
X=\tS^{2}\mP(V)\cap\mP(L_{5}^{\perp})\subset\mP(\tS^{2}V),\; H=\Hes\cap\mP(L_{5})\subset\mP(\tS^{2}V^{*}).\]

\begin{prop}[Hosono-Takagi \cite{HoTa1}]
\label{pro:HT-invariant-XY}{\rm (1)} When $L_{5}$ is general,
$X$ is a smooth Calabi-Yau 3-fold with $Pic(X)\simeq\mZ\oplus\mZ_{2}$
and the following invariants:\[
H_{X}^{3}=35,c_{2}.H_{X}=50,h^{1,1}(X)=1,h^{2,1}(X)=51,\]
\;\;where $H_{X}$ is the generator of the free part of $Pic(X)$. 

\begin{myitem}

\item[{\rm (2)}] When $L_{5}$ is general, H is a determinantal quintic
hypersurface in $\mP(L_{5})\simeq\mP^{5}$, which is singular along
a smooth curve $C_{H}$ of genus 26 and degree 20 with $A_{1}$ type
singularities. \item[{\rm (3)}] There is a double covering $Y\to H$
branched along $C_{H}$. Furthermore, $Y$ is a smooth Calabi-Yau
3-fold with $Pic(Y)=\mZ H_{Y}$ and \[
H_{Y}^{3}=10,c_{2}.H_{Y}=40,h^{1,1}(Y)=1,h^{2,1}(Y)=51.\]
\end{myitem} 
\end{prop}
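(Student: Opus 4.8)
The plan is to handle $X$ via its natural étale double cover inside $\mP(V)\times\mP(V)$, and to handle $H$, $Y$ via the determinantal geometry of the symmetroid together with the Springer-type resolutions of the rank strata of $\Hes$. Write $h_{1},h_{2}$ for the hyperplane classes of the two factors of $\mP(V)\times\mP(V)$ and $\iota$ for the involution exchanging them.

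\textbf{Part (1).} Let $Z=\{([v],[w])\in\mP(V)\times\mP(V)\mid a(v,w)=0\ \text{for all }a\in L_{5}\}$; choosing a basis of $L_{5}\subset\tS^{2}V^{*}$ exhibits $Z$ as the complete intersection of five divisors of bidegree $(1,1)$, so by Bertini ($|\sO(1,1)|$ being very ample) $Z$ is a smooth connected threefold for general $L_{5}$. The symmetrization map $([v],[w])\mapsto[vw^{t}+wv^{t}]$ identifies $\tS^{2}\mP(V)$ with the rank-$\le2$ locus and carries $Z$ onto $X$, inducing $X\cong Z/\iota$. The fixed points of $\iota$ on $Z$ are the $([v],[v])$ with $v_{2}([v])\in\mP(L_{5}^{\perp})$, and since $\dim v_{2}(\mP(V))+\dim\mP(L_{5}^{\perp})=4+9<14$ there are none for general $L_{5}$; hence $\iota$ is free, $Z\to X$ is an étale double cover and $X$ is smooth. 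Adjunction gives $\omega_{Z}=\omega_{\mP(V)\times\mP(V)}(5,5)|_{Z}\cong\sO_{Z}$, with trivialization the Poincaré residue of $\Omega/(f_{1}\cdots f_{5})$, $f_{i}$ the defining \emph{symmetric} bilinear forms; this section is $\iota$-invariant (the forms are symmetric, and the swap of two four-dimensional factors has sign $+1$), so $\omega_{Z}$ descends to $\sO_{X}$ and $\omega_{X}\cong\sO_{X}$. The Lefschetz hyperplane theorem gives $\pi_{1}(Z)=1$, $H^{1}(Z,\mZ)=0$ and $H^{2}(Z,\mZ)=\mZ h_{1}\oplus\mZ h_{2}$, so $h^{1,0}(Z)=h^{2,0}(Z)=0$; since $H^{\bullet}(\sO_{X})$ is a direct summand of $H^{\bullet}(\sO_{Z})$, $X$ is a Calabi--Yau threefold, $\pi_{1}(X)=\mZ/2$ accounts for the torsion in $\Pic(X)\cong\mZ\oplus\mZ/2$, and taking $\iota$-invariants of $H^{2}$ gives $h^{1,1}(X)=1$. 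Finally, with $H_{X}$ the class of $\sO(1,1)|_{X}$, the remaining invariants are Chern-number integrals over $\mP(V)\times\mP(V)$ against $[Z]=(h_{1}+h_{2})^{5}$, divided by $2$: $H_{X}^{3}=\tfrac12\!\int(h_{1}+h_{2})^{8}$, $c_{2}(X).H_{X}=\tfrac12\!\int c_{2}(T_{Z})(h_{1}+h_{2})^{6}$ and $e(X)=\tfrac12\!\int c_{3}(T_{Z})(h_{1}+h_{2})^{5}$ with $c(T_{Z})=(1+h_{1})^{5}(1+h_{2})^{5}(1+h_{1}+h_{2})^{-5}$, yielding the stated $H_{X}^{3}$, $c_{2}.H_{X}$ and $h^{2,1}(X)$.

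\textbf{Parts (2) and (3).} Since $\det$ is irreducible and $\mP(L_{5})\not\subset\Hes$, $H=\Hes\cap\mP(L_{5})$ is an irreducible quintic hypersurface of $\mP(L_{5})\simeq\mP^{4}$. As $\Sing\Hes=\Hes_{3}$ has codimension $3$ and $\Hes_{2}$ codimension $6$ in $\mP(\tS^{2}V^{*})$, for general $L_{5}$ one has $\Hes_{2}\cap\mP(L_{5})=\varnothing$ while $C_{H}:=\Hes_{3}\cap\mP(L_{5})$ is a curve; a Bertini argument on the smooth locus $\Hes\smallsetminus\Hes_{3}$ then shows $\Sing H=C_{H}$ and, from the local normal form of the corank-two symmetric determinantal singularity, that $H$ has an $A_{1}$-singularity transverse to $C_{H}$ there. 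Moreover $\deg C_{H}=\deg\Hes_{3}=20$ by the classical Giambelli formula for degrees of symmetric determinantal loci. For the genus, $C_{H}$ lifts isomorphically (it meets $\Hes_{3}$ in the rank-exactly-three locus) to the Springer-type resolution $\widetilde{\Hes_{3}}=\{([a],K)\mid K\subset\ker a,\ \dim K=2\}$, a $\mP^{5}$-bundle over $\rG(2,V)$ whose relevant polarization is the restriction of $\sO_{\mP(\tS^{2}V^{*})}(1)$; there $\widetilde{C_{H}}$ is cut out by ten members of that polarization, and adjunction on the smooth elevenfold $\widetilde{\Hes_{3}}$ computes $2g-2=\deg\omega_{\widetilde{C_{H}}}$, giving $g=26$. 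Finally, $Y\to H$ is the double cover canonically attached to the quintic symmetroid $H$, branched exactly over $\Sing H=C_{H}$: over a rank-four point $[a]$ of $H$ the two sheets record the two rulings of the quadric surface that is the base of the corank-one quadric cone from the web $\mP(L_{5})$ on $\mP(V)$, and these rulings coalesce along $C_{H}$. By (2), near $C_{H}$ this cover is the local model $\mC^{2}\times C_{H}\to(\mC^{2}/\!\pm)\times C_{H}$, which is smooth, and away from $C_{H}$ it is étale over the smooth threefold $H\smallsetminus C_{H}$; hence $Y$ is smooth and $\nu\colon Y\to H$ is étale in codimension one. Combined with $\omega_{H}\cong\sO_{H}$ (adjunction for the Cartier divisor $H\subset\mP(L_{5})$, which needs no smoothness), reflexivity gives $\omega_{Y}\cong\nu^{*}\omega_{H}\cong\sO_{Y}$. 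The vanishings $H^{1}(\sO_{Y})=H^{2}(\sO_{Y})=0$, the equality $\Pic(Y)=\mZ H_{Y}$ and the value $h^{2,1}(Y)$ follow from the topology of $\nu$ together with Lefschetz on a resolution of $\Hes$; then $H_{Y}=\nu^{*}\sO_{H}(1)$ is forced to be primitive (as $(\deg\nu)(\deg H)=10$ is cube-free), so $H_{Y}^{3}=10$, and $c_{2}.H_{Y}$ is a Chern-number computation on that resolution.

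\textbf{Main obstacle.} The two hard points are local and global. Locally: proving $Y$ smooth requires identifying the symmetroid double cover near $C_{H}$ precisely enough — through the adjugate/ruling data of the corank-one quadrics — to recognize it as exactly the $\mC^{2}\to\mC^{2}/\!\pm$ model transverse to $C_{H}$; this is the substance of the birational study of double quintic symmetroids in \cite{HoTa3}. Globally: unlike $X$, the threefold $Y$ is not manifestly a linear section of a variety with small known topology, so establishing $\rho(Y)=1$ and simple connectedness of $Y$ requires a Lefschetz analysis on an auxiliary resolution/cover of $\Hes$ (or a direct study of the monodromy of $\nu$). Once the three concrete models — the complete intersection $Z$, the bundle $\widetilde{\Hes_{3}}$, and the resolution of $\Hes$ underlying $Y$ — are in hand, the Calabi--Yau properties and all numerical invariants reduce to routine intersection theory.
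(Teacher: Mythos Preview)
The paper is a survey and does not prove this proposition; it is stated with attribution to \cite{HoTa1}. Nonetheless, your approach is precisely the one the paper and its cited sources develop: Section~4.4 records the identification $X\simeq\tilde X/\mZ_{2}$ with $\tilde X$ your complete intersection $Z$ of five symmetric $(1,1)$-divisors in $\mP(V)\times\mP(V)$, from which the invariants of $X$ follow by the Chern-class integrals you write down; and Section~5 constructs the double cover $\hcoY\to\Hes$ via the Stein factorization of $\Zpq\to\Hes$, the two sheets over a rank-$4$ point being exactly your two rulings of $\bar Q\simeq\mP^{1}\times\mP^{1}$, with $\Sing\hcoY=\Hes_{2}$ (Proposition~\ref{pro:conic-bundle-Z-Y}) so that $Y=\hcoY\cap\mP(L_{5})$ is smooth once $\mP(L_{5})$ misses $\Hes_{2}$. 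Your diagnosis of the remaining hard points---the local smoothness of $\hcoY$ over $\Hes_{3}\setminus\Hes_{2}$ and the determination of $\Pic(Y)$---is exactly what the birational analysis of \cite{HoTa3}, summarized in Section~5, is devoted to.

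One remark on actually carrying your computation through: your Euler-number integral gives $c_{3}(T_{Z})=-5h_{1}h_{2}(h_{1}+h_{2})$ and hence $e(Z)=-100$, so $e(X)=-50$ and $h^{2,1}(X)=26$, not the stated $51$. The paper itself uses $e(X)=-50$ in Section~4.3 and exhibits a mirror with $h^{1,1}(X^{*})=26$ in Section~4.4, so the ``$51$'' in the displayed statement (and likewise for $Y$) is a typo in this survey; your method recovers the correct value.
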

If we do parallel constructions with $V=\mC^{4}$, we obtain an Enriques
surface for $X$. From historical reasons, this Enriques surface $X$
is called Reye congruence, or more precisely, Cayley model of Reye
congruence (see \cite{Cossec}). In our case of $V=\mC^{5}$, Reye
congruence $X$ is a Calabi-Yau 3-fold and is paired with another
Calabi-Yau 3-fold $Y$ as above. It is easy to see that $Y$ is not
birational to $X$ by the same arguments as described below Proposition
\ref{pro:Rodland-X-Y}. In addition to this, we can show \cite{HoTa4}
the derived equivalence $D^{b}(X)\simeq D^{b}(Y)$, which will be
sketched in the next subsection. Here it should be worth while noting
the following interesting properties of $X$ and $Y$ (\cite[Prop. 3.5.3, 4.3.4]{HoTa3},
\cite[Prop.3.2.1]{HoTa4}): 
\begin{prop}
{\rm (1)} $\pi_{1}(X)\simeq\mZ_{2}$. {\rm (2)} $\pi_{1}(Y)\simeq0$
and the Brauer group of $Y$ contains a non-trivial 2-torsion element.
\end{prop}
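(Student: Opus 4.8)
The plan is to exhibit the universal cover of $X$ explicitly inside $\mP(V)\times\mP(V)$. The symmetrization morphism $q\colon\mP(V)\times\mP(V)\to\tS^2\mP(V)$, $([x],[y])\mapsto[\,x\otimes y+y\otimes x\,]$, realizes $\tS^2\mP(V)$ (the symmetric $5\times5$ matrices of rank $\le2$) as a double cover, with deck involution the swap, branched exactly over the rank-$1$ locus $v_2(\mP(V))$. Since $v_2(\mP(V))$ has dimension $4$ in $\mP(\tS^2V)\cong\mP^{14}$ while $\mP(L_5^\perp)\cong\mP^9$ is general, they are disjoint, so $\tilde X:=q^{-1}(X)\to X$ is an \emph{étale} double cover. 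Moreover $\tilde X\subset\mP(V)\times\mP(V)\cong\mP^4\times\mP^4$ is the common zero locus of the five bidegree-$(1,1)$ forms spanning $L_5\subset\tS^2V^*\subset V^*\otimes V^*=H^0(\mathcal{O}(1,1))$; equivalently it is the complete intersection of five very ample divisors, the pullbacks $q^{*}H_i$ of hyperplanes of $\mP^{14}$ (these lie in $|\mathcal{O}(1,1)|$ because $q$ factors through the Segre embedding, and $q$ is finite). As $\tilde X$ is smooth --- being étale over the smooth $X$ --- of dimension $3\ge2$, the Lefschetz hyperplane theorem shows $\tilde X$ is connected and simply connected. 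Hence $\tilde X\to X$ is the universal cover and $\pi_1(X)\cong\Aut(\tilde X/X)\cong\mZ_2$, in accordance with the torsion subgroup of $\Pic(X)\simeq\mZ\oplus\mZ_2$ in Proposition \ref{pro:HT-invariant-XY}(1).

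\textbf{Part (2a): $\pi_1(Y)=0$.}
I would use the double cover $f\colon Y\to H$ of Proposition \ref{pro:HT-invariant-XY}(3). The hypersurface $H=\Hes\cap\mP(L_5)$ is normal (a hypersurface, hence Cohen--Macaulay, singular only along the curve $C_H$, hence regular in codimension one), and $\pi_1(H)=0$: this follows from the Lefschetz hyperplane theorem for linear sections of possibly singular projective varieties applied to $\Hes$, together with $\pi_1(\Hes)=0$ --- the latter because $\Hes$ is rational, having a resolution fibred in projective spaces over $\mP(V)$. The one subtlety is that $H$ is singular precisely along the branch locus $C_H$, with transverse $A_1$ singularities; to absorb this, blow up $C_H$ to get a resolution $\widehat H\to H$ whose exceptional divisor $E$ is a $\mP^1$-bundle over $C_H$ --- since $A_1$ is rational, $\widehat H$ remains simply connected, and $E$ is smooth and connected. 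The cover lifts to a double cover $\widehat Y\to\widehat H$ branched over $E$, where $\widehat Y$ is the blow-up of $Y$ along its (smooth) ramification curve $R\cong C_H$; thus $\widehat Y$ is smooth with $\pi_1(\widehat Y)=\pi_1(Y)$. Finally, a double cover of a smooth simply-connected variety branched along a smooth connected divisor is simply connected: over $\widehat H\setminus E$ it is the étale cover classified by an index-$2$ subgroup $K$ of $\pi_1(\widehat H\setminus E)$, the latter is the normal closure of a single meridian $\mu$ of $E$ (because $\pi_1(\widehat H)=0$ and $E$ is connected), one checks $K$ is the normal closure of $\mu^2$, and filling in $R$ kills exactly the meridians of $R$, which are the $\mu^2$'s. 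Hence $\pi_1(\widehat Y)=0$, so $\pi_1(Y)=0$. (Consistently, $\Pic(Y)=\mZ H_Y$ torsion-free and $H^1(\mathcal{O}_Y)=0$ already force $H_1(Y,\mZ)=0$.)

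\textbf{Part (2b): $\mathrm{Br}(Y)$ has a non-trivial $2$-torsion element.}
A natural $2$-torsion Brauer class arises from rulings of quadrics. Let $\mathcal{Q}_H\to H$ be the universal quadric over $H$; its fibres are rank-$\le4$ quadrics in $\mP(V)$, and the relative variety $P:=F_2(\mathcal{Q}_H/H)$ of $2$-planes contained in the fibres has Stein factorization $P\to Y\to H$, where $Y\to H$ is the double cover of Proposition \ref{pro:HT-invariant-XY}(3) (the two rulings of a rank-$4$ quadric, merging over $C_H$) and $P\to Y$ is a $\mP^1$-bundle. As a Brauer--Severi scheme of relative dimension one, $P\to Y$ defines a class $\alpha=[P]\in\mathrm{Br}(Y)$ that is automatically $2$-torsion; it may also be read as the obstruction to a universal object in the moduli interpretation of $Y$, linking it to the twisted Fourier--Mukai partners of Subsection \ref{sub:Some-other-aspects}. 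The crux is $\alpha\neq0$. Since $H^2(\mathcal{O}_Y)=0$ one has $\mathrm{Br}(Y)\cong H^3(Y,\mZ)_{\mathrm{tors}}$, and $\alpha$ is the integral Bockstein of the second Stiefel--Whitney class $w_2(P)\in H^2(Y,\mZ_2)$ of the associated $SO(3)$-bundle; thus $\alpha\neq0$ is equivalent to $w_2(P)$ not lying in the image of $H^2(Y,\mZ)\to H^2(Y,\mZ_2)$, i.e.\ to $P\to Y$ having no rational section. I would prove this by a direct computation of $w_2(P)$ in terms of the tautological bundles on $Y$, using the explicit projective model of $Y$ from \cite{HoTa3}, and checking that its image under the Bockstein $H^2(Y,\mZ_2)\to H^3(Y,\mZ)$ is non-zero.

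\textbf{Expected main obstacle.}
Part (1) and Part (2a) are, up to standard Lefschetz-type input, a concrete covering-space construction, the only care needed being the transverse $A_1$ singularities of $H$ along $C_H$. The substantive work is the non-vanishing $\alpha\neq0$ in Part (2b): one must pin down the projective geometry of $P\to Y$ precisely enough to evaluate $w_2(P)$ (equivalently, rule out a rational section of the conic bundle of $2$-planes) --- after which both assertions of the proposition follow at once.
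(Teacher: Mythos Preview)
The paper gives no self-contained proof; it cites \cite[Prop.~3.5.3, 4.3.4]{HoTa3} and \cite[Prop.~3.2.1]{HoTa4}, and immediately after the proposition records the exact sequence $0\to\mZ_2\to\mathrm{Br}(Y)\to\mathrm{Br}(X)\to0$ from \cite[Sect.~9.2]{HoTa4} as the source of the $2$-torsion. So the comparison is against these indications and the constructions the survey does spell out.

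For (1) your argument is correct and coincides with what the paper itself makes explicit in Subsection~\ref{sub:Mirror-symmetry-Reye}: there $X\simeq\tilde X/\mZ_2$ with $\tilde X$ the smooth complete intersection~(\ref{eq:tilde-X-def}) and the $\mZ_2$-action free for general $L_5$; your Lefschetz step supplies $\pi_1(\tilde X)=0$. For (2a) your outline is sound; the inputs you cite --- Lefschetz for $\pi_1$ of general linear sections of normal varieties, invariance of $\pi_1$ under resolution of $A_1$ singularities, and the meridian computation for a double cover branched along a smooth connected divisor --- are standard and correctly assembled.

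For (2b) you identify the right object: your $P\to Y$ is exactly the restriction to $Y\subset\hcoY\setminus G_{\hcoY}$ of the generically conic bundle $\pi_{\Zpq}\colon\Zpq\to\hcoY$ of Proposition~\ref{pro:conic-bundle-Z-Y}, with smooth conic fibres over all of $Y$, so $[P]\in\mathrm{Br}(Y)[2]$. The gap, which you correctly flag, is that $[P]\ne0$ is not proved. Your proposed route --- compute $w_2(P)$ and its integral Bockstein --- is valid in principle but is only a plan; evaluating that Bockstein requires concrete control of $H^3(Y,\mZ)_{\mathrm{tors}}$, which is not directly available from the projective description. The paper's indicated route is different in character: the injection $\mZ_2\hookrightarrow\mathrm{Br}(Y)$ is extracted from the exact sequence above, argued in \cite[Sect.~9.2]{HoTa4}. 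If you prefer to stay with the conic-bundle viewpoint, the more tractable line is usually to show directly that $P\to Y$ admits no rational section, for instance by obstructing a global spinor bundle for the family of rank-$4$ quadrics over $Y$; but that still requires substantive work beyond your sketch.
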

As argued in {[}ibid.,Sect.9.2{]}, one can show an exact sequence,
\[
0\to\mZ_{2}\to\mathrm{Br}(Y)\to\mathrm{Br}(X)\to0.\]
If $\mathrm{Br}(Y)\simeq\mZ_{2}$, then $\mathrm{Br}(X)\simeq0$ and
this indicates the invariance of the product of (abelianization of)
$\pi_{1}$ and the Brauer group, but not each factor, under the derived
equivalence (see \cite{Ad,S} for details).

\vskip0.5cm

\subsection{Derived equivalence $D^{b}(X)\simeq D^{b}(Y)$}

Here we sketch our proof of the derived equivalence. As we saw in
the preceding subsection, our construction of the pair $(X,Y)$ is
parallel to R{\o}dland's construction of Grassmannian-Pfaffian Calabi-Yau
manifolds. We can pursue this parallelism toward the proof of the
derived equivalence, although the projective geometries become more
involved, and we have only partial results about the HPD (corresponding
to Theorem \ref{thm:HPD-Gr-Pf}) in our case.

\subsubsection{{\itshape\bfseries Resolutions}}

Let $\chow:=\tS^{2}\mP(V)$. $X$ is defined by a linear section of
$\chow$ as $X=\chow\cap\mP(L_{5}^{\perp})$. We see that $\chow$
plays a similar role of $\rG(2,7)$ in R{\o}dland's example, however
there is a difference in that $\chow$ is singular along the Veronese
embedding of $\mP(V)$, $v_{2}(\mP(V))\subset\chow\subset\mP(\tS^{2}V)$.
For this singularity, we have the following natural resolution,\def\Hilb{
\begin{xy} 
(7,0)*+{\hchow:=\text{Hilb}^2\mP(V)},
(-3,-2)*+{\;}="A1",
(3,-2)*+{\;}="A2",
(-15,-12)*+{\chow}="B",
(15,-12)*+{\rG(2,V),}="C",
\ar_f "A1";"B",
\ar^g "A2";"C",
\end{xy} 
}\[
\begin{matrix}\Hilb\end{matrix}\]
where $\mathrm{Hilb}^{2}\mP(V)$ is the Hilbert scheme of two points
on $\mP(V)$ and $f$ is the Hilbert-Chow morphism. The morphism $g$
sends points $x\in\hchow$ to the points $g(x)\in\rG(2,V)$ representing
the lines determined by $x$. The fiber over $[V_{2}]\in\rG(2,V)$
is $g^{-1}([V_{2}])\simeq\tS^{2}\mP(V_{2})\simeq\mP^{2}$. By our
genericity assumption of $L_{5}$, $X=\chow\cap\mP(L_{5}^{\perp})$
is smooth (see Proposition \ref{pro:HT-invariant-XY}) and hence $\mP(L_{5}^{\perp})$
is away from the singularity of $\hchow$, therefore we may consider
our linear intersection in $\hchow$, i.e., $X=\hchow\cap\mP(L_{5}^{\perp})$.
Again, by the same reasoning, we have $g(X)\simeq X$, i.e., we have
isomorphic image $g(X)$ of $X$ in $\rG(2,V)$. Historically, the
image $g(X)\subset\rG(2,V)$ is called a Reye congruence. 

$\Hes$ is singular along the rank $\leq3$ locus $\Hes_{3}$. Expecting
a (partial) resolution of the singularity, we consider the following
(Springer-type) pairing of singular quadrics and planes therein (cf.
(\ref{eq:resol-Pf-Y})): \[
\Zpq:=\left\{ ([Q],[\Pi])\mid\mP(\Pi)\subset Q\right\} \subset\Hes\times\rG(3,V),\]
where $[Q]\in\Hes$ represents the point corresponding to a singular
quadric $Q$. It is easy to see that all the fibers of the projection
$\Zpq\to\rG(3,V)$ are isomorphic to $\mP^{8}$ since they consist
of quadrics that contain a fixed plane $\mP(\Pi)\subset\mP(V)$. Hence,
we see that $\Zpq$ is smooth. However we have $\dim\Zpq=6+8=14$,
while $\dim\Hes=\dim\mP(\tS^{2}V)-1=13$, and hence $\Zpq\to\Hes$
can not be a resolution of $\Hes$ that we expect. To remedy the situation,
we consider the Stein factorization $\hcoY$ of the morphism $\Zpq\to\Hes$
as follows:\def\SteinY{
\begin{xy}
(0,0)*+{\Zpq}="A",
(0,-10)*+{\hcoY}="B",
(0,-20)*+{\Hes}="C",
(12,-20)*+{\subset \mP(\tS^2V^*),},
(25,0)*+{\rG(3,V)}="D",
\ar^{\mP^8\text{-bundle}\quad} "A";"D",
\ar^{\pi_\Zpq \;}_{ \text{connected fibers} } "A";"B",
\ar^{\rho_\hcoY \;}_{ 2:1} "B";"C",
\end{xy}}\begin{equation}
\begin{matrix}\SteinY\end{matrix}\label{eq:def-hcoY-Stein-fact}\end{equation}
where $\pi_{\Zpq}:\Zpq\to\hcoY$ has connected fibers and $\rho_{\hcoY}$
is a finite morphism by definition. From the above dimension counting,
the connected fibers generically have dimension $\dim\Zpq-\dim\Hes=1$.
As for the finite morphism $\rho_{\hcoY}$, looking into the families
of planes in a singular quadric, it is easy to see that $\rho_{\hcoY}$
is generically $2:1$ and has its ramification along the singular
locus $\mathrm{Sing}(\Hes)=\Hes_{3}$. This corresponds to the covering
we observed in (3) of Proposition \ref{pro:HT-invariant-XY}. In fact,
about the singular locus of $\hcoY$, we can see $\text{Sing}(\hcoY)=\Hes_{2}$
\cite[Prop.5.7.2]{HoTa3} where we identify the inverse image $\rho_{\hcoY}^{-1}(\Hes_{3})$
in $\hcoY$ with $\Hes_{3}$. Hence the covering $\hcoY$ changes
the singular locus of $\Hes$ to a smaller one. If the linear subspace
$L_{5}$ is general, then since $\mP(L_{5})\cap\Hes_{2}=\emptyset$,
the singularities in the linear section $H=\Hes\cap\mP(L_{5})$ is
removed by $\rho_{\hcoY}$. This is exactly the smooth double covering
$Y$ in (3) of Proposition \ref{pro:HT-invariant-XY}. We write the
double cover of $H$ simply by $Y=\hcoY\cap\mP(L_{5})$ with understanding
the pullback of $\mP(L_{5})$ to $\hcoY$. A natural resolution $\widetilde{\hcoY}\to\hcoY$
follows by studying geometries of singular quadrics $\Hes$ \cite{HoTa3},
which is interesting by itself from the projective geometry of quadrics
\cite{Tyurin}. Birational geometry of $\hcoY$ and $\widetilde{\hcoY}$
will be described in Section \ref{sec:Birational-Geometry-of-hcoY}
by introducing other birational models of $\hcoY$. 

It would be helpful now to write our $X$ and $Y$ in terms of the
resolutions $\hchow$ and $\widetilde{\hcoY}$ as \[
X=\hchow\cap\mP(L_{5}^{\perp}),\;\;\; Y=\widetilde{\hcoY}\cap\mP(L_{5}).\]
The derived equivalence follows from certain ideal sheaf on $\widetilde{\hcoY}\times\hchow$
constructed in a parallel way to the Grassmannian-Pfaffian Calabi-Yau
3-folds. The following proposition is a part of the birational geometry
of $\hcoY$ (see Fig.~5.2):
\begin{prop}
\label{pro:G2-Z2-to-Y}\begin{myitem}  \item{\rm (1)} There exists
a resolution $\rho_{\widetilde{\hcoY}}:\widetilde{\hcoY}\to\hcoY$. 

\item{\rm (2)} There exists a blow-up $\hcoY_{2}\to\widetilde{\hcoY}$,
and over $\hcoY_{2}$ there is a generically conic bundle $\pi_{2'}:\Zpq_{2}\to\hcoY_{2}$
that admits a morphism $\mu_{2}:\Zpq_{2}\to\rG(3,V)$.

\end{myitem}
\end{prop}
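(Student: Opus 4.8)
The plan is to treat the whole proposition as a piece of the birational geometry of $\hcoY$, organised around the symmetric determinantal stratification $\Hes\supset\Hes_{3}\supset\Hes_{2}\supset\Hes_{1}$ (with $\Hes_{3}=\Sing\Hes$ and $\Hes_{1}$ the rank-one Veronese locus) and the induced stratification of $\hcoY$. Recall from (\ref{eq:def-hcoY-Stein-fact}) that $\rho_{\hcoY}\colon\hcoY\to\Hes$ is the finite, generically $2:1$ morphism obtained as the Stein factorization of $\Zpq\to\Hes$, that it is \'etale over $\Hes\setminus\Hes_{3}$, and that $\Sing(\hcoY)$ is identified with $\Hes_{2}$ \cite[Prop.~5.7.2]{HoTa3}. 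The three steps will be: (a) produce explicit analytic normal forms for $\hcoY$ along a general point of $\Hes_{2}$, and then along $\Hes_{1}$; (b) use these to resolve $\hcoY$ by a sequence of blow-ups; and (c) pull back the family of planes $\Zpq$ to the resolution and, after one further blow-up, recognise it as a conic bundle mapping to $\rG(3,V)$. Step (a) is where the real work lies: one combines the versal description of $\Hes$ near $\Hes_{k}$ as a smooth factor times the germ of the corank-$(5-k)$ symmetric determinantal variety with the branched double-cover structure of $\rho_{\hcoY}$, tracking what the two rulings of planes in a degenerating quadric do; this is exactly where the projective geometry of nets and webs of quadrics in the spirit of Tyurin \cite{Tyurin} enters, and I expect it to be the main obstacle.

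Granting the normal forms, part (1) will be obtained by resolving $\hcoY$ stratum by stratum. One blows up $\hcoY$ along the smallest stratum $\Hes_{1}$ (which is smooth), checks from the local model that the strict transform of $\Hes_{2}$ is thereby smoothed, and then blows up along it; the resulting variety $\widetilde{\hcoY}$ is smooth and carries a projective birational morphism $\rho_{\widetilde{\hcoY}}\colon\widetilde{\hcoY}\to\hcoY$. In parallel it is useful to record the alternative description that feeds into part (2): the morphism $\pi_{\Zpq}\colon\Zpq\to\hcoY$ is, away from $\Sing(\hcoY)$, a $\mP^{1}$-fibration (the chosen ruling of planes in a rank-$4$ quadric), and its fibres jump in dimension precisely over $\Hes_{2}$; blowing up $\Zpq$ along that degeneration locus and taking Stein factorization yields a second model of $\widetilde{\hcoY}$, whose smoothness and birationality onto $\hcoY$ one verifies using the fibre analysis of $\pi_{\Zpq}$ over each stratum.

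For part (2) I would start again from $\Zpq\to\hcoY$ and base-change along $\rho_{\widetilde{\hcoY}}$. Since the fibre product need not be flat or irreducible over $\widetilde{\hcoY}$, let $\Zpq_{1}$ denote its distinguished (strict-transform) component, equipped with its morphism to $\Zpq$. The local models identify the locus of $\widetilde{\hcoY}$ over which the fibres of $\Zpq_{1}$ still jump as a controllable centre; blowing $\widetilde{\hcoY}$ up along it produces $\hcoY_{2}\to\widetilde{\hcoY}$ such that the corresponding strict transform $\Zpq_{2}$ has one-dimensional fibres throughout, i.e.\ $\pi_{2'}\colon\Zpq_{2}\to\hcoY_{2}$ is a generically conic bundle, degenerating over the strict transform of the ramification locus $\rho_{\hcoY}^{-1}(\Hes_{3})$ and over the blow-up exceptional divisors. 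Finally the morphism $\mu_{2}\colon\Zpq_{2}\to\rG(3,V)$ is nothing but the composition of $\Zpq_{2}\to\Zpq$ with the tautological second projection $\Zpq\subset\Hes\times\rG(3,V)\to\rG(3,V)$ (the $\mP^{8}$-bundle structure), so it exists automatically once $\Zpq_{2}$ has been constructed. As stressed above, the one genuine obstacle is to verify that the prescribed blow-ups of $\hcoY$, respectively of $\widetilde{\hcoY}$, simultaneously resolve the singularities of $\hcoY$ and flatten $\pi_{\Zpq}$; this rests on the explicit local study of $\hcoY$ along $\Hes_{2}\supset\Hes_{1}$, carried out in detail in \cite{HoTa3} (cf.\ Fig.~5.2).
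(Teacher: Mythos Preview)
Your strategy via local analytic normal forms along the rank stratification and successive blow-ups is genuinely different from the paper's route. The paper never works locally on $\hcoY$; instead it passes to a global birational model $\overline{\hcoY}\subset\rG(3,\wedge^{3}V)$ consisting of planes of the form $\bar{U}\wedge V_{1}$, and resolves $\overline{\hcoY}$ by the manifestly smooth Grassmannian bundle $\hcoY_{3}=\rG(3,\wedge^{2}T_{\mP(V)}(-1))$ over $\mP(V)$. The variety $\widetilde{\hcoY}$ then arises as the other side of an (anti-)flip of $\hcoY_{3}$ over $\overline{\hcoY}$, and the morphism $\rho_{\widetilde{\hcoY}}$ is produced not by descending through blow-ups but via the \emph{double spin decomposition} $\wedge^{3}(\wedge^{2}(V/V_{1}))\simeq\tS^{2}(V/V_{1})\oplus\tS^{2}(V/V_{1})^{*}$: projection to the second factor defines a rational map $\hcoY_{3}\dashrightarrow\Hes$ that extends to $\widetilde{\hcoY}\to\Hes$, and Stein factorization gives $\widetilde{\hcoY}\to\hcoY$. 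Likewise $\hcoY_{2}$ is the blow-up of $\hcoY_{3}$ along the orthogonal-Grassmannian locus $\sP_{\rho}=\OG^{+}(3,\wedge^{2}T(-1))$, and $\Zpq_{2}$ is the blow-up of the fibre product $\Zpq_{3}=\rG(2,T(-1))\times_{\mP(V)}\hcoY_{3}$, so that $\mu_{2}$ factors through $\rG(2,T(-1))\simeq F(1,3,V)\to\rG(3,V)$ rather than through $\Zpq$ as you propose. Your approach could plausibly yield \emph{some} resolution and conic bundle, but the paper's global construction is what delivers the specific models on which everything downstream depends: the locally free sheaves $\tilde{\sS}_{L},\tilde{\sT},\tilde{\sQ}$ and the Lefschetz collections of Theorem~\ref{thm:Lefshetz-collections-Reye} are all defined via the Grassmannian-bundle description of $\hcoY_{3}$ and pushforwards along $\pi_{2'}$, and these would be inaccessible from a local blow-up picture.
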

We summarize the resolutions and morphisms as follows (cf. (\ref{eq:Gr-Pf-resl-diagram})
):\def\YZX{ 
\begin{xy}  
(0,0)*+{\Zpq_2}="A", 
(0,-13)*+{\hcoY_2}="B", 
(-15,-13)*+{\widetilde{\hcoY}}="C", 
(-30,-13)*+{\hcoY}="D", 
(22,-13)*+{\rG(3,V)\quad\rG(2,V)}, 
(11,-10)*+{ }="p1",
(32,-10)*+{ }="p2",
(40,0)*+{\hchow}="X", 
\ar_{\pi_{2'}} "A";"B", 
\ar_{\tilde{\rho}_2} "B";"C",
\ar_{\rho_{\widetilde{\hcoY}}} "C";"D",
\ar^{\mu_2} "A";"p1",
\ar_{g} "X";"p2",
\end{xy} }\begin{equation}
\begin{matrix}\YZX\end{matrix}\label{eq:diag-YZX}\end{equation}

\subsubsection{{\itshape\bfseries Incidence relation $\Delta_{0}$}}

In the diagram (\ref{eq:diag-YZX}), we introduce the following incidence
relation $\Delta_{0}$: \[
\Delta_{0}=\left\{ ([V_{3}],[V_{2}])\mid V_{3}\supset V_{2}\right\} \subset\rG(3,V)\times\rG(2,V),\]
and consider its ideal sheaf $\sI_{\Delta_{0}}$. Pulling this back
to $\Zpq_{2}\times\hchow$, we obtain $\sI_{\Delta_{2}}=(\mu_{2}\times g)^{*}\sI_{\Delta_{0}}$.
Since the variety $\Delta_{0}$ is nothing but the flag variety $F(2,3,V)$,
we have locally free resolution,\begin{equation}
0\to\wedge^{4}(\eQ^{*}\boxtimes\eF)\to\wedge^{3}(\eQ^{*}\boxtimes\eF)\to\wedge^{2}(\eQ^{*}\boxtimes\eF)\to\eQ^{*}\boxtimes\eF\to\sI_{\Delta_{2}}\to0,\label{eq:resol-D0}\end{equation}
where \begin{align*}
0 & \to\eS\to V\otimes\sO_{\rG(3,V)}\to\eQ\to0 & \text{and} &  & 0\to\eF\to V\otimes\sO_{\rG(2,V)}\to\eG\to0\end{align*}
are the universal sequences on the Grassmannians $\rG(3,V)$ and $\rG(2,V)$
($\rk\eS=3,\rk\eF=2$), respectively. Roughly speaking, the direct
image $(\tilde{\rho}_{2}\times\id)_{*}\circ(\pi_{2'}\times\id)_{*}\sI_{\Delta_{2}}$
is the ideal sheaf $\sI$ on $\widetilde{\hcoY}\times\hchow$ which
corresponds to the one used in the Grassmannian-Pfaffian case in \cite{BC}
and \cite{Ku2}. In actual calculation of the direct image, however,
we need to use the structure of the conic bundle. Hence we first restrict
the generically conic bundle to a conic bundle $\pi_{2'}^{o}:\Zpq_{2}^{o}\to\hcoY_{2}^{o}:=\hcoY_{2}\setminus\sP_{\sigma}$,
where $\sP_{\sigma}$ is a certain subvariety of dimension $7$, and
define $\sI^{o}:=(\tilde{\rho}_{2}^{o}\times\id)_{*}\circ(\pi_{2'}^{o}\times\id)_{*}\sI_{\Delta_{2}}$
with the corresponding restriction $\tilde{\rho}_{2}^{o}:\hcoY_{2}^{o}\to\widetilde{\hcoY}^{o}$.
Then $\sI=\iota_{*}\sI^{o}$ under the inclusion $\iota:\widetilde{\hcoY}^{o}\hookrightarrow\widetilde{\hcoY}$
is the precise definition of the ideal sheaf $\sI$.

\subsubsection{{\itshape\bfseries Derived equivalence}}

The proof of derived equivalence in \cite{HoTa4} proceeds by constructing
the Fourier-Mukai functor with the kernel $I=\sI\vert_{Y\times X}$
as in Subsection \ref{sub:Derived-equivalence-Gr-Pf}. In the paper
{[}ibid{]}, we have obtained a locally free resolution of the ideal
sheaf $\sI$ starting with (\ref{eq:resol-D0}). To describe the results,
we introduce locally free sheaves on $\widetilde{\hcoY}$.
\begin{prop}
There exists locally free sheaves $\tilde{\sS}_{L},\tilde{\sT},\tilde{\sQ}$
on $\widetilde{\hcoY}$ which satisfy\[
\begin{matrix}\begin{aligned}\pi_{2'*}\left\{ \mu_{2}^{*}\sO_{\rG(3,V)}(1)\right\} \simeq\tilde{\rho}_{2}^{*}\tilde{\sS}_{L}^{*},\quad\pi_{2'*}(\mu_{2}^{*}\eQ)\simeq\tilde{\rho}_{2}^{*}\tilde{\sT},\qquad\\
\;\;\pi_{2'*}\left\{ \big(\mu_{2}^{*}\tS^{2}\eW\big)\otimes\mu_{2}^{*}\sO_{\rG(3,V)}(-1)\right\} \simeq\tilde{\rho}_{2}^{*}\big(\tilde{\sQ}\otimes\sO_{\widetilde{\hcoY}}(-M_{\widetilde{\hcoY}})\big),\end{aligned}
\end{matrix}\]
where $M_{\widetilde{\hcoY}}$ is the divisor corresponding to $\rho_{\widetilde{\hcoY}}^{*}\circ\rho_{\hcoY}^{*}\sO_{\Hes}(1)$. \end{prop}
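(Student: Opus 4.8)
The plan is to compute each of the three direct images fibrewise on the generically conic bundle $\pi_{2'}\colon\Zpq_{2}\to\hcoY_{2}$ of Proposition~\ref{pro:G2-Z2-to-Y}(2), obtaining locally free sheaves on $\hcoY_{2}$, and then to show that these descend along the blow-up $\tilde{\rho}_{2}\colon\hcoY_{2}\to\widetilde{\hcoY}$; the descended bundles yield the asserted $\tilde{\sS}_{L}$, $\tilde{\sT}$, $\tilde{\sQ}$. Throughout one works over the open locus $\hcoY_{2}^{o}=\hcoY_{2}\setminus\sP_{\sigma}$ on which $\pi_{2'}$ is a genuine conic bundle and extends across the codimension-$\ge2$ locus $\sP_{\sigma}$ afterwards, exactly as in the construction of $\sI=\iota_{*}\sI^{o}$ above.

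First I would make the relative geometry of $\pi_{2'}^{o}$ explicit from the projective geometry of the singular quadrics parametrized by $\Hes$ developed in \cite{HoTa3}: the conic fibre over a point of $\hcoY_{2}^{o}$ parametrizes a family of planes in the associated singular quadric, and this presents $\Zpq_{2}^{o}$ as a relative plane conic, i.e.\ a divisor of relative degree $2$ in a $\mP^{2}$-bundle $p\colon\mP(\sE)\to\hcoY_{2}^{o}$ cut out by a section of $\sO_{\mP(\sE)}(2)\otimes p^{*}\sL$. In the same description one reads off, up to line bundles pulled back from $\hcoY_{2}^{o}$, how the three sheaves $\mu_{2}^{*}\sO_{\rG(3,V)}(1)$, $\mu_{2}^{*}\eQ$ and $\mu_{2}^{*}\big(\tS^{2}\eW\otimes\sO_{\rG(3,V)}(-1)\big)$ of the statement restrict to $\Zpq_{2}$ in terms of $\sE$, the relative polarization, and a relative line bundle of relative degree $1$ carrying the Plücker classes of the planes; and one identifies the pullback of $\sO_{\Hes}(1)$ with $\sO_{\mP(\sE)}(2)\otimes p^{*}\sL$ along the fibres, whose descent to $\widetilde{\hcoY}$ is the divisor $M_{\widetilde{\hcoY}}$.

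Next, from the structure sequence
\[
0\to\sO_{\mP(\sE)}(-2)\otimes p^{*}\sL^{-1}\to\sO_{\mP(\sE)}\to\sO_{\Zpq_{2}^{o}}\to0
\]
twisted by the relevant sheaves, I would compute $\pi_{2'*}$ by cohomology and base change. The key point, to be checked fibrewise, is that along every (possibly degenerate) plane conic fibre the three sheaves restrict to acyclic sheaves, namely direct sums of line bundles of non-negative degree on a conic; equivalently one uses $p_{*}\sO_{\mP(\sE)}(k)=\Sym^{k}\sE$ and $R^{>0}p_{*}\sO_{\mP(\sE)}(k)=0$ for $k\ge-2$. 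Hence the three direct images are locally free on $\hcoY_{2}^{o}$, commute with base change, and carry exactly the twists recorded in the statement — the $\sO(-M_{\widetilde{\hcoY}})$ in the third formula arising because $\tS^{2}\eW\otimes\sO_{\rG(3,V)}(-1)$ involves $\sO_{\rG(3,V)}(-1)$ together with the identification of $\sO_{\Hes}(1)$ above.

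It remains to descend the resulting bundles on $\hcoY_{2}$ along $\tilde{\rho}_{2}$. Since $\tilde{\rho}_{2}$ is the blow-up of a smooth variety along a smooth centre, a locally free sheaf $\sF$ on $\hcoY_{2}$ is of the form $\tilde{\rho}_{2}^{*}(-)$ precisely when it restricts trivially to every fibre of the exceptional divisor over the centre, and then $\sF\simeq\tilde{\rho}_{2}^{*}\tilde{\rho}_{2*}\sF$ with $\tilde{\rho}_{2*}\sF$ locally free. So the proof reduces to this triviality, which by base change amounts to the geometric assertion that over an exceptional fibre $E_{z}$ of $\tilde{\rho}_{2}$ the conic bundle $\Zpq_{2}$ together with $\mu_{2}$ is, up to twists pulled back from $\widetilde{\hcoY}$ (which restrict trivially to $E_{z}$), a product with a fixed curve in $\rG(3,V)$ — i.e.\ the families of planes do not vary along $E_{z}$. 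This is the main obstacle: it requires the explicit description of the centre of the blow-up $\hcoY_{2}\to\widetilde{\hcoY}$ and of how the singular quadrics degenerate over it, obtained in \cite{HoTa3,HoTa4}, and it is precisely here that the projective geometry of quadrics enters in an essential way. Granting it, one takes $\tilde{\sS}_{L}$ to be the dual of $\tilde{\rho}_{2*}\pi_{2'*}\mu_{2}^{*}\sO_{\rG(3,V)}(1)$, $\tilde{\sT}:=\tilde{\rho}_{2*}\pi_{2'*}\mu_{2}^{*}\eQ$ and $\tilde{\sQ}:=\tilde{\rho}_{2*}\big(\pi_{2'*}\mu_{2}^{*}(\tS^{2}\eW\otimes\sO_{\rG(3,V)}(-1))\otimes\sO(M_{\widetilde{\hcoY}})\big)$, which yields the three asserted isomorphisms.
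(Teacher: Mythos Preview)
The paper gives no proof here: this is a survey, and the entire argument is deferred to \cite[Prop.~5.6.4]{HoTa4} and \cite[Prop.~6.1.2, 6.2.3]{HoTa3}. So there is no ``paper's own proof'' to compare against beyond those external references. Your outline --- compute the three pushforwards on the conic locus using the relative conic presentation, then descend along $\tilde{\rho}_{2}$ by checking triviality along the exceptional fibres --- is indeed the shape of the argument carried out in those papers, and you correctly isolate the descent step as the place where the detailed projective geometry of quadrics (the explicit description of the blow-up centre and of the degeneration of the families of planes over it) is essential.

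One point deserves more care than you give it. You propose to work over $\hcoY_{2}^{o}=\hcoY_{2}\setminus\sP_{\sigma}$ and then ``extend across the codimension-$\ge2$ locus $\sP_{\sigma}$ afterwards, exactly as in the construction of $\sI=\iota_{*}\sI^{o}$''. But pushing a locally free sheaf forward along an open immersion with codimension-$\ge2$ complement only produces a reflexive sheaf, not a priori a locally free one; and over $\sP_{\sigma}$ the fibres of $\pi_{2'}$ jump to $\sigma$-planes $\simeq\mP^{2}$, so $\pi_{2'}$ is not flat there and cohomology-and-base-change does not apply directly. The actual argument in \cite{HoTa3,HoTa4} does not sidestep $\sP_{\sigma}$ in this way: the sheaves $\tilde{\sS}_{L},\tilde{\sT},\tilde{\sQ}$ are constructed globally on $\widetilde{\hcoY}$ by explicit identifications (e.g.\ $\tilde{\sS}_{L}$ arises from the plane $\mP_{q}\subset\mP(\wedge^{3}V)$ attached to each point of $\widetilde{\hcoY}$, as in Subsection~\ref{sub:Birational-model-barY}), and the isomorphisms of the proposition are then verified. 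Your sketch would go through if you replace the ``extend afterwards'' step by this direct global construction, or alternatively by checking separately that the pushforwards have constant rank across $\sP_{\sigma}$ as well.
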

\begin{proof}
See \cite[Prop.5.6.4]{HoTa4} and \cite[Prop.6.1.2,6.2.3]{HoTa3}.
\end{proof}
We denote by $L_{\hchow}$ (resp. $H_{\hchow}$) the divisor on $\hchow$
corresponding to $g^{*}\sO_{\rG(2,V)}(1)$ (resp. $g^{*}\sO_{\chow}(1)$).
Then, we have 
\begin{thm}[{\cite[Theorem 5.1.3]{HoTa4}}]
We have the following locally free resolution: \[
\begin{matrix}\begin{aligned}0\to\tilde{\sS}_{L}\boxtimes\sO_{\hchow}\to\tilde{\sT}^{*}\boxtimes g^{*}\eF^{*}\to\big(\sO_{\widetilde{\hcoY}}\boxtimes g^{*}\tS^{2}\eF^{*}\big)\oplus\big(\tilde{\sQ}^{*}(M_{\widetilde{\hcoY}})\boxtimes\sO_{\hchow}(L_{\hchow})\big)\\
\to\sI\otimes\big(\sO_{\widetilde{\hcoY}}(M_{\widetilde{\hcoY}})\boxtimes\sO_{\hchow}(2L_{\hchow})\big)\to0.\end{aligned}
\end{matrix}\]

\end{thm}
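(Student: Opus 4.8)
The plan is to produce the asserted four-term sequence as the derived pushforward of the Koszul-type resolution (\ref{eq:resol-D0}) of $\sI_{\Delta_{2}}$ along the two morphisms $\pi_{2'}\times\id$ and $\tilde{\rho}_{2}\times\id$ of the diagram (\ref{eq:diag-YZX}). The sheaves in (\ref{eq:resol-D0}) are exterior powers of $\mu_{2}^{*}\eQ^{*}\boxtimes g^{*}\eF$, with the factor $\eQ^{*}$ sitting on $\Zpq_{2}$ and $\eF$ on $\hchow$; so the first step is to apply the Cauchy formula $\wedge^{k}(A\otimes B)=\bigoplus_{|\lambda|=k}\Sigma^{\lambda}A\otimes\Sigma^{\lambda'}B$ with $A=\mu_{2}^{*}\eQ^{*}$ and $B=g^{*}\eF$. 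Since both have rank $2$, every Schur functor occurring is a symmetric power twisted by a power of the determinant, so each term of (\ref{eq:resol-D0}) breaks up into a direct sum of sheaves of the form $\mu_{2}^{*}\!\bigl(\tS^{a}\eQ^{*}\otimes(\det\eQ)^{\otimes b}\bigr)\boxtimes g^{*}\!\bigl(\tS^{c}\eF\otimes(\det\eF)^{\otimes d}\bigr)$, and only a few $(a,b,c,d)$ appear.

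Next I would push forward. As $\pi_{2'}\times\id$ and $\tilde{\rho}_{2}\times\id$ move only the first factor, K\"unneth/flat base change gives $R(\pi_{2'}\times\id)_{*}(\mathcal{A}\boxtimes\mathcal{B})\simeq R\pi_{2'*}\mathcal{A}\boxtimes\mathcal{B}$, so it suffices to evaluate $R\pi_{2'*}$ on the $\mu_{2}$-pullbacks above and then apply $R\tilde{\rho}_{2*}$. For the latter this is painless, because the preceding Proposition identifies the relevant $\pi_{2'*}$ with pullbacks $\tilde{\rho}_{2}^{*}(-)$ from $\widetilde{\hcoY}$, and for a blow-up $R\tilde{\rho}_{2*}\tilde{\rho}_{2}^{*}\mathcal{G}\simeq\mathcal{G}$. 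For $R\pi_{2'*}$ itself the Proposition supplies the three building blocks $\pi_{2'*}\mu_{2}^{*}\sO_{\rG(3,V)}(1)\simeq\tilde{\rho}_{2}^{*}\tilde{\sS}_{L}^{*}$, $\pi_{2'*}\mu_{2}^{*}\eQ\simeq\tilde{\rho}_{2}^{*}\tilde{\sT}$, and $\pi_{2'*}\bigl(\mu_{2}^{*}\tS^{2}\eW\otimes\mu_{2}^{*}\sO_{\rG(3,V)}(-1)\bigr)\simeq\tilde{\rho}_{2}^{*}\bigl(\tilde{\sQ}\otimes\sO_{\widetilde{\hcoY}}(-M_{\widetilde{\hcoY}})\bigr)$; using $\det\eQ\simeq\sO_{\rG(3,V)}(1)$, the rank-$2$ identity $\eQ^{*}\simeq\eQ\otimes(\det\eQ)^{-1}$, and standard vanishing for cohomology along the conic fibres of $\pi_{2'}$ (which reduces, over $\rG(3,V)$, to Bott's theorem on the ambient $\mP^{8}$-bundle $\Zpq\to\rG(3,V)$), I would compute $R^{0}\pi_{2'*}$ of every summand and check that all higher direct images vanish, with the exception of one of the four terms of (\ref{eq:resol-D0}) whose total derived pushforward is zero because its restriction to the conic fibres has no cohomology. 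Assembling, $R(\pi_{2'}\times\id)_{*}$ of (\ref{eq:resol-D0}) is a complex of locally free sheaves concentrated in degree $0$; keeping track of the determinant factors and of the divisor $M_{\widetilde{\hcoY}}$ coming from the third building block, this is precisely the four-term sequence of the statement, the overall twist $\sO_{\widetilde{\hcoY}}(M_{\widetilde{\hcoY}})\boxtimes\sO_{\hchow}(2L_{\hchow})$ absorbing the accumulated shifts, and its last cohomology sheaf is $(\pi_{2'}\times\id)_{*}\sI_{\Delta_{2}}=\sI^{o}$, with $\iota_{*}\sI^{o}=\sI$ by construction.

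The main obstacle is that $\pi_{2'}$ is only a \emph{generically} conic bundle, so the pushforward computation above is valid only over $\hcoY_{2}^{o}=\hcoY_{2}\setminus\sP_{\sigma}$. One must still show that the open-locus complex globalises: the bundles $\tilde{\sS}_{L},\tilde{\sT},\tilde{\sQ}$ are defined on all of $\widetilde{\hcoY}$ and, since $\sP_{\sigma}$ has high codimension, the maps extend by Hartogs, so the issue is to check that the resulting global complex on $\widetilde{\hcoY}\times\hchow$ is still exact across $\sP_{\sigma}$ and has $\sI\otimes\bigl(\sO_{\widetilde{\hcoY}}(M_{\widetilde{\hcoY}})\boxtimes\sO_{\hchow}(2L_{\hchow})\bigr)$ as its cokernel there. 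This requires understanding how the conic and the family $\sI_{\Delta_{2}}$ degenerate over $\sP_{\sigma}$, for which one uses the detailed birational geometry of $\hcoY$, $\widetilde{\hcoY}$ and $\hcoY_{2}$ established in \cite{HoTa3}; once the fibrewise vanishing over $\hcoY_{2}^{o}$ is in place, the Cauchy-formula bookkeeping and the twist computation are routine. Full details are in \cite[Theorem 5.1.3]{HoTa4}.
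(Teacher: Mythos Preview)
Your overall strategy is the one the survey points to: the paper does not prove the theorem here but records precisely the ingredients you use---the Koszul resolution (\ref{eq:resol-D0}) of $\sI_{\Delta_{2}}$, the preceding Proposition identifying $\pi_{2'*}$ of the relevant Schur pieces of $\mu_{2}^{*}\eQ$, and the description of $\sI$ as $\iota_{*}$ of the direct image over the honest-conic-bundle locus $\hcoY_{2}^{o}=\hcoY_{2}\setminus\sP_{\sigma}$---and then defers to \cite{HoTa4} for the computation. So the architecture of your sketch is right.

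Two points where the mechanism is off. First, the parenthetical about Bott vanishing on the $\mP^{8}$-bundle $\Zpq\to\rG(3,V)$ is misplaced: that projection is transverse to $\pi_{2'}$, and what you actually need is cohomology along the \emph{conic} fibres, computed directly on $\mP^{1}$ from the splitting type of $\mu_{2}^{*}\eQ$ there. Second, and more substantively, the degree is inverted. On a generic conic one has $\mu_{2}^{*}\sO_{\rG(3,V)}(1)\simeq\sO_{\mP^{1}}(2)$ and $\mu_{2}^{*}\eQ\simeq\sO_{\mP^{1}}(1)^{\oplus2}$, so every Cauchy summand of $\wedge^{k}(\mu_{2}^{*}\eQ^{*}\boxtimes g^{*}\eF)$ restricts to a sum of $\sO_{\mP^{1}}(d)$'s with $d<0$; hence $R^{0}\pi_{2'*}$ \emph{vanishes} on all four terms and it is $R^{1}\pi_{2'*}$ that produces the bundles in the statement (via relative duality, which is why the Proposition is phrased in terms of $\pi_{2'*}$ of $\sO_{\rG(3,V)}(1)$, $\eQ$, and $\tS^{2}\eQ\otimes\sO_{\rG(3,V)}(-1)$ and why the answer involves $\tilde{\sS}_{L}$, $\tilde{\sT}^{*}$, $\tilde{\sQ}^{*}$). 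Your identification of the acyclic term is correct---$\wedge^{1}=\mu_{2}^{*}\eQ^{*}\boxtimes g^{*}\eF$ restricts to $\sO_{\mP^{1}}(-1)^{\oplus2}$ and has no cohomology---and that is exactly why the length drops from four to three; but the passage goes through $R^{1}$ and a one-step shift in the spectral sequence, not through $R^{0}$ with higher vanishing. Once this is corrected the rank and twist bookkeeping you outline goes through.
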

Extracting each term of the above resolution of $\sI$, we define
the following notation:\[
\begin{matrix}\begin{aligned}(\sE_{3},\sE_{2},\sE_{1a},\sE_{1b})= & (\tilde{S}_{L},\tilde{\sT}^{*},\sO_{\widetilde{\hcoY}},\tilde{Q}^{*}(M_{\widetilde{\hcoY}})),\\
(\sF_{3},\sF_{2},\sF_{1a}',\sF_{1b})= & (\sO_{\hchow},g^{*}\eF^{*},g^{*}\tS^{2}\eF^{*},\sO_{\hchow}(L_{\hchow})),\end{aligned}
\end{matrix}\]
and set $\sF_{1a}=\sF_{1a}'/\sO_{\hchow}(-H_{\hchow}+2L_{\hchow})$.
Now corresponding to (\ref{eq:Collections-A-B-Gr-Pf}) in Subsection
\ref{sub:Derived-equivalence-Gr-Pf}, we define the following full-subcategories\[
\begin{matrix}\begin{aligned}\langle\sE_{3},\sE_{2},\sE_{1a},\sE_{1b}\rangle=\sA_{0}=\sA_{1}=\cdots=\sA_{9}\subset D^{b}(\widetilde{\hcoY}),\\
\langle\sF_{1b}^{*},\sF_{1a}^{*},\sF_{2}^{*},\sF_{3}^{*}\rangle=\sB_{0}=\sB_{1}=\cdots=\sB_{4}\subset D^{b}(\hchow).\end{aligned}
\end{matrix}\]

\begin{thm}[{\cite[Theorem 3.4.5, 8.1.1]{HoTa3}}]
\label{thm:Lefshetz-collections-Reye} Denote by $\sA_{i}(a)$,$\sB_{i}(b)$
the twists of $\sA_{i},\sB_{i}$ by $\sO_{\widetilde{\hcoY}}(aM_{\widetilde{\hcoY}})$
and $\sO_{\hchow}(bH_{\hchow})$, respectively. Then

{\rm (i)} $\langle\sA_{0},\sA_{1}(1),\cdots,\sA_{9}(9)\rangle$ is
a Lefschetz collection in $D^{b}(\widetilde{\hcoY})$, and 

{\rm (ii)} $\langle\sB_{4}(-4),\cdots,\sB_{1}(-1),\sB_{0}\rangle$
is a dual Lefschetz collection in $D^{b}(\hchow)$.

\noindent In particular the following vanishings hold:\[
\Hom_{D^{b}(\widetilde{\hcoY})}^{\bullet}(\sA_{i}(i),\sA_{j}(j))=0\;(i>j),\;\;\;\Hom_{D^{b}(\hchow)}^{\bullet}(\sB_{i}(-i),\sB_{j}(-j))=0\;(i<j).\]

\end{thm}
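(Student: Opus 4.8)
The plan is to reduce both assertions to finite, explicit lists of $\Ext$-vanishings among known sheaves, and then to settle those by transporting the computation onto a Grassmannian, where Bott's theorem applies. Since $\sA_{0}=\cdots=\sA_{9}=\langle\sE_{3},\sE_{2},\sE_{1a},\sE_{1b}\rangle$ and $\sB_{0}=\cdots=\sB_{4}=\langle\sF_{1b}^{*},\sF_{1a}^{*},\sF_{2}^{*},\sF_{3}^{*}\rangle$, assertion (i) holds once one knows that $(\sE_{3},\sE_{2},\sE_{1a},\sE_{1b})$ is an exceptional collection in $D^{b}(\widetilde{\hcoY})$ and that
\[
\Hom^{\bullet}_{D^{b}(\widetilde{\hcoY})}\bigl(\sE_{a},\,\sE_{b}(-kM_{\widetilde{\hcoY}})\bigr)=0\qquad(1\le k\le 9)
\]
for all four generators $\sE_{a},\sE_{b}$, and (ii) holds once $(\sF_{1b}^{*},\sF_{1a}^{*},\sF_{2}^{*},\sF_{3}^{*})$ is exceptional in $D^{b}(\hchow)$ and $\Hom^{\bullet}_{D^{b}(\hchow)}(\sF_{a}^{*},\sF_{b}^{*}(-kH_{\hchow}))=0$ for $1\le k\le 4$; granting these, the Lefschetz and dual Lefschetz statements, and in particular the displayed $\Hom$-vanishings, follow formally, with the exceptionality of a block appearing as the $k=0$ boundary case of the same computation.

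For (ii) this is the lighter half. The variety $\hchow=\mathrm{Hilb}^{2}\mP(V)$ is a genuine $\mP^{2}$-bundle $g\colon\hchow\to\rG(2,V)$ --- a length-two subscheme spans a unique line, and the fibre over $[V_{2}]$ is $\tS^{2}\mP(V_{2})$, so $\hchow$ is the relative symmetric square of the universal $\mP^{1}$-bundle $\mP(\eF)$. I would first pin down the classes $H_{\hchow}$, $L_{\hchow}=g^{*}\sO_{\rG(2,V)}(1)$ and the relative hyperplane class in $\Pic\hchow$, express the four generators of $\sB$ through them, and then invoke Orlov's projective-bundle formula to split $D^{b}(\hchow)$ into three copies of $g^{*}D^{b}(\rG(2,V))$. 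Since $\rG(2,V)=\rG(2,5)$ carries Kapranov's full exceptional collection of homogeneous bundles, all the required $\Hom$-groups become cohomology groups on $\rG(2,5)$ of tensor products of $\eF$, $\tS^{2}\eF$ and their duals with Pl\"ucker twists, and these are settled by Bott's theorem for $SL_{5}$.

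For (i) the same recipe is run through the diagram (\ref{eq:diag-YZX}), in three stages. First I would pull the $\Ext$-computation back along the blow-up $\tilde{\rho}_{2}\colon\hcoY_{2}\to\widetilde{\hcoY}$: as $\tilde{\rho}_{2}$ is the blow-up of a smooth variety one has $R\tilde{\rho}_{2*}\sO_{\hcoY_{2}}=\sO_{\widetilde{\hcoY}}$, so by the projection formula the $\Ext$-groups do not change, while the preceding proposition rewrites $\tilde{\sS}_{L},\tilde{\sT},\tilde{\sQ}$ as direct images along $\pi_{2'}$ of bundles pulled back from $\rG(3,V)$ by $\mu_{2}$, and identifies $\tilde{\rho}_{2}^{*}\sO_{\widetilde{\hcoY}}(M_{\widetilde{\hcoY}})$ with a $\mu_{2}^{*}\sO_{\rG(3,V)}(1)$-twist along the conic bundle. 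Second, I would transfer the computation up to $\Zpq_{2}$: restrict to the honest conic bundle $\pi_{2'}^{o}\colon\Zpq_{2}^{o}\to\hcoY_{2}^{o}$ over the complement of the seven-dimensional locus $\sP_{\sigma}$, and use adjunction together with the Koszul resolution of the relative conic inside the ambient $\mP^{2}$-bundle and relative duality to rewrite each $R\pi_{2'*}$ and each generator in terms of $\mu_{2}$-pullbacks of homogeneous bundles on $\rG(3,V)$ (the sheaves $\sO_{\rG(3,V)}(1)$, $\eQ$, $\tS^{2}\eW$ and twists thereof that occur in the preceding proposition and in (\ref{eq:resol-D0})). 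Third, push down along $\mu_{2}\colon\Zpq_{2}\to\rG(3,V)$, which factors through projective bundles beginning with the $\mP^{8}$-bundle $\Zpq\to\rG(3,V)$; Orlov's formula and the projection formula then reduce everything to cohomology on $\rG(3,V)=\rG(2,5)$ of tensor products of the universal bundles and their twists, again computed by Bott's theorem, and one checks that the weights occurring are precisely those for which acyclicity persists up to and including $k=9$.

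The main obstacle is the second stage, i.e.\ the fact that $\pi_{2'}\colon\Zpq_{2}\to\hcoY_{2}$ is only a conic bundle away from $\sP_{\sigma}$. One must show that descending to $\hcoY_{2}^{o}=\hcoY_{2}\setminus\sP_{\sigma}$ costs no cohomology: that the pushforwards $\tilde{\sS}_{L},\tilde{\sT},\tilde{\sQ}$ and the $\Ext$-groups assembled from them are unaffected by extending by $\iota_{*}$ across $\sP_{\sigma}$ --- equivalently, that the higher direct images $R^{>0}\pi_{2'*}$ of the relevant bundles either vanish or are supported in codimension large enough to be harmless. This is exactly where the detailed local analysis in \cite{HoTa3} of the singular quadrics parametrised by $\Hes$ and of the degenerate fibres of $\pi_{2'}$ over $\sP_{\sigma}$ is indispensable; with that local picture in hand, the first and third stages become routine manipulations with relative dualizing sheaves and Bott's theorem. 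A secondary difficulty --- shared with (ii) --- is pinning down the classes $M_{\widetilde{\hcoY}}$, $H_{\hchow}$, $L_{\hchow}$ and the relative hyperplane classes precisely enough that, after the reductions, the integers $9$ and $4$ emerge as the exact lengths for which acyclicity holds; the endpoints are presumably sharp, so there is no slack to waste.
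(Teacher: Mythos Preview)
This survey does not itself prove the theorem; it is quoted from \cite[Theorem~3.4.5, 8.1.1]{HoTa3}, and the article explicitly omits proofs in favour of references. So there is no ``paper's own proof'' to compare with here beyond the pointer to the original source.

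That said, your outline is the right shape and is broadly what one expects the argument in \cite{HoTa3} to be: for (ii), exploit the honest $\mP^{2}$-bundle $g\colon\hchow\to\rG(2,V)$ together with Orlov's decomposition and Bott on $\rG(2,5)$; for (i), chase the diagram \eqref{eq:diagram-ZZZ} back to $\rG(3,V)$ and again invoke Bott. Two remarks are worth making. First, your proposal is openly a strategy sketch rather than a proof: you yourself identify the passage across $\sP_{\sigma}$ (where $\pi_{2'}$ is not a conic bundle) as the main obstacle and defer it to the ``detailed local analysis in \cite{HoTa3}'', which is exactly where the substantive work lies; so what you have written is not independent of the source you are trying to reproduce. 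Second, a small factual correction: the morphism $\mu_{2}\colon\Zpq_{2}\to\rG(3,V)$ does not factor through the $\mP^{8}$-bundle $\Zpq\to\rG(3,V)$ of \eqref{eq:def-hcoY-Stein-fact}; rather, by \eqref{eq:diagram-ZZZ} it factors as $\Zpq_{2}\xrightarrow{\rho_{2'}}\Zpq_{3}\xrightarrow{\pi_{G'}}\rG(2,T(-1))\xrightarrow{\rho_{G}}\rG(3,V)$, i.e.\ via the flag variety $F(1,3,V)$ over $\mP(V)$. This does not change the flavour of the reduction, but the bookkeeping of pushforwards along this tower (and the comparison between $\Zpq$ and $\Zpq_{3}$, which are genuinely different spaces) is part of what must be done carefully.
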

Although it is implicit in the above theorem, the (dual) Lefschetz
collections (i) and (ii) above indicate that there exist some non-commutative
resolutions of $\hcoY$ and $\chow$, respectively, and furthermore,
they are expected to be HPD with each other. This should be contrasted
to Theorem \ref{thm:HPD-Gr-Pf} where non-commutative resolution has
appeared only for the Pfaffian variety $\sY$. Of course, this difference
is due to the fact that both $\hcoY$ and $\chow$ are singular varieties
in our case. See \cite{Ku3} for a recent survey about known examples
of HPDs. 

As in Subsection \ref{sub:Derived-equivalence-Gr-Pf}, the derived
equivalence follows from the flatness of the ideal sheaf $I=\sI\vert_{Y\times X}$
over $X$ and the vanishing properties in Theorem \ref{thm:Ix-Iy-Thm}. 
\begin{thm}[{\cite[Theorem 8.0.3]{HoTa4}}]
 The restriction $I=\sI\vert_{Y\times X}$ defines a scheme $\Cs$
flat over $X$, and an equivalence $\Phi_{I}:D^{b}(Y)\to D^{b}(X)$
with $\Phi_{I}(-)=R\pi_{X*}(L\pi_{Y}^{*}(-)\otimes I)$. 
\end{thm}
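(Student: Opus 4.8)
The plan is to apply Theorem \ref{thm:Ix-Iy-Thm} to the kernel $I=\sI\vert_{Y\times X}$ in the spirit of Subsection \ref{sub:Derived-equivalence-Gr-Pf}, using the semi-orthogonality package of Theorem \ref{thm:Lefshetz-collections-Reye}. First I would establish that $I$ is a coherent sheaf on $Y\times X$, flat over $X$, by restricting the locally free resolution of $\sI$ on $\widetilde{\hcoY}\times\hchow$ given in Theorem \ref{thm:Lefshetz-collections-Reye}'s preceding statement (the three-term complex with leftmost term $\tilde{\sS}_L\boxtimes\sO_{\hchow}$) to $Y\times X=(\widetilde{\hcoY}\cap\mP(L_5))\times(\hchow\cap\mP(L_5^{\perp}))$. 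Since $X$ and $Y$ are general linear sections avoiding the relevant singular loci ($\mP(L_5)\cap\Hes_2=\emptyset$ etc.), the restricted complex stays a resolution, so $I$ has a finite locally free resolution on $Y\times X$; flatness over $X$ then follows from a dimension/fiberwise-exactness count exactly as in \cite[Prop.~4.4]{BC}, identifying $I$ with (the ideal sheaf of) the flat family $\Cs\to X$.

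Next I would verify the two fully-faithfulness conditions of Theorem \ref{thm:Ix-Iy-Thm}. Condition (i), $\Hom(I_x,I_x)\simeq\mC$, should follow from the general principle that the ideal sheaf of a subscheme of dimension $\leq 1$ in a smooth projective threefold is simple, once one checks $I_x$ is such an ideal sheaf (Proposition \ref{pro:Curve-BPS-Gr-Pf}'s analogue for this example, coming from \cite{HoTa4}). The substantive condition is (ii): $\Ext^{\bullet}_{X}(I_{x_1},I_{x_2})=0$ for $x_1\neq x_2$. Here the strategy is the Kuznetsov-style argument: realize the $\Ext$-computation on $X$ via the geometry of $\widetilde{\hcoY}$ (equivalently the non-commutative resolution of $\hcoY$) and reduce it to the vanishing $\Hom^{\bullet}_{D^b(\widetilde{\hcoY})}(\sB_i(-i),\sB_j(-j))=0$ for $i<j$ (and the companion $\sA$-vanishings) provided by Theorem \ref{thm:Lefshetz-collections-Reye}. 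Concretely, one writes $I_{x_1}$ and $I_{x_2}$ through the restriction of the locally free resolution — whose terms are built from $\sF_3=\sO_{\hchow}$, $\sF_2=g^*\eF^*$, $\sF_{1a}'$, $\sF_{1b}=\sO_{\hchow}(L_{\hchow})$, i.e.\ precisely the objects generating the $\sB_k$ — and spreads the $\Ext$-computation out over a spectral sequence whose $E_1$-terms are governed by these $\Hom$-complexes between twisted collections; the off-diagonal ones vanish by the theorem, and the diagonal contributions cancel by a mutation/orthogonality bookkeeping as in \cite[Thm.~4.1]{Ku2}.

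Finally, granting fully faithfulness, I would upgrade to an equivalence using the last clause of Theorem \ref{thm:Ix-Iy-Thm}: $\dim X=\dim Y=3$ is immediate, and $I\otimes\pi_X^*\omega_X\simeq I\otimes\pi_Y^*\omega_Y$ holds because both $X$ and $Y$ are Calabi-Yau ($\omega_X\simeq\sO_X$, $\omega_Y\simeq\sO_Y$), so the twist condition is vacuous. The main obstacle is unquestionably condition (ii): unlike the Grassmannian--Pfaffian case where the fiberwise resolution is the classical Eagon--Northcott complex and the relevant HPD is fully established, here only the \emph{partial} HPD statement of Theorem \ref{thm:Lefshetz-collections-Reye} is available, and the direct image defining $\sI$ involves the generically-conic-bundle structure $\pi_{2'}:\Zpq_2\to\hcoY_2$ together with the excision of the $7$-dimensional locus $\sP_\sigma$; one must check that passing to $\widetilde{\hcoY}^o$ and pushing forward does not disturb the $\Ext$-vanishings, which is where the careful birational analysis of $\hcoY$ from \cite{HoTa3} is indispensable.
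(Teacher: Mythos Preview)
Your proposal follows essentially the same route as the paper (and \cite[Sect.~8]{HoTa4}): verify the Bondal--Orlov/Bridgeland criterion of Theorem~\ref{thm:Ix-Iy-Thm} by establishing flatness and simplicity of the fiber ideal sheaves, and reduce the crucial $\Ext$-vanishing for distinct points to the Lefschetz-collection orthogonalities of Theorem~\ref{thm:Lefshetz-collections-Reye}, with the Calabi--Yau condition giving the upgrade from fully faithful to equivalence. One bookkeeping correction: since here $I$ is flat over $X$ with fibers $I_x$ the ideal sheaves of curves $C_x\subset Y$, the relevant $\Ext$-groups live on $Y$ (not $X$) and the fiberwise resolution of $I_x$ is built from the $\sE_i|_Y$, so the roles of the $\sA$- and $\sB$-collections in your write-up are swapped relative to the actual argument.
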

The proof given in \cite[Sect.8]{HoTa4} proceeds in a similar way
to \cite{BC} and only uses the vanishing properties in Theorem \ref{thm:Lefshetz-collections-Reye}.

\vskip0.5cm

\subsection{BPS numbers }

The ideal sheaf $I$ describes a family of curves on $Y$ parametrized
by $x\in X$. In particular, in \cite{HoTa4}, an interesting relation
of them to some BPS number of $Y$ has been observed. Here we start
with the following proposition:
\begin{prop}[{\cite[Sect.3, Prop.7.2.2]{HoTa4}}]
 The ideal sheaf $I=\sI\mid_{Y\times X}$ defines a flat family $\left\{ C_{x}\right\} _{x\in X}$
whose general members are smooth curves of genus 3 and degree 5 in
$Y$. 
\end{prop}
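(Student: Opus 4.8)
The plan is to extract both the numerical invariants and the generic smoothness from the explicit locally free resolution of $\sI$ on $\widetilde{\hcoY}\times\hchow$ (\cite[Theorem 5.1.3]{HoTa4}, quoted above), combined with the flatness of $\Cs$ over $X$ already established in \cite[Theorem 8.0.3]{HoTa4}. First I would restrict that resolution to $Y\times X=\bigl(\widetilde{\hcoY}\cap\mP(L_5)\bigr)\times\bigl(\hchow\cap\mP(L_5^{\perp})\bigr)$. For general $L_5$ the section $\mP(L_5)$ misses $\Sing(\hcoY)=\Hes_2$, the exceptional locus of $\rho_{\widetilde{\hcoY}}$, and the image in $\widetilde{\hcoY}$ of the $7$-dimensional subvariety $\sP_\sigma\subset\hcoY_2$ — that is, exactly the loci entering the careful definition $\sI=\iota_*\sI^{o}$ — so the restriction stays exact, and after twisting down by $\sO_{\widetilde{\hcoY}}(M_{\widetilde{\hcoY}})\boxtimes\sO_{\hchow}(2L_{\hchow})$ it becomes a three-term locally free resolution of $I=\sI|_{Y\times X}$ on $Y\times X$.

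Next, fixing a general $x\in X$, I would restrict this to $Y\times\{x\}$. Since $\Cs\to X$ is flat (\cite[Theorem 8.0.3]{HoTa4}) the restricted complex stays exact, so one obtains on $Y$ a locally free resolution of $I_x\otimes\sO_Y(M_{\widetilde{\hcoY}}|_Y)\cong I_x(H_Y)$, where $I_x=I|_{Y\times\{x\}}$ is the ideal sheaf of a subscheme $C_x\subset Y$ and the resolving bundles are the restrictions to $Y$ of $\tilde{\sS}_L$, $\tilde{\sT}^{*\oplus2}$, and $\sO_{\widetilde{\hcoY}}^{\oplus3}\oplus\tilde{\sQ}^{*}(M_{\widetilde{\hcoY}})$. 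From this I would compute the Hilbert polynomial $P_{C_x}(m)=\chi\bigl(\sO_{C_x}(mH_Y)\bigr)$ as an alternating sum of Hilbert polynomials on $Y$, using the Chern classes of $\tilde{\sS}_L,\tilde{\sT},\tilde{\sQ}$ on $\widetilde{\hcoY}$ from \cite{HoTa3} together with the invariants $H_Y^{3}=10$, $c_{2}.H_Y=40$ of Proposition~\ref{pro:HT-invariant-XY}. The expected value is $P_{C_x}(m)=5m-2$; flatness over the connected base $X$ makes it independent of $x$, so once $C_x$ is known to be a reduced curve (the smoothness step below), it has degree $5$ and, from $\chi(\sO_{C_x})=-2$, genus $3$.

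It is worth recording the geometric meaning of $C_x$, since it drives the smoothness step. A point $x\in X$ corresponds under $g$ (diagram~\eqref{eq:diag-YZX}) to a line $\mP(V_2)\subset\mP(V)$, i.e. a point of the Reye congruence $g(X)\subset\rG(2,V)$; the planes $\mP(V_3)$ with $V_3\supset V_2$ form a $\mP^2\subset\rG(3,V)$, and $C_x$ is cut out of $Y$ by transporting the incidence $\Delta_0=F(2,3,V)$, restricted over this point, through the conic bundle $\pi_{2'}$ and the morphisms $\mu_2,\tilde{\rho}_{2},\rho_{\widetilde{\hcoY}}$ of \eqref{eq:diag-YZX}; concretely $C_x$ parametrizes the (resolved) rulings of planes containing $\mP(V_2)$ among the singular quadrics of $H$.

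Finally, for smoothness of the general member I would argue by generic smoothness in characteristic zero: it suffices to show that the total space $\Cs\subset Y\times X$ is integral and dominates $X$, for then its generic fibre $C_x$ is smooth (in particular a reduced curve, closing the gap left in the second paragraph). Integrality and generic smoothness of $\Cs$ should follow from the description above: $\Cs$ is built from the \emph{smooth} flag variety $\Delta_0=F(2,3,V)$ by pulling back along $\mu_2\times g$, pushing down along the conic bundle $\pi_{2'}$ over the open set $\hcoY_{2}^{o}$ where $\pi_{2'}^{o}$ is an honest conic bundle, and intersecting with the general $\mP(L_5)$ — so a Bertini-type argument gives smoothness of $\Cs$ away from a locus not dominating $X$, provided one checks that $\mP(L_5)$ meets the bad loci ($\sP_\sigma$, the exceptional divisors of $\rho_{\widetilde{\hcoY}}$, $\Hes_2$) in small enough dimension. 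I expect this bookkeeping along the various exceptional and singular loci of $\widetilde{\hcoY}$ and of the conic bundle to be the main obstacle. The alternative — and the route used for the analogous curves in the Grassmannian-Pfaffian case — is to fix one explicit general pair $(L_5,x)$, write down the ideal of $C_x\subset Y$, and verify smoothness together with $P_{C_x}(m)=5m-2$ directly with Macaulay2, concluding for general $(L_5,x)$ by semicontinuity of the singular locus and constancy of the Hilbert polynomial.
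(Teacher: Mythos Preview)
Your proposal is viable but takes a quite different route from the paper. The argument indicated here (and carried out in \cite[Sect.~3]{HoTa4}) is entirely geometric and does not touch the locally free resolution at all: one pushes $C_x$ down along the double cover $\rho_{\hcoY}:Y\to H$ and identifies the image as the plane section $\bar{\gamma}_x=H\cap P_x$, a plane quintic of arithmetic genus $6$ (Proposition~\ref{pro:shadow-curve}). For general $x$ this quintic has exactly three nodes, lies away from the branch curve $C_H$, and the map $C_x\to\bar{\gamma}_x$ is the normalisation. Hence $C_x$ is smooth of genus $6-3=3$, and since $H_Y$ is the pullback of $\sO_H(1)$ its degree is $\deg\bar{\gamma}_x=5$.

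Your cohomological approach---restricting the resolution and reading off $P_{C_x}(m)=5m-2$ from the Chern classes of $\tilde{\sS}_L,\tilde{\sT},\tilde{\sQ}$---would also work, and packages the numerics uniformly, but it requires the full Chern data of these bundles on $Y$ and still leaves smoothness to a separate Bertini or Macaulay2 step. The paper's geometric argument is lighter and, crucially, produces for free the extra structure (the three nodes, the companion curve $C_x'$ with $\rho_{\hcoY}^{-1}(\bar{\gamma}_x)=C_x\cup C_x'$) that is then used to explain the BPS number $n_3^Y(5)=100=(-1)^{\dim X}e(X)\times 2$.
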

The curve $C_{x}$ appears from the incidence relation $\Delta_{0}$
in $\rG(3,V)\times\rG(2,V)$. Recall $X=\hchow\cap\mP(L_{5}^{\perp})$
and the morphism $g:\hchow\to\rG(2,V)$. Then $g(x)\,(x\in X)$ determines
a line $l_{x}=\mP(V_{2,x})$. Then we have \[
\Delta_{0}\vert_{\rG(3,V)\times\left\{ g(x)\right\} }=\left\{ [\Pi]\in\rG(3,V)\mid l_{x}\subset\mP(\Pi)\right\} .\]
Now let us recall the definition of $\hcoY$ in (\ref{eq:def-hcoY-Stein-fact})
and $Y=\hcoY\cap\mP(L_{5})$. We define \[
\Zpq_{x}:=\left\{ ([Q],[\Pi])\mid l_{x}\subset\mP(\Pi)\subset Q\right\} \subset\Zpq\]
and \[
\gamma_{x}:=\Zpq_{x}\cap\pi_{\Zpq}^{-1}(Y)=\left\{ ([Q],[\Pi])\mid l_{x}\subset\mP(\Pi)\subset Q,[Q]\in\mP(L_{5})\right\} .\]
 When $Y$ is smooth, then $Y=\widetilde{\hcoY}\cap\mP(L_{5})=\hcoY\cap\mP(L_{5})$,
i.e., $\rho_{\hcoY}^{-1}(\mP(L_{5}))$ is away from the singular locus
$Sing(\hcoY)=\Hes_{2}$. On the other hand, over $\hcoY\setminus Sing(\hcoY)$
the Stein factorization $\Zpq\to\hcoY$ has the structure of a conic
bundle which is isomorphic to the generically conic bundle $\Zpq_{2}\to\hcoY_{2}$
over $\hcoY_{2}\setminus(\tilde{\rho}_{2}\circ\rho_{\widetilde{\hcoY}})^{-1}(Sing(\hcoY))$
(see {[}ibid,Sect.2.3{]} and also the next section). Therefore we
have $C_{x}=\pi_{\Zpq}(\gamma_{x})$ for the family of curves on $Y$.
We can further study the following properties:
\begin{prop}
\label{pro:shadow-curve}{\rm (1)} $\bar{\gamma}_{x}=\rho_{\hcoY}\circ\pi_{\Zpq}(\gamma_{x})=\rho_{\hcoY}(C_{x})$
is a plane quintic curve in $H=\Hes\cap\mP(L_{5})$ with 3 nodes and
arithmetic genus 6 for general $x\in X$. 

\begin{myitem}

\item[{\rm (2)}]  When $x\in X$ is general, $\bar{\gamma}_{x}$
is away from the branch locus $C_{H}\subset H$ and $C_{x}\to\bar{\gamma}_{x}$
is the normalization map. 

\item[{\rm (3)}] For general $x\in X$, there exists a 'shadow' curve
$C_{x}'$ of genus 3 and degree 5 with the properties $\rho_{\hcoY}^{-1}(\bar{\gamma}_{x})=C_{x}\cup C_{x}'$
and $C_{x}\cap C_{x}'=\rho_{\hcoY}^{-1}(\text{3 nodes of }\bar{\gamma}_{x})$. 

\end{myitem}
\end{prop}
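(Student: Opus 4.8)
The plan is to reduce the statement to projective geometry of quadrics in $\mP(V)$ together with one Thom--Porteous count, along the lines of \cite{HoTa3,HoTa4}. First I would pin down the plane containing $\bar\gamma_x$. A quadric of $\mP(L_5)$ through a general line imposes three conditions, but the line $l_x=\mP(V_{2,x})$ attached to $x\in X$ is special: writing $x$ for the rank-$\le 2$ form with support $V_{2,x}=\langle v_1,v_2\rangle$, membership $x\in\mP(L_5^\perp)$ reads $q(v_1,v_1)+q(v_2,v_2)=0$ for all $q\in L_5$, so $q|_{l_x}=0$ imposes only two conditions and the quadrics of $\mP(L_5)$ containing $l_x$ form a plane $P_x\simeq\mP^2$ for general $x$, with $\bar\gamma_x\subset H\cap P_x$.

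Next I would describe $\bar\gamma_x\subset P_x$ explicitly. For $Q=[q]\in P_x$ a plane through $l_x$ lying on $Q$ is $\langle V_{2,x},\bar w\rangle$ with $[\bar w]\in\mP(V/V_{2,x})$ satisfying $q(v_1,\bar w)=q(v_2,\bar w)=0$ (these descend to $V/V_{2,x}$ because $q|_{V_{2,x}}=0$) and $q(\bar w,\bar w)=0$. The first two equations form a $2\times 3$ matrix $B(Q)$ linear in $Q$; for general $Q$ its kernel is a single point $[\bar w_0(Q)]$ whose homogeneous coordinates are the $2\times2$ minors of $B(Q)$, hence quadratic in the coordinates of $P_x\simeq\mP^2$, and feeding this into $q(\bar w_0,\bar w_0)=0$ yields a form of degree $1+2\cdot2=5$. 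So $\bar\gamma_x$ is a plane quintic, automatically of arithmetic genus $6$, and $\gamma_x\to\bar\gamma_x$ is birational since a general $Q\in\bar\gamma_x$ carries the unique companion plane $\langle V_{2,x},\bar w_0(Q)\rangle$. For the three nodes, the singular points of $\bar\gamma_x$ are, for general $x$, exactly those $Q$ with $\rk B(Q)\le1$; on $P_x\simeq\mP^2$ this degeneracy locus has class $c_2\big(\sO_{P_x}(1)^{\oplus3}\big)=3H^2$, i.e.\ three points. At such a $Q_0$ the forms $q(v_1,-),q(v_2,-)$ on $V/V_{2,x}$ are proportional, their common zero is a line on which $q(\bar w,\bar w)=0$ cuts out two distinct points, giving two distinct planes $\Pi_1,\Pi_2\supset l_x$ on $Q_0$; these are the two branches of $\bar\gamma_x$ at $Q_0$ and lift to two distinct points of $\gamma_x$, so $\gamma_x$ (hence $C_x$) is smooth over $Q_0$ and separates the branches. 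With arithmetic genus $6$ and three nodes the geometric genus is $3$, matching $g(C_x)=3$ from the flat family of the previous proposition, so $C_x\to\bar\gamma_x$ is the normalization: this gives (1).

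For (2) it remains to see $\bar\gamma_x\cap C_H=\emptyset$ for general $x$. Since $C_H=\Hes_3\cap\mP(L_5)$ parametrizes rank-$3$ quadrics and, for such a $Q_0$, the condition that $l_x$ lie in a plane of $Q_0$ forces $l_x\subset Q_0$, $l_x$ to meet the vertex line of $Q_0$, and $l_x$ to project to a point of its base conic, a dimension count of these conditions against the threefold $X$ (equivalently the Reye congruence $g(X)\subset\rG(2,V)$) shows that the locus of bad $x$ is a proper closed subset; for $x$ outside it, $\bar\gamma_x$ avoids the branch locus $C_H$ of $\rho_{\hcoY}$, $\rho_{\hcoY}$ is étale over all of $\bar\gamma_x$, and $C_x\to\bar\gamma_x$ is the normalization map. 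For (3) set $C_x':=\overline{\rho_{\hcoY}^{-1}(\bar\gamma_x)\setminus C_x}$. Over the smooth locus of $\bar\gamma_x$, $\rho_{\hcoY}^{-1}(\bar\gamma_x)$ is an étale double cover, which cannot be irreducible because $C_x$ already dominates it with degree one; so it has exactly one further component $C_x'$, likewise of degree one over $\bar\gamma_x$, and $\rho_{\hcoY}^{-1}(\bar\gamma_x)=C_x\cup C_x'$. Being étale over the smooth locus and carrying a single branch at each of the two points of $\rho_{\hcoY}^{-1}(Q_0)$ above a node $Q_0$, $C_x'$ is smooth, hence also the normalization of $\bar\gamma_x$, of genus $3$; its degree is $C_x'\cdot H_Y=(\rho_{\hcoY})_*C_x'\cdot\sO_H(1)=\deg\bar\gamma_x=5$, using $\rho_{\hcoY}^*\sO_H(1)=H_Y$ (the pullback has cube $2\deg H=10=H_Y^3$ and $\Pic Y=\mZ H_Y$). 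Finally, over each node $Q_0$ the two local branches of $\bar\gamma_x$ are distributed between $C_x$ and $C_x'$ so that each point of $\rho_{\hcoY}^{-1}(Q_0)$ carries one branch of each, whence $C_x\cap C_x'=\rho_{\hcoY}^{-1}(\{\text{the three nodes}\})$.

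The genuinely hard part, which I expect to occupy most of the work and which is settled in \cite{HoTa3} via a detailed study of the birational geometry of $\Hes$ and $\hcoY$, is the bundle of genericity statements used above: that the plane $P_x$ does not jump in dimension for general $x$, that the three rank-drop points of $B$ are ordinary nodes with no other singularity of $\bar\gamma_x$ occurring, and that $\bar\gamma_x$ misses the branch curve $C_H$. The smoothness and genus of $C_x$ itself are imported from the flatness statement of the preceding proposition.
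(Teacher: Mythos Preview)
Your argument is essentially correct and, for the finer points (the three nodes, the normalisation, the shadow curve), runs along the lines one would extract from \cite{HoTa4}; since the present paper is a survey, it records only the identification
\[
\bar\gamma_x=\{[Q]\in H\mid l_x\subset Q\}=H\cap P_x
\]
and defers everything else to that reference. Two remarks are worth making.

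First, once one has $\bar\gamma_x=H\cap P_x$ with $H\subset\mP(L_5)\simeq\mP^4$ the quintic symmetroid, degree $5$ and arithmetic genus $6$ are immediate. Your Cramer computation of $q(\bar w_0(Q),\bar w_0(Q))$ is in fact re-deriving $\det(a_{ij})|_{P_x}$: on the block form of $q$ determined by $V_{2,x}\oplus V/V_{2,x}$ (with vanishing $2\times2$ block, since $l_x\subset Q$), a kernel vector forces its $V/V_{2,x}$ component to be proportional to $\bar w_0$, and the remaining equation is exactly $q(\bar w_0,\bar w_0)=0$. So your quintic and the restricted symmetroid are proportional. The extra work is not wasted, since this parametrisation is precisely what identifies the nodes as the locus where the vertex of $Q$ lands on $l_x$ and gives the map from $C_x$, but the identity is worth noting.

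Second, your orthogonality condition is written in a basis different from the paper's. The paper represents a point of $X$ by an unordered pair $\{[z],[w]\}$ with $x=[z\cdot w]$, so that $x\in\mP(L_5^\perp)$ reads ${}^tzAw=0$, i.e.\ $q(z,w)=0$, for all $q\in L_5$; the two remaining conditions cutting out $P_x$ are $q(z,z)=0$ and $q(w,w)=0$. Your condition $q(v_1,v_1)+q(v_2,v_2)=0$ is correct only after the tacit normalisation $x=v_1^2+v_2^2$ (equivalently $z=v_1+iv_2$, $w=v_1-iv_2$); for an arbitrary basis of $V_{2,x}$ it is false. Nothing downstream depends on this, but you should either state the normalisation or adopt the paper's $\{z,w\}$ convention.
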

We refer to {[}ibid Sect. 3, Fig.1{]} for details, but only remark
that the plane curve $\bar{\gamma}_{x}$ can be written explicitly
by $\bar{\gamma}_{x}=\left\{ [Q]\in H\mid l_{x}\subset Q\right\} $.
Considering the condition $l_{x}\subset Q$ under $x\in\hchow\cap\mP(L_{5}^{\perp})$,
we see easily that $\bar{\gamma}_{x}$ is a plane curve $H\cap P_{x}$
with \[
P_{x}=\left\{ [a_{ij}]\in\mP(L_{5})\mid\,^{t}zAz=\,^{t}wAw=0\,(\forall[z],[w]\in l_{x})\right\} \simeq\mP^{2},\]
where $A=(a_{ij})$ is the symmetric matrix corresponding to a point
$[a_{ij}].$ Note that $x\in\hchow\cap\mP(L_{5}^{\perp})$ implies
$\,^{t}zAw=0$, which is one of the three conditions for $l_{x}\subset Q$.
We depict the claims in Proposition \ref{pro:shadow-curve} in Fig.~4.1.

\begin{figure}
\begin{centering}
\includegraphics[scale=0.6]{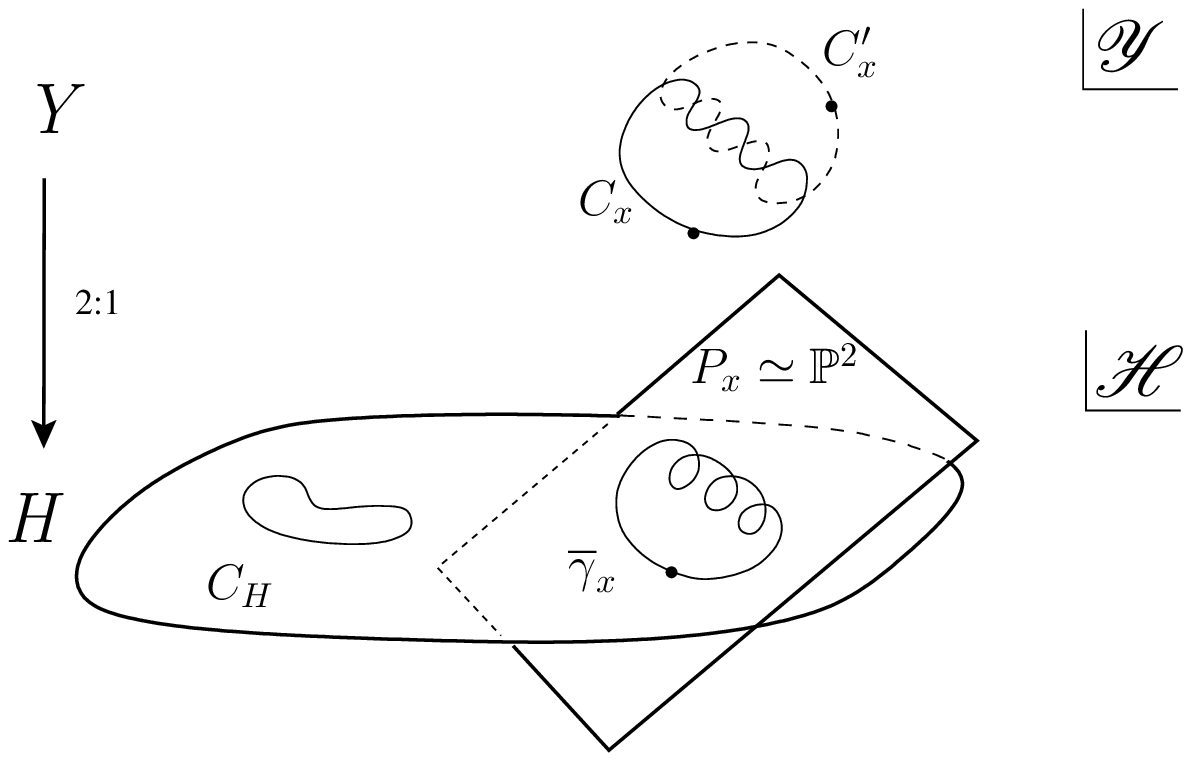}
\par\end{centering}

\begin{fcaption}\item{} \textbf{Fig.4.1. Shadow curve $C_{x}'$.}
Two intersecting curves $C_{x}$ and $C_{x}'$ in $Y$ covers the
plane quintic curve $\overline{\gamma}_{x}$ in $H$. $C_{H}$ is
the curve of the branch locus.

\end{fcaption}
\end{figure}

As claimed in Proposition \ref{pro:shadow-curve}, there are two (distinct)
families of curves $\left\{ C_{x}\right\} _{x\in X}$ and $\left\{ C_{x}'\right\} _{x\in X}$
in $Y$ parametrized $X$. These two are smooth curves of genus 3
and degree 5 for general $x\in X$, and interestingly, can be identified
in the BPS numbers calculated in {[}HoTa,1{]}. The relevant part of
the table of BPS numbers reads as follows: \begin{equation}
\begin{tabular}{c|cccccc}
 \mbox{\ensuremath{g}}  &  0  &  1  &  2  &  3  &  4  &  5 \\
\hline \mbox{\ensuremath{n_{g}^{Y}(d)}}  &  12279982850  &  571891188  &  3421300  &  100  &  0  &  0 \end{tabular}\label{eq:Reye-Y-BPS}\end{equation}
with $d=5$. As discovered in {[}ibid{]}, we can exactly identify
the two families in the BPS number $n_{3}^{Y}(5)=100$ as \[
n_{3}^{Y}(5)=(-1)^{\dim X}e(X)\times2=-(-50)\times2\]
following the counting {}``rule'' described in Subsection \ref{sub:BPS-numbers-Gr-Pf}.
This indicates that the BPS numbers, which are preferred in physics
interpretations \cite{GV} to other mathematical invariants such as
Donaldson-Thomas invariants, has a nice moduli interpretation in some
cases although their mathematical definition (as invariants of manifolds)
is difficult in general \cite{HST}. 

\vskip0.5cm

\subsection{\label{sub:Mirror-symmetry-Reye}Mirror symmetry }

In Subsection \ref{sub:Mirror-symmetry-Gr-Pf}, we have only described
the monodromy properties of Picard-Fuchs differential equation for
the mirror family of R{\o}dland's Pfaffian Calabi-Yau 3-fold. This
is partially because the geometry of the mirror family is rather involved.
Our second example of FM partners $\left\{ X,Y\right\} $ of $\rho=1$
has a nice feature from this perspective. We have a rather simple
description for the mirror family of Reye congruence Calabi-Yau 3-folds
$X$ in terms of special form of determinantal quintic hypersurfaces
in $\mP^{4}$. 

Recall the definition $X=\tS^{2}\mP(V)\cap\mP(L_{5}^{\perp})\subset\mP(\tS^{2}V)$.
Using the fact $\tS^{2}\mP(V)=\mP(V)\times\mP(V)/\mZ_{2}$, it is
easy to see the isomorphism $X\simeq\tilde{X}/\mZ_{2}$ with \begin{equation}
\tilde{X}=\left(\begin{matrix}\mP^{4}|\,1\,1\,1\,1\,1\\
\mP^{4}|\,1\,1\,1\,1\,1\end{matrix}\right)^{2,52},\label{eq:tilde-X-def}\end{equation}
where the superscripts $2,52$ represent the Hodge numbers $h^{1,1}$and
$h^{2,1}$, respectively. The r.h.s of (\ref{eq:tilde-X-def}) is
a common notation in physics literatures to represent complete intersections
of five (generic) $(1,1)$-divisors in $\mP^{4}\times\mP^{4}$. In
our case, we should read this as the complete intersection of five
generic and symmetric $(1,1)$-divisors which correspond to five linear
forms in $\mP(\tS^{2}V)$ determined by $L_{5}\subset\tS^{2}V^{*}$.
Note that when $L_{5}$ is taken in general position, $X$ is smooth
which means that the $\mZ_{2}$ action on $\tilde{X}$ is free. 

For concreteness, let us take a basis of $L_{5}$ by $A_{k}=(a_{ij}^{(k)})$
$(k=1,..,5).$ Then the defining equations of $\tilde{X}$ are given
by $f_{1}=f_{2}=...=f_{5}=0$ with $f_{k}=\sum_{i,j}z_{i}a_{ij}^{(k)}w_{j}$
and $([z],[w])\in\mP^{4}\times\mP^{4}$. If we introduce a notation
$A(z)=\left(\sum_{i}z_{i}a_{ij}^{(k)}\right)_{1\leq k,j\leq5}$ for
the $5\times5$ matrix defined by $A_{k}$, then we have \[
\tilde{X}=\left\{ ([z],[w])\in\mP^{4}\times\mP^{4}\mid A(z)w=0\right\} .\]
It is easy to deduce that the projection of $\tilde{X}$ to the first
factor of $\mP^{4}\times\mP^{4}$ is a determinantal quintic hypersurface,
\[
Z=\left\{ [z]\in\mP^{4}\mid\det\, A(z)=0\right\} .\]

\begin{prop}[\cite{HoTa1}]
{\rm (1)} When the linear subspace $L_{5}\subset\tS^{2}V^{*}$ is
general, the quintic hypersurface $Z$ is singular at $50$ ordinary
double points(ODPs) where $\rk A(z)=3$. {\rm (2)} The morphism $\pi_{1}:\tilde{X}\to Z$
is a small resolution of the 50 ODPs. 
\end{prop}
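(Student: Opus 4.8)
The plan is to describe $Z$ as the image of the first projection $\pi_1\colon\tilde X\to\mP^4$ and to extract the whole statement from the rank stratification of the $5\times5$ matrix $A(z)$. For $[z]\in\mP^4$ the scheme‑theoretic fibre $\pi_1^{-1}([z])$ is the linear subspace $\mP(\ker A(z))$ of the second $\mP^4$: empty when $\det A(z)\neq0$, a single reduced point when $\rk A(z)=4$, and positive–dimensional (a $\mP^{1}$ when $\rk A(z)=3$) otherwise. Hence $\pi_1(\tilde X)=\{\det A(z)=0\}$ as sets, and this is closed since $\pi_1$ is proper; as $\tilde X$ is a $3$‑fold this image is not all of $\mP^4$, so $\det A(z)$ is not the zero polynomial and $Z=\pi_1(\tilde X)=\{\det A(z)=0\}$ is a quintic hypersurface. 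Write $Z_k=\{[z]\mid\rk A(z)\le k\}$, so $Z=Z_4\supset Z_3\supset Z_2\supset\cdots$; here $\tilde X$ itself is smooth because $X=\tilde X/\mZ_2$ is smooth with free action for general $L_5$.

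The core is the genericity statement: for general $L_5$, $Z_3$ is a reduced set of $50$ points at which the rank is exactly $3$, while $Z_2=\emptyset$ and $\Sing Z=Z_3$. To locate the number $50$, regard $z\mapsto A(z)$ as a morphism of rank‑$5$ bundles $\phi\colon V\otimes\sO_{\mP(V)}\to L_5^*\otimes\sO_{\mP(V)}(1)$, whose $k$‑th degeneracy locus is $Z_k$. With $c(L_5^*\otimes\sO(1)-V\otimes\sO)=(1+H)^5=1+5H+10H^2+10H^3+\cdots$, Thom–Porteous gives $[Z_4]=c_1=5H$ (consistent with $Z$ being a quintic) and $[Z_3]=(c_2^{\,2}-c_1c_3)\cap[\mP^4]=\big((10H^2)^2-5H\cdot10H^3\big)=50\,H^4$, i.e.\ $Z_3$ has length $50$, while the expected codimension of $Z_2$ is $9>4$, so $Z_2$ (hence $Z_1,Z_0$) is empty. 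Promoting "expected" to "actual" — reducedness of $Z_3$, emptiness of $Z_2$, and the identity $\Sing Z=Z_3$, equivalently that $\mP^4\to\mathrm{Mat}_5$ meets each determinantal stratum transversally along its smooth part — I would either invoke a Kleiman‑type transversality argument using the $\mathrm{GL}(V)$‑action on the family of tensors in $\tS^2V^*\otimes L_5^*$ coming from $L_5\in\rG(5,\tS^2V^*)$, or exhibit one explicit $L_5$ (a sufficiently sparse choice of five symmetric matrices) where the claim is checkable and conclude by semicontinuity. Granting this, $\phi$ has corank exactly $1$ along $Z\setminus Z_3$, so $Z$ is smooth there; being a Cohen–Macaulay hypersurface with $\codim_Z\Sing Z=3$, $Z$ is normal.

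Next I would pin down the local picture at $[z_0]\in Z_3$. Changing the constant bases of $V$ and $L_5^*$ so that $A(z_0)=\mathrm{diag}(1,1,1,0,0)$ and writing $A(z)=\left(\begin{smallmatrix}P(z)&Q(z)\\ R(z)&S(z)\end{smallmatrix}\right)$ in $3+2$ blocks, invertibility of $P$ near $z_0$ gives $\det A(z)=\det P(z)\cdot\det\!\big(S(z)-R(z)P(z)^{-1}Q(z)\big)$ with the first factor a unit, and the $2\times2$ block $\tilde S(z):=S(z)-R(z)P(z)^{-1}Q(z)$ vanishes at $z_0$ with differential $dS(z_0)$. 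The transversality of the previous paragraph says precisely that the four entries $a,b,c,d$ of $\tilde S$ have independent differentials at $z_0$, so they form a local coordinate system on $\mP^4$; thus locally $Z=\{ad-bc=0\}$, an ordinary double point, which is (1). In the same coordinates $\pi_1^{-1}$ of this neighbourhood is $\{(a,b,c,d,[w_4:w_5])\mid\left(\begin{smallmatrix}a&b\\ c&d\end{smallmatrix}\right)\binom{w_4}{w_5}=0\}$ (the other $w_i$ vanishing), which is exactly the small (conifold) resolution of $\{ad=bc\}$ obtained by blowing up one ruling — smooth, in agreement with $\tilde X$ smooth, and replacing the node by a $\mP^1$. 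Globally $\pi_1\colon\tilde X\to Z$ is then a proper birational morphism onto the normal $3$‑fold $Z$, an isomorphism over $Z\setminus Z_3$ by Zariski's main theorem, with exceptional fibre $\mP^1$ over each of the $50$ ordinary double points — i.e.\ a small resolution, which is (2).

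The main obstacle I anticipate is exactly the genericity bookkeeping of the second paragraph: the family $z\mapsto A(z)$ is not a general pencil of matrices but the special one forced by the symmetric embedding $L_5\hookrightarrow\tS^2V^*$, so one must check that this constrained family is still transverse to the determinantal strata for general $L_5$. I would settle this either through the equivariance argument above or, more concretely, by a single Macaulay2 computation confirming $|Z_3|=50$, $Z_2=\emptyset$, and the ODP property for one explicit $L_5$, after which openness of these conditions finishes the proof; with that in hand the Thom–Porteous count and the conifold local model are routine.
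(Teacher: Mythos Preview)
The paper does not prove this proposition here; it simply refers to \cite[Prop.~3.3]{HoTa1}, so there is no in-text argument to compare against. Judged on its own, your approach is the standard determinantal one and is sound: the Thom--Porteous computation $[Z_3]=c_2^{\,2}-c_1c_3=50\,H^4$ is correct, the block reduction $\det A(z)=\det P\cdot\det\tilde S$ at a rank-$3$ point gives the local equation $ad-bc=0$, and identifying $\tilde X$ locally with $\{\tilde S(z)\binom{w_4}{w_5}=0\}$ is exactly the conifold small resolution. One small inaccuracy: the remaining coordinates $w_1,w_2,w_3$ are not zero near $z_0$ but are determined by $w'=-P^{-1}Qw''$; this does not affect the local model.

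You have correctly isolated the only nontrivial point, namely that the constrained family $z\mapsto A(z)$ (with the $A_k$ symmetric) is still transverse to the rank strata for general $L_5$. Your Kleiman-style suggestion is vague as stated---the $\mathrm{GL}(V)$-action on $\rG(5,\tS^2V^*)$ is not transitive and does not directly yield transversality to the strata in $\mathrm{Mat}_5$---so I would drop that option. Your second route (exhibit one explicit $L_5$ with $|Z_3|=50$, $Z_2=\emptyset$, and $50$ ODPs, then invoke openness) is the clean way to close this, and is almost certainly what the original reference does as well.
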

Details can be found in {[}ibid, Prop.3.3{]}. Here we summarize properties
of $X,\tilde{X}$ and $Z$ in the left of the following diagrams:
\def\tildeXZ{
\begin{xy}
(0,0)*+{\tilde X}="tX",
(0,-13)*+{X}="X",
(17,-13)*+{Z}="Z",
   (40,0)*+{\tilde{X}^*}="tXs",
   (40,-13)*+{\;X^*}="Xmirror",
  (52,0)*+{\tilde{X}_{sp}}="tXsp",
   (65,-13)*{Z_{sp}}="Zsp",
\ar_{/_{{\mathbb Z}_2}} "tXs";"Xmirror",
\ar_{/_{{\mathbb Z}_2}} "tX";"X",
\ar^{\,_{50 \text{OPD's}}} "tX";"Z",
\ar "tXs";"tXsp",
\ar "tXsp";"Zsp",
\end{xy} }\begin{equation}
\begin{matrix}\tildeXZ\end{matrix}\label{eq:diagram-X-mirror-Z}\end{equation}
For the construction of mirror family of $X$, we invoke the orbifold
mirror construction, which schematically described in the right diagram
of (\ref{eq:diagram-X-mirror-Z}). Namely, we start with a certain
special form $A_{sp}(z)$ of $A(z)$ (or the linear subspace $L_{5})$
to define $Z_{sp}=\left\{ \det\, A_{sp}(z)=0\right\} .$ $Z_{sp}$
is singular in general, and so is $\tilde{X}_{sp}:=\left\{ A_{sp}(z)w=0\right\} \subset\mP^{4}\times\mP^{4}.$
Finding a suitable crepant resolution $\tilde{X}^{*}\to\tilde{X}_{sp}$,
which is compatible with the $\mZ_{2}$ action of exchanging the two
factors of $\mP^{4}\times\mP^{4}$, we obtain a mirror family of $X$
by the quotient $X^{*}=\tilde{X}^{*}/\mZ_{2}$. In the final process,
we usually need to find a suitable finite group $G_{orb}$ (called
orbifold group) to arrive at the desired properties $h^{1,1}(X)=h^{2,1}(X^{*}$)
and $h^{2,1}(X)=h^{1,1}(X^{*})$, however interestingly it turns out
that $G_{orb}=\left\{ \id\right\} $ in our case. 

The special form $A_{sp}(z)$ found in \cite{HoTa2} corresponds to
a linear subspace $L_{5}=\langle A_{1},A_{2},\cdots,A_{5}\rangle$
with $A_{1},A_{2},...,A_{5}$ in order given by \[
\begin{aligned}\left(\begin{smallmatrix}1 & a & 0 & 0 & 0\\
a & 0 & 0 & 0 & 0\\
0 & 0 & 0 & 0 & 0\\
0 & 0 & 0 & 0 & 0\\
0 & 0 & 0 & 0 & 0\end{smallmatrix}\right),\;\left(\begin{smallmatrix}0 & 0 & 0 & 0 & 0\\
0 & 1 & a & 0 & 0\\
0 & a & 0 & 0 & 0\\
0 & 0 & 0 & 0 & 0\\
0 & 0 & 0 & 0 & 0\end{smallmatrix}\right),\;\left(\begin{smallmatrix}0 & 0 & 0 & 0 & 0\\
0 & 0 & 0 & 0 & 0\\
0 & 0 & 1 & a & 0\\
0 & 0 & a & 0 & 0\\
0 & 0 & 0 & 0 & 0\end{smallmatrix}\right),\;\left(\begin{smallmatrix}0 & 0 & 0 & 0 & 0\\
0 & 0 & 0 & 0 & 0\\
0 & 0 & 0 & 0 & 0\\
0 & 0 & 0 & 1 & a\\
0 & 0 & 0 & a & 0\end{smallmatrix}\right),\;\left(\begin{smallmatrix}0 & 0 & 0 & 0 & a\\
0 & 0 & 0 & 0 & 0\\
0 & 0 & 0 & 0 & 0\\
0 & 0 & 0 & 0 & 0\\
a & 0 & 0 & 0 & 1\end{smallmatrix}\right).\end{aligned}
\]
Using these special form of $A_{k}$, we have $Z_{sp}(a):=\left\{ \det A_{sp}(z)=0\right\} \subset\mP^{4}$
where\begin{equation}
\begin{matrix}\begin{aligned}\det A_{sp}(z)=\left|\begin{smallmatrix}z_{1}+az_{2} & az_{1} & 0 & 0 & 0\\
0 & z_{2}+az_{3} & az_{2} & 0 & 0\\
0 & 0 & z_{3}+az_{4} & az_{3} & 0\\
0 & 0 & 0 & z_{4}+az_{5} & az_{4}\\
az_{5} & 0 & 0 & 0 & z_{5}+az_{1}\end{smallmatrix}\right|\hskip3.2cm\\
=a^{5}z_{1}z_{2}z_{3}z_{4}z_{5}+(z_{1}+az_{2})(z_{2}+az_{3})(z_{3}+az_{4})(z_{4}+az_{5})(z_{5}+az_{1}).\end{aligned}
\end{matrix}\label{eq:Zsp-Det}\end{equation}
 By coordinate change, it is easy to see that $\left\{ Z_{sp}(a)\right\} _{a}$
defines a family of Calabi-Yau threefolds over $\mP^{1}$ by $[-a^{5},1]\in\mP^{1}$.
\begin{prop}
\begin{myitem} 

\item[{\rm (1)}]  When $a^{5}$ is general $(a^{5}\not=-\frac{1}{32},1-11a^{5}+a^{10}\not=0)$,
$Z_{sp}(a)$ is singular along 5 lines of $A_{2}$ singularities and
10 lines of $A_{1}$ singularities. 

\item[{\rm (2)}] $\tilde{X}_{sp}(a):=\left\{ ([z],[w])\mid A_{sp}(z)z=0\right\} $
partially resolves the singularities in {\rm (1)} to 20 lines of
$A_{1}$ singularities. 

\item[{\rm (3)}] There exists a crepant resolution $\tilde{X}^{*}(a)\to\tilde{X}_{sp}(a)$.
And $\tilde{X}^{*}(a)$ for general $a^{5}$ is a smooth Calabi-Yau
3-fold with Hodge numbers $h^{1,1}=52$, $h^{2,1}=2$. 

\end{myitem}
\end{prop}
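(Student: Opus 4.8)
The plan is to establish the three parts by explicit local analysis of the banded cyclic matrix $A_{sp}(z)$, using throughout the cyclic symmetry $z_{i}\mapsto z_{i+1}$ (indices mod $5$), which permutes $A_{1},\dots,A_{5}$ and hence acts on $Z_{sp}(a)$, $\tilde X_{sp}(a)$ and the whole configuration. For part (1), I would first note that the corank-$\ge 2$ locus $\{[z]\mid\rk A_{sp}(z)\le 3\}$ is always contained in $\Sing Z_{sp}(a)$ (there the adjugate, hence $\nabla\det A_{sp}$, vanishes), and check that for general $a$ equality holds, as for the general member. This reduces (1) to computing the ideal of $4\times 4$ minors of $A_{sp}(z)$: the band structure makes these factor, and solving and grouping the solutions by the $\mZ/5$-action produces a short list of lines, from which discarding the components of corank $1$ leaves one orbit of five lines and two orbits of five lines, i.e.\ $5+10$ lines. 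For each such line $\ell$ I would slice by a general $\mP^{3}$ through a general point of $\ell$, expand $\det A_{sp}$ to third order in the three transverse coordinates, and read the type of the isolated surface singularity of $Z_{sp}(a)\cap\mP^{3}$ from the rank of the quadratic part: rank $3$ gives $A_{1}$, rank $2$ with non-degenerate cubic gives $A_{2}$. The exceptional values $a^{5}=-\tfrac1{32}$ and $1-11a^{5}+a^{10}=0$ are precisely those for which two of the lines collide or a leading quadratic/cubic term drops rank, and I would verify this from the discriminants of the minor ideal and of the transverse slice.

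For part (2), the morphism $\pi_{1}\colon\tilde X_{sp}(a)\to Z_{sp}(a)$, $([z],[w])\mapsto[z]$, has fibre $\mP(\ker A_{sp}(z))$: a point over the rank-$4$ locus, a $\mP^{1}$ over the rank-$3$ locus (and one checks rank $\le 2$ does not occur for general $a$). Working at a generic point of each singular line, after a constant change of bases $A_{sp}$ is $\mathrm{diag}(I_{3},0_{2})$ plus linear terms, $\det A_{sp}$ becomes $M_{44}M_{55}-M_{45}M_{54}+(\text{higher order})$, and $\tilde X_{sp}(a)$ is locally the incidence variety $\{[w_{4}\!:\!w_{5}]\in\mP^{1}\mid M_{44}w_{4}+M_{45}w_{5}=M_{54}w_{4}+M_{55}w_{5}=0\}$ fibred over the transverse coordinates. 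A direct chart computation of this incidence over an $A_{1}$-line and over an $A_{2}$-line determines in each case whether the total space is smooth there or retains a curve of $A_{1}$-singularities; tallying the outcomes over the fifteen lines (organised by the $\mZ/5$-action) yields exactly $20$ lines of $A_{1}$-singularities on $\tilde X_{sp}(a)$, and along the way one records the finitely many points where these lines meet and the lift of the $\mZ_{2}$ swapping the two $\mP^{4}$-factors.

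For part (3), a threefold singular along a (smooth) curve of $A_{1}$-singularities, with local model $\{xy=z^{2}\}$ constant in the fourth coordinate, is resolved crepantly by blowing up the reduced singular curve — in charts the strict transform is smooth and the adjunction computation $K_{\tilde Y}=f^{*}K_{Y}$ gives crepancy — and the finitely many intersection points of the $A_{1}$-lines are handled by a further sequence of crepant blow-ups. This yields $\rho_{\tilde X^{*}}\colon\tilde X^{*}(a)\to\tilde X_{sp}(a)$ with $\tilde X^{*}(a)$ smooth projective; since $\tilde X_{sp}(a)$ is a complete intersection of five divisors of bidegree $(1,1)$ in $\mP^{4}\times\mP^{4}$ it is Gorenstein with $\omega\simeq\sO$ and $H^{1}(\sO)=H^{2}(\sO)=0$, and crepancy propagates these to $\tilde X^{*}(a)$, so it is Calabi--Yau. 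For the Hodge numbers I would compute $h^{1,1}(\tilde X^{*}(a))$ by counting the irreducible components of the exceptional divisor of $\rho_{\tilde X^{*}}$ together with the two classes pulled back from $\mP^{4}\times\mP^{4}$, and compute $e(\tilde X^{*}(a))$ by tracking the degeneration of the smooth member $\tilde X$ (with $e=2(2-52)=-100$) to $\tilde X_{sp}(a)$ and then the crepant blow-ups; the outputs $h^{1,1}=52$, $h^{2,1}=2$ are then read off and cross-checked against the mirror prediction $h^{p,q}(\tilde X^{*})=h^{3-p,q}(\tilde X)$ coming from $h^{1,1}(\tilde X)=2$, $h^{2,1}(\tilde X)=52$.

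The main obstacle is the local singularity analysis underlying (1) and (2): since $A_{sp}(z)$ is not symmetric, the corank-$2$ loci must be found and classified by hand, and then the behaviour of the kernel-incidence modification over each type of line must be computed in coordinate charts, including the precise passage from five lines of $A_{2}$-type and ten of $A_{1}$-type to twenty lines of $A_{1}$-type and the bookkeeping of exceptional divisors that ultimately gives $h^{1,1}=52$. The cyclic symmetry reduces the casework by a factor of five, but the third-order transverse expansions and the chart computations for the incidence are the genuinely laborious step; part (3) is then essentially formal given the local normal forms, apart from confirming global projectivity of the resolution and that the genericity hypotheses on $a$ keep the configuration of lines as described.
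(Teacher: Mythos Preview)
The paper itself gives no proof of this proposition: it is a survey article, and immediately after the statement it says ``More details of the singularities and their resolutions can be found in [ibid]'', referring to \cite{HoTa2}. So there is nothing in the present paper to compare your argument against; the proof lives entirely in the cited reference, where the computations are carried out explicitly in coordinates along the lines you describe.

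Your plan is the correct and standard one: exploit the cyclic $\mZ/5$-symmetry to reduce the corank-$2$ locus of $A_{sp}(z)$ to a small number of orbit representatives, classify the transverse slice singularities by the rank of the leading quadratic form, analyse the kernel-incidence $\tilde X_{sp}(a)\to Z_{sp}(a)$ in local charts, and then crepantly resolve curves of $A_{1}$-singularities by blowing up. This is essentially how \cite{HoTa2} proceeds.

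One word of caution on part (3). Your proposed count of $h^{1,1}(\tilde X^{*}(a))$ as ``two pullback classes from $\mP^{4}\times\mP^{4}$ plus the irreducible exceptional divisors'' is correct in principle, but the naive tally $2+20=22$ falls far short of $52$. The extra divisor classes come from the configuration of the $20$ $A_{1}$-lines: they intersect in a substantial number of points, and at those points the singularity is worse (locally a product of $A_{1}$'s, or a more degenerate compound Du Val point), so that the crepant resolution there introduces further exceptional components beyond the generic $\mP^{1}$-bundles over the lines. In \cite{HoTa2} this intersection pattern is tabulated explicitly and the resolution is built by a specific sequence of toric blow-ups at these points before blowing up the lines; the final $h^{1,1}=52$ only emerges after that careful accounting. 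Your outline flags ``bookkeeping of exceptional divisors'' as the laborious step, which is accurate, but you should be aware that the bulk of the $52$ comes from the intersection points rather than from the $20$ lines themselves.
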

More details of the singularities and their resolutions can be found
in {[}ibid{]}. For general $a^{5}$, we can see that $\tilde{X}^{*}(a)$
admits a free $\mZ_{2}$ action, and hence $X^{*}(a)=\tilde{X}^{*}(a)/\mZ_{2}$
is a Calabi-Yau 3-fold with Hodge numbers $h^{1,1}=26$, $h^{2,1}=1$.
We have then a family $\frak{X}^{*}:=\left\{ X^{*}(a)\right\} _{[-a^{5},1]\in\mP^{1}}$
of Calabi-Yau 3-folds over $\mP^{1}$.
\begin{prop}[{\cite[Prop.6.9]{HoTa2}}]
 $\frak{X}^{*}$ is a mirror family of Reye congruence Calabi-Yau
3-fold $X$.
\end{prop}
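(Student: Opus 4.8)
The plan is to certify $\frak{X}^{*}$ as a mirror family of $X$ by the same programme used in Subsection~\ref{sub:Mirror-symmetry-Gr-Pf} for the Grassmannian--Pfaffian pair, adapted to the fact that $h^{1,1}(X)=1$. Concretely, I would verify three things: (a) for general $a$ the member $X^{*}(a)$ is a smooth Calabi--Yau $3$-fold whose Hodge numbers are exchanged with those of $X$, in particular with a one-dimensional space of complex structure deformations; (b) the Picard--Fuchs operator $\sD_{a}$ annihilating the periods $w(a)=\int_{\gamma}\Omega(X^{*}(a))$ of the holomorphic $3$-form has rank $4$ and possesses a point of maximal unipotent monodromy (MUM); and (c) the B-model data attached to that MUM point --- the classical triple intersection number, $c_{2}\cdot H_{X}$ and $e(X)$ extracted from the leading asymptotics of an integral symplectic basis of solutions, together with the genus-$0$ instanton numbers produced by the mirror map --- agree with the A-model of $X$.

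Step (a) is essentially the content of the propositions already stated: for general $a$ the crepant resolution $\tilde{X}^{*}(a)$ carries the free involution exchanging the two $\mP^{4}$-factors, so $X^{*}(a)=\tilde{X}^{*}(a)/\mZ_{2}$ is smooth with $h^{2,1}(X^{*}(a))=1=h^{1,1}(X)$, the remaining Hodge number being exchanged by the same count, and the family is genuinely parametrised by $[-a^{5},1]\in\mP^{1}$. For step (b) I would exploit that the orbifold group is trivial ($G_{orb}=\{\id\}$): $\tilde{X}^{*}(a)$ is, after crepant resolution, a member of the Batyrev--Borisov mirror of the nef-partition complete intersection $\tilde{X}=(\mP^{4}\times\mP^{4}\,|\,(1,1)^{5})$, so its periods are obtained by restricting that two-parameter mirror GKZ period system to the $\mZ_{2}$-symmetric one-parameter sub-locus picked out by the special form $A_{sp}(z)$. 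The fundamental period then has an explicit hypergeometric-type series in $x=-a^{5}$ whose coefficients are read from $\det A_{sp}(z)$ in~(\ref{eq:Zsp-Det}); a Griffiths--Dwork (or GKZ) reduction produces $\sD_{a}$, after which I would record its Riemann $\sP$-scheme --- exhibiting the MUM point corresponding to the large-volume limit of $X$ (and checking whether the opposite end of the $\mP^{1}$-base carries a second MUM point, as happened for the Grassmannian--Pfaffian family), conifold points at the roots of the discriminant (the factors $a^{5}+\tfrac{1}{32}$ and $1-11a^{5}+a^{10}$ seen in the singularity analysis), and any apparent singularities --- exactly as in Subsections~\ref{sub:Monodromy-calc-K3} and~\ref{sub:Mirror-symmetry-Gr-Pf}.

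For step (c) I would build, near the MUM point, the symplectic integral solution basis via the ansatz of Subsection~\ref{sub:Mirror-symmetry-Gr-Pf}, fixing the leading coefficients $(\kappa,\beta,\gamma)=\big(H_{X}^{3},-\tfrac{c_{2}\cdot H_{X}}{24},-\tfrac{\zeta(3)e(X)}{(2\pi i)^{3}}\big)=\big(35,-\tfrac{50}{24},-\tfrac{\zeta(3)e(X)}{(2\pi i)^{3}}\big)$, and check that the Griffiths transversality and integrality/symplecticity conditions hold; this pins the normalisation and the Griffiths--Yukawa coupling $C_{aaa}$. Expanding $C_{aaa}$ in the canonical coordinate $q$ and applying the usual BPS counting rule of Subsection~\ref{sub:BPS-numbers-Gr-Pf} yields a sequence of candidate genus-$0$ Gromov--Witten invariants $n_{d}^{X}$, which I would match against the A-model of $X$ computed independently: since $\mP^{1}$ admits no connected double cover, every rational curve on $X$ in the class $dH_{X}$ lifts in exactly two ways under the étale cover $\tilde{X}\to X$ to a rational curve on $\tilde{X}$ in the class dual to $\pi^{*}H_{X}=H_{1}+H_{2}$, so the $n_{d}^{X}$ are determined by the (toric-mirror-computable) invariants of $\tilde{X}$ along the diagonal Kähler direction $t_{1}=t_{2}$ up to a standard free-quotient bookkeeping factor $2$; low-degree values can also be cross-checked by directly counting curves in the Reye congruence. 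Agreement of the two instanton sequences, together with (a) and the classical checks, yields the proposition, and higher-genus consistency can be appended by solving the BCOV holomorphic anomaly equation for $\sD_{a}$ and comparing, as in \cite{HoKo}.

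The main obstacle is step (c). One must make the correspondence between the A-model of $X$ and that of the CICY $\tilde{X}$ fully precise --- tracking degrees under $\pi^{*}H_{X}=H_{1}+H_{2}$ and the multiplicities from the two lifts of each rational curve --- and then reconcile it with the B-model side, which requires correctly identifying the restriction of the multi-parameter mirror variation of Hodge structure of $\tilde{X}$ to the singular one-parameter sub-family $\{\tilde{X}_{sp}(a)\}$ and verifying that the crepant resolution $\tilde{X}^{*}(a)\to\tilde{X}_{sp}(a)$ contributes no additional monodromy or periods. There is also a conceptual point: because $h^{1,1}(X)=1$, a one-parameter B-model family is all that the A-model of $X$ can probe, so one must argue that $\frak{X}^{*}$ genuinely deserves the name ``mirror family'' --- the remaining complex deformations of $X$ do not affect the quantum-cohomology structure constants being matched. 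By contrast, step (b) is computationally heavy but routine, and step (a) is immediate from the preceding propositions.
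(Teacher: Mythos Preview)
Your plan is correct and matches the paper's own programme (deferred to \cite{HoTa2}): having constructed $\frak{X}^{*}$ with the exchanged Hodge numbers, one verifies the mirror statement through exactly the monodromy computation you outline in (b)--(c), namely the analogue of Subsection~\ref{sub:Mirror-symmetry-Gr-Pf} that recovers $(H_{X}^{3},\,c_{2}\cdot H_{X},\,e(X))$ from the integral symplectic basis at the MUM point $x=0$ (and, as you anticipate, finds a second MUM point at $x=\infty$ corresponding to $Y$). Your proposed independent check of the instanton numbers via the \'etale double cover $\tilde{X}$ along the diagonal K\"ahler class is a sensible extra safeguard, but the paper does not rely on it.
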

We omit the monodromy calculations which correspond to those in Subsection
\ref{sub:Mirror-symmetry-Gr-Pf}, since they are reported in {[}ibid,
Prop.2.10{]}. 

$\;$

\noindent \textbf{Remark. }(1) Set $x=-a^{5}$, then from the defining
equation (\ref{eq:Zsp-Det}) we observe that both $x=0$ and $x=\infty$
are MUM points. In \cite{HoTa1}, Gromov-Witten invariants $(g\leq14$)
of Reye congruence $X$ have been calculated from the MUM degeneration
at $x=0$ and the invariants of Fourier-Mukai partner $Y$ from $x=\infty$.
We believe that our mirror family $\frak{X}^{*}$ provides us a nice
example to study the geometry of mirror symmetry \cite{SYZ,GrS1,GrS2,RuS}
when non-trivial Fourier-Mukai partners exist. It is interesting,
although accidental, that in (\ref{eq:Zsp-Det}) we come across to
the geometry of quintic from which the study of mirror symmetry started
\cite{Gepner,GP,CdOGP}. 

(2) If we focus on the form of Picard-Fuchs differential operators
in \cite{AESZ,ES},\cite{DM}, there are many other examples which
exhibit two MUM points. Among them, a nice example has been identified
in \cite{Mi} with the mirror family of the Calabi-Yau 3-fold given
by general linear sections of a Schubert cycle in the Cayley plane
$E_{6}/P_{1}$. It is expected that this Calabi-Yau 3-fold has a non-trivial
Fourier-Mukai partner {[}ibid{]}\cite{Ga}. Also the mirror family
of the Calabi-Yau 3-folds given by the intersection of two copies
of Grassmannians $X=\rG(2,5)\cap\rG(2,5)\subset\mP^{9}$ \cite{Kan,Kap}
shows two MUM points whose interpretation seems slightly different
from those we have seen in this article. The two MUM points seems
to correspond Fourier-Mukai partners which are diffeomorphic but not
bi-holomorphic. It would be interesting to investigate these new examples
in more detail.

(3) In \cite{Hori}, the pair of Reye congruence Calabi-Yau 3-fold
$X$ and its Fourier-Mukai partner $Y$ have been understood in the
language of Gauged linear sigma modes along the arguments used for
the Grassmannian-Pfaffian example. Extending these arguments, many
other examples have been worked out in \cite{HoriKnapp} by calculating
the so-called {}``two sphere partition'' in physics \cite{JKLMR}. 

\newpage

\section{\label{sec:Birational-Geometry-of-hcoY}\textbf{\textup{Birational
Geometry of the Double Symmetroid $\hcoY$}}}

We describe the birational geometry of the double (quintic) symmetroid
$\hcoY$ and its resolution $\widetilde{\hcoY}$. We will see intensive
interplay of the projective geometry of quadrics and that of relevant
Grassmannians. In this section, we fix $V=\mC^{5}$ and retain all
the notations introduced in the last section. This section is an exposition
of the results whose details are contained in \cite{HoTa3,HoTa4}. 

\vskip0.5cm

\subsection{\label{sub:ZY-conic-1}Generically conic bundle $\Zpq\to\hcoY$}

We describe the (connected) fibers of $\Zpq\to\hcoY$ of the Stein
factorization $\Zpq\to\hcoY\to\Hes$ in (\ref{eq:def-hcoY-Stein-fact}).
Recall the definition $\Hes=\left\{ [a_{ij}]\in\mP(\tS^{2}V^{*})\mid\det a=0\right\} $
and \[
\Zpq:=\left\{ ([Q],[\Pi])\mid\mP(\Pi)\subset Q\right\} \subset\Hes\times\rG(3,V),\]
i.e., $\Zpq$ consists of pairs of singular quadric and (projective)
plane therein. The notation $[Q]\in\Hes$ above indicates that we
identify points $[a_{ij}]\in\Hes$ with the corresponding quadrics
$Q$ in $\mP(V)$. Since $\dim\Zpq-\dim\Hes=1$, we have generically
one dimensional fibers for $\pi_{\Zpq}:\Zpq\to\hcoY$. It is easy
to deduce the fibers of $\pi_{\Zpq}:\Zpq\to\hcoY$ from those of $\Zpq\to\Hes$:

The fibers of $\Zpq\to\Hes$ over a point $[Q]$ consists of planes
contained in the quadric $Q$. In Fig.~5.1 , depending on the rank
of $[Q]=[a_{ij}]$, the corresponding quadric $Q$ is depicted schematically.
Let us define reduced quadric $\bar{Q}$ to be the smooth quadric
naturally defined in $\mP(V/\Ker(a_{ij})$). Then, as is clear in
Fig.~5.1, $\bar{Q}\simeq\mP^{1}\times\mP^{1}$, a smooth conic, two
points and one point depending on $\rk Q$=4, 3, 2 and 1, respectively.
Singular quadrics $Q$ are then described by the cones over the reduced
quadric $\bar{Q}$ with the vertex $\Ker Q:=\mP(\Ker(a_{ij}))$. The
fibers of $\pi_{\Zpq}:\Zpq\to\hcoY$ over $y\in\hcoY$ are given by
connected families of planes contained in the quadric $Q_{y}=\rho_{\hcoY}(y)$.
We summarize the connected fibers:

$\;$

\begin{myitem3} 

\item{(a)} When $\rk Q_{y}=4$, the fiber is the $\mP^{1}$-families
of planes which corresponds to one of the two possible rulings of
$\bar{Q}_{y}\simeq\mP^{1}\times\mP^{1}$. 

\item{(b)} When $\rk Q_{y}=3$, the fiber is the $\mP^{1}$-family
of planes parametrized by the conic $\bar{Q}_{y}.$

\item{(c)} When $\rk Q_{y}=2$, the fiber is the planes parametrized
by $(\mP^{3})^{*}\sqcup_{1pt}(\mP^{3})^{*}$ where $(\mP^{3})^{*}$
parametrizes planes in $\mP^{3}$ and $A\sqcup_{1pt}B$ represents
the union with $a\in A$ and $b\in B$ (one point from each) are identified.

\item{(d)} When $\rk Q_{y}=1$, the fiber is the planes parametrized
by $(\mP^{3})^{*}$.

\end{myitem3} 

$\;$

We remark that, in the case of (a), one of the two possible $\mP^{1}$-families
of planes is specified (by the definition of Stein factorization)
when we take $y\in\hcoY$. This and the other cases explain the finite
morphism $\rho_{\hcoY}:\hcoY\to\Hes$ which is $2:1$ over $\Hes_{4}\setminus\Hes_{3}$
and branched over $\Hes_{3}$. We say that a point $y\in\hcoY$ has
rank $i$ if $\rank a_{y}=i$ for $\rho_{\hcoY}(y)=[a_{y}]$, and
define $G_{\hcoY}:=\left\{ y\in\hcoY\mid\rk y\leq2\right\} $. Note
that $\dim G_{\hcoY}=\dim\Hes_{2}=8$.
\begin{prop}
\label{pro:conic-bundle-Z-Y}\begin{myitem} \item{\rm (1)} $\mathrm{Sing\,}\Hes=\Hes_{3}$
and $\mathrm{Sing}\,\hcoY=G_{\hcoY}(=\Hes_{2}).$

\item{\rm (2)} $\pi_{\Zpq}:\Zpq\to\hcoY$ is a generically conic
bundle with the conics in $\rG(3,V)$.

\end{myitem}\end{prop}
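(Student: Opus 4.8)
The plan is to establish (1) and (2) separately: (1) by stratifying $\Hes$ according to the rank of the corresponding symmetric matrix, and (2) by a direct description of the generic fibre of $\pi_{\Zpq}$.

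For the first assertion of (1) I would use the classical Jacobian criterion for symmetric determinantal hypersurfaces. Writing $A=(a_{ij})$ for the symmetric matrix attached to a point of $\mP(\tS^{2}V^{*})$, the differential of $A\mapsto\det A$ is, up to a nonzero scalar, the functional $B\mapsto\mathrm{tr}\!\left(\mathrm{adj}(A)\,B\right)$ on $\tS^{2}V^{*}$; since $A$ is symmetric so is $\mathrm{adj}(A)$, hence this functional vanishes on all of $\tS^{2}V^{*}$ exactly when $\mathrm{adj}(A)=0$, i.e. when $\rk A\le 5-2=3$. Thus $\Sing\Hes=\Hes_{3}$, and the same computation on the smaller strata gives $\Sing\Hes_{k}=\Hes_{k-1}$, which I use below.

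For $\Sing\hcoY$ I would first record that $\hcoY$ is normal: $\Zpq$ is smooth, being a $\mP^{8}$-bundle over $\rG(3,V)$, and $\pi_{\Zpq}$ is proper with connected fibres, so $\hcoY$, being the Stein factorization of a morphism from a smooth variety, is normal; in particular $\Sing\hcoY$ has codimension $\ge2$. Then I would run through the rank stratification using the fibre description (a)--(d). Over $\Hes_{4}\setminus\Hes_{3}$ the fibre of $\Zpq\to\Hes$ is a disjoint union of two $\mP^{1}$'s, so $\rho_{\hcoY}$ is an \'etale double cover, and as $\Hes$ is smooth there so is $\hcoY$. Over $\Hes_{3}\setminus\Hes_{2}$ the fibre is a single $\mP^{1}$, so $\rho_{\hcoY}$ is bijective and $\hcoY$ coincides scheme-theoretically with $\Hes_{3}$ on that stratum; the nontrivial point is that $\hcoY$ is nevertheless smooth along it, which is not automatic because $\Hes$ itself is singular there. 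Over $\Hes_{2}$ the fibres in (c),(d) jump to dimension $3$, $\rho_{\hcoY}$ is again bijective, and $\hcoY$ is genuinely singular. To handle the rank-$3$ and rank-$\le2$ cases I would introduce explicit local analytic coordinates on $\Zpq$ near a plane contained in a rank-$3$ (resp. rank-$\le2$) quadric and compute the Stein factorization directly: the expected picture is that $\rho_{\hcoY}$ is a simple branched double cover transverse to $\Hes_{3}$ (so $\hcoY$ is smooth there) and degenerates only over $\Hes_{2}$. Combined with the smoothness of $\hcoY$ on the top stratum this gives $\Sing\hcoY\subseteq\Hes_{3}$ and then $\Sing\hcoY=\Hes_{2}=G_{\hcoY}$; these local computations are the ones carried out in \cite{HoTa3}.

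For (2) it is enough to describe the generic fibre of $\pi_{\Zpq}$ together with its image under the second projection $\mu\colon\Zpq\to\rG(3,V)$. For $y\in\hcoY$ with $\rk y=4$, the quadric $Q_{y}$ is the cone with vertex $p_{y}=\mP(\Ker a_{y})$ over a smooth quadric surface $\bar Q_{y}\cong\mP^{1}\times\mP^{1}\subset\mP(V/\Ker a_{y})\cong\mP^{3}$, and the planes in $Q_{y}$ are exactly the joins of $p_{y}$ with the lines of $\bar Q_{y}$; by case (a) the fibre $F_{y}=\pi_{\Zpq}^{-1}(y)$ is one ruling of $\bar Q_{y}$. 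Every such plane contains $p_{y}$, hence is a line of $\mP(V/\langle\tilde p_{y}\rangle)\cong\mP^{3}$, and a short Pl\"ucker computation shows that the lines of one ruling of a smooth quadric surface sweep out a smooth conic spanning a $\mP^{2}$ inside the sub-Grassmannian $\rG(2,4)\subset\rG(3,V)$ of planes through $p_{y}$. So $\mu|_{F_{y}}$ is an isomorphism onto a plane conic in $\rG(3,V)$, and $\pi_{\Zpq}$ is a conic bundle over the dense open $\{\rk=4\}\subset\hcoY$; the same computation over $\Hes_{3}\setminus\Hes_{2}$ (where by case (b) $F_{y}$ is the conic $\bar Q_{y}$ lying in the linear $\mP^{2}$ of planes containing the vertex line $\mP(\Ker a_{y})$) keeps the fibres plane conics there, while over $\Hes_{2}$ they cease to be one-dimensional. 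Hence $\pi_{\Zpq}$ is a generically conic bundle whose conics live in $\rG(3,V)$. The step I expect to be the main obstacle is the local analytic analysis inside the proof of (1) --- showing that $\hcoY$ is smooth along the rank-$3$ locus even though $\Hes$ is singular there, and that $\hcoY$ has no singular points of rank $\ge3$; this is where the projective geometry of families of planes in degenerating quadrics genuinely enters, and it is the substance of the study in \cite{HoTa3}.
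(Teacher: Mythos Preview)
Your proposal is correct and follows essentially the same route as the paper. For (1) you use the Jacobian criterion where the paper simply invokes ``basic properties of secant varieties,'' and for the delicate claim $\Sing\hcoY=G_{\hcoY}$ you, like the paper, defer the actual local computation to \cite{HoTa3}; for (2) your explicit Pl\"ucker description of the $\mP^{1}$-family of planes as a conic in $\rG(3,V)$ is exactly what the paper carries out in the subsection immediately following this proposition.
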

\begin{proof}
(1) $\mathrm{Sing\,}\Hes=\Hes_{3}$ follows from the basic properties
of secant varieties. For the latter claim $\mathrm{Sing}\hcoY=G_{\hcoY}$,
we refer to \cite[Prop.5.7.2]{HoTa3}. 

\noindent (2) Over $\hcoY\,\setminus G_{\hcoY}$, the fibers of $\pi_{\Zpq}:\Zpq\to\hcoY$
consists of smooth $\mP^{1}$-families of planes in $\rG(3,V)$. As
we see in the next subsection, it is easy to see that these are smooth
conics on $\rG(3,V)$.
\end{proof}
\vskip0.5cm%
\begin{figure}
\begin{centering}
\includegraphics[scale=0.7]{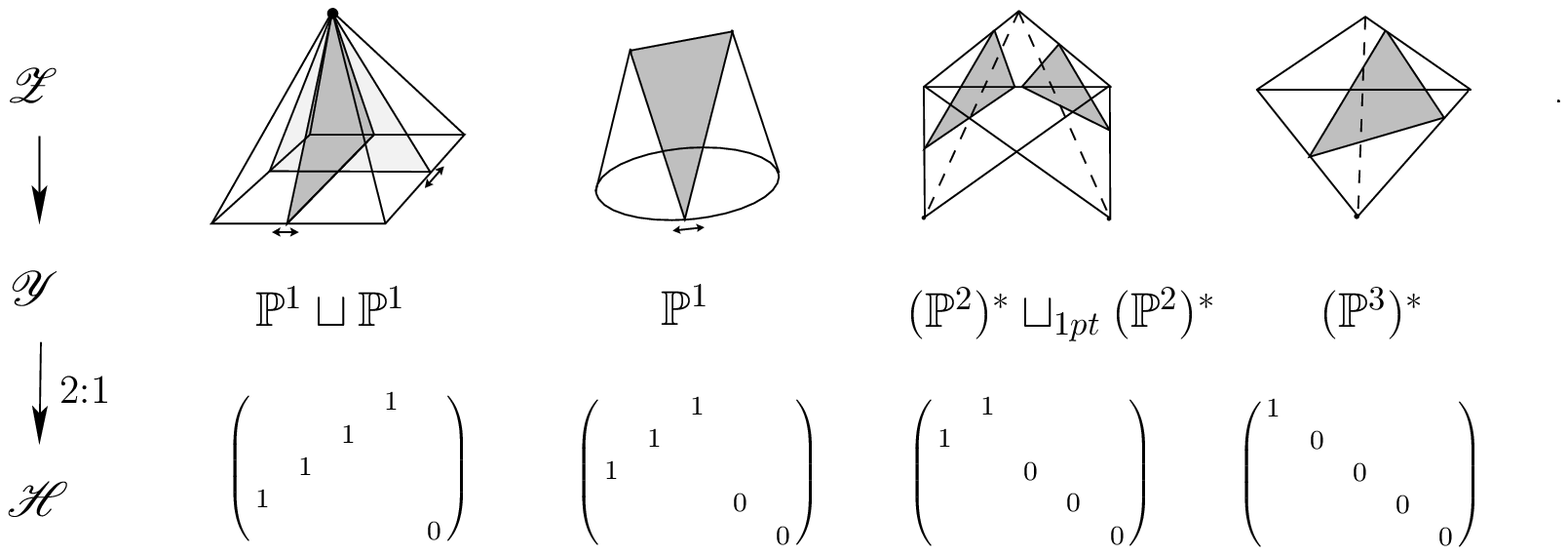}
\par\end{centering}

\begin{fcaption}\item{} \textbf{Fig.5.1. Quadrics and planes therein.}
Quadrics $Q$ are depicted for each rank, $\rk Q=4,3,2,1$. When $\rk Q=4$,
there are two connected fibers of $\Zpq\to\Hes$. 

\end{fcaption}
\end{figure}

\subsection{\label{sub:Birational-model-barY}Birational model $\overline{\hcoY}$
of $\hcoY$ }

Let us consider a quadric $Q$ of rank 4 and 3, in order, and a $\mP^{1}$-family
of planes in $Q$. 

First, for a quadric $Q$ of rank 4, let us denote the vertex of $Q$
(the kernel of $(a_{ij}))$ by $\langle v\rangle$. Then, one of the
$\mP^{1}$-family of plane described in (a) in Subsection \ref{sub:ZY-conic-1}
takes the following form:\[
\left\{ [\Pi_{s,t}]\right\} :=\left\{ \langle c(s,t),d(s,t),v\rangle\mid[s,t]\in\mP^{1}\right\} ,\]
where $c(s,t),d(s,t)\in V$ are linear in $s,t$ and span the $\langle c(s,t),d(s,t)\rangle\simeq\mP^{1}$
which gives the ruling $\bar{Q}\simeq\mP^{1}\times\mP^{1}$. One of
the key observations is that for such a $\mP^{1}$-family of plane
we have a conic $q$ in $\mP(\wedge^{3}V)$ by \[
q:=\left\{ [c\wedge d\wedge v]=[\Lambda_{0}s^{2}+\Lambda_{1}st+\Lambda_{2}t^{2}]\mid[s,t]\in\mP^{1}\right\} ,\]
which actually defines a conic in $\rG(3,V)$ by the Pl\"ucker embedding
$\rG(3,V)\subset\mP(\wedge^{3}V)$. We note that conic $q$ resides
in the plane $\mP_{q}$ which is uniquely determined by the $\mP^{1}$-family,
\[
\mP_{q}:=\langle\Lambda_{0},\Lambda_{1},\Lambda_{2}\rangle\subset\mP(\wedge^{3}V).\]

When $\rk Q=3$, we start with $\left\{ [\Pi_{s,t}]\right\} =\left\{ \langle d(s,t),v_{1},v_{2}\rangle\mid[s,t]\in\mP^{1}\right\} $
with $v_{1},v_{2}$ being bases of $\Ker(a_{ij})$ and $d(s,t)=s^{2}v_{3}+stv_{4}+t^{2}v_{5}$
parametrizing the conic $\bar{Q}$ in $\mP(V/\Ker(a_{ij}))$. Again,
we have the corresponding conic $q$ in $\rG(3,V)$ and also the plane
$\mP_{q}\subset\mP(\wedge^{3}V)$ which contains the conic $q$. 

The conics $q$ above explain the generically conic bundle $\Zpq\to\hcoY$
claimed in Proposition \ref{pro:conic-bundle-Z-Y}. The planes $\mP_{q}\subset\mP(\wedge^{3}V)$
and conics $q$ will play central roles in the description of the
resolution $\widetilde{\hcoY}\to\hcoY$. Here noting that the planes
$\mP_{q}$ above have a specific forms, we define the following subset
of planes in $\mP(\wedge^{3}V)$: \[
\overline{\hcoY}=\left\{ [U]\in\rG(3,\wedge^{3}V)\mid U=\bar{U}\wedge v\,\text{\text{ for some }}v\in\mP(V)\right\} ,\]
where we regard $\bar{U}$ as an element in $\mP(\wedge^{2}(V/V_{1}))$
with $V_{1}=\mC v$. To introduce a (reduced) scheme structure on
the subset $\overline{\hcoY}$, we consider a linear morphism $\varphi:\tS^{2}(\wedge^{3}V)\to V$
by the composition of the following natural linear morphisms: \begin{equation}
\varphi:\tS^{2}(\wedge^{3}V)\to\tS^{2}(\wedge^{2}V^{*})\to\wedge^{4}V^{*}\simeq V.\label{eq:map-varphi-U}\end{equation}
We define $\varphi_{U}:=\varphi\vert_{\tS^{2}U}$ to be the natural
restriction of $\varphi$ for a fixed subspace $[U]\in\rG(3,\wedge^{3}V)$.
Then, we have the following proposition: 
\begin{prop}
\begin{myitem} \item{\rm (1)} $U\subset\wedge^{3}V$ decomposes as
$U=\bar{U}\wedge v$ if and only if $\rk\varphi_{U}\leq1$. 

\noindent \item{\rm (2)} The scheme $\left\{ [U]\in\rG(3,\wedge^{3}V)\mid\rk\varphi_{U}\leq1\right\} $
is nonreduced along the singular locus of its reduced structure. 

\end{myitem}
\end{prop}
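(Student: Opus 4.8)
The plan is to make the map $\varphi$ explicit, deduce~(1) from a short analysis of $2$-forms on $V^{*}$ together with the classical classification of planes on a Grassmannian, and obtain~(2) by comparing the defining equations of the two scheme structures at a point of the singular locus.

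First I would fix volume forms on $V$ and $V^{*}$; the canonical isomorphisms $\wedge^{3}V\simeq\wedge^{2}V^{*}$ and $\wedge^{4}V^{*}\simeq V$ then turn $\varphi$ of~(\ref{eq:map-varphi-U}) into the symmetric multiplication $\tS^{2}(\wedge^{2}V^{*})\to\wedge^{4}V^{*}$, $\beta\cdot\gamma\mapsto\beta\wedge\gamma$. Writing $W\subseteq\wedge^{2}V^{*}$ for the image of $U$, one has $\mathrm{Im}\,\varphi_{U}=\langle\beta\wedge\gamma\mid\beta,\gamma\in W\rangle$, and a computation in a basis shows that $U=\overline{U}\wedge v$ is equivalent to $W\subseteq\wedge^{2}L$, where $L=\mathrm{Ann}(v)\subset V^{*}$ is the annihilator hyperplane, the line $\wedge^{4}L$ corresponding to $v$ under $\wedge^{4}V^{*}\simeq V$. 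This yields one implication of~(1) immediately: if $W\subseteq\wedge^{2}L$ then every $\beta\wedge\gamma$ lies in the line $\wedge^{4}L$, so $\rk\varphi_{U}\le1$. For the converse, assume $\rk\varphi_{U}\le1$. If some $\beta\in W$ is not decomposable, i.e.\ $\beta\wedge\beta\neq0$, then $\beta$ has rank $4$ with uniquely determined $4$-dimensional support $L$, and $\beta\wedge\beta$ spans $\wedge^{4}L=\mathrm{Im}\,\varphi_{U}$; writing an arbitrary $\gamma\in W$ as $\gamma=\gamma_{0}+f_{0}\wedge c$ with $\gamma_{0}\in\wedge^{2}L$, $c\in L$, $f_{0}\notin L$, the condition $\beta\wedge\gamma\in\wedge^{4}L$ reduces to $\beta\wedge c=0$ in $\wedge^{3}L$, and as $\beta$ is nondegenerate on the $4$-space $L$ the map $c\mapsto\beta\wedge c$ is injective, so $c=0$ and $\gamma\in\wedge^{2}L$. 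Hence $W\subseteq\wedge^{2}L$ and $U=\overline{U}\wedge v$ with $v=\mathrm{Ann}(L)$.

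In the one remaining case every element of $W$ is decomposable, so $\mP(W)$ is a plane contained in $\rG(2,V^{*})\subset\mP(\wedge^{2}V^{*})$; by the classical description of linear subspaces of a Grassmannian, such a plane is either $\{P\mid P\subseteq V_{3}^{*}\}$ for a fixed $3$-dimensional $V_{3}^{*}$, or $\{P\mid\ell_{0}\subseteq P\subseteq L\}$ for a fixed line $\ell_{0}$ and $4$-dimensional $L$, and in both cases $W\subseteq\wedge^{2}L$ for a suitable $4$-dimensional $L$, so again $U=\overline{U}\wedge v$ with $v=\mathrm{Ann}(L)$. This proves~(1), and in particular identifies $\overline{\hcoY}$ with the reduction of the scheme $\sP:=\{[U]\in\rG(3,\wedge^{3}V)\mid\rk\varphi_{U}\le1\}$. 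For~(2), recall from the birational geometry of $\overline{\hcoY}$ (see \cite{HoTa3}) that $\mathrm{Sing}(\overline{\hcoY})$ is the locus $\{[V_{2}\wedge V]\mid V_{2}\in\rG(2,V)\}$ of subspaces divisible by more than one vector; since $\mathrm{GL}(V)$ acts transitively on it and preserves $\sP$, it suffices to prove non-reducedness of $\sP$ at one point $[U_{0}]$ with $U_{0}=V_{2}\wedge V$. For such $U_{0}$ every element is decomposable and the image $W_{0}$ of $U_{0}$ equals $\wedge^{2}B_{0}$ for a $3$-dimensional $B_{0}\subset V^{*}$, whence $\beta\wedge\gamma\in\wedge^{4}B_{0}=0$ for all $\beta,\gamma\in W_{0}$, i.e.\ $\varphi_{U_{0}}=0$. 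Therefore, in a standard affine chart of $\rG(3,\wedge^{3}V)$ centred at $[U_{0}]$ every matrix entry of $\varphi_{U}$ vanishes at $[U_{0}]$, so every $2\times2$ minor --- these generate the defining ideal of $\sP$ --- lies in $\mathfrak{m}_{[U_{0}]}^{2}$.

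On the other hand, parametrising $\overline{\hcoY}$ near $[U_{0}]$ by the incidence variety $\{(U,[v])\mid U\wedge v=0\}$ in this chart and eliminating $v$, one checks that $T_{[U_{0}]}\overline{\hcoY}$ has dimension $18<21$ --- the surviving tangent directions being exactly those along which $U$ remains divisible by a vector of $V_{2}$ --- so $I(\overline{\hcoY})$ contains a function $g$ with $dg|_{[U_{0}]}\neq0$. Such $g$ lies in $\sqrt{I(\sP)}=I(\overline{\hcoY})$ but not in $I(\sP)\subseteq\mathfrak{m}_{[U_{0}]}^{2}$, so $\sP$ and $\sP_{\mathrm{red}}=\overline{\hcoY}$ differ near $[U_{0}]$, i.e.\ $\sP$ is non-reduced along $\mathrm{Sing}(\overline{\hcoY})$. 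The work in~(1) is light, the plane classification being standard. The main obstacle is in~(2): one must first pin down $\mathrm{Sing}(\overline{\hcoY})$ (this is where the birational analysis of the double symmetroid enters) and then carry the elimination in the chart far enough to exhibit the linear equation of $\overline{\hcoY}$ at $[U_{0}]$, against which the purely order-$\ge2$ minor ideal cannot compete --- it is precisely this mismatch that forces the embedded, non-reduced structure.
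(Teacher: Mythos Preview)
Your argument for~(1) is correct and takes a genuinely different, more conceptual route than the paper. The paper simply refers to an explicit coordinate computation in \cite[Subsect.~5.3, 5.4]{HoTa3}: write the $6\times 5$ matrix of $\varphi_U$ in a chart and read off both claims. You instead translate $\varphi$ into the wedge pairing on $\wedge^{2}V^{*}$, reduce~(1) to the statement ``$W\subset\wedge^{2}L$ for some hyperplane $L$'', and handle the two cases (some $\beta\in W$ of rank $4$ / all $\beta$ decomposable) by the support of a symplectic form and the classical classification of planes in $\rG(2,V^{*})$. This avoids coordinates entirely and makes the geometric content transparent; the paper's approach, by contrast, yields~(1) and~(2) simultaneously from the same computation.

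For~(2) your strategy is also sound, and again more structural: the key observation $\varphi_{U_{0}}=0$ for $U_{0}\in\overline{\sP}_{\rho}$ forces every $2\times2$ minor into $\mathfrak{m}_{[U_{0}]}^{2}$, so non-reducedness follows once you know $T_{[U_{0}]}\overline{\hcoY}\subsetneq T_{[U_{0}]}\rG(3,\wedge^{3}V)$. Two remarks. First, you import both $\mathrm{Sing}(\overline{\hcoY})=\overline{\sP}_{\rho}$ and (implicitly) the local cone description from \cite{HoTa3}; these appear in the paper as the {\em next} proposition, so be sure your reference does not create a circularity (in \cite{HoTa3} both facts are obtained from the same explicit matrix calculation, independently of the present statement). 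Second, the step ``one checks that $\dim T_{[U_{0}]}\overline{\hcoY}=18$'' by eliminating $v$ from the incidence variety is precisely where the work lies; your heuristic about ``tangent directions along which $U$ remains divisible by a vector of $V_{2}$'' describes the tangent {\em cone}, not a priori the Zariski tangent space, so this elimination really must be carried out (it is straightforward, and gives $18$ as you claim). Once done, your comparison of orders of vanishing finishes cleanly. In sum: same endpoint as the paper, but you separate the structural observation ($\varphi_{U_{0}}=0$ on $\overline{\sP}_{\rho}$) from the single residual computation, whereas the paper folds everything into one coordinate check.
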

The proof of the above proposition follows by writing the rank condition
explicitly for the matrix representing $\varphi_{U}$ under suitable
bases (see \cite[Subsect.5.3, 5.4]{HoTa3}). Hereafter, we consider
$\overline{\hcoY}$ as the scheme with the reduced structure on $\left\{ [U]\in\rG(3,\wedge^{3}V)\mid\rk\varphi_{U}\leq1\right\} $. 
\begin{prop}
\label{pro:birational-bY-Y}$\hcoY$ and $\overline{\hcoY}$ are birational. \end{prop}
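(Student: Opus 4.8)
The plan is to exhibit mutually inverse birational maps between $\hcoY$ and $\overline{\hcoY}$ by following, in both directions, the conic $q$ and its spanning plane $\mP_{q}$ introduced in Subsection~\ref{sub:Birational-model-barY}. First I would set up the forward map. On the dense open subset $\hcoY^{\circ}:=\hcoY\setminus G_{\hcoY}$ the morphism $\pi_{\Zpq}:\Zpq\to\hcoY$ is, by Proposition~\ref{pro:conic-bundle-Z-Y}, an honest conic bundle, so for $y\in\hcoY^{\circ}$ the fibre $q_{y}:=\pi_{\Zpq}^{-1}(y)$ is a smooth conic in $\rG(3,V)\subset\mP(\wedge^{3}V)$ and spans a unique plane $\mP(U_{y})$ with $\dim U_{y}=3$. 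In the generic case $\rk Q_{y}=4$, the explicit computation of Subsection~\ref{sub:Birational-model-barY} gives $U_{y}=\langle\Lambda_{0},\Lambda_{1},\Lambda_{2}\rangle=\bar U\wedge v$ with $\langle v\rangle=\Ker Q_{y}$, so $[U_{y}]\in\overline{\hcoY}$; since the conic bundle is algebraic this yields a morphism $\alpha:\hcoY^{\circ}\to\overline{\hcoY}$, $y\mapsto[U_{y}]$.

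Next I would build the inverse. A dimension count comes first: the incidence variety $\{(v,[U])\mid U=\bar U\wedge v,\ \bar U\in\rG(3,\wedge^{2}(V/\langle v\rangle))\}$ fibres over $\mP(V)$ with fibre $\rG(3,6)$, hence is irreducible of dimension $4+9=13$, and it maps generically one-to-one onto $\overline{\hcoY}$ (for general $U$ the ``vertex'' $v$ is recovered uniquely from $U$ by the preceding proposition). Thus $\overline{\hcoY}$ is irreducible of dimension $13=\dim\hcoY$, with dense rank-$4$ locus. For a general $[U]\in\overline{\hcoY}$, writing $U=\bar U\wedge v$ and choosing a complement $V'$ of $\langle v\rangle$, the injection $\wedge^{2}(V/\langle v\rangle)\simeq(\wedge^{2}V')\wedge v\subset\wedge^{3}V$ identifies $\mP(U)\cap\rG(3,V)$ with $\mP(\bar U)\cap\rG(2,V/\langle v\rangle)$; since $\rG(2,V/\langle v\rangle)=\rG(2,4)$ is a smooth quadric fourfold in $\mP^{5}$, a general plane $\mP(\bar U)$ cuts it in a smooth conic $\bar q$, whose lines in $\mP(V/\langle v\rangle)=\mP^{3}$ form one ruling of a unique smooth quadric surface $\bar Q$. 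Coning $\bar Q$ from $[v]$ produces a rank-$4$ quadric $Q\subset\mP(V)$, i.e.\ a point $[Q]\in\Hes$, and every plane $\mP(\Pi)$ with $[\Pi]\in q:=\mP(U)\cap\rG(3,V)$ contains $[v]$ and lies in $Q$ (each decomposable element of $U$ has the shape $a\wedge b\wedge v$ with $\langle a,b\rangle$ cutting $\bar Q$ in a line of the ruling). Hence $q$ is one of the connected $\mP^{1}$-families of planes in $Q$, which selects a point $y=\beta([U])\in\hcoY$ lying over $[Q]$; this construction being algebraic, $\beta$ is a rational map $\overline{\hcoY}\dashrightarrow\hcoY$.

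Finally I would check that $\alpha$ and $\beta$ are inverse on dense opens, which is essentially forced by the constructions: starting from $y\in\hcoY^{\circ}$ of rank $4$, the conic $\mP(U_{y})\cap\rG(3,V)$ recovers $q_{y}$, whose associated lines sweep out $\bar Q_{y}$, and coning from $[v]$ returns $Q_{y}$ together with its original ruling, so $\beta(\alpha(y))=y$; conversely a smooth conic spans its own plane, so $\alpha(\beta([U]))=[U]$ for general $[U]$. This establishes the birational equivalence $\hcoY\sim\overline{\hcoY}$.

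The step I expect to be the main obstacle is the inverse direction: proving that for a general $[U]\in\overline{\hcoY}$ the linear section $\mP(U)\cap\rG(3,V)$ really is a smooth conic contained in a unique rank-$4$ quadric as one of its rulings, and that the classical correspondence between conics on $\rG(2,4)$ and rulings of quadric surfaces makes $\beta$ a genuine morphism on a dense open and the literal inverse of $\alpha$ there. The supporting verifications are the explicit coordinate computations for the matrix of $\varphi_{U}$ already carried out in the proof of the preceding proposition and in \cite{HoTa3}.
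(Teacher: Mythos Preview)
Your proposal is correct and follows essentially the same route as the paper: the forward map sends $y$ to the plane $\mP_{q_y}$ spanned by the conic fibre, and the inverse uses the key isomorphism $\mP(U)\cap\rG(3,V)\simeq\mP(\bar U)\cap\rG(2,V/V_1)$ together with the fact that $\rG(2,4)$ is a quadric, so a general plane section is a smooth conic. Your write-up is in fact more complete than the paper's, which simply asserts ``We can see that this is the inverse rational map'' without spelling out, as you do, how the conic on $\rG(2,4)$ recovers a ruling of a quadric surface and hence, by coning from $[v]$, a rank-$4$ point of $\hcoY$.
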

\begin{proof}
By definition of the Stein factorization, points $y\in\hcoY$ are
specified by the connected fibers of $\Zpq\to\hcoY$, which are generically
given by conics $q$ in $\rG(3,V)$. Hence we can write general points
$y\in\hcoY$ by $y=([Q_{y}],q_{y}$) where $[Q_{y}]=\rho_{\hcoY}(y)$
and the corresponding conic $q_{y}$ which is a $\mP^{1}$-family
of planes contained in $Q_{y}$. Rational map $\hcoY\dashrightarrow\overline{\hcoY}$
has been described already above by $y=([Q_{y}],q_{y})\to\mP_{q_{y}}$
for $y\in\hcoY\setminus G_{\hcoY}$. To describe the inverse rational
map $\overline{\hcoY}\dashrightarrow\hcoY$, we note that the following
isomorphism for $U=\bar{U}\wedge v\in\wedge^{3}V$:\begin{equation}
\mP(U)\cap\rG(3,V)\text{ in }\mP(\wedge^{3}V)\simeq\mP(\bar{U})\cap\rG(2,V/V_{1})\text{ in }\mP(\wedge^{2}(V/V_{1})),\label{eq:PintG(3)-and-PintG(2)}\end{equation}
where $V_{1}=\mC v$. Since $\rG(2,V/V_{1})\simeq\rG(2,4)$ is the
Pl\"ucker quadric, when $U$ is general, the r.h.s. determines a
smooth conic on $\rG(2,V/V_{1})$ and in turn a smooth conic on $\rG(3,V)$.
We can see that this is the inverse rational map. 
\end{proof}
\textcolor{red}{$\;$}

Obviously, the inverse rational map $\overline{\hcoY}\dashrightarrow\hcoY$
is not defined when $\mP(U)\cap\rG(3,V)=\mP(U)$, i.e. $\mP(U)\subset\rG(3,V)$.\textcolor{red}{{}
}There are two cases where $\mP(U)\subset\rG(3,V)$ occurs for $[U]\in\overline{\hcoY}$:
The first one is when $\mP(U)$ is given by the Pl\"ucker image of
the plane \[
{\tt P}_{V_{2}}:=\left\{ [\Pi]\mid V_{2}\subset\Pi\subset V\right\} \simeq\mP^{2}\]
in $\rG(3,V)$ for some $V_{2}$. The second one is given by the Pl\"ucker
image of the plane\[
{\tt P}_{V_{1}V_{4}}:=\left\{ [\Pi]\mid V_{1}\subset\Pi\subset V_{4}\right\} \simeq\mP^{2}\]
in $\rG(3,V)$ for some $V_{1}$ and $V_{4}$. The plans of the form
${\tt P}_{V_{2}}$ and ${\tt P}_{V_{1}V_{4}}$, respectively, are
called \textit{$\rho$-planes} and \textit{$\sigma$-planes. }These
planes determine the following loci in $\overline{\hcoY}$: \begin{equation}
\begin{aligned}\overline{\sP}_{\rho} & :=\left\{ [U]\mid V_{2}\subset V,\text{ }U=V/V_{2}\wedge(\wedge^{2}V_{2})\right\} ,\\
\overline{\sP}_{\sigma} & :=\left\{ [U]\mid V_{1}\subset V_{4}\subset V,\; U=\wedge^{2}(V_{4}/V_{1})\wedge V_{1}\right\} .\end{aligned}
\label{eq:Prho-Psig}\end{equation}
Note that $\overline{\sP}_{\rho}\simeq\rG(2,V)$ and $\overline{\sP}_{\sigma}\simeq F(1,4,V)$.

\vskip0.5cm

\subsection{Sing~$\overline{\hcoY}$ and resolutions of $\overline{\hcoY}$ }

We consider the reduced structure on $\overline{\hcoY}$ as described
in the preceding subsection. Then writing the condition $\rk\varphi_{U}\leq1$,
we can study the singularities of $\overline{\hcoY}$ explicitly. 
\begin{prop}
\label{pro:Resolutions-Y}\begin{myitem} \item{\rm (1)} $\overline{\hcoY}$
is singular along $\overline{\sP}_{\rho}\simeq\rG(2,V)$. 

\item{\rm (2)} Define $\hcoY_{3}:=\left\{ ([U],[V_{1}])\mid U=\bar{U}\wedge V_{1}\right\} \subset\overline{\hcoY}\times\mP(V)$,
then the natural projection $\hcoY_{3}\to\overline{\hcoY}$ is a resolution
of the singularity. 

\item{\rm (3)} $\hcoY_{3}$ is isomorphic to the Grassmannian bundle
$\rG(3,\wedge^{2}T_{\mP(V)}(-1))$ over $\mP(V)$. 

\item{\rm (4)} The singularities of $\overline{\hcoY}$ are the affine
cone over $\mP^{1}\times\mP^{5}$ along $\overline{\sP}_{\rho}$,
and there is a (anti-)flip to another resolution $\widetilde{\hcoY}\to\overline{\hcoY}$
which fits into the following diagram: \def\FigFlop{
\begin{xy} 
(-10,01)*+{ },
(43,-10)*+{\hcoY.}="Y", 
(14,2)*+{\hcoY_2}="Yii", 
(1,-10)*+{\hcoY_3}="Yiii", 
(5,-10)*+{}="yiii", 
(22,-10)*+{}="ty", 
(27,-10)*+{\widetilde{\hcoY}}="tY", 
(14,-22)*+{\overline{\hcoY}}="bY", 
(19,-20)*+{}="by", 
(35,-20)*+{}="h", 
(35,-8)*+{\,_{\rho_{\widetilde{\hcoY}}}}, 
\ar "Yii";"tY" 
\ar "Yii";"Yiii" 
\ar "Yiii";"bY" 
\ar  "tY";"Y" 
\ar "tY";"bY" 
\ar @{-->}^{\,_\text{(anti-)flip}} "yiii";"ty" 
\end{xy} }\begin{equation}
\begin{matrix}\FigFlop\end{matrix}\label{eq:Fig-anti-Flop}\end{equation}

\end{myitem}\end{prop}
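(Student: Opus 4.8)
The plan is to work throughout in the explicit model $\overline{\hcoY}=\{[U]\in\rG(3,\wedge^{3}V)\mid\rk\varphi_{U}\le 1\}$ with its reduced structure, treating the four assertions in turn and deferring the most computational points to \cite{HoTa3}. Assertion~(3) I would obtain by unwinding the definition of $\hcoY_{3}=\{([U],[V_{1}])\mid U=\bar{U}\wedge V_{1}\}$: the projection $\hcoY_{3}\to\mP(V)$ has, over a point $[V_{1}]$, the fibre $\{[U]\mid U\subset V_{1}\wedge\wedge^{2}V\}$, and since the linear map $\wedge^{2}(V/V_{1})\to\wedge^{3}V$, $\bar\omega\mapsto\bar\omega\wedge v$ (any $0\ne v\in V_{1}$), is injective with image $V_{1}\wedge\wedge^{2}V$, this fibre is canonically $\rG(3,\wedge^{2}(V/V_{1}))$. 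Globalising, the bundle over $\mP(V)$ with fibre $V/V_{1}$ is the tautological quotient bundle $T_{\mP(V)}(-1)$, so $\hcoY_{3}\cong\rG(3,\wedge^{2}T_{\mP(V)}(-1))$ as a Grassmann bundle over $\mP(V)$; in particular $\hcoY_{3}$ is smooth.

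For (1) and the first half of (4) I would make an explicit local computation, writing the condition $\rk\varphi_{U}\le 1$ in affine charts of $\rG(3,\wedge^{3}V)$. One first identifies the rank-$0$ locus: under $\wedge^{3}V\cong\wedge^{2}V^{*}$ the equation $\varphi_{U}=0$ says that $U$ is a $3$-dimensional space of pairwise wedge-orthogonal bivectors, which by the classical incidence lemma forces $U$ to be of $\rho$-type or of $\sigma$-type, so $\{\rk\varphi_{U}=0\}=\overline{\sP}_{\rho}\cup\overline{\sP}_{\sigma}$. Along $\overline{\sP}_{\sigma}$ the reduced variety $\overline{\hcoY}$ turns out to be smooth (there the decomposition $U=\bar{U}\wedge v$ still has $v$ unique), whereas transverse to $\overline{\sP}_{\rho}\cong\rG(2,V)$ the locus $\{\rk\varphi_{U}\le 1\}$ becomes the rank-$\le 1$ locus of a generic $2\times 6$ matrix, whose two factor spaces one recognises as built from $V_{2}$ and from $\tS^{2}(V/V_{2})$; thus $\overline{\hcoY}$ has along $\overline{\sP}_{\rho}$ a transverse singularity isomorphic to the affine cone over the Segre variety $\mP^{1}\times\mP^{5}$. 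Combined with the preceding proposition, which identifies $\Sing$ of the reduced scheme with the non-reduced locus of the scheme-theoretic rank locus, this gives $\Sing\overline{\hcoY}=\overline{\sP}_{\rho}$ and the singularity type claimed in (4).

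Granting (1) and (3), assertion (2) is then formal: $\hcoY_{3}\subset\overline{\hcoY}\times\mP(V)$ is closed with $\mP(V)$ proper, so $\hcoY_{3}\to\overline{\hcoY}$ is proper; over the dense open $\overline{\hcoY}\setminus\overline{\sP}_{\rho}$ the decomposition $U=\bar{U}\wedge v$ determines $[v]\in\mP(V)$ uniquely, so the morphism is birational and, being bijective onto the smooth locus, an isomorphism there by Zariski's main theorem; since $\hcoY_{3}$ is smooth it is therefore a resolution, with exceptional locus the $\mP(V_{2})\cong\mP^{1}$-bundle over $\overline{\sP}_{\rho}$. For the rest of (4): the affine cone over $\mP^{1}\times\mP^{5}$ carries two standard small resolutions, obtained by blowing up in the $\mP^{1}$- and in the $\mP^{5}$-direction, which are related by a flip; the first, performed relative to $\overline{\sP}_{\rho}$, is precisely $\hcoY_{3}\to\overline{\hcoY}$, and the second produces $\widetilde{\hcoY}\to\overline{\hcoY}$, their common blow-up being $\hcoY_{2}$. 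Identifying $\overline{\hcoY}$ with $\hcoY$ via Proposition~\ref{pro:birational-bY-Y} (composed with $\rho_{\widetilde{\hcoY}}:\widetilde{\hcoY}\to\hcoY$) then yields the diagram (\ref{eq:Fig-anti-Flop}).

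The hard part is the explicit local computation underlying (1) and (4): one must choose the chart of $\rG(3,\wedge^{3}V)$ cleverly enough that, transverse to $\overline{\sP}_{\rho}$, the $6\times 5$ matrix of $\varphi_{U}$ visibly degenerates to a generic $2\times 6$ matrix cutting out the Segre cone, while remaining smooth along $\overline{\sP}_{\sigma}$, and then verify that the induced flip is compatible with the birational geometry of $\hcoY$ coming from the families of planes in the singular quadrics of $\Hes$. The full details are carried out in \cite{HoTa3}.
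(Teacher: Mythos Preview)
Your proposal is correct and follows essentially the same approach as the paper: part~(3) is obtained by identifying the fibre over $[V_{1}]$ with $\rG(3,\wedge^{2}(V/V_{1}))$ via $T_{\mP(V)}(-1)\vert_{[V_{1}]}\simeq V/V_{1}$, part~(2) is deduced from~(3), and parts~(1) and~(4) are reduced to the explicit local computation of the condition $\rk\varphi_{U}\le 1$ carried out in \cite{HoTa3}. Your write-up is in fact more detailed than the paper's own proof, which simply records these reductions; the only caveat is that the arrow $\widetilde{\hcoY}\to\hcoY$ in the diagram is not established here but anticipates Theorem~\ref{thm:Resolution-Y}, as the paper itself notes just after the proof.
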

\begin{proof}
(1) and (4) follow directly by writing the condition $\rk\varphi_{U}\leq1$,
see \cite[Prop.5.4.2, 5.4.3]{HoTa3}. Global descriptions of the blow-up
$\hcoY_{2}\to\hcoY_{3}$ will be given in Proposition \ref{pro:Y3-Y2-Frho}.
(3) We consider $\bar{U}\simeq\mC^{3}$ as a subspace in $\wedge^{2}(V/V_{1})$.
Then claim is clear since $T_{\mP(V)}(-1)\vert_{[V_{1}]}\simeq V/V_{1}$.
(2) follows from (3). 
\end{proof}
We denote by $\sP_{\rho}$ the exceptional set (which is contracted
to $\overline{\sP}_{\rho}$) of the resolution $\hcoY_{3}\to\overline{\hcoY}$
and by $\sP_{\sigma}\simeq\overline{\sP}_{\sigma}$ the proper transform
of $\overline{\sP}_{\sigma}$. It is easy to observe the following
isomorphisms:\begin{equation}
\sP_{\rho}\simeq F(1,2,V)\simeq\mathbb{P}(T_{\mP(V)}(-1)),\;\;\;\;\sP_{\sigma}\simeq F(1,4,V)\simeq\mathbb{P}(T_{\mP(V)}(-1)^{*}).\label{eq:Pr-Psigma}\end{equation}
These loci $\sP_{\rho}$ and $\sP_{\sigma}$ in $\rG(3,\wedge^{3}T_{\mP(V)}(-1))$
will be interpreted in the next section.

In the diagram (\ref{eq:Fig-anti-Flop}), we have included the content
of the following theorem:
\begin{thm}
\label{thm:Resolution-Y}There is a morphism $\rho_{\widetilde{\hcoY}}:\widetilde{\hcoY}\to\hcoY$
which contracts an exceptional divisor $F_{\widetilde{\hcoY}}$ to
the singular locus $G_{\hcoY}$ of $\hcoY$. 
\end{thm}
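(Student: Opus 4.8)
The plan is to realise $\rho_{\widetilde{\hcoY}}$ as the map sending a point of $\widetilde{\hcoY}$ to the quadric swept out by the $3$-planes of its associated conic in $\rG(3,V)$, equipped with that family of planes, and then to read off its fibres. Concretely, diagram (\ref{eq:diag-YZX}) supplies the generically conic bundle $\pi_{2'}\colon\Zpq_{2}\to\hcoY_{2}$ together with $\mu_{2}\colon\Zpq_{2}\to\rG(3,V)$, whose combined image is a family of conics in $\rG(3,V)$ over $\hcoY_{2}$; pushing it down through the blow-down $\hcoY_{2}\to\widetilde{\hcoY}$ of (\ref{eq:Fig-anti-Flop}) produces, over a dense open subset of $\widetilde{\hcoY}$, the smooth conics $q$ of Subsection~\ref{sub:Birational-model-barY}, each of which is the $\mP^{1}$-family of $3$-planes lying on a unique quadric $Q$ of rank $\le 4$. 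Thus "$q\mapsto(Q,q)$" is a rational map $\widetilde{\hcoY}\dashrightarrow\hcoY$, visibly equal to the composite of $\widetilde{\hcoY}\to\overline{\hcoY}$ with the inverse of the birational map of Proposition~\ref{pro:birational-bY-Y}. The theorem asserts this rational map is a morphism that contracts a divisor onto $G_{\hcoY}$, and I would establish these two points in turn.

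\textbf{The morphism.} First I would check there is no indeterminacy. On the open set where the conic $q$ is smooth the map is already a morphism, agreeing there with the inverse of the open immersion of Proposition~\ref{pro:birational-bY-Y}, so only the two loci $\mP(U)\subset\rG(3,V)$ of (\ref{eq:Prho-Psig}) need attention. Over $\overline{\sP}_{\sigma}\simeq F(1,4,V)$ the variety $\overline{\hcoY}$ is smooth (Proposition~\ref{pro:Resolutions-Y}(1)), $\widetilde{\hcoY}\to\overline{\hcoY}$ is a local isomorphism, and the plane ${\tt P}_{V_{1}V_{4}}$ of $3$-planes still sweeps out the honest rank-one member $2\,\mP(V_{4})$ of $\Hes$, so the map extends by continuity. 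Over $\overline{\sP}_{\rho}\simeq\rG(2,V)$ the naive swept locus is all of $\mP(V)$, but Proposition~\ref{pro:Resolutions-Y}(4) shows $\widetilde{\hcoY}\to\overline{\hcoY}$ acquires a $\mP^{5}$ of new directions there, and unwinding the matrix normal forms for $\varphi_{U}$ from \cite[Sect.~5]{HoTa3} one sees this $\mP^{5}$ records exactly the limiting (reducible or double) conic in $\rG(3,V)$, hence the limiting member of $\Hes$ together with its distinguished family of planes, i.e.\ a point of $\hcoY$. Since $\widetilde{\hcoY}$ is smooth, being a resolution by Proposition~\ref{pro:G2-Z2-to-Y}, and the family of conics is now defined over \emph{all} of $\widetilde{\hcoY}$, the Stein-factorisation property of $\hcoY$ in (\ref{eq:def-hcoY-Stein-fact}) packages these local extensions into the morphism $\rho_{\widetilde{\hcoY}}\colon\widetilde{\hcoY}\to\hcoY$.

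\textbf{The exceptional divisor.} The morphism $\rho_{\widetilde{\hcoY}}$ is projective and birational. Over quadrics of rank $3$ and $4$ the point $y\in\hcoY$ recovers its conic $q_{y}$ uniquely (cases (a),(b) of Subsection~\ref{sub:ZY-conic-1} and the recovery step in the proof of Proposition~\ref{pro:birational-bY-Y}); since $\Sing\hcoY=G_{\hcoY}=\Hes_{2}$ and $\widetilde{\hcoY}$ is smooth, Zariski's main theorem makes $\rho_{\widetilde{\hcoY}}$ an isomorphism over $\hcoY\setminus G_{\hcoY}$. Hence the exceptional locus is $F_{\widetilde{\hcoY}}:=\rho_{\widetilde{\hcoY}}^{-1}(G_{\hcoY})$, contracted onto $G_{\hcoY}$. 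It remains to see $F_{\widetilde{\hcoY}}$ is a divisor, by a dimension count: for general $y\in G_{\hcoY}$, corresponding to a rank-two quadric $Q=H_{1}\cup H_{2}$, the fibre $\rho_{\widetilde{\hcoY}}^{-1}(y)$ is the space of limiting conics in $\rG(3,V)$ whose planes all lie in $Q$, and by case (c) of Subsection~\ref{sub:ZY-conic-1} these sit inside $(\mP^{3})^{*}\sqcup_{1pt}(\mP^{3})^{*}$; the explicit local model of $\widetilde{\hcoY}$ near its exceptional set in \cite{HoTa3} identifies this fibre with a $4$-dimensional variety. With $\dim G_{\hcoY}=\dim\Hes_{2}=8$ and $\dim\widetilde{\hcoY}=13$ this gives $\dim F_{\widetilde{\hcoY}}=12$, so $F_{\widetilde{\hcoY}}$ is a divisor contracted by $\rho_{\widetilde{\hcoY}}$ to $G_{\hcoY}$, as required.

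\textbf{Main obstacle.} The hard part will be the extension of the conic family across $\overline{\sP}_{\rho}$ in the morphism step: one must show the $\mP^{5}$ created by the anti-flip in (\ref{eq:Fig-anti-Flop}) is precisely the datum pinning down the low-rank limiting quadric together with its ruling family, equivalently that the tautological conic on $\widetilde{\hcoY}$ never degenerates below a reduced conic or line. This, together with the companion point --- the precise $4$-dimensionality of the fibres over $G_{\hcoY}$ --- rests on the concrete description of $\Sing\overline{\hcoY}$ as a cone over $\mP^{1}\times\mP^{5}$ and the normal forms of $\varphi_{U}$ worked out in \cite[Sect.~5]{HoTa3}.
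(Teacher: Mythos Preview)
Your route is genuinely different from the paper's. Rather than constructing a map to $\hcoY$ directly from the swept conic family, the paper first builds a morphism $\tilde{\varphi}_{DS}:\widetilde{\hcoY}\to\Hes$ using the \emph{double spin decomposition} $\wedge^{3}(\wedge^{2}(V/V_{1}))\simeq\tS^{2}(V/V_{1})\oplus\tS^{2}(V/V_{1})^{*}$ (equation~(\ref{eq:spin-decomp})): fibrewise this Pl\"ucker-embeds $\hcoY_{3}=\rG(3,\wedge^{2}T(-1))$ into $\mP(\tS^{2}T(-1)\oplus\tS^{2}T(-1)^{*})$, and projection onto the second summand followed by the inclusion $\tS^{2}T(-1)^{*}\hookrightarrow\tS^{2}V^{*}$ gives a rational map $\phi_{DS}:\hcoY_{3}\dashrightarrow\Hes$ whose indeterminacy locus is exactly $\sP_{\rho}$ (Proposition~\ref{pro:spin-OG36}). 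Passing to the flip $\widetilde{\hcoY}$ resolves the indeterminacy, and $\rho_{\widetilde{\hcoY}}$ is then simply the Stein factorisation of $\tilde{\varphi}_{DS}$. The payoff is that the map is now an explicit linear-algebraic formula in double-spin coordinates $[v_{ij},w_{kl}]$ (Appendix~\ref{sec:AppendixB}), so the fibres over each rank stratum can be read off directly: one finds $\phi_{V_{1}}^{-1}([a])=\emptyset$ for $\rk a=3$ (so rank-$3$ fibres lie entirely in the exceptional set $G_{\rho}$ of the flip), and fibres of dimension $2+2=4$ over $\Hes_{2}\setminus\Hes_{1}$, whence $F_{\widetilde{\hcoY}}$ is a divisor. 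Your synthetic approach via ``conic $\mapsto$ swept quadric'' is conceptually closer to how one discovers the map, but your two acknowledged hard steps --- extending across $\overline{\sP}_{\rho}$ and pinning down the $4$-dimensional fibres over $G_{\hcoY}$ --- both defer to the same local computations in \cite[Sect.~5]{HoTa3}, and the double spin decomposition is precisely the tool there that makes those computations tractable; without it your ``main obstacle'' stays an obstacle.
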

The above theorem is one of the main results of \cite{HoTa3}. We
refer to {[}ibid, Subsect.~5.7, and Fig.2{]} for details. Also, for
the proof of Theorem \ref{thm:Lefshetz-collections-Reye}, we used
a natural flattening of the fibers of $F_{\widetilde{\hcoY}}\to G_{\hcoY}$
constructed in {[}ibid,Section 7{]}. Below, we describe the construction
of the morphism $\rho_{\widetilde{\hcoY}}$ briefly. 

\vskip0.5cm

\subsection{The resolution $\rho_{\widetilde{\hcoY}}:\widetilde{\hcoY}\to\hcoY$ }

We formulate a rational map $\varphi_{DS}:\overline{\hcoY}\dashrightarrow\Hes$
which extends to $\tilde{\varphi}_{DS}:\widetilde{\hcoY}\to\Hes$.
Then the Stein factorization of $\tilde{\varphi}_{DS}$ gives the
claimed morphism $\rho_{\widetilde{\hcoY}}:\widetilde{\hcoY}\to\hcoY$
{[}ibid,Prop.5.6.1{]}. 

The key relation for the construction is the following decomposition:\begin{equation}
\wedge^{3}(\wedge^{2}(V/V_{1}))=\Sigma^{(3,1,1,1)}(V/V_{1})\oplus\Sigma^{(2,2,2,0)}(V/V_{1})\simeq\tS^{2}(V/V_{1})\oplus\tS^{2}(V/V_{1})^{*},\label{eq:spin-decomp}\end{equation}
as irreducible $so(\wedge^{2}V/V_{1})\simeq sl(V/V_{1})$-modules,
where $\Sigma^{\alpha}$ represents the Schur functor. We called this
\textit{double spin decomposition} since the r.h.s. is $V_{2\lambda_{s}}\oplus V_{2\lambda_{\bar{s}}}$
with the spinor and conjugate spinor weights $\lambda_{s}$ and $\lambda_{\bar{s}}$,
respectively. $\rG(3,\wedge^{2}(V/V_{1}))$ consists of 3-spaces in
$\wedge^{2}(V/V_{1})$. We have also $\OG(3,\wedge^{2}(V/V_{1}))$
which consists of isotropic 3-spaces with respect to the natural symmetric
form $\wedge^{2}(V/V_{1})\times\wedge^{2}(V/V_{1})\to\wedge^{4}(V/V_{1})\simeq\mC$.
We denote by $\OG^{\pm}(3,\wedge^{2}(V/V_{1}))$ the connected components
of $\OG(3,\wedge^{2}(V/V_{1}))$.

If we consider the above decomposition fiberwise for $\wedge^{2}T_{\mP(V)}(-1)$,
then we have the following embedding:\begin{equation}
\begin{matrix}\begin{aligned}i: & \hcoY_{3}=\rG(3,\wedge^{2}T_{\mP(V)}(-1))\\
 & \hookrightarrow\mP(\tS^{2}T_{\mP(V)}(-1)\otimes\wedge^{4}T_{\mathbb{P}(V)}(-1)\oplus\tS^{2}T_{\mP(V)}(-1)^{*}\otimes(\wedge^{4}T_{\mathbb{P}(V)}(-1))^{\otimes2}).\end{aligned}
\end{matrix}\label{eq:G36-spin-embed2}\end{equation}

\begin{prop}
\label{pro:spin-OG36}The following properties hold for the loci $\sP_{\rho}$
and $\sP_{\sigma}$ in $\hcoY_{3}$:

\begin{myitem} 

\item{\rm (1)} $i(\sP_{\rho})=v_{2}(\mathbb{P}(T_{\mP(V)}(-1)))$,
$i(\sP_{\sigma})=v_{2}(\mathbb{P}(T(-1)^{*}))$. 

\item{\rm (2)} $\sP_{\rho}=\OG^{+}(3,\wedge^{2}T_{\mP(V)}(-1))$,
$\sP_{\sigma}=\OG^{-}(3,\wedge^{2}T_{\mP(V)}(-1)^{*})$. 

\end{myitem}\end{prop}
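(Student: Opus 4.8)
The plan is to reduce both assertions to a statement in a single fibre of the $\mathbb{P}^{3}$-bundle $\hcoY_{3}=\rG(3,\wedge^{2}T_{\mathbb{P}(V)}(-1))\to\mathbb{P}(V)$ and then invoke the classical geometry of a smooth quadric fourfold. First I would fix $[V_{1}]\in\mathbb{P}(V)$ and put $W=V/V_{1}$, so that by the Euler sequence the fibre of $\wedge^{2}T_{\mathbb{P}(V)}(-1)$ at $[V_{1}]$ is $\wedge^{2}W$, which carries the nondegenerate symmetric form $b(\alpha,\beta)=\alpha\wedge\beta$ valued in $\wedge^{4}W$. A three-dimensional subspace $U\subset\wedge^{2}W$ is $b$-isotropic exactly when $\mathbb{P}(U)$ is a projective plane lying on the Plücker quadric $\rG(2,W)\subset\mathbb{P}(\wedge^{2}W)$; by the standard theory of linear subspaces of a smooth quadric fourfold these planes form two $\mathbb{P}^{3}$-families, the $\alpha$-planes $\{\,\ell\wedge W:\ell\in\mathbb{P}(W)\,\}$ and the $\beta$-planes $\{\,\wedge^{2}W_{3}:W_{3}\in\rG(3,W)\,\}$, and these are precisely the two connected components of $\OG(3,\wedge^{2}W)$.

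For (2) I would trace the definitions (\ref{eq:Prho-Psig}) of the loci $\overline{\sP}_{\rho},\overline{\sP}_{\sigma}$, together with their resolutions/proper transforms $\sP_{\rho},\sP_{\sigma}$ described in (\ref{eq:Pr-Psigma}), into the bundle $\hcoY_{3}$. Concretely, if $V_{1}\subset V_{2}$ and $\ell=V_{2}/V_{1}\subset W$, the point of $\hcoY_{3}$ arising from the $\rho$-plane ${\tt P}_{V_{2}}$ is $(\,\ell\wedge W,\,[V_{1}]\,)$; if $V_{1}\subset V_{4}$ and $W_{3}=V_{4}/V_{1}\subset W$, the point arising from the $\sigma$-plane ${\tt P}_{V_{1}V_{4}}$ is $(\,\wedge^{2}W_{3},\,[V_{1}]\,)$. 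Hence on the fibre over $[V_{1}]$ the locus $\sP_{\rho}$ is the family of $\alpha$-planes, giving $\sP_{\rho}|_{[V_{1}]}\cong\mathbb{P}(W)=\mathbb{P}(T_{\mathbb{P}(V)}(-1)|_{[V_{1}]})$ in accordance with (\ref{eq:Pr-Psigma}), while $\sP_{\sigma}$ is the family of $\beta$-planes, giving $\sP_{\sigma}|_{[V_{1}]}\cong\rG(3,W)\cong\mathbb{P}(W^{*})=\mathbb{P}(T_{\mathbb{P}(V)}(-1)^{*}|_{[V_{1}]})$. Letting $[V_{1}]$ vary and matching the labelling of the two components $\OG^{\pm}$ with the two summands of (\ref{eq:spin-decomp}), this gives $\sP_{\rho}=\OG^{+}(3,\wedge^{2}T_{\mathbb{P}(V)}(-1))$ and $\sP_{\sigma}=\OG^{-}(3,\wedge^{2}T_{\mathbb{P}(V)}(-1)^{*})$.

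For (1) I would use that the double spin decomposition (\ref{eq:spin-decomp}) is exactly the splitting of $\wedge^{3}(\wedge^{2}W)$ into the two eigenspaces of the (suitably normalized) Hodge operator attached to $b$, with the summands identified as $\tS^{2}W\otimes\wedge^{4}W$ and $\tS^{2}W^{*}\otimes(\wedge^{4}W)^{\otimes2}$ as in (\ref{eq:G36-spin-embed2}), and that $i$ is the Plücker map $U\mapsto[\wedge^{3}U]$ followed by the projections to these two summands. For a $b$-isotropic $U$ the Plücker line $\wedge^{3}U$ lies entirely in the half-spin summand attached to its component of $\OG^{\pm}$; moreover the restriction of the Plücker polarization to each component is the square of the spinor generator (the spinor-variety phenomenon recalled for $\OG(5,10)$ in Subsection \ref{sub:OG(5,10)-def}), so $i$ restricted to $\OG^{+}(3,\wedge^{2}W)\cong\mathbb{P}(W)$ is the second Veronese into $\mathbb{P}(\tS^{2}W\otimes\wedge^{4}W)$, and similarly for $\OG^{-}$; a one-line computation in coordinates confirms $\wedge^{3}(\ell\wedge W)=\ell^{\otimes2}\otimes(\mathrm{vol})$ up to a scalar. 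Running this fibrewise over $\mathbb{P}(V)$ and combining with (2) yields $i(\sP_{\rho})=v_{2}(\mathbb{P}(T_{\mathbb{P}(V)}(-1)))$ and $i(\sP_{\sigma})=v_{2}(\mathbb{P}(T_{\mathbb{P}(V)}(-1)^{*}))$.

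I expect the main obstacle to be the bookkeeping in the fibrewise identification: one must follow how the $\rho$- and $\sigma$-planes ${\tt P}_{V_{2}}$ and ${\tt P}_{V_{1}V_{4}}$ of $\rG(3,V)$ sit inside $\overline{\hcoY}\subset\rG(3,\wedge^{3}V)$, how the blow-up $\hcoY_{3}\to\overline{\hcoY}$ separates them into $\sP_{\rho},\sP_{\sigma}\subset\hcoY_{3}$, and verify, via the identification (\ref{eq:PintG(3)-and-PintG(2)}) of $\mathbb{P}(U)\cap\rG(3,V)$ with a conic on $\rG(2,V/V_{1})$, that these become exactly the $\alpha$- and $\beta$-plane families of the relative Plücker quadric. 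Once this dictionary is in place the remaining input is the classical spinor geometry of $\OG(3,6)$. Keeping the twists by $\wedge^{4}T_{\mathbb{P}(V)}(-1)$ consistent between the fibrewise computation and the global bundle statements is the other point that demands care.
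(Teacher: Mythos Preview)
Your proposal is correct and follows essentially the same approach as the paper's own proof: both arguments work fibrewise over $[V_1]\in\mathbb{P}(V)$, identify the $\rho$- and $\sigma$-loci with the two families $\ell\wedge W$ and $\wedge^{2}W_{3}$ of maximally isotropic $3$-spaces in $(\wedge^{2}W,\,\wedge)$, and then read off the Veronese description from the Pl\"ucker embedding and the double spin decomposition. The only difference is one of emphasis: for part~(1) the paper invokes the explicit coordinate form of the decomposition collected in Appendix~\ref{sec:AppendixB}, whereas you phrase the same fact conceptually via the spinor embedding of $\OG(3,6)$ and the square relation between the Pl\"ucker and spinor polarizations; both lead to the same conclusion.
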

\begin{proof}
(1) The claimed relations follow from the isomorphisms (\ref{eq:Pr-Psigma})
and the form of the embedding (\ref{eq:G36-spin-embed2}). We can
also verify the claim explicitly by writing the decomposition (\ref{eq:spin-decomp})
(see Appendix \ref{sec:AppendixB}). (2) The points $[V_{1},V_{2}]\in F(1,2,V)\simeq\sP_{\rho}$
determine the corresponding points $([\bar{U}],[V_{1}])\in\sP_{\rho}$
with $[\bar{U}]=[(V/V_{2})\wedge(V_{2}/V_{1})]\in\rG(3,\wedge^{2}(V/V_{1}))$.
Then we verify $\bar{U}\wedge\bar{U}=0$. Similarly, points $([\bar{U}],[V_{1}])\in\sP_{\sigma}$
have the forms $[\bar{U}]=[\wedge^{2}(V_{4}/V_{1})]$ for some $V_{4}$.
Again, we have $\bar{U}\wedge\bar{U}=0$. The claims follow since
all maximally isotropic subspaces in $\wedge^{2}(V/V_{1})$ take either
of these two forms. 
\end{proof}
Now we consider the following sequence of (rational) morphisms:\begin{equation}
\begin{matrix}\begin{aligned}\hcoY_{3}\overset{i}{\hookrightarrow}\mP(\tS^{2}T(-1)\otimes\mathcal{O}_{\mathbb{P}(V)}(1)\oplus\tS^{2}T(-1)^{*}\otimes\mathcal{O}_{\mathbb{P}(V)}(2))\qquad\qquad\\
\dashrightarrow\mP(\tS^{2}T(-1)^{*})\hookrightarrow\mP(\tS^{2}V^{*}\otimes\mathcal{O}_{\mathbb{P}(V)})\rightarrow\mathbb{P}(S^{2}V^{*}),\end{aligned}
\end{matrix}\label{eq:sequence-Y3-to-S2V}\end{equation}
where we use $\wedge^{4}T_{\mathbb{P}(V)}(-1)=\mathcal{O}_{\mathbb{P}(V)}(1)$,
and (here and hereafter) we write $T(-1)$ for $T_{\mP(V)}(-1)$ to
simplify formulas. In the middle, we consider the projection to the
second factor. The injection in the right is defined by considering
the dual of the surjection $V\otimes\sO_{\mP(V)}\to T(-1)\to0$, and
$\mP(\tS^{2}V^{*}\otimes\mathcal{O}_{\mathbb{P}(V)})\rightarrow\mathbb{P}(S^{2}V^{*})$
is the natural projection for $\mP(\tS^{2}V^{*}\otimes\mathcal{O}_{\mathbb{P}(V)})=\mP(\tS^{2}V^{*})\times\mathbb{P}(V)$.
Since the image of the composition is in $\Hes\subset\mP(\tS^{2}V^{*})$,
we have a rational map,\[
\phi_{DS}:\hcoY_{3}\dashrightarrow\Hes.\]

\begin{prop}
\begin{myitem} \item{\rm (1)} The rational map $\phi_{DS}$ defines
a morphism $\phi_{DS}:\hcoY_{3}\setminus\sP_{\rho}\simeq\overline{\hcoY}\setminus\overline{\sP}_{\rho}\to\Hes$.
In particular, it induces a rational map $\varphi_{DS}:\overline{\hcoY}\dashrightarrow\Hes$
whose indeterminacy locus is $\overline{\sP}_{\rho}$. 

\item{\rm (2)} $\phi_{DS}(\sP_{\sigma})=\varphi_{DS}(\overline{\sP}_{\sigma})=\Hes_{1}$.

\item{\rm (3)} The rational map $\varphi_{DS}:\overline{\hcoY}\dashrightarrow\Hes$
extends to a morphism $\tilde{\varphi}_{DS}:\widetilde{\hcoY}\to\Hes$. 

\end{myitem}\end{prop}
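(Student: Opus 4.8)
The plan is to analyse the composite (\ref{eq:sequence-Y3-to-S2V}) one arrow at a time. Every arrow there is a morphism except the middle projection $\mP\big(\tS^{2}T(-1)\otimes\sO_{\mP(V)}(1)\oplus\tS^{2}T(-1)^{*}\otimes\sO_{\mP(V)}(2)\big)\dashrightarrow\mP(\tS^{2}T(-1)^{*})$, which is the linear projection away from the sub-bundle $\mP(\tS^{2}T(-1)\otimes\sO_{\mP(V)}(1))$: indeed $i$ is the relative Pl\"ucker-type closed immersion of (\ref{eq:G36-spin-embed2}), the arrow $\mP(\tS^{2}T(-1)^{*})\hookrightarrow\mP(\tS^{2}V^{*}\otimes\sO_{\mP(V)})$ is the closed immersion induced by the subbundle $\tS^{2}T(-1)^{*}\hookrightarrow\tS^{2}V^{*}\otimes\sO_{\mP(V)}$ (apply $\tS^{2}$ to the dual of $V\otimes\sO_{\mP(V)}\twoheadrightarrow T(-1)$), and the last arrow is $\mP(\tS^{2}V^{*})\times\mP(V)\to\mP(\tS^{2}V^{*})$. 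Hence $\phi_{DS}$ is a morphism precisely on $\hcoY_{3}\setminus i^{-1}\big(\mP(\tS^{2}T(-1)\otimes\sO_{\mP(V)}(1))\big)$, and, since $\tS^{2}(V/V_{1})^{*}\subset\tS^{2}V^{*}$ consists of quadratic forms whose radical contains $V_{1}$ and so have rank $\le4$, the image of $\phi_{DS}$ lies in $\Hes$.

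For (1) it remains to show $i^{-1}\big(\mP(\tS^{2}T(-1)\otimes\sO_{\mP(V)}(1))\big)=\sP_{\rho}$. Fibrewise over $[V_{1}]\in\mP(V)$, put $E=V/V_{1}$ and $W=\wedge^{2}E$ with its quadratic form $\omega\mapsto\omega\wedge\omega$; under (\ref{eq:spin-decomp}) the map $i$ sends $U\in\rG(3,W)$ to $[\wedge^{3}U]\in\mP(\tS^{2}E\oplus\tS^{2}E^{*})$ and the sub-bundle above is fibrewise $\mP(\tS^{2}E)$, so the claim amounts to $\rG(3,W)\cap\mP(\tS^{2}E)=\OG^{+}(3,W)$ in $\mP(\wedge^{3}W)$. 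The inclusion $\supseteq$ is Proposition \ref{pro:spin-OG36}(1), as $i(\sP_{\rho})=v_{2}(\mP(T(-1)))$ sits in the $\tS^{2}E$-summand; the inclusion $\subseteq$ is the classical fact that each half-spinor $\mP^{9}$ meets $\rG(3,6)$ exactly in the spinor variety $\mP^{3}$, verified in \cite{HoTa3} by expressing the components of $\wedge^{3}U$ as minors and checking that their simultaneous vanishing forces $U$ to be isotropic of the relevant type. With Proposition \ref{pro:Resolutions-Y}(2), which gives the isomorphism $\hcoY_{3}\setminus\sP_{\rho}\simeq\overline{\hcoY}\setminus\overline{\sP}_{\rho}$, this shows $\phi_{DS}$ descends to $\varphi_{DS}:\overline{\hcoY}\dashrightarrow\Hes$ defined off $\overline{\sP}_{\rho}$; that the indeterminacy is exactly $\overline{\sP}_{\rho}$ follows once one knows, again from \cite{HoTa3}, that the $\tS^{2}T(-1)^{*}$-valued section defining the projection vanishes simply along $\sP_{\rho}$, so the base scheme of the associated linear system is the reduced $\sP_{\rho}$.

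For (2), $\sP_{\sigma}\cap\sP_{\rho}=\emptyset$ because by Proposition \ref{pro:spin-OG36}(2) they are the two connected components of a relative orthogonal Grassmannian; hence $\sP_{\sigma}$ lies in the locus where $\phi_{DS}$ is a morphism, $\hcoY_{3}\to\overline{\hcoY}$ is an isomorphism near $\sP_{\sigma}$, and $\varphi_{DS}(\overline{\sP}_{\sigma})=\phi_{DS}(\sP_{\sigma})$. By Proposition \ref{pro:spin-OG36}(1), $i$ carries a point of $\sP_{\sigma}$ over $[V_{1}]$ to a rank-one tensor $[\xi^{2}]$ with $\xi\in(V/V_{1})^{*}$ and vanishing $\tS^{2}(V/V_{1})$-component; following (\ref{eq:sequence-Y3-to-S2V}) and the inclusion $(V/V_{1})^{*}\subset V^{*}$, this goes to $[\bar\xi^{2}]\in\mP(\tS^{2}V^{*})$ with $\bar\xi\in V^{*}$, i.e. to a point of $v_{2}(\mP(V^{*}))=\Hes_{1}$. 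Conversely any $[\bar\xi^{2}]\in\Hes_{1}$ is hit by taking $V_{1}\subset\ker\bar\xi$ and the point of $\sP_{\sigma}$ over $[V_{1}]$ with Veronese coordinate $[\xi^{2}]$ ($\xi$ being $\bar\xi$ viewed in $(V/V_{1})^{*}$); hence $\phi_{DS}(\sP_{\sigma})=\Hes_{1}$.

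For (3) the plan is to resolve the indeterminacy of $\phi_{DS}$ and identify the result with $\widetilde{\hcoY}$. By (1) the base scheme of the defining linear system is $\sP_{\rho}$, so blowing it up, i.e. passing to $\hcoY_{2}$ in (\ref{eq:Fig-anti-Flop}), produces a morphism $\Psi:\hcoY_{2}\to\Hes$ agreeing with $\phi_{DS}$ off the exceptional divisor; it then suffices to check that $\Psi$ factors through the anti-flip contraction $\hcoY_{2}\to\widetilde{\hcoY}$, i.e. collapses the $\mP^{1}$-rulings contracted there. This is the main obstacle, and it is settled in \cite{HoTa3} by the explicit local model of the singularity of $\overline{\hcoY}$ along $\overline{\sP}_{\rho}$ — the affine cone over $\mP^{1}\times\mP^{5}$ of Proposition \ref{pro:Resolutions-Y}(4) — together with a computation of the leading term of the $\tS^{2}T(-1)^{*}$-valued section transverse to $\sP_{\rho}$, which depends only on the normal direction and hence is constant along those rulings. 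This yields $\tilde\varphi_{DS}:\widetilde{\hcoY}\to\Hes$ extending $\varphi_{DS}$; as noted after Theorem \ref{thm:Resolution-Y}, the Stein factorization of $\tilde\varphi_{DS}$ recovers $\rho_{\widetilde{\hcoY}}:\widetilde{\hcoY}\to\hcoY$.
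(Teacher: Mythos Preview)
Your proof is correct and follows essentially the same approach as the paper: parts (1) and (2) are deduced from Proposition~\ref{pro:spin-OG36}(1) together with the definition of $\phi_{DS}$ via the sequence (\ref{eq:sequence-Y3-to-S2V}), and part (3) is reduced to an explicit local computation carried out in \cite{HoTa3}. The only minor difference is in (3): the paper extends $\varphi_{DS}$ directly to the small resolution $\widetilde{\hcoY}\to\overline{\hcoY}$, whereas you pass first to the common blow-up $\hcoY_{2}$ and then check that the resulting morphism descends through $\hcoY_{2}\to\widetilde{\hcoY}$; both routes lead to the same explicit verification from \cite[Prop.~5.5.3]{HoTa3}.
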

\begin{proof}
(1) and (2) follow from the claim (1) in Theorem \ref{pro:spin-OG36}
and the definition $\varphi_{DS}$ with the Pl\"ucker embedding (\ref{eq:G36-spin-embed2}).
We can verify (3) explicitly by writing the rational map $\varphi_{DS}$
and extending it to the blow-up $\widetilde{\hcoY}\to\overline{\hcoY}$
(see \cite[Prop.5.5.3]{HoTa3}).\end{proof}
\begin{thm}
$\tilde{\varphi}_{DS}:\widetilde{\hcoY}\to\Hes$ factors as $\widetilde{\hcoY}\to\hcoY\overset{\rho_{\hcoY}}{\to}\Hes$
with the morphism $\rho_{\hcoY}:\hcoY\to\Hes$ in (\ref{eq:def-hcoY-Stein-fact}).
This defines the resolution $\rho_{\widetilde{\hcoY}}:\widetilde{\hcoY}\to\hcoY$. \end{thm}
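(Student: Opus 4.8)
The plan is to construct the factorization by taking a Stein factorization of $\tilde\varphi_{DS}$ and identifying its intermediate variety with $\hcoY$. First I would note that $\tilde\varphi_{DS}:\widetilde{\hcoY}\to\Hes$ is a projective morphism between projective varieties, so by Stein factorization it factors as $\widetilde{\hcoY}\overset{\alpha}{\to}W\overset{\beta}{\to}\Hes$ with $\alpha_*\mathcal O_{\widetilde{\hcoY}}=\mathcal O_W$ (so $\alpha$ has connected fibers) and $\beta$ finite. The substance of the proof is then to show $W\simeq\hcoY$ in a way compatible with the two maps to $\Hes$, i.e. that $\beta$ coincides with $\rho_{\hcoY}$ under this isomorphism.

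The second step is to show $\beta$ is generically $2:1$ with the correct branch behaviour, matching the description of $\rho_{\hcoY}$ coming from the original Stein factorization $\Zpq\to\hcoY\to\Hes$ in (\ref{eq:def-hcoY-Stein-fact}). Over the open locus $\Hes_4\setminus\Hes_3$ of rank-$4$ quadrics, a general point $[Q]\in\Hes$ has preimage in $\widetilde{\hcoY}$ described, via the birational model $\overline{\hcoY}$ and Proposition \ref{pro:birational-bY-Y}, by the two $\mathbb P^1$-families of planes in $Q$ — equivalently the two rulings of $\bar Q\simeq\mathbb P^1\times\mathbb P^1$ — each producing one plane $\mathbb P_q\subset\mathbb P(\wedge^3 V)$. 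Thus the generic fiber of $\tilde\varphi_{DS}$ has exactly two connected components, forcing $\deg\beta=2$, and the ramification occurs exactly where the two rulings coincide, i.e. over $\Hes_3=\Sing\Hes$ (cf. Fig.~5.1). This is precisely the finite double cover $\rho_{\hcoY}$, so $W$ and $\hcoY$ are identified birationally over $\Hes$.

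The third step is to upgrade this birational identification $W\dashrightarrow\hcoY$ to an isomorphism. Here I would use that $\hcoY$ is by definition (via (\ref{eq:def-hcoY-Stein-fact})) the Stein factorization of $\Zpq\to\Hes$, hence is normal and is the unique intermediate normal variety with connected fibers over $\Hes$ dominating $\Hes$ with degree $2$; since $W=\Spec_{\Hes}\beta_*\mathcal O_W$ with $\beta_*\mathcal O_W$ the integral closure of $\mathcal O_\Hes$ inside the degree-$2$ function field extension determined by the ruling choice, $W$ satisfies the same universal property and so $W\simeq\hcoY$ as schemes over $\Hes$. Concretely one can exhibit the common resolution: the generically conic bundle $\Zpq\to\hcoY$ and the birational model $\hcoY_3=\mathbb P(T_{\mathbb P(V)}(-1))$-Grassmannian bundle both dominate $\widetilde{\hcoY}$, and one checks the induced maps to $\Hes$ agree by Proposition~\ref{pro:spin-OG36}(2) and the description of $\phi_{DS}$ on $\sP_\sigma$, nailing down the factorization on a dense open set and then everywhere by normality. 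Finally, $\rho_{\widetilde{\hcoY}}:=\alpha:\widetilde{\hcoY}\to\hcoY$ is the asserted resolution: it is birational (generically an isomorphism, since generic fibers of $\tilde\varphi_{DS}$ over $\Hes_4\setminus\Hes_3$ are single points once the ruling is fixed) and it contracts the exceptional divisor $F_{\widetilde{\hcoY}}$ onto $G_{\hcoY}$, in agreement with Theorem~\ref{thm:Resolution-Y}.

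The main obstacle I anticipate is the third step — verifying that $\alpha$ really has connected fibers with generic fiber a point (so that it is birational onto $\hcoY$ rather than onto some variety properly between $\hcoY$ and $\Hes$), and that the exceptional locus matches $F_{\widetilde{\hcoY}}\to G_{\hcoY}$. This requires tracking, through the double spin decomposition (\ref{eq:spin-decomp}) and the blow-up $\widetilde{\hcoY}\to\overline{\hcoY}$ along $\overline{\sP}_\rho$, exactly which data a point of $\widetilde{\hcoY}$ remembers beyond its image quadric in $\Hes$, and comparing this with the conic $q_y$ recorded by a point of $\hcoY$ in Proposition~\ref{pro:birational-bY-Y}. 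Over the rank $\le 3$ locus the fibers of $\tilde\varphi_{DS}$ are positive-dimensional (e.g. the conic $\bar Q_y$ parametrizing planes when $\rk Q=3$), and one must confirm these are exactly the connected fibers contracted by $\alpha$, which is where the detailed local computations of \cite[Subsect.~5.5--5.7]{HoTa3} enter; I would cite those rather than reproduce them.
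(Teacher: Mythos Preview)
Your approach is essentially the same as the paper's: both take the Stein factorization of $\tilde\varphi_{DS}$ and identify the finite part with $\rho_{\hcoY}$ by appealing to the explicit fiber description worked out in \cite[Section~5.6]{HoTa3}. The paper's proof is terser---it simply asserts that the complete fiber analysis there makes the identification clear---while you spell out the matching via normality and the uniqueness of the degree-$2$ cover branched along $\Hes_3$; but the substance and the ultimate reliance on the computations in \cite{HoTa3} are the same. (One small slip: in your third step you write that $\hcoY_3$ dominates $\widetilde{\hcoY}$, but in fact they are related by the anti-flip through the common blow-up $\hcoY_2$ as in diagram~(\ref{eq:Fig-anti-Flop}); this does not affect your argument.)
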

\begin{proof}
The claim basically follows from the Stein factorization. In {[}ibid,
Section 5.6, Fig.2{]}, the fibers of $\tilde{\varphi}_{DS}:\widetilde{\hcoY}\to\Hes$
have been described completely, and the claim is clear from the results
there.
\end{proof}
\noindent\textbf{Remark.} We describe the inverse image of the rational
map $\phi_{DS}$. Let us fix $[a]\in\Hes$. When we fix (a choice
of) $V_{1}\subset\Ker a$, we have a {}``reduced matrix'' $[a_{V_{1}}]\in\mP(\tS(V/V_{1})^{*})$
representing the quadric in $\mathbb{P}(V/V_{1})$. Consider the restriction
$\phi_{V_{1}}:=\phi_{DS}\vert_{\pi_{3}^{-1}([V_{1}])}$ of $\phi_{DS}$
to the fiber $\pi_{3}^{-1}([V_{1}])=\rG(3,\wedge^{2}(V/V_{1}))$ of
$\pi_{3}:\hcoY_{3}\to\mP(V)$, and also similar restriction $i_{V_{1}}:\rG(3,\wedge^{2}(V/V_{1}))\hookrightarrow\mP(\tS^{2}(V/V_{1})\oplus\tS^{2}(V/V_{1})^{*})$
of the Pl\"ucker embedding (\ref{eq:G36-spin-embed2}). Then, over
the fiber $\pi_{3}^{-1}([V_{1}])$,\textcolor{red}{{} }the rational
map $\phi_{DS}:\hcoY_{3}\dashrightarrow\Hes$ (\ref{eq:sequence-Y3-to-S2V})
is basically given by the projection $\mP(\tS^{2}(V/V_{1})\oplus\tS^{2}(V/V_{1})^{*})\dashrightarrow\mP(\tS^{2}(V/V_{1})^{*})$
sending $[v_{ij},w_{kl}]$ to $[w_{ij}]$. The ideal of the Pl\"ucker
embedding in terms coordinate $[v_{ij},w_{kl}]$ turns out to have
a rather  nice form as shown in Appendix \ref{sec:AppendixB}. Using
the results listed in Appendix \ref{sec:AppendixB}, we can prove
the following properties of the inverse image of $\phi_{DS}$: 

1) When $\rk a=4$, $V_{1}$ is unique and we have $i_{V_{1}}\circ\phi_{V_{1}}^{-1}(a)=[\pm\sqrt{\det a_{V_{1}}}\, a_{V_{1}}^{-1},a_{V_{1}}].$

2) When $\rk a=3$, for any $V_{1}\subset\Ker a$, we have $i_{V_{1}}\circ\phi_{V_{1}}^{-1}(a)=\emptyset$.

3) When $\rk a=2$, for each choice of $V_{1}\subset\Ker a$, we have
$i_{V_{1}}\circ\phi_{V_{1}}^{-1}(a)\simeq\mP^{1}\times\mP^{1}.$

4) When $\rk a=1$, \textcolor{black}{for} each choice of $V_{1}\subset\Ker a$,
we have $i_{V_{1}}\circ\phi_{V_{1}}^{-1}(a)\simeq\mP(1^{3},2).$

\noindent Let us denote by $G_{\rho}$ the exceptional set of the
resolution $\tilde{\varphi}_{DS}:\widetilde{\hcoY}\to\overline{\hcoY}$.
Then, since $\hcoY_{3}\setminus\sP_{\rho}\simeq\overline{\hcoY}\setminus\overline{\sP}_{\rho}\simeq\widetilde{\hcoY}\setminus G_{\rho}$,
we can identify $\phi_{DS}$, $\varphi_{DS}$ and $\tilde{\varphi}_{DS}$
with each other over these complement sets. Then the above results
indicate that $\tilde{\varphi}_{DS}^{-1}(a)\,(\rk a=3)$ is contained
in the exceptional set $G_{\rho}$ (and this is indeed the case {[}ibid,
Lemma 5.6.2{]}). Note also that from 3) and 4) and $\dim G_{\hcoY}=8\,(G_{\hcoY}\simeq\Hes_{2})$,
we see that $\tilde{\varphi}_{DS}^{-1}(G_{\hcoY})$ is a divisor in
$\widetilde{\hcoY}$, which is nothing but the divisor $F_{\widetilde{\hcoY}}$
that appeared in Theorem \ref{thm:Resolution-Y}. Full details of
1)--4) can be found in {[}ibid, Section 5.6{]} (see also {[}ibid,
Fig.2{]}). \hfill $\square$

\vskip0.5cm

\subsection{Generically conic bundles}

We describe the generically conic bundle $\pi_{2'}:\Zpq_{2}\to\hcoY_{2}$
which has appeared in (\ref{eq:diag-YZX}). The basic idea is the
same as that we used in the proof of Proposition \ref{pro:birational-bY-Y},
i.e., to consider the intersection $\mP(U)\cap\rG(3,V)\simeq\mP(\bar{U})\cap\rG(3,V/V_{1})$
for $U=\bar{U}\wedge V_{1}$.

\subsubsection{{\itshape\bfseries Generically conic bundle $\overline{\Zpq}\to\overline{\hcoY}$
}}

Let us fix the embedding $\rG(3,V)\subset\mP(\wedge^{3}V)$. We recall
the definition \[
\overline{\hcoY}=\left\{ [U]\in\rG(3,\wedge^{3}V)\mid U=\bar{U}\wedge V_{1}\text{ for some }V_{1}\subset V\right\} .\]
Then from the isomorphism (\ref{eq:PintG(3)-and-PintG(2)}), we have
generically conic bundle by \[
\overline{\Zpq}:=\left\{ ([c],[U])\mid[c]\in\mP(U)\cap\rG(3,V),[U]\in\overline{\hcoY}\right\} \subset\rG(3,V)\times\overline{\hcoY},\]
with the natural projection $\overline{\Zpq}\to\overline{\hcoY}$.
As explained in Subsection \ref{sub:Birational-model-barY}, the fibers
$\mP(U)\cap\rG(3,V)$ over point $[U]$ are conics for $[U]\in\overline{\hcoY}\setminus(\overline{\sP}_{\rho}\cup\overline{\sP}_{\sigma})$
while they are $\rho$-planes and $\sigma$-planes ($\simeq\mP(U)$)
for $[U]\in\overline{\sP}_{\rho}$ and $[U]\in\overline{\sP}_{\sigma}$,
respectively.

\subsubsection{{\itshape\bfseries Generically conic bundle $\Zpq_{3}\to\hcoY_{3}$
}}

The generically conic bundle $\overline{\Zpq}\to\overline{\hcoY}$
naturally extends to $\Zpq_{3}\to\hcoY_{3}$ by the isomorphism $\mP(U)\cap\rG(3,V)\simeq\mP(\bar{U})\cap\rG(2,V/V_{1})$
for $U=\bar{U}\wedge V_{1}$. To describe it, let us introduce the
universal bundles for the Grassmannian bundle $\pi_{3}:\hcoY_{3}=\rG(3,\wedge^{2}T(-1))\to\mP(V),$\[
0\to\sS\to\pi_{3}^{*}\wedge^{2}T(-1)\to\sQ\to0.\]
Denote by $\mP(S)$ the universal planes over $\hcoY_{3}$, whose
fiber over $([\bar{U}],[V_{1}])$ is $\mP(\bar{U})$. Now, consider
Grassmannian bundle $\pi_{G}:\rG(2,T(-1))\to\mP(V)$, and define \[
\Zpq_{3}:=\rG(2,T(-1))\times_{\mP(V)}\hcoY_{3},\]
with the natural projections $\pi_{G'}:\Zpq_{3}\to\rG(2,T(-1))$ and
$\pi_{3'}:\Zpq_{3}\to\hcoY_{3}$. By definition, the fiber of $\pi_{3'}$
over the points $([\bar{U}],[V_{1}])\in\hcoY_{3}\setminus(\sP_{\rho}\cup\sP_{\sigma})$
is \[
\rG(2,V/V_{1})\cap\mP(\bar{U}),\]
which are conics isomorphic to $\mP(U)\cap\rG(3,V)$ with $U=\bar{U}\wedge V_{1}$,
i.e., the fibers of $\overline{\Zpq}\to\overline{\hcoY}$ over $[U]$.
As before the fibers over $\sP_{\rho}$ and $\sP_{\sigma}$ are the
$\rho$-planes and $\sigma$-planes, respectively. 

Noting the isomorphism $\rG(2,T(-1))\simeq F(1,3,V)$, the following
lemma is clear:
\begin{lem}
There is a natural morphism $\rho_{G}:\rG(2,T(-1))\to\rG(3,V).$
\end{lem}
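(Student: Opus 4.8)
The plan is to identify $\rG(2,T(-1))$ with the partial flag variety $F(1,3,V)$ — an isomorphism recorded just above the lemma — and then take the tautological projection $F(1,3,V)\to\rG(3,V)$ forgetting the line. First I would recall the Euler sequence $0\to\sO_{\mP(V)}(-1)\to V\otimes\sO_{\mP(V)}\to T(-1)\to0$ on $\mP(V)$, which gives $T(-1)\vert_{[V_1]}\simeq V/V_1$ for a one-dimensional $V_1\subset V$; thus a rank-$2$ subspace of $T(-1)\vert_{[V_1]}$ is the same datum as a three-dimensional $V_3$ with $V_1\subset V_3\subset V$, and the map we want is $(V_1\subset V_3)\mapsto V_3$.

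To make this a genuine morphism of schemes I would argue with universal bundles. Write $\pi_G:\rG(2,T(-1))\to\mP(V)$ for the structure map and let $\sA\subset\pi_G^*T(-1)$ be the universal rank-$2$ subbundle. Pulling the Euler sequence back to $\rG(2,T(-1))$ and taking the preimage of $\sA$ under the surjection $V\otimes\sO\twoheadrightarrow\pi_G^*T(-1)$ produces a subsheaf $\sB\subset V\otimes\sO_{\rG(2,T(-1))}$ sitting in an extension $0\to\pi_G^*\sO_{\mP(V)}(-1)\to\sB\to\sA\to0$; hence $\sB$ is locally free of rank $3$, and $\sB\hookrightarrow V\otimes\sO_{\rG(2,T(-1))}$ is a subbundle since the quotient is $(\pi_G^*T(-1))/\sA$, locally free of rank $2$. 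By the universal property of $\rG(3,V)$ this subbundle corresponds to a unique morphism $\rho_G:\rG(2,T(-1))\to\rG(3,V)$ whose pullback of the tautological rank-$3$ subbundle is $\sB$. Fibrewise over $[V_1]$ it sends a plane $W\subset V/V_1$ to its preimage $V_3$ in $V$, i.e. it is exactly the forgetful map $F(1,3,V)\to\rG(3,V)$.

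I expect no real obstacle here: the only thing to check is that taking the preimage of a subbundle under a surjection of vector bundles again yields a subbundle (equivalently that the extension $0\to\pi_G^*\sO_{\mP(V)}(-1)\to\sB\to\sA\to0$ has locally free cokernel inside $V\otimes\sO$), which is routine. Alternatively one can bypass the bundle bookkeeping entirely and simply invoke that the forgetful morphism $F(1,3,V)\to\rG(3,V)$ of flag varieties is a morphism, composed with the Euler-sequence identification $\rG(2,T(-1))\simeq F(1,3,V)$ stated just before the lemma.
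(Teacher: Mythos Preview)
Your proposal is correct and matches the paper's approach exactly: the paper simply notes the isomorphism $\rG(2,T(-1))\simeq F(1,3,V)$ (coming from the Euler sequence identification $T(-1)\vert_{[V_1]}\simeq V/V_1$) and declares the lemma clear, with $\rho_G$ being the forgetful projection $F(1,3,V)\to\rG(3,V)$. Your bundle-theoretic construction of $\sB$ via the preimage of $\sA$ under the Euler surjection is a perfectly fine way to make this explicit, but it is more detail than the paper provides.
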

\vskip1cm

\begin{center}
\includegraphics[scale=0.9]{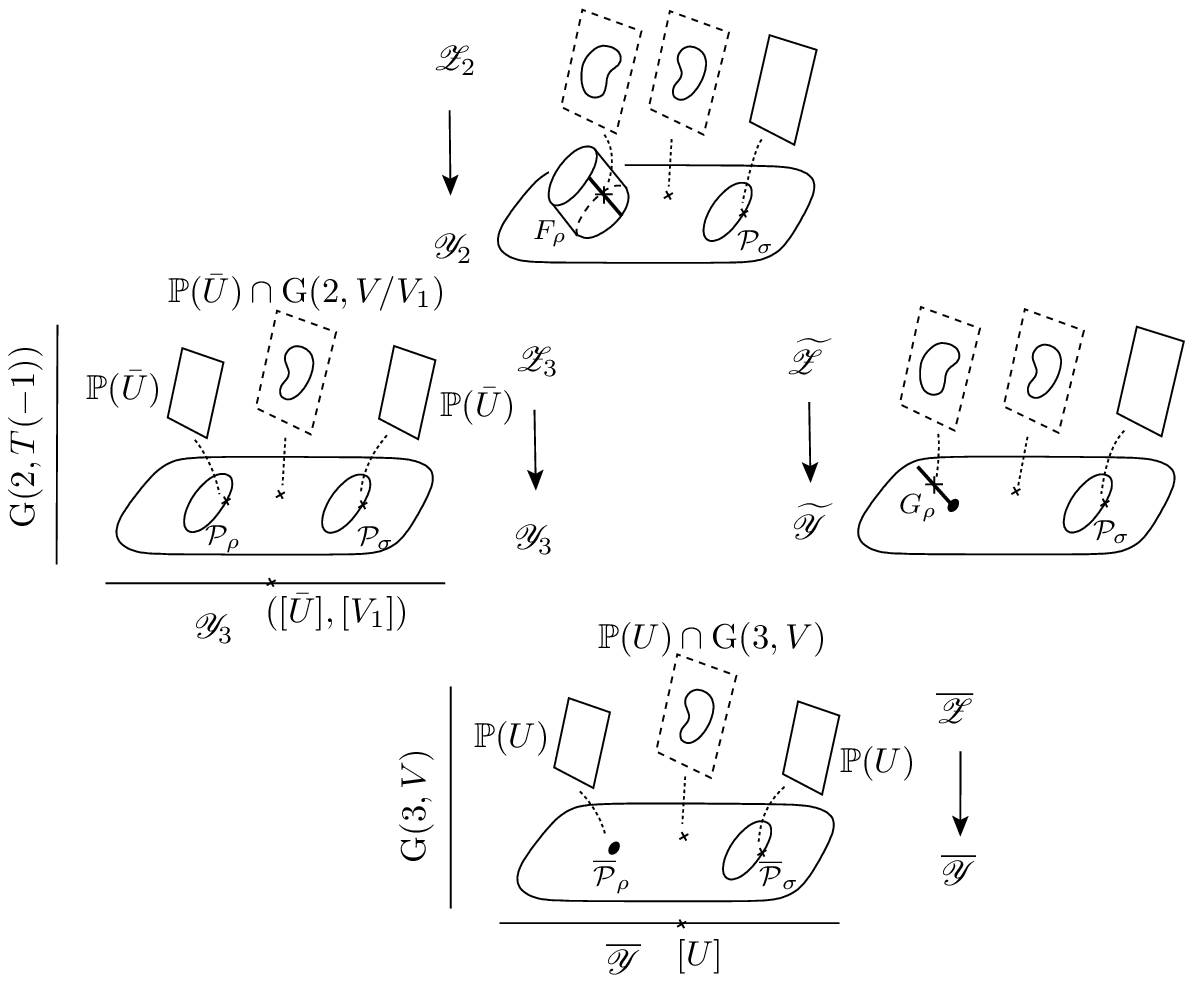}
\par\end{center}

\vskip0.5cm\begin{fcaption}\item{} \textbf{Fig.5.2. Generically conic
bundles.} Generically conic bundles in the text are schematically
described. The proper transforms of $\overline{\sP}_{\sigma}$ are
written by the same letter $\sP_{\sigma}$ for simplicity. 

\end{fcaption}

\vskip0.3cm

\subsubsection{{\itshape\bfseries Generically conic bundle $\Zpq_{2}\to\hcoY_{2}$}}

As described in Proposition \ref{pro:Resolutions-Y}, $\hcoY_{2}$
is given as the blow-up of $\hcoY_{3}$ along $\sP_{\rho}$. We denote
the exceptional divisor of the blow-up by $F_{\rho}$ (note that $F_{\rho}$
is a divisor). 
\begin{prop}
\label{pro:Y3-Y2-Frho}\begin{myitem} \item{\rm (1)} We have $\sN_{\sP_{\rho}/\hcoY_{3}}=\tS^{2}\sS^{*}\otimes\pi_{3}^{*}\sO_{\mP(V)}(1)\vert_{\sP_{\rho}}$
for the normal bundle of $\sP_{\rho}\subset\hcoY_{3}$, and hence
$F_{\rho}=\mP(\tS^{2}\sS^{*}\vert_{\sP_{\rho}}).$ 

\item{\rm (2)} The fibers of $F_{\rho}\to\sP_{\rho}$ can be identified
with the conics in the $\rho$-planes parametrized by $\sP_{\rho}$. 

\end{myitem}\end{prop}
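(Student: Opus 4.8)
The plan is to reduce the statement to the classical description of the normal bundle of an orthogonal Grassmannian inside its ambient Grassmannian, carried out relatively over $\mP(V)$ and keeping track of the line-bundle twist coming from $\wedge^{4}T(-1)\simeq\sO_{\mP(V)}(1)$. I would fix notation on $\hcoY_{3}=\rG(3,\sW)$, where $\sW:=\wedge^{2}T(-1)$ is the rank-$6$ bundle on $\mP(V)$, with tautological sequence $0\to\sS\to\pi_{3}^{*}\sW\to\sQ\to0$. Exterior multiplication gives a fibrewise symmetric bilinear form $q\colon\sW\otimes\sW\to\wedge^{4}T(-1)=\det\!\big(T(-1)\big)=\sO_{\mP(V)}(1)$, which is non-degenerate. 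By Proposition~\ref{pro:spin-OG36}(2) one has $\sP_{\rho}=\OG^{+}(3,\sW)\subset\hcoY_{3}$, so $\sS|_{\sP_{\rho}}$ is isotropic for $q$; since $\rk\sS=3=\tfrac12\rk\sW$ it follows that $\sS=\sS^{\perp}$ on $\sP_{\rho}$, hence $q$ descends to a perfect pairing $\sS\otimes\sQ\to\pi_{3}^{*}\sO_{\mP(V)}(1)$ and $\sQ|_{\sP_{\rho}}\simeq\big(\sS^{*}\otimes\pi_{3}^{*}\sO_{\mP(V)}(1)\big)|_{\sP_{\rho}}$. As $\sP_{\rho}$ sits in $\hcoY_{3}$ fibrewise over $\mP(V)$, its normal bundle is the relative one, $\sN_{\sP_{\rho}/\hcoY_{3}}=\big(T_{\hcoY_{3}/\mP(V)}|_{\sP_{\rho}}\big)\big/T_{\sP_{\rho}/\mP(V)}$, with $T_{\hcoY_{3}/\mP(V)}|_{\sP_{\rho}}=\sHom(\sS,\sQ)|_{\sP_{\rho}}\simeq\big(\sS^{*}\otimes\sS^{*}\otimes\pi_{3}^{*}\sO_{\mP(V)}(1)\big)|_{\sP_{\rho}}$. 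A first-order deformation $\phi\in\sHom(\sS,\sQ)$ keeps $\sS$ isotropic to first order precisely when $u\otimes u'\mapsto q(\phi(u),u')+q(u,\phi(u'))$ vanishes, i.e.\ when the associated section of $\sS^{*}\otimes\sS^{*}\otimes\pi_{3}^{*}\sO_{\mP(V)}(1)$ is skew-symmetric; thus $T_{\sP_{\rho}/\mP(V)}\simeq\big(\wedge^{2}\sS^{*}\otimes\pi_{3}^{*}\sO_{\mP(V)}(1)\big)|_{\sP_{\rho}}$ and therefore
\[
\sN_{\sP_{\rho}/\hcoY_{3}}\simeq\big((\sS^{*}\otimes\sS^{*})/\wedge^{2}\sS^{*}\big)\otimes\pi_{3}^{*}\sO_{\mP(V)}(1)\big|_{\sP_{\rho}}\simeq\tS^{2}\sS^{*}\otimes\pi_{3}^{*}\sO_{\mP(V)}(1)\big|_{\sP_{\rho}},
\]
which is the first assertion of~(1).

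For the ``hence'': by Proposition~\ref{pro:Resolutions-Y}, $\hcoY_{2}\to\hcoY_{3}$ is the blow-up along the smooth centre $\sP_{\rho}$, so $F_{\rho}=\mP(\sN_{\sP_{\rho}/\hcoY_{3}})$; projectivisation is unchanged under twist by a line bundle, whence $F_{\rho}=\mP\big(\tS^{2}\sS^{*}|_{\sP_{\rho}}\big)$, with fibres $\mP^{5}$ over $\sP_{\rho}$, matching the cone-over-$\mP^{1}\times\mP^{5}$ description in Proposition~\ref{pro:Resolutions-Y}(4).

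For~(2) I would compute the fibre of $F_{\rho}\to\sP_{\rho}$. A point of $\sP_{\rho}\simeq F(1,2,V)$ (see~(\ref{eq:Pr-Psigma})) is a flag $V_{1}\subset V_{2}\subset V$, corresponding to $([\bar{U}],[V_{1}])\in\hcoY_{3}$ with $\bar{U}=(V/V_{2})\wedge(V_{2}/V_{1})\subset\wedge^{2}(V/V_{1})$ (as in the proof of Proposition~\ref{pro:spin-OG36}(2)), and $\sS|_{([\bar{U}],[V_{1}])}=\bar{U}$. By the isomorphism~(\ref{eq:PintG(3)-and-PintG(2)}) the plane $\mP(\bar{U})\subset\rG(2,V/V_{1})$ is the Pl\"ucker image of $\mP(U)\subset\rG(3,V)$ with $U=\bar{U}\wedge V_{1}=(V/V_{2})\wedge(\wedge^{2}V_{2})$, i.e.\ of the $\rho$-plane ${\tt P}_{V_{2}}=\{[\Pi]\mid V_{2}\subset\Pi\subset V\}\simeq\mP^{2}$. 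Hence the fibre of $F_{\rho}=\mP(\tS^{2}\sS^{*}|_{\sP_{\rho}})$ over this point is $\mP(\tS^{2}\bar{U}^{*})$, the linear system of conic curves in the plane $\mP(\bar{U})\simeq\mP(U)={\tt P}_{V_{2}}$; as the flag ranges over $\sP_{\rho}$ these planes are exactly the $\rho$-planes, so $F_{\rho}\to\sP_{\rho}$ is the family of conics in the $\rho$-planes. (This dovetails with the generically conic bundle $\Zpq_{2}\to\hcoY_{2}$: over $F_{\rho}$, where the fibre $\mP(U)\cap\rG(3,V)$ of $\Zpq_{3}\to\hcoY_{3}$ degenerates to the whole $\rho$-plane $\mP(U)$, the conic bundle specialises to the conic marked by the point of $F_{\rho}$.)

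The only substantive step is the normal bundle computation, and the point to watch there is the bookkeeping of the $\sO_{\mP(V)}(1)$-twist together with the fact that $\sP_{\rho}$ is a single component of the orthogonal Grassmannian bundle — harmless, since either component of $\OG(3,\sW)$ has the same normal bundle $\tS^{2}\sS^{*}$ (up to twist) inside $\rG(3,\sW)$. Everything else is formal: the deformation-theoretic identity $T_{\sP_{\rho}/\mP(V)}\simeq\wedge^{2}\sS^{*}\otimes(\mathrm{twist})$, the standard form of the exceptional divisor of a blow-up along a smooth centre, and the identification of $\tS^{2}\bar{U}^{*}$ with quadratic forms on the $\rho$-plane.
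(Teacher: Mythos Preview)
Your argument is correct and closely parallels the paper's, with one minor difference in how the normal bundle is computed. The paper observes that the restriction of the quadratic form $\wedge^{2}T(-1)\times\wedge^{2}T(-1)\to\sO_{\mP(V)}(1)$ to $\sS$ gives a global section of $\tS^{2}\sS^{*}\otimes\pi_{3}^{*}\sO_{\mP(V)}(1)$ whose zero locus is exactly $\OG(3,\wedge^{2}T(-1))$; since this zero locus has the expected codimension $\binom{4}{2}=6$, the normal bundle of each component is simply the restriction of that bundle. You instead compute $T_{\hcoY_{3}/\mP(V)}|_{\sP_{\rho}}$ and $T_{\sP_{\rho}/\mP(V)}$ directly via deformation theory and take the quotient. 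Both are standard; the paper's section argument is a line shorter, while your route makes the symmetric/antisymmetric splitting explicit and does not need to invoke regularity of the section. Part~(2) is identical to the paper's.
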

\begin{proof}
(1) We have seen in Proposition \ref{pro:spin-OG36} that $\sP_{\rho}=\OG^{+}(3,\wedge^{2}T(-1))$,
i.e., one of the connected component of $\OG(3,\wedge^{2}T(-1))\subset\rG(3,\wedge^{2}T(-1)).$
The orthogonal Grassmannian consists maximally isotropic subspaces
with respect to the symmetric form on the universal bundle $\sS$
induced from \[
\wedge^{2}T(-1)\times\wedge^{2}T(-1)\to\wedge^{4}T(-1)\simeq\sO_{\mP(V)}(1).\]
 Hence it is given by the zero locus of the section of the bundle
$\tS^{2}\sS^{*}\otimes\pi_{3}^{*}\sO_{\mP(V)}(1)$ over $\rG(3,\wedge^{2}T(-1))$. 

\noindent (2) The points $([\bar{U}],[V_{1}])\in\sP_{\rho}$ determine
the $\rho$-planes $\mP(\bar{U})\subset\mP(\wedge^{2}(V/V_{1})).$
We can evaluate the fiber over a point $([\bar{U}],[V_{1}])\in\sP_{\rho}$
as \[
\mP(\tS^{2}\sS^{*}\vert_{([\bar{U}],[V_{1}])})=\mP(\tS^{2}\bar{U}^{*}),\]
which we identity with the conics in the $\rho$-plane.\end{proof}
\begin{prop}
Let $\rho_{2'}:\Zpq_{2}\to\Zpq_{3}$ be the blow-up of $\Zpq_{3}$
along $\pi_{3'}^{-1}(\sP_{\rho})$, and $E_{\rho}$ be its exceptional
divisor. Then $E_{\rho}\to F_{\rho}$ is the universal family of $\rho$-conics
parametrized by $F_{\rho}$. \end{prop}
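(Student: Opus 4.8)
The plan is to realise $\Zpq_{3}$ as a relative quadric inside the plane bundle $p\colon\mP(\sS)\to\hcoY_{3}$ and to obtain $\rho_{2'}$ as the strict transform of that quadric under a blow-up of $\mP(\sS)$. Recall that the fibre of $\pi_{3'}\colon\Zpq_{3}\to\hcoY_{3}$ over $([\bar{U}],[V_{1}])$ is $\rG(2,V/V_{1})\cap\mP(\bar{U})\subset\mP(\bar{U})$, and that $\rG(2,V/V_{1})$ is the Pl\"ucker quadric in $\mP(\wedge^{2}(V/V_{1}))$. Hence $\Zpq_{3}=Z(q)\subset\mP(\sS)$, where $q\in H^{0}\big(\mP(\sS),\sO_{\mP(\sS)}(2)\otimes p^{*}\pi_{3}^{*}\sO_{\mP(V)}(1)\big)$ is induced by the tautological inclusion $\sO_{\mP(\sS)}(-1)\hookrightarrow p^{*}\sS\hookrightarrow p^{*}\pi_{3}^{*}\wedge^{2}T(-1)$ paired with itself through $\wedge^{2}T(-1)\times\wedge^{2}T(-1)\to\wedge^{4}T(-1)\simeq\sO_{\mP(V)}(1)$. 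Equivalently $q=\mathrm{ev}(\tilde{q})$, where $\tilde{q}\in H^{0}\big(\hcoY_{3},\tS^{2}\sS^{*}\otimes\pi_{3}^{*}\sO_{\mP(V)}(1)\big)$ is the fibrewise quadratic form $\eta\mapsto\eta\wedge\eta$ and $\mathrm{ev}$ is the tautological ``evaluate a quadratic form at the tautological point'' map; by Proposition~\ref{pro:spin-OG36} one has $Z(\tilde{q})=\OG(3,\wedge^{2}T(-1))=\sP_{\rho}\sqcup\sP_{\sigma}$.

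Next I would pin down the centre $W:=\pi_{3'}^{-1}(\sP_{\rho})$ and the first-order behaviour of $q$ along it. By Proposition~\ref{pro:spin-OG36}, $\sP_{\rho}=\OG^{+}(3,\wedge^{2}T(-1))$ is a fibrewise isotropic locus, so $\tilde{q}|_{\sP_{\rho}}=0$; therefore $q$ vanishes on $p^{-1}(\sP_{\rho})=\mP(\sS|_{\sP_{\rho}})$, and $W=\Zpq_{3}\cap\mP(\sS|_{\sP_{\rho}})=\mP(\sS|_{\sP_{\rho}})$ is the full plane bundle over $\sP_{\rho}$ --- in particular it is reduced, and $W\subset\mP(\sS)$ is smooth. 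Since $\sP_{\rho}$ and $\sP_{\sigma}$ are disjoint in $\hcoY_{3}$ (Proposition~\ref{pro:spin-OG36}(1)), near $\sP_{\rho}$ the section $\tilde{q}$ cuts out $\sP_{\rho}$ regularly, and Proposition~\ref{pro:Y3-Y2-Frho}(1) identifies $\sN_{\sP_{\rho}/\hcoY_{3}}\simeq\tS^{2}\sS^{*}\otimes\pi_{3}^{*}\sO_{\mP(V)}(1)|_{\sP_{\rho}}$ via the first-order jet of $\tilde{q}$. Pulling this back to $W$ and composing with $\mathrm{ev}$, the derivative of $q$ along $W$ is the bundle map $\sN_{W/\mP(\sS)}\to\big(\sO_{\mP(\sS)}(2)\otimes p^{*}\pi_{3}^{*}\sO_{\mP(V)}(1)\big)|_{W}$ that sends, over a point of $\sP_{\rho}$, a quadratic form $Q$ on $\bar{U}$ together with a point $[\eta]\in\mP(\bar{U})$ to $Q(\eta)$; this is everywhere surjective, so $q$ vanishes along $W$ to order exactly one. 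I expect this last identification --- that the jet of the relative Pl\"ucker quadric along $W$ is precisely evaluation, compatibly with the normal-bundle isomorphism of Proposition~\ref{pro:Y3-Y2-Frho}(1) --- to be the main point requiring genuine work; it is cleanest to verify in the explicit coordinates on $\hcoY_{3}=\rG(3,\wedge^{2}T(-1))$ and the explicit Pl\"ucker relations recorded in \cite[Subsect.~5.3--5.5]{HoTa3} and Appendix~\ref{sec:AppendixB}.

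Granting this, the conclusion is the standard strict-transform description of a blow-up. Writing $b\colon\mathrm{Bl}_{W}\mP(\sS)\to\mP(\sS)$ with exceptional divisor $E=\mP(\sN_{W/\mP(\sS)})$, one has $b^{*}q=\mathbf{1}_{E}\cdot q'$, where $\mathbf{1}_{E}$ is the canonical section of $\sO(E)$ and $q'$ is a regular section of $b^{*}\big(\sO_{\mP(\sS)}(2)\otimes p^{*}\pi_{3}^{*}\sO_{\mP(V)}(1)\big)\otimes\sO(-E)$ whose zero scheme $Z(q')$ is simultaneously the strict transform of $\Zpq_{3}=Z(q)$ and the blow-up $\mathrm{Bl}_{W}\Zpq_{3}=\Zpq_{2}$ (because $W\subset\Zpq_{3}$). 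Hence $E_{\rho}=Z(q'|_{E})$, and $q'|_{E}$ is the section of $\sO_{E}(1)$, twisted by a pullback from $W$, attached to the derivative computed above. Since $\sN_{W/\mP(\sS)}$ is, up to a line-bundle twist irrelevant to projectivisation, the pullback to $W$ of $\tS^{2}\sS^{*}|_{\sP_{\rho}}$, we get $E=W\times_{\sP_{\rho}}\mP(\tS^{2}\sS^{*}|_{\sP_{\rho}})=W\times_{\sP_{\rho}}F_{\rho}$ using $F_{\rho}=\mP(\tS^{2}\sS^{*}|_{\sP_{\rho}})$ from Proposition~\ref{pro:Y3-Y2-Frho}, so that a point of $E$ is a triple $\big(([\bar{U}],[V_{1}]),[\eta],[Q]\big)$ with $[\eta]\in\mP(\bar{U})$, $[Q]\in\mP(\tS^{2}\bar{U}^{*})$, and $E_{\rho}=\{\,Q(\eta)=0\,\}$. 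Thus the projection $E_{\rho}\to F_{\rho}$ forgetting $[\eta]$ has, over $\big(([\bar{U}],[V_{1}]),[Q]\big)$, fibre the conic $\{Q=0\}\subset\mP(\bar{U})$: it is the universal family of $\rho$-conics over $F_{\rho}$. Finally, functoriality of blow-ups applied to $\pi_{3'}\colon\Zpq_{3}\to\hcoY_{3}$ with centre $\sP_{\rho}$, whose scheme-theoretic preimage is $W$, produces the morphism $\Zpq_{2}\to\hcoY_{2}=\mathrm{Bl}_{\sP_{\rho}}\hcoY_{3}$ (the $\pi_{2'}$ of the diagram) restricting on $E_{\rho}$ to exactly this projection, while $\rho_{2'}|_{E_{\rho}}\colon E_{\rho}\to W$ is the projection forgetting $[Q]$; this is the assertion.
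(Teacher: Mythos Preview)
Your proof is correct and follows essentially the same route the paper indicates: the paper's one-line proof says ``this follows by considering the normal bundle of $\pi_{3'}^{-1}(\sP_{\rho})$ in $\Zpq_{3}$ carefully'' and defers to \cite[Prop.~4.3.4]{HoTa4}, and your argument is precisely a careful execution of that normal-cone computation. Your framing via the ambient plane bundle $\mP(\sS)$ is a clean way to organise it, since $\Zpq_{3}$ is singular along $W$ and working in the smooth ambient space lets you identify the projectivised normal cone $\mP(C_{W/\Zpq_{3}})$ as the hypersurface $Z(q'|_{E})$ inside $\mP(\sN_{W/\mP(\sS)})=W\times_{\sP_{\rho}}F_{\rho}$ rather than computing it intrinsically; the identification of the leading jet of $q$ with evaluation $Q\mapsto Q(\eta)$ is exactly the content behind the paper's normal-bundle formula in Proposition~\ref{pro:Y3-Y2-Frho}(1), and you are right that this is where the real verification happens.
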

\begin{proof}
This follows by considering the normal bundle of $\pi_{3'}^{-1}(\sP_{\rho})$
in $\Zpq_{3}$ carefully. We refer to \cite[Prop.4.3.4]{HoTa4} for
the proof.
\end{proof}
Now we summarize the above results into 
\begin{prop}
The natural morphism $\pi_{2'}:\Zpq_{2}\to\hcoY_{2}$ between the
blow-ups $\Zpq_{2}$ and $\hcoY_{2}$ is a generically conic bundle.
Precisely, the fibers over $\hcoY_{2}\setminus\sP_{\sigma}$ are conics
and the fibers over $\sP_{\sigma}$ are $\sigma$-planes $($where
we use the same notation $\sP_{\sigma}$ for the proper transform
of $\sP_{\sigma}$ in $\hcoY_{3})$.
\end{prop}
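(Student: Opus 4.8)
The plan is to produce $\pi_{2'}$ from the universal property of blow-ups, identify it with $\pi_{3'}$ away from the exceptional divisors, and then read off the three kinds of fibre from material already established. First I would check that the composite $\Zpq_2\xrightarrow{\rho_{2'}}\Zpq_3\xrightarrow{\pi_{3'}}\hcoY_3$ makes the ideal $\sI_{\sP_\rho}$ of $\sP_\rho$ invertible: $\sI_{\sP_\rho}\cdot\sO_{\Zpq_3}$ is the ideal of the scheme-theoretic preimage $\pi_{3'}^{-1}(\sP_\rho)$, and since $\Zpq_2$ is by definition the blow-up of $\Zpq_3$ along that preimage, the further pullback $\sI_{\sP_\rho}\cdot\sO_{\Zpq_2}$ is invertible. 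By the universal property of the blow-up $\hcoY_2$ of $\hcoY_3$ along $\sP_\rho$, the composite then factors uniquely as $\Zpq_2\xrightarrow{\pi_{2'}}\hcoY_2\to\hcoY_3$; this $\pi_{2'}$ is the morphism of diagram (\ref{eq:diag-YZX}), it is compatible with $\rho_{2'}$ and with $\hcoY_2\to\hcoY_3$, and it carries $E_\rho$ onto $F_\rho$.

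Next I would describe the fibres over $\hcoY_2\setminus F_\rho$. The blow-ups $\hcoY_2\to\hcoY_3$ and $\rho_{2'}$ restrict to isomorphisms over the complements of their centres, and these complements correspond under $\pi_{3'}$; hence over $\hcoY_2\setminus F_\rho\simeq\hcoY_3\setminus\sP_\rho$ the morphism $\pi_{2'}$ is identified with $\pi_{3'}$. By Proposition \ref{pro:spin-OG36}(2) the loci $\sP_\rho$ and $\sP_\sigma$ are realised as the two connected components of $\OG(3,\wedge^2 T(-1))$ inside $\hcoY_3$, so $\sP_\rho\cap\sP_\sigma=\emptyset$; thus $\hcoY_2\to\hcoY_3$ is an isomorphism near $\sP_\sigma$ and the proper transform of $\sP_\sigma$ (still written $\sP_\sigma$) maps isomorphically onto $\sP_\sigma\subset\hcoY_3\setminus\sP_\rho$. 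The fibre analysis of $\pi_{3'}$ in Subsection \ref{sub:Birational-model-barY} then gives, over $([\bar U],[V_1])$, the conic $\rG(2,V/V_1)\cap\mP(\bar U)$ outside $\sP_\rho\cup\sP_\sigma$ and the $\sigma$-plane $\mP(U)$ with $U=\bar U\wedge V_1$ over $\sP_\sigma$. Carrying these over, the fibres of $\pi_{2'}$ over $\hcoY_2\setminus(F_\rho\cup\sP_\sigma)$ are conics (realised as conics in $\rG(3,V)$ via $\mu_2$) and those over $\sP_\sigma$ are $\sigma$-planes.

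Then for the fibres over $F_\rho$: because $\Zpq_2\setminus E_\rho$ maps into $\hcoY_2\setminus F_\rho$ by the previous step, $\pi_{2'}^{-1}(F_\rho)=E_\rho$ set-theoretically, and $\pi_{2'}|_{E_\rho}\colon E_\rho\to F_\rho$ is exactly the universal family of $\rho$-conics over $F_\rho=\mP(\tS^2\sS^{*}|_{\sP_\rho})$ of the proposition immediately preceding the statement. So every fibre of $\pi_{2'}$ over a point of $F_\rho$ is a conic---a $\rho$-conic, lying in the $\rho$-plane over the corresponding point of $\sP_\rho$. Putting the two steps together, $\pi_{2'}$ has $1$-dimensional fibres that are conics away from the proper closed subset $\sP_\sigma\subset\hcoY_2$ and $\sigma$-planes over $\sP_\sigma$, which is precisely the asserted generically-conic-bundle description.

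The hard (non-formal) part will be the identification in the last step of $\pi_{2'}|_{E_\rho}$ with the universal family of $\rho$-conics: one has to verify that the exceptional divisor $E_\rho$ of the blow-up of $\Zpq_3$ along $\pi_{3'}^{-1}(\sP_\rho)$ sits inside the $\mP^2$-fibres of $\Zpq_3\to\hcoY_3$ over $\sP_\rho$ exactly as the family of conics governed by the normal bundle $\sN_{\sP_\rho/\hcoY_3}=\tS^2\sS^{*}\otimes\pi_3^{*}\sO_{\mP(V)}(1)|_{\sP_\rho}$ of Proposition \ref{pro:Y3-Y2-Frho}. This reduces to computing $\sN_{\pi_{3'}^{-1}(\sP_\rho)/\Zpq_3}$ and matching it against that normal bundle, as in \cite[Prop.~4.3.4]{HoTa4}; granting that compatibility, the remainder is bookkeeping with the blow-up diagram and the fibre descriptions already available.
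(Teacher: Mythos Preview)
Your proposal is correct and aligns with the paper's approach: the paper presents this proposition explicitly as a summary of the immediately preceding results (the description of $\pi_{3'}$-fibres, Proposition~\ref{pro:Y3-Y2-Frho}, and the proposition on $E_\rho\to F_\rho$), giving no separate proof. You are simply making explicit the bookkeeping the paper leaves implicit---the construction of $\pi_{2'}$ via the universal property of blow-ups, the disjointness $\sP_\rho\cap\sP_\sigma=\emptyset$ from Proposition~\ref{pro:spin-OG36}(2), and the identification $\pi_{2'}^{-1}(F_\rho)=E_\rho$---and you correctly isolate the normal-bundle computation behind $E_\rho\to F_\rho$ as the only substantive input, which the paper likewise defers to \cite[Prop.~4.3.4]{HoTa4}.
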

We may summarize generically conic bundles into the following diagram:\def\ZZZ{
\begin{xy} 
(0,0)*+{\Zpq_3}="Z3",
(15,0)*+{\Zpq_2}="Z2",
(0,-10)*+{\hcoY_3}="Y3",
(15,-10)*+{\hcoY_2}="Y2",
(0,-20)*+{\mP(V)}="P3",
(15,-20)*+{\mP(V)}="P2",
(-14.5,-10)*+{\rG(2,T(-1))}="G2",
(-30,-20)*+{\rG(3,V)}="G3",
(30,-20)*+{\widetilde{\hcoY}}="tY",
(45,-20)*+{\hcoY}="Y",
\ar_{\pi_{G'}} "Z3";"G2",
\ar_{\rho_G} "G2";"G3",
\ar_{\;\rho_{2'}} "Z2";"Z3",
\ar^{\pi_{3'}} "Z3";"Y3",
\ar^{\pi_{2'}} "Z2";"Y2",
\ar_{\rho_2} "Y2";"Y3",
\ar^{\tilde{\rho}_2} "Y2";"tY",
\ar^{\pi_3} "Y3";"P3",
\ar^{\pi_2} "Y2";"P2",
\ar_{\pi_G} "G2";"P3",
\ar^{\rho_{\widetilde{\hcoY}}} "tY";"Y",
\end{xy} }\begin{equation}
\begin{matrix}\ZZZ\end{matrix}\label{eq:diagram-ZZZ}\end{equation}
In the above diagram, we have included all the morphisms claimed in
Proposition \ref{pro:G2-Z2-to-Y}. In Fig.~5.2, we schematically
have depicted the generically conic bundles, $\overline{\Zpq}\to\overline{\hcoY}$,
$\Zpq_{3}\to\hcoY_{3}$, $\Zpq_{2}\to\hcoY_{2}$ and also $\widetilde{\Zpq}\to\widetilde{\hcoY}$
which is deduced from $\Zpq_{2}\to\hcoY_{2}$ and $\overline{\Zpq}\to\overline{\hcoY}$.

\newpage

\appendix

\section{\textup{Two Theorems on Indefinite Lattices} }

We summarize two theorems on indefinite lattices which we use in Section
\ref{sec:Fourier-Mukai-partners-K3}. 
\begin{thm}[{\cite[Theorem 1.14.2]{Nikulin}}]
\label{thm:Nikulin1} Let $L$ be an indefinite lattice and $\ell(L)$
be the minimal number of generators of $L^{\vee}/L$. If $\rk L\geq2+\ell(L)$,
then the isogeny classes of $L$ consists of $L$ itself, $\sG(L)=\left\{ L\right\} $
and the natural group homomorphism $O(L)\to O(A_{L})$ is surjective. 
\end{thm}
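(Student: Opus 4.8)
The plan is to establish the two assertions together — that the genus $\sG(L)$ of $L$ consists of a single isomorphism class, and that $\lambda\colon O(L)\to O(A_L)$ is surjective — using the standard dictionary between genera of even lattices and their discriminant forms, the Milnor--Husemoller classification of indefinite unimodular lattices, and the correspondence between even overlattices of a lattice $M$ and isotropic subgroups of its finite quadratic form $A_M$.

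First I would unpack the arithmetic content of the hypothesis. Writing $A_L=\bigoplus_p A_{L,p}$ over primes $p$ one has $\ell(A_L)=\max_p\ell(A_{L,p})$, and for each $p$ the $\mZ_p$-lattice $L\otimes\mZ_p$ splits off a unimodular orthogonal summand of rank $\rk L-\ell(A_{L,p})\ge\rk L-\ell(A_L)\ge 2$. So $\rk L\ge\ell(A_L)+2$ says exactly that $L\otimes\mZ_p$ has a rank-$\ge2$ unimodular summand for every $p$ (and $L$ is indefinite over $\mathbb{R}$). For $\sG(L)=\{L\}$: if $\ell(A_L)=0$ then $L$ is indefinite unimodular and this is Milnor--Husemoller, so assume $\rk L\ge3$. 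A lattice $L'$ lies in $\sG(L)$ iff it has the same signature and an isometric discriminant form; by Eichler's theorem an indefinite lattice of rank $\ge3$ is the only class in its spinor genus, so it suffices to know that the genus of $L$ is a single spinor genus. Here the local rank-$\ge2$ unimodular summands do the work: over $\mQ_p$ such a summand is isotropic as soon as its rank is $\ge3$, which makes its spinor-norm group large, in the residual rank-$2$ cases it still contributes the unit classes, and together with indefiniteness at the real place these cover the adelic spinor obstruction; hence there is one spinor genus and $\sG(L)=\{L\}$.

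For surjectivity of $\lambda$ I would use the mirror-doubling construction. Put $M=L\oplus L(-1)$, so $A_M=q_L\oplus(-q_L)$ on $A_L\oplus A_L$, and for $\bar\varphi\in O(A_L)$ the graph $\Gamma_{\bar\varphi}=\{(\bar x,\bar\varphi\bar x):\bar x\in A_L\}$ is an isotropic, self-orthogonal subgroup of $A_M$; it therefore defines an even unimodular overlattice $N_{\bar\varphi}\supset M$ in which $L$ sits primitively with orthogonal complement $L(-1)$ and gluing map $\bar\varphi$. All the $N_{\bar\varphi}$ have rank $2\rk L$, signature $(s_++s_-,s_++s_-)$ (hence are indefinite), and the same parity, so $N_{\bar\varphi}\cong N_{\mathrm{id}}$ by Milnor--Husemoller. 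Nikulin's uniqueness theorem for primitive embeddings into a unimodular lattice — applicable since $\rk(L(-1))\ge\ell(A_L)+2$ and $L(-1)$ is indefinite — then shows that, after composing an abstract isomorphism $N_{\mathrm{id}}\xrightarrow{\sim}N_{\bar\varphi}$ with a suitable automorphism of $N_{\mathrm{id}}$, one obtains an isometry $\Psi\colon N_{\mathrm{id}}\to N_{\bar\varphi}$ with $\Psi(L)=L$, and hence $\Psi(L(-1))=\Psi(L^{\perp})=L^{\perp}=L(-1)$. Restricting $\Psi$ gives $\psi_L\in O(L)$ and $\psi_T\in O(L(-1))=O(L)$, and compatibility of $\Psi$ with the two gluings forces $\bar\psi_T=\bar\varphi\circ\bar\psi_L$ on $A_L$; hence $\bar\varphi=\lambda(\psi_T)\,\lambda(\psi_L)^{-1}$ lies in the image of $\lambda$.

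The step I expect to be the real obstacle is the single-class-in-genus statement, together with the fact that the embedding-uniqueness input just invoked is not logically independent of the surjectivity assertion: in Nikulin's treatment the uniqueness in the genus, the uniqueness of primitive embeddings into unimodular lattices, and the surjectivity of $O(L)\to O(A_L)$ are all proved as one package by induction on $\ell(A_L)$ (or, alternatively, the genus statement can be isolated via Eichler's theorem and strong approximation for the spin groups). So a genuinely self-contained proof would have to set up that induction carefully; the bound $\rk L\ge\ell(A_L)+2$ enters only to keep the auxiliary unimodular lattices indefinite and the orthogonal complements large enough to run the inductive step.
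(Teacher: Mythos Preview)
The paper does not give its own proof of this theorem: it appears in an appendix as a quoted result from Nikulin's paper, with no argument supplied. So there is nothing in the paper to compare your proposal against.

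That said, your sketch is a reasonable outline of how Nikulin's argument goes, and you have correctly identified the main structural issue: the three statements (uniqueness in the genus, uniqueness of primitive embeddings into unimodular lattices under the stated rank hypothesis, and surjectivity of $O(L)\to O(A_L)$) are proved by Nikulin as a package, and your use of the embedding-uniqueness theorem inside the surjectivity argument is indeed circular as written. Nikulin resolves this by an induction on $\ell(A_L)$, splitting off a hyperbolic plane $U$ (the hypothesis $\rk L\ge \ell(A_L)+2$ with $L$ indefinite guarantees $L\cong U\oplus L'$ for some $L'$) and reducing to smaller $\ell$. Your alternative route to $\sG(L)=\{L\}$ via Eichler and strong approximation is legitimate and in some ways cleaner, but it imports heavier machinery than Nikulin actually uses; if you want a self-contained argument in Nikulin's spirit you should set up the induction explicitly rather than invoking the embedding-uniqueness as a black box.
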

   
\begin{thm}[{\cite[Theorem 1.14.4]{Nikulin}}]
\label{thm:Nikulin2} Let $L$ be an even unimodular lattice with
signature $(l_{+},l_{-})$ and $M$ be an even lattice with signature
$(m_{+},m_{-})$. If {\rm (i)} $\mathrm{sgn}(L)-\mathrm{sgn}(M)>0$
$(l_{+}-m_{+}>0$, $l_{-}-m_{-}>0)$ and {\rm (ii)} $\rk L-\rk M\geq2+l(A_{M})$
hold, then primitive embedding $L\hookleftarrow M$ is unique up to
automorphism of $L$. 
\end{thm}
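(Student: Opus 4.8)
The plan is to reduce the assertion, following Nikulin \cite{Nikulin}, to the classification of primitive embeddings into even unimodular lattices in terms of discriminant forms. First I would recall the basic dictionary: a primitive embedding $M\hookrightarrow L$ with $L$ even unimodular determines its orthogonal complement $K:=M^{\perp}$, an even lattice of signature $(l_{+}-m_{+},\,l_{-}-m_{-})$, together with a gluing anti-isometry $\gamma\colon A_{M}\overset{\sim}{\longrightarrow}A_{K}$ (i.e.\ $q_{K}\circ\gamma=-q_{M}$); conversely the pair $(K,\gamma)$ reconstructs $L$ as the preimage in $M^{\vee}\oplus K^{\vee}$ of the graph of $\gamma$. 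By Milnor's classification an indefinite even unimodular lattice is determined up to isometry by its signature, and hypothesis (i) forces $L$, hence also $K$, to be indefinite; so by Nikulin's correspondence \cite[\S 1.6, Prop.~1.6.1, Cor.~1.6.2]{Nikulin} (or \cite[Prop.~1.15.1]{Nikulin} with trivial target discriminant form) the primitive embeddings $M\hookrightarrow L$, considered up to $O(L)$ and re-identifications of $M$ by $O(M)$, are classified by pairs $(K,\gamma)$ modulo the action $(K,\gamma)\mapsto(K',\,\bar f\circ\gamma\circ\bar h^{-1})$ by isometries $f\colon K\to K'$ and $h\in O(M)$. It therefore suffices to establish: (a) such a $K$ exists; (b) $K$ is unique up to isometry; (c) for a fixed $K$, all admissible $\gamma$ lie in one orbit.

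For (a) and (b) I would set $\ell:=\ell(A_{M})$ and check that the hypotheses are exactly those of Nikulin's existence theorem for even lattices with prescribed signature and discriminant form \cite[Thm.~1.10.1]{Nikulin}. Since $L$ is even unimodular, $l_{+}-l_{-}\equiv 0\pmod 8$, whence $(l_{+}-m_{+})-(l_{-}-m_{-})\equiv m_{-}-m_{+}\equiv\mathrm{sign}(-q_{M})\pmod 8$; by (i) both $l_{+}-m_{+}$ and $l_{-}-m_{-}$ are $\geq 1$; and by (ii) $\rk K=\rk L-\rk M\geq 2+\ell$, which in particular makes every local condition in \cite[Thm.~1.10.1]{Nikulin} automatic, since $\rk K\geq 2+\ell\geq 2+\ell(A_{p})$ for every prime $p$ ($A_{p}$ the $p$-part of $A_{M}\cong A_{K}$). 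Hence an even $K$ with signature $(l_{+}-m_{+},l_{-}-m_{-})$ and $q_{K}\cong -q_{M}$ exists; it is indefinite and satisfies $\rk K\geq 2+\ell(A_{K})$, so Theorem~\ref{thm:Nikulin1} gives $\sG(K)=\{K\}$. As any two even lattices of equal signature with isomorphic discriminant forms lie in one genus, $K$ is unique up to isometry.

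For (c), fix $K$ and let $\gamma,\gamma'\colon A_{M}\to A_{K}$ be two gluing anti-isometries. Then $\gamma'\circ\gamma^{-1}\in O(A_{K})$, and Theorem~\ref{thm:Nikulin1} applied to $K$ (again using that $K$ is indefinite with $\rk K\geq 2+\ell(A_{K})$) says $O(K)\to O(A_{K})$ is surjective; choosing $f\in O(K)$ with $\bar f=\gamma'\circ\gamma^{-1}$ gives $\gamma'=\bar f\circ\gamma$, so $(K,\gamma)$ and $(K,\gamma')$ lie in the same orbit. Combining (a)--(c), the classifying set is a single point, which proves both existence and uniqueness up to $O(L)$ of the primitive embedding $M\hookrightarrow L$.

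The two genuinely deep inputs — that $K$ is alone in its genus, and that $O(K)$ surjects onto $O(A_{K})$ — are precisely what Theorem~\ref{thm:Nikulin1} hands us, so the real labor is bookkeeping: making the discriminant-form correspondence for primitive embeddings into an even unimodular lattice precise (the anti-isometry $\gamma$ and the reconstruction of $L$), and verifying that (i) and (ii) are exactly the hypotheses of the existence theorem \cite[Thm.~1.10.1]{Nikulin}. The only point there that needs care is that condition (ii), read prime by prime, is what removes all the $p$-adic obstructions to the existence of $K$; this is where the hypothesis is used beyond merely guaranteeing indefiniteness.
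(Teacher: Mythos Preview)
The paper does not supply its own proof of this statement: it is quoted in Appendix~A directly from Nikulin \cite[Theorem~1.14.4]{Nikulin} as a background result, with no argument given. Your proposal is a faithful reconstruction of Nikulin's original proof strategy --- the discriminant-form dictionary for primitive embeddings into an even unimodular lattice, reduction to the genus and surjectivity statements of Theorem~\ref{thm:Nikulin1} for the orthogonal complement $K$, and the existence input \cite[Thm.~1.10.1]{Nikulin} --- and the steps you outline are correct. There is nothing in the paper to compare against beyond the citation itself.
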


\section{\textup{\label{sec:AppendixB}Pl\"ucker Ideal of $\rG(3,6)$}}

Let us fix a 4-dimensional space $V_{4}$ and write the double spin
decomposition (\ref{eq:spin-decomp}) as \[
\wedge^{3}(\wedge^{2}V_{4})=\Sigma^{(3,1,1,1)}V_{4}\oplus\Sigma^{(2,2,2,0)}V_{4}\simeq\tS^{2}V_{4}\oplus\tS^{2}V_{4}^{*}.\]
We fix a basis of $V_{4}$ and write the corresponding bases of $\wedge^{4}V_{4}$
in terms of the index set $\sI=\left\{ \{i,j\}\mid1\leq i<j\leq4\right\} $
(where we regard $\{i,j\}$ as an ordered set). Then we introduce
the standard Pl\"ucker coordinate by $[p_{IJK}]\in\mP(\wedge^{3}(\wedge^{2}V_{4})).$
On the other hand, we introduce the homogeneous coordinate (which
may be called double spin coordinate) by $[v_{ij},w_{kl}]\in\mP(\tS^{2}V_{4}\oplus\tS^{2}V_{4}^{*})$
with $4\times4$ symmetric matrices $v=(v_{ij})$, $w=(w_{kl})$.
Writing the isomorphism of the above decomposition, we have a linear
relation between $[p_{IJK}]$ and $[v_{ij},w_{kl}].$ Then the Pl\"ucker
ideal $I_{G}$ of the embedding $\rG(3,\wedge^{2}V_{4})\subset\mP(\tS^{2}V_{4}\oplus\tS^{2}V_{4}^{*})$
follows from that of the standard embedding $\rG(3,\wedge^{2}V_{4})\subset\mP(\wedge^{3}(\wedge^{2}V_{4}))$. 

Let us introduce some notations. We define the signature function
$\epsilon_{IJ}\,(I,J\in\sI$) by the signature of the permutation
of the {}``ordered'' union $I\cup J$, e.g., $\{2,4\}\cup\{1,3\}=\{2,4,1,3\}$.
We also define the dual index $\check{I}\in\sI$ of $I\in\sI$ by
the property $\check{I}\cup I=\left\{ 1,2,3,4\right\} $ (here $\cup$
is the standard union). 
\begin{prop}[{\cite[Appendix A]{HoTa3}}]
 The Pl\"ucker ideal $I_{G}$ of the embedding $\rG(3,\wedge^{2}V_{4})\subset\mP(\tS^{2}V_{4}\oplus\tS^{2}V_{4}^{*})$
is generated by \begin{equation}
\begin{aligned}\vert v_{IJ}\vert-\epsilon_{I\check{I}}\epsilon_{J\check{J}}\vert w_{\check{I}\check{J}}\vert & \;\;\;\;(I,J\in\sI),\\
(v.w)_{ij},\;(v.w)_{ii}-(v.w)_{jj} & \;\;\;\;(i\not=j,1\leq i,j\leq4),\end{aligned}
\label{eq:vw-Plucker-rel}\end{equation}
where $\vert v_{IJ}\vert,\vert w_{IJ}\vert$ represent the $2\times2$
minors of $v,w$ with the rows and columns specified by $I$ and $J$.
$(v.w)_{ij}$ is the $ij$-entry of the matrix multiplication $v.w$. 
\end{prop}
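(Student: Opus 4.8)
The plan is to reduce everything to the classical Plücker relations for $\rG(3,\wedge^2 V_4)\subset\mP(\wedge^3(\wedge^2 V_4))$ and then rewrite those relations in the double-spin coordinates $[v_{ij},w_{kl}]$. First I would recall that $\rG(3,W)$ for a $6$-dimensional space $W=\wedge^2 V_4$ is a codimension-$6$ subvariety of $\mP(\wedge^3 W)\simeq\mP^{19}$, cut out by the quadratic Plücker relations $p_{I_1I_2I_3}$, which (via the $\rG(3,6)\simeq\rG(3,6)^{*}$ self-duality, or directly) can be organized into the well-known set of $35$ quadrics. The key structural input is the $SL(V_4)$-equivariant decomposition
\[
\wedge^3(\wedge^2 V_4)=\Sigma^{(3,1,1,1)}V_4\oplus\Sigma^{(2,2,2,0)}V_4\simeq\tS^2V_4\oplus\tS^2V_4^{*},
\]
which I would make completely explicit: fix a basis of $V_4$, index $\wedge^2 V_4$ by $\sI$, and write down the linear change of coordinates $[p_{IJK}]\leftrightarrow[v_{ij},w_{kl}]$ by projecting onto the two summands. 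The projection onto $\tS^2V_4$ picks out, for each pair $\{i,j\}$, the coordinate $v_{ij}$ as a specific signed sum of $p_{IJK}$'s (those triples of $2$-vectors that, wedged together, land in the "symmetric" summand), and dually for $w_{kl}$ using the dual index $\check I$ and the signature functions $\epsilon_{IJ}$.

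Having fixed that dictionary, the second step is bookkeeping: substitute the expressions for $p_{IJK}$ in terms of $v,w$ into the classical Plücker quadrics and collect terms using the $SL(V_4)$-equivariance to predict which irreducible $SL(V_4)$-modules appear. The span of the Plücker quadrics is a known $SL(V_4)$-submodule of $\tS^2(\wedge^3 W)$, and after the change of coordinates it must decompose into pieces that are recognizable in the $v,w$ variables. I would match these pieces against the proposed generators: the differences of $2\times2$ minors $|v_{IJ}|-\epsilon_{I\check I}\epsilon_{J\check J}|w_{\check I\check J}|$ (which form the module coming from the "mixed" $\wedge^2$-type relations and have the right $SL(V_4)$-character since $\tS^2V_4\otimes\tS^2V_4^{*}$ contains the relevant summand), together with the traceless part of the matrix product $v.w$, i.e. the entries $(v.w)_{ij}$ for $i\ne j$ and the differences $(v.w)_{ii}-(v.w)_{jj}$. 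A dimension count — comparing the number of independent classical Plücker quadrics ($35$, or the appropriate number after accounting for linear syzygies) against the count of the listed generators ($\binom{6}{2}=15$ minor-difference relations plus $6+3=9$ from $v.w$, minus overlaps) — together with the equality of $SL(V_4)$-characters pins down that the two ideals coincide in degree $2$; since the Grassmannian is projectively normal and cut out by quadrics, equality of the degree-$2$ parts forces equality of the ideals.

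The main obstacle I anticipate is the explicit identification of the linear isomorphism in the decomposition, i.e. getting all the signs right: the signature functions $\epsilon_{IJ}$ and the dual-index map $I\mapsto\check I$ must be threaded consistently through the wedge products so that the "symmetric" projections genuinely land in $\tS^2 V_4$ and $\tS^2 V_4^{*}$ rather than in some twisted copy. Once the coordinate change is nailed down, verifying that the classical quadrics transform into the stated generators is a finite, representation-theoretically guided computation (one checks it on highest-weight vectors of each $SL(V_4)$-isotypic component rather than on all $35$ relations), and the projective-normality/quadratic-generation of $\rG(3,6)$ closes the argument. This is essentially the content of \cite[Appendix A]{HoTa3}, so I would follow that computation, using the $SL(V_4)$-equivariance as the organizing principle to keep the sign-chasing under control.
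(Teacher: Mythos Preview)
Your approach matches the paper's own description of the method: the text preceding the proposition says explicitly that one writes the linear isomorphism $[p_{IJK}]\leftrightarrow[v_{ij},w_{kl}]$ coming from the double spin decomposition and then transports the standard Pl\"ucker ideal through it, which is precisely your plan. The paper itself gives no proof here (this is a survey; the computation is delegated to \cite[Appendix~A]{HoTa3}), so there is nothing further to compare against beyond that indicated strategy.

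One small correction to your bookkeeping: the count ``$6+3=9$'' for the $v.w$ relations is off. The matrix $v.w$ is $4\times4$ and not symmetric in general (since $(vw)^{T}=wv$), so the off-diagonal entries $(v.w)_{ij}$ for $i\neq j$ give $12$ relations, and the diagonal differences give $3$ more, for $15$ conditions expressing that $v.w$ is a scalar multiple of the identity. Likewise the minor relations $|v_{IJ}|-\epsilon_{I\check I}\epsilon_{J\check J}|w_{\check I\check J}|$ are indexed by unordered pairs $\{I,J\}$ with $I,J\in\sI$ (including $I=J$), giving $21$ relations rather than $15$. These numbers are not all independent, but you should redo the count before invoking it; the cleaner route, as you note, is to match $SL(V_{4})$-characters of the degree-$2$ part rather than to count generators na\"ively.
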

\noindent For $[v,w]\in V(I_{G})\simeq\rG(3,6)$, we have 

1) $\det\, v=\det\: w$,

2) $v.w=\pm\sqrt{\det\, w}\,\id_{4}$,

3) $\rk v\not=3$ and $\rk w\not=3$,

4) $\rk v=2\Leftrightarrow\rk w=2$, and 

5) $\rk v\leq1\Leftrightarrow\rk w\leq1.$

\noindent These are easy consequences from (\ref{eq:vw-Plucker-rel}). 

$\;$

\vspace{1cm}
\noindent {\footnotesize Graduate School of Mathematical Sciences,
University of Tokyo, Meguro-ku,Tokyo 153-8914,$\,$Japan }{\footnotesize \par}

\noindent {\footnotesize e-mail addresses: hosono@ms.u-tokyo.ac.jp,
takagi@ms.u-tokyo.ac.jp} \[
\]


\begin{thebibliography}{\qquad}
\bibitem[Ad]{Ad} N.~Addington, \textit{The Brauer group is not a
derived invariant}, arXiv:1306.6538 {[}mathAG{]}.

\bibitem[ADS]{ADS}N. Addington, W. Donovan and E. Segal, \textit{The
Pfaffian-Grassmannian equivalence revisited}, arXiv:1401.3661{[}mathAG{]}.

\bibitem[AESZ]{AESZ}G. Almkvist, C. van Enckevort, D. van Straten
and W. Zudilin, \textit{Tables of Calabi--Yau equations}, arXiv:math/0507430{[}mathAG{]}.

\bibitem[BDFIK]{BDFIK}M. Ballard, D. Deliu, D. Favero, M. U. Isik
and L. Katzarkov, \textit{Homological Projective Duality via Variation
of Geometric Invariant Theory Quotients}, arXiv:1306.3957{[}mathAG{]} 

\bibitem[BHPv]{BHPv}W. Barth, K. Hulek, Chris Peters and A. van de
Ven, \textit{Compact Complex Surfaces} (Ergebnisse der Mathematik
und ihrer Grenzgebiete. 3. Folge / A Series of Modern Surveys in Mathematics),
Springer, 2003.

\bibitem[Ba1]{Ba1}V. Batyrev, \textit{Dual polyhedra and mirror symmetry
for Calabi-Yau hypersurfaces in toric varieties}, J. Alg. Geom. 3
(1994) 493--535. 

\bibitem[Ba2]{Ba2}\bysame, \textit{Birational Calabi-Yau n-folds
have equal Betti numbers}, New trends in algebraic geometry (Warwick,
1996), 1\textendash{}11, London Math. Soc. Lecture Note Ser., 264,
Cambridge Univ. Press, Cambridge, 1999. 

\bibitem[BCKvS]{BCKvS}V. Batyrev, I. Ciocan-Fontanine, B. Kim, and
D. van Straten, Conifold transitions and mirror symmetry for Calabi-Yau
complete intersections in Grassmannians, Nuclear Phys. B 514 (1998),
640--666. 

\bibitem[Be]{Bel}S.-M. Belcastro, \textit{Picard lattices of families
of K3 surfaces}, Comm. Algebra \textbf{\footnotesize 30} (2002), no.
1, 61\textendash{}82. 

\bibitem[BC]{BC} L.~Borisov and A.~Caldararu, \textit{The Pfaffian-Grassmannian
derived equivalence}, J. Algebraic Geom. \textbf{18} (2009), no. 2,
201-222.

\bibitem[BCOV1]{BCOV1}M. Bershadsky, S. Cecotti, H. Ooguri and C.
Vafa, \textit{Holomorphic anomalies in topological field theories},
(with an appendix by S.Katz), Nucl. Phys. \textbf{\footnotesize B405}
(1993) 279-304. 

\bibitem[BCOV2]{BCOV2}\bysame, \textit{Kodaira-Spencer theory of
gravity and exact results for quantum string amplitudes}, Commun.
Math. Phys. \textbf{\footnotesize 165} (1994) 311-428. 

\bibitem[BO]{BO} A.~Bondal and D.~Orlov, \textit{Semiorthogonal
decomposition for algebraic varieties}, arXiv:alg-geom/9506012

\bibitem[Br2]{Br1} T.~Bridgeland, \textit{Equivalences of triangulated
categories and Fourier-Mukai transforms}, Bull. London Math. Soc.
\textbf{31} (1999), no. 1, 25--34.

\bibitem[Br2]{Br2}\bysame, \textit{Flops and derived categories},
Invent. Math. \textbf{\footnotesize 147} (2002), no. 3, 613--632. 

\bibitem[BM]{BM}T. Bridgeland and A. Maciocia, \textit{Complex surfaces
with equivalent derived categories}, Math. Zeit.\textbf{\footnotesize{}
236} (2001) 677--697. 

\bibitem[Ca]{Cal}A.$\,$Caldararu, \textit{Derived categories of
twisted sheaves on Calabi-Yau manifolds}, Ph.D. thesis, Cornell, Ithaca,
NY, 2000, www.math.wisc.edu/\textasciitilde{}andreic/publications. 

\bibitem[CdOGP]{CdOGP}P. Candelas, X.C. de la Ossa, P.S. Green and
L.Parkes, \textit{A pair of Calabi-Yau manifolds as an exactly soluble
superconformal theory}, Nucl.Phys. \textbf{\footnotesize B356}(1991),
21-74. 

\bibitem[Ca]{Cassels}J. W. S. Cassels,\textit{ Rational quadratic
forms}, Academic Press (1978). 

\bibitem[Co]{Cossec}F. Cossec, \textit{Reye congruence}, Transactions
of the A.M.S. \textbf{\footnotesize 280} (1983), 737--751. 

\bibitem[EHKR]{EHKR}R. Eager, K. Hori, J. Knapp and M. Romo, to appear. 

\bibitem[DS]{DS}W. Donovan and Ed. Segal, \textit{Window shifts,
flop equivalences and Grassmannian twists}. Compos. Math. 150 (2014),
no. 6, 942\textendash{}978.

\bibitem[DM]{DM} C. F. Doran and J. W. Morgan, \textit{Mirror Symmetry
and Integral Variations of Hodge Structure Underlying One Parameter
Families of Calabi-Yau Threefolds}, Mirror symmetry. V, 517\textendash{}537,
AMS/IP Stud. Adv. Math., \textbf{\footnotesize 38}, Amer. Math. Soc.,
Providence, RI, 2006. 

\bibitem[ES]{ES} C. Enckevort and D. van Straten, \textit{Monodromy
calculations of fourth order equations of Calabi-Yau type}, Mirror
symmetry. V, 539\textendash{}559, AMS/IP Stud. Adv. Math., \textbf{\footnotesize 38},
Amer. Math. Soc., Providence, RI, 2006.

\bibitem[Do]{Dolgavhev}I.V. Dolgachev,\textit{ Mirror symmetry for
lattice polarized K3 surfaces}, Algebraic geometry, 4. J. Math. Sci.\textbf{\footnotesize{}
81} (1996) 2599--2630. 

\bibitem[Ga]{Ga}S.\,Galkin, \textit{An explicit construction of
Miura's varieties}, talk given at {}``Workshop on Calabi-Yau Manifolds,
Mirror Symmetry and Related Topics'' (Tokyo, Feb.17-21,2014). 

\bibitem[Ge]{Gepner}D. Gepner, \textit{Exactly solvable string compactifications
on manifolds of SU(n) holonomy}, Phys.Lett.\textbf{\footnotesize 199B}(1987)380. 

\bibitem[GP]{GP}B.R. Greene and M.R. Plesser, Duality in Calabi-Yau
moduli space, Nucl. Phys. \textbf{\footnotesize B338} (1990) 15-37. 

\bibitem[GV]{GV} R. Gopakumar and C. Vafa,\textit{ M-Theory and Topological
Strings--II}, hep-th/9812127.

\bibitem[GrPo]{GrPo} M. Gross and S. Popescu, \textit{Calabi-Yau
threefolds and moduli of abelian surfaces. I}, Compositio Math. \textbf{\footnotesize 127}
(2001), no. 2, 169\textendash{}228. 

\bibitem[GrS1]{GrS1}M. Gross and B. Siebert, \textit{Mirror symmetry
via logarithmic degeneration data. I}. J. Differential Geom. \textbf{72}
(2006), no. 2, 169\textendash{}338.

\bibitem[GrS2]{GrS2}\bysame, \textit{Mirror symmetry via logarithmic
degeneration data, II}. J. Algebraic Geom. \textbf{19} (2010), no.
4, 679\textendash{}780.

\bibitem[GW]{GrossWilson}M. Gross and P.M.H. Wilson, \textit{Mirror
symmetry via 3-tori for a class of Calabi-Yau threefolds}, Math. Ann.
\textbf{\footnotesize 309} (1997) 505-531. 

\bibitem[Ha]{Ha}D. Halpern-Leistner, \textit{The derived category
of a GIT quotient}, arXiv:1203.0276{[}mathAG{]}.

\bibitem[Ha]{Hart}H. Hartmann, \textit{Cusps of the K\"ahler moduli
space and stability conditions on K3 surfaces}. Math. Ann. \textbf{\footnotesize 354}
(2012), no. 1, 1\textendash{}42. 

\bibitem[Hor]{Hori}K. Hori, \textit{Duality In Two-Dimensional (2,2)
Supersymmetric Non-Abelian Gauge Theories}, JHEP 1310 (2013) 121. 

\bibitem[HK]{HoriKnapp} K. Hori and J. Knapp, \textit{Linear sigma
models with strongly coupled phases - one parameter models}, JHEP
1311 (2013) 070.

\bibitem[HT]{HoriT}K. Hori and D. Tong, \textit{Aspects of non-abelian
gauge dynamics in two-dimensional N=(2,2) theories}, J. High Energy
Phys. 2007, no. 5, 079, 41 pp. (electronic). 

\bibitem[HR]{HoriR}K. Hori and M. Romo, Exact Results In Two-Dimensional
(2,2) Supersymmetric Gauge Theories With Boundary, arXiv:1308.2438
{[}hep-th{]}.

\bibitem[Ho]{HoIIa}S. Hosono, \textit{Local Mirror Symmetry and Type
IIA Monodromy of Calabi-Yau Manifolds}, Adv. Theor. Math. Phys.\textbf{\footnotesize{}
4 }(2000), 335--376. 

\bibitem[HK]{HoKo}S. Hosono ans Y. Konishi, \textit{Higer genus Gromov-Witten
invariants of the Grassmannian, and the Pfaffian Calabi-Yau 3-folds},
Adv. Theor. Math. Phys. \textbf{13} (2009), 1--33.

\bibitem[HLOY1]{HLOYauteq}S. Hosono, B.H. Lian, K. Oguiso and S.-T.
Yau, \textit{Autoequivalences of a K3 surface and monodromy transformations},
J. Algebraic Geom.\textbf{\footnotesize{} 13}(2004), 513--545.

\bibitem[HLOY2]{HLOYcounting} \bysame, \textit{Fourier-Mukai number
of a K3 surface,} in \textquotedbl{}Algebraic structures and moduli
spaces\textquotedbl{}, CRM Proc. Lecture Notes, \textbf{\footnotesize 38
}(2004), 177 --192. 

\bibitem[HLOY3]{HLOYpicOne} \bysame, \textit{Fourier-Mukai partners
of a K3 surface of Picard number one}, in the proceedings for ``Conference
on Hilbert schemes, vector bundles and their interplay with representation
theory'', Columbia, Missouri, Contemp. Math. \textbf{\footnotesize 322}
(2003), 43--55. 

\bibitem[HST]{HST}S. Hosono, M.-H. Saito and A. Takahashi, \textit{Relative
Lefschetz action and BPS state counting}, International Math. Res.
Notices, \textbf{\footnotesize 15} (2001), 783--816.

\bibitem[HoTa1]{HoTa1} S.~Hosono and H.~Takagi, \textit{Mirror
symmetry and projective geometry of Reye congruences I}, J. Algebraic
Geom. \textbf{23} (2014), no. 2, 279--312. 

\bibitem[HoTa2]{HoTa2} \bysame, \textit{Determinantal quintics and
mirror symmetry of Reye congruences}, Comm. Math. Phys. \textbf{\footnotesize 329}
(2014), no. 3, 1171--1218. 

\bibitem[HoTa3]{HoTa3} \bysame, \textit{Duality between $\tS^{2}\mP^{4}$
and the double }quintic symmetroid, arXiv:1302.5881v2{[}mathAG{]}.

\bibitem[HoTa4]{HoTa4} \bysame, \textit{Double quintic symmetroids,
Reye congruences, and their derived equivalence}, arXiv:1302.5883v2{[}mathAG{]},
to appear in J. Differential Geom. (2015). 

\bibitem[HP]{HP}K. Hulek and D. Ploog, \textit{Fourier-Mukai partners
and polarised K3 surfaces}, in {}``Arithmetic and geometry of K3
surfaces and Calabi-Yau threefolds'', 333\textendash{}365, Fields
Inst. Commun.,\textbf{ 67}, Springer, New York, 2013. 

\bibitem[Hu]{Huy} D.~Huybrechts, \textit{Fourier-Mukai Transforms
in Algebraic Geometry}, Oxford Mathematical Monographs, Oxford 2006.

\bibitem[HS]{HuySt}D.$\,$Huybrechts and P.$\,$Stellari, \textit{Equivalences
of twisted K3 surfaces}, Math. Ann. \textbf{\footnotesize 332 }(2005),
no. 4, 901\textendash{}936. 

\bibitem[IM]{IM} A. Iliev and D. Markushevich, \textit{Elliptic curves
and rank-2 vector bundles on the prime Fano threefold of genus 7},
Adv. Geom., \textbf{\footnotesize 4}(3):287--318, 2004.

\bibitem[JKLMR]{JKLMR} H. Jockers, V. Kumar, J.M. Lapan, D. Morrison
and M. Romo, \textit{Two-Sphere Partition Functions and Gromov-Witten
Invariants}, Commun.Math.Phys. \textbf{\textit{325}} (2014) 1139-1170.

\bibitem[Kan]{Kan}A. Kanazawa, \textit{Calabi-Yau threefolds and
mirror symmetry}, Commun. Number Theory Phys. \textbf{\footnotesize 6}
(2012), no. 3, 661\textendash{}696.

\bibitem[Kap]{Kap}M. Kapustka, Geometric transitions between Calabi\textendash{}Yau
threefolds related to Kustin\textendash{}Miller unprojections, J.
of Geom. and Phys.\textbf{\footnotesize{} 61} (2011) 1309\textendash{}1318

\bibitem[Kaw]{Kaw}K. Kawatani, \textit{Fourier\textendash{}Mukai
transformations on K3 surfaces with $\rho=1$ and Atkin\textendash{}Lehner
involutions}, J. Algebra \textbf{\footnotesize 417} (2014), 103\textendash{}115. 

\bibitem[Ko]{Ko} M. Kontsevich,\textit{ Homological algebra of mirror
symmetry}, Proceedings of the International Congress of Mathematicians
(Z\"urich, 1994) Birkh\"auser (1995) pp. 120 --139.

\bibitem[Ku1]{Ku1} A.$\,$Kuznetsov, \textit{Homological projective
duality}, Publ. Math. Inst. Hautes \'Etudes Sci. No. 105 (2007),
157--220.

\bibitem[Ku2]{Ku2} \bysame, \textit{Homological projective duality
for Grassmannians of lines}, arXiv:math /0610957.

\bibitem[Ku3]{Ku3}\bysame, \textit{Semiorthogonal decompositions
in algebraic geometry}, Contribution to the ICM 2014, arXiv:1404.3143{[}mathAG{]}.

\bibitem[LY]{L-Y}B. Lian and S.-T. Yau, \textit{Arithmetic properties
of mirror map and quantum coupling}, Commun. Math. Phys. \textbf{\footnotesize 176}
(1996) 163--192. 

\bibitem[Ma]{Ma}S. Ma, \textit{Fourier-Mukai partners of a K3 surface
and the cusps of its Kahler moduli,} Internat. J. Math. \textbf{\footnotesize 20}
(2009), no. 6, 727\textendash{}750.

\bibitem[Mi]{Mi}M. Miura, Minuscule Schubert varieties and mirror
symmetry, arXiv:1301.7632{[}mathAG{]}.

\bibitem[Mo]{Morrison}D. R. Morrison, \textit{Picard-Fuchs equations
and mirror maps for hypersurfaces}, in {}``Essays on Mirror Manifolds''
Ed. S.-T. Yau, International Press, Hong Kong (1992) 241--264. 

\bibitem[Mu1]{Mukai1}S. Mukai, \textit{Duality between ${\mathbf D(X) }$
and ${\mathbf D(\hat X)}$ with its application to Picard sheaves},
Nagoya Math. J. \textbf{\footnotesize 81} (1981) 101--116. 

\bibitem[Mu2]{Mukai2} \bysame, \textit{Symplectic structure of the
moduli space of sheaves on an abelian K3 surface}, Invent. Math. \textbf{\footnotesize 77
}(1984) 101 --116.

\bibitem[Mu3]{Mukai3} \bysame, \textit{On the moduli space of bundles
on K3 surfaces I}, in {}``Vector bundles on algebraic varieties''
Oxford Univ. Press (1987) 341--413. 

\bibitem[Mu4]{Mukai4} \bysame, \textit{Duality of polarized K3 surfaces},
in New trends in algebraic geometry (Warwick, 1996), 311--326, London
Math. Soc. Lecture Note Ser., \textbf{\footnotesize 264}, Cambridge
Univ. Press, Cambridge, 1999. 

\bibitem[Ni]{Nikulin} V. Nikulin, \textit{Integral symmetric bilinear
forms and some of their geometric applications}, Math. USSR Izv \textbf{\footnotesize 14}
(1980) 103 --167. 

\bibitem[Og]{Og}K. Oguiso, \textit{K3 surfaces via almost-primes},
Math. Res. Lett. \textbf{\footnotesize 9 }(2002) 47--63. 

\bibitem[Or]{Orlov}D. Orlov, \textit{Equivalences of derived categories
and K3 surfaces}, Algebraic geometry, 7. J. Math. Sci. (New York)\textbf{\footnotesize 84},
no. 5 (1997) 1361--1381. 

\bibitem[PS]{PetersStienstra}C. Peters and J. Stienstra, \textit{A
pencil of K3-surfaces related to Ap\'ery's recurrence for $\zeta(3)$
and Fermi surfaces for potential zero}, in ``Arithmetics of Complex
Manifolds'', Lect. Notes in Math., vol.\textbf{\footnotesize 1399}
Springer-Verlag, Berlin (1989) 110 -- 127. 

\bibitem[PT]{PT} R. Pandharipande and R.P. Thomas,\textit{ Stable
pairs and BPS invariants}. J. Amer. Math. Soc. \textbf{\footnotesize 23}
(2010), no. 1, 267--297.

\bibitem[Ro]{Ro} E.A. R{\o}dland, \textit{The Pfaffian Calabi-Yau,
its Mirror and their link to the Grassmannian $\mathrm{G}(2,7)$},
Compositio Math. \textbf{\footnotesize 122} (2000), no. 2, 135--149.

\bibitem[RuS]{RuS}H. Ruddat and B. Siebert, \textit{Canonical Coordinates
in Toric Degenerations}, arXiv:1409.4750 {[}mathAG{]}.

\bibitem[ST]{ST}P. Seidel and R. Thomas, \textit{Braid group a actions
on derived categories of coherent sheaves}, Duke Math. J. \textbf{\footnotesize 108},
no. 1 (2001) 37--108. 

\bibitem[S]{S} C.~Schnell, \textit{The fundamental group is not
a derived invariant}, in Derived categories in algebraic geometry,
279\textendash{}285, EMS Ser. Congr. Rep., Eur. Math. Soc., Z\"urich,
2012.

\bibitem[SYZ]{SYZ}A. Strominger, S.-T. Yau and E. Zaslow, \textit{Mirror
symmetry is T-Duality}, Nucl. Phys. \textbf{\footnotesize B479} (1996)
243-259. 

\bibitem[Ty]{Tyurin}A.N. Tyurin, \textit{On intersections of quadrics},
Russian Math. Surveys \textbf{\footnotesize 30} (1975), 51--105.

\bibitem[\qquad]{key-2} 

\end{thebibliography}
\end{document}